\newlength{\XHeight}
\newlength{\XWidth}
\setlist[itemize,1]{leftmargin=\dimexpr 26pt-.1in}
\newtheorem{PARA}{}[section]
\newtheorem{theorem}[PARA]{Theorem}
\newtheorem{corollary}[PARA]{Corollary}
\newtheorem{lemma}[PARA]{Lemma}
\newtheorem{proposition}[PARA]{Proposition}
\newtheorem{definition}[PARA]{Definition}
\newtheorem{definition-proposition}[PARA]{Definition-Proposition}
\newtheorem{question}[PARA]{Question}
\theoremstyle{definition}
\newtheorem{remark}[PARA]{Remark}
\theoremstyle{theorem}
\newtheorem{example}[PARA]{Example}
\newcommand{\para}{\begin{PARA}\rm}
\newcommand{\arap}{\end{PARA}\rm}
\newcommand{\dfn}{\begin{definition}\rm}
\newcommand{\nfd}{\end{definition}\rm}
\newcommand{\rmk}{\begin{remark}\rm}
\newcommand{\kmr}{\end{remark}\rm}
\newcommand{\xmpl}{\begin{example}\rm}
\newcommand{\lpmx}{\end{example}\rm}
\newcommand{\cA}{\mathcal{A}}
\newcommand{\cB}{\mathcal{B}}
\newcommand{\cF}{\mathcal{F}}
\newcommand{\cE}{\mathcal{E}}
\newcommand{\cM}{\mathcal{M}}
\newcommand{\cN}{\mathcal{N}}
\newcommand{\cO}{\mathcal{O}}
\newcommand{\cP}{\mathcal{P}}
\newcommand{\cS}{\mathcal{S}}
\newcommand{\cU}{\mathcal{U}}
\newcommand{\cZ}{\mathcal{Z}}
\newcommand{\BB}{\mathcal{B}}
\newcommand{\EE}{\mathcal{E}}
\newcommand{\OO}{\mathcal{O}}
\newcommand{\ZZ}{\mathcal{Z}}
\newcommand{\FF}{\mathcal{F}}
\newcommand{\one}
{{{\mathchoice \mathrm{ 1\mskip-4mu l} \mathrm{ 1\mskip-4mu l}
\mathrm{ 1\mskip-4.5mu l} \mathrm{ 1\mskip-5mu l}}}}
\newcommand{\C}{{\mathbb{C}}}
\renewcommand{\H}{{\mathbb{H}}}
\newcommand{\N}{{\mathbb{N}}}
\newcommand{\R}{{\mathbb{R}}}
\newcommand{\Z}{{\mathbb{Z}}}
\newcommand{\coker}{\mathrm{ coker }}  % cokernel
\newcommand{\im}{\mathrm{im}\,}        % image
\newcommand{\sign}{\mathrm{ sign }}    % sign
\newcommand{\id}{\mathrm{ id}}         % identity
\newcommand{\supp}{\mathrm{ supp}}     % support
\newcommand{\ind}{\mathrm{ind}}
\renewcommand{\Re}{\mathrm{ Re\,}}       % real part
\renewcommand{\Im}{\mathrm{ Im\,}}       % imaginary part
\newcommand{\ev}{\mathrm{ev}}
\newcommand{\Aut}{\mathrm{ Aut}}          % Automorphisms
\newcommand{\Hom}{\mathrm{Hom}}
\newcommand{\Fix}{\mathrm{Fix}}
\newcommand{\rk}{\mathrm{rk}}
\newcommand{\Max}{\mathrm{Max}}
\newcommand{\eps}{{\varepsilon}}
\newcommand{\om}{{\omega}}
\newcommand{\Om}{{\Omega}}
\newcommand{\CZ}{\mathrm{CZ}}
\def\NABLA#1{{\mathop{\nabla\kern-.5ex\lower1ex\hbox{$#1$}}}}
\def\Nabla#1{\nabla\kern-.5ex{}_{#1}}
\def\Tabla#1{\Tilde\nabla\kern-.5ex{}_{#1}}
\renewcommand{\Tilde}{\widetilde}
\newcommand{\p}{{\partial}}
\newcommand{\dbar}{{\bar\partial}}
\newenvironment{enum}
{\begin{enumerate}}
{\end{enumerate}}
\newcommand{\la}{\langle}
\newcommand{\ra}{\rangle}
\newcommand{\wh}{\widehat}
\newcommand{\ol}{\overline}
\newcommand{\ul}{\underline}
\newcommand{\MM}{\mathcal{M}}
\newcommand{\PP}{\mathcal{P}}
\newcommand{\Crit}{{\rm Crit}}
\newcommand{\wt}{\widetilde}
\newcommand{\K}{\mathbb{K}}
\newcommand{\into}{\hookrightarrow}
\newcommand{\pb}{\overline{\partial}}
\newcommand{\st}{\mathrm{st}}
\newcommand{\pt}{{\rm pt}}
\newcommand{\Skel}{{\rm Skel}}
\newcommand{\boldtau}{\tau} % Redefined by Kai
\begin{document}

\title[Loop coproduct in Morse and Floer homology]{Loop coproduct in Morse and Floer homology}
\author{Kai Cieliebak}
\address{Universit\"at Augsburg \newline Universit\"atsstrasse 14, D-86159 Augsburg, Germany}
\email{kai.cieliebak@math.uni-augsburg.de}
\author{Nancy Hingston}
\address{Department of Mathematics and Statistics, College of New Jersey \newline Ewing, New
Jersey 08628, USA}
\email{hingston@tcnj.edu} 
\author{Alexandru Oancea}
\address{
Institut de recherche math\'ematique avanc\'ee, IRMA \newline
Universit\'e de Strasbourg and CNRS\newline
%Universit\'e de Strasbourg Institut d'\'etudes avanc\'ees, USIAS \newline 
Strasbourg, France}
\email{oancea@unistra.fr}
\date{\today}

%%%%%%%%%%%%%%%%%%%%%%%%%%%%%%%%%%%%%%%%%%%%%%
%%%%%%%%%%%%%%%%%%%%%%%%%%%%%%%%%%%%%%%%%%%%%%
%%%%%%%%%%%%%%%% Abstract %%%%%%%%%%%%%%%%%%%%
%%%%%%%%%%%%%%%%%%%%%%%%%%%%%%%%%%%%%%%%%%%%%%
%%%%%%%%%%%%%%%%%%%%%%%%%%%%%%%%%%%%%%%%%%%%%%

\maketitle

\begin{center}
{\it To Claude Viterbo on the occasion of his 60th birthday, with admiration}
\end{center}

\begin{abstract}
By a well-known theorem of Viterbo, the symplectic homology of the cotangent bundle of a closed manifold is isomorphic to the homology of its loop space. In this paper we extend the scope of this isomorphism in several directions. First, we give a direct definition of {\em Rabinowitz loop homology} in terms of Morse theory on the loop space and prove that its product
agrees with the pair-of-pants product on Rabinowitz Floer homology. The proof uses compactified moduli spaces of punctured annuli. Second, we prove that, when restricted to {\em positive} Floer homology, resp.~loop space homology relative to the constant loops,
the Viterbo isomorphism intertwines various constructions of secondary
pair-of-pants coproducts with the loop homology coproduct. Third, we introduce {\em reduced loop homology}, which is a common domain of definition for a canonical reduction of the loop product and for extensions of the loop homology coproduct which together define the structure of a commutative cocommutative unital infinitesimal anti-symmetric bialgebra. Along the way, we show that the Abbondandolo-Schwarz quasi-isomorphism going from the Floer complex of quadratic Hamiltonians to the Morse complex of the energy functional can be turned into a filtered chain isomorphism by using linear Hamiltonians and the square root of the energy functional. 
\end{abstract}

%{\tableofcontents}

%%%%%%%%%%%%%%%%%%%%%%%%%%%%%%%%%%%%%%%%%%%%%%%%%
%%%%%%%%%%%%%%%%%%%%%%%%%%%%%%%%%%%%%%%%%%%%%%%%%
%%%%%%%%%%%%%%%%%%%%%%%%%%%%%%%%%%%%%%%%%%%%%%%%%
%%%%%%%%%%%%%%%%% Introduction %%%%%%%%%%%%%%%%%%%%%%%%
%%%%%%%%%%%%%%%%%%%%%%%%%%%%%%%%%%%%%%%%%%%%%%%%%
%%%%%%%%%%%%%%%%%%%%%%%%%%%%%%%%%%%%%%%%%%%%%%%%%

%%%%%%%%%%%%%%%%%%%%%%%%%%%%%%%%%%%%%%%%%%%%%%%%%%%%%%%%%%%%%%%%%%%%%%
\section{Introduction}\label{sec:introduction}
%%%%%%%%%%%%%%%%%%%%%%%%%%%%%%%%%%%%%%%%%%%%%%%%%%%%%%%%%%%%%%%%%%%%%%

For a closed manifold $M$ there are canonical isomorphisms
\begin{equation}\label{eq:isos}
   H_*(\Lambda,\Lambda_0;\eta) \cong FH_*^{>0}(T^*M) \cong SH_*^{>0}(D^*M) \cong SH^{1-*}_{<0}(S^*M).
\end{equation}
Here we use coefficients in any commutative ring $R$, twisted in the first group by a suitable local system $\eta$ which restricts to the orientation local system on the space $\Lambda_0\subset \Lambda$ of constant loops (see Appendix~\ref{sec:local-systems}). 
The groups in the above chain of isomorphisms are as follows: $H_*(\Lambda,\Lambda_0)$ denotes the homology of the free loop space $\Lambda=C^\infty(S^1,M)$ relative to $\Lambda_0$;
$FH_*^{>0}(T^*M)$ the positive action part of the Floer homology of a fibrewise quadratic Hamiltonian on the cotangent bundle; 
$SH_*^{>0}(D^*M)$ the positive symplectic homology of the unit cotangent bundle $D^*M$; 
and $SH^{1-*}_{<0}(S^*M)$ the negative symplectic cohomology of the trivial Liouville cobordism $W=[1,2]\times S^*M$ over the unit cotangent bundle $S^*M$. 
The first isomorphism is the result of work of many people starting with Viterbo (see~\cite{Viterbo-cotangent,AS,AS-corrigendum,AS2,SW,Ritter,Cieliebak-Latschev,Kragh,Abouzaid-cotangent}); the second one is obvious; and the third one is a restriction of the Poincar\'e duality isomorphism from~\cite{CO}. 

Restricting to field coefficients, all the groups in~\eqref{eq:isos} carry natural coproducts of degree $1-n$:
\begin{itemize}
\item the {\em loop homology coproduct} (in the sequel simply called {\em loop coproduct}) $\lambda$ on $H_*(\Lambda,\Lambda_0;\eta)$ defined by Sullivan~\cite{Sullivan-open-closed} and further studied by Goresky and the second author in~\cite{Goresky-Hingston}, see also~\cite{Hingston-Wahl}; 
\item the {\em (secondary) pair-or-pants coproduct} $\lambda^{AS}$ on $FH_*^{>0}(T^*M)$ defined by Abbondandolo and Schwarz~\cite{AS-product-structures}; 
\item the {\em varying weights coproduct} $\lambda^w$ on $SH_*^{>0}(D^*M)$ first described by Seidel and further explored in~\cite{Ekholm-Oancea};
\item the {\em continuation coproduct} $\lambda^F$ on $SH_*^{>0}(D^*M)$ described in~\cite{CO-cones};
\item the {\em Poincar\'e duality coproduct} %$\sigma_{PD}^\vee$
$\lambda^{PD}$ on $SH^{1-*}_{<0}(S^*M)$ dual to the pair-of-pants product on $SH_{1-*}^{<0}(S^*M)$, described in~\cite{CHO-PD}. 
\end{itemize}
The first result of this paper is 

\begin{theorem}\label{thm:main1}
With field coefficients, all the above coproducts are equivalent under the isomorphisms in~\eqref{eq:isos}. 
\end{theorem}

\begin{remark}[Coproducts and field coefficients]
There is a formal algebraic reason why we need to restrict to field coefficients when speaking about homology coproducts. Given a chain complex $C=C_*$ and a chain map $C\to C\otimes C$, we obtain a map $H_*(C) \to H_*(C\otimes C)$. However, the latter factors through $H_*(C)\otimes H_*(C)$ only if the K\"unneth isomorphism $H_*(C)\otimes H_*(C)\stackrel\simeq\to H_*(C\otimes C)$ holds, which is the case with field coefficients. All our coproducts are defined at chain level with arbitrary coefficients, and we would not need to restrict to field coefficients if we carried the discussion at chain level.
\end{remark}

\begin{remark}[coefficients twisted by local systems]
The chain of isomorphisms~\eqref{eq:isos} also holds if one further twists each of the factors by an additional local system. If the latter is compatible with products in the sense of Appendix~\ref{sec:spaces-of-loops}, then all groups still carry natural coproducts of degree $1-n$ and Theorem~\ref{thm:main1} continues to hold. This is particularly relevant when $M$ is orientable: the constant local system on $\Lambda$ is indeed of the form $\sigma^{-1}\otimes \eta$, where $\sigma$ is the transgression of the second Stiefel-Whitney class, so that $H_*(\Lambda,\Lambda_0)\simeq SH_*^{>0}(D^*M;\sigma^{-1})$. See Appendix~\ref{sec:iso-sympl-loop}. 
\end{remark}

\begin{remark}
All our statements have counterparts for open strings, in which the free loop space is replaced by the based loop space and symplectic homology of $T^*M$ is replaced by wrapped Floer homology of the cotangent fiber $T^*_qM$. See~\cite{CHO-PD}. We do not spell out these results and focus on closed strings in this paper. 
\end{remark}

The first two isomorphisms in~\eqref{eq:isos} are obtained by dividing
out the constant loops, resp.~the action zero part in the chain of isomorphisms

\begin{equation}\label{eq:isos2}
   H_*(\Lambda;\eta) \cong FH_*(T^*M) \cong SH_*(D^*M).
\end{equation}

According to Abbondandolo and Schwarz~\cite{AS2}, these
isomorphisms intertwine the Chas--Sullivan loop product~\cite{CS} on
the first group with the pair-of-pants products on the other two groups.
On the other hand, according to~\cite{CHO-PD,CHO-reduced}, the product on
$SH_*(D^*M)$ and the coproduct on $SH_*^{>0}(D^*M)$ are related to the
pair-of-pants product and coproduct on $SH_*(S^*M)$ by the ``almost split" exact sequence  
\begin{equation}\label{eq:les+}
{\scriptsize
\xymatrix
@C=20pt
{
& & & & SH^{1-*}_{>0}(D^*M) \ar[dl]_-{i} \ar[d]^{j} & \\
    \ar[r]& SH^{-*}(D^*M) \ar[r]^-\eps & SH_*(D^*M) \ar[d]^-q \ar[r]^-\iota &
    SH_*(S^*M) \ar[dl]_-p \ar[r]^-\pi & SH^{1-*}(D^*M)\ar[r]&  \\
    & & SH_*^{>0}(D^*M) & & 
}
}
\end{equation}
where the maps have the following properties. 
%marginpar{\tiny {blue} Added the maps $p$ and $q$ in the diagram, cf. Paper 1.} 
\begin{itemize} 
\item The map $\iota$ intertwines the pair-of-pants products, and the map $\pi$ intertwines the pair-of-pants coproducts.
\item The ``almost splitting" $i$ satisfies $\pi\circ i = j$ and
  intertwines the product dual to $\lambda^F$ on
  $SH^{1-*}_{>0}(D^*M)$ with the pair-of-pants product on $SH_*(S^*M)$. 
\item The ``almost splitting" $p$ satisfies $p\circ \iota = q$ and intertwines the coproduct on $SH_*(S^*M)$ with the continuation coproduct $\lambda^F$ on $SH_*^{>0}(D^*M)$.
\item The map $\eps$ lives only in degree $0$ and factors through the constant loops as the connecting map in the Gysin sequence for the cohomology $H^{n-*}(S^*M)$ 
\begin{equation}\label{eq:eps}
\xymatrix{
  SH^{-*}(D^*M) \ar[r]^-\eps \ar[d]& SH_*(D^*M) \\
  H^{-*}M \ar[r]^-e & H_*M \ar[u]
}
\end{equation}
\end{itemize} 
Here the map $e$ is multiplication with the Euler characteristic of $M$ in degree $0$.  
From this perspective, and up to some discrepancy at the constant loops, both the
pair-of-pants product on $SH_*(D^*M)$ and the product dual to
$\lambda^F$ on $SH^{1-*}_{>0}(D^*M)$ appear as ``components'' of
the pair-of-pants product on $SH_*(S^*M)$. See~\cite[\S7]{CHO-reduced}. 

Our second goal is to define a topological counterpart of $SH_*(S^*M)$.\footnote{In~\cite{CO} the group $SH_*(S^*M)$ was called \emph{symplectic homology of (the trivial cobordism over) $S^*M$}, and in~\cite{Cieliebak-Frauenfelder-Oancea} it was proved to be isomorphic to the \emph{Rabinowitz Floer homology group} $RFH_*(S^*M)$. In the sequel we will allow ourselves to use both names. The isomorphism $\wh H_*\Lambda\simeq SH_*(S^*M)$ motivates our terminology \emph{Rabinowitz loop homology} for $\wh H_*\Lambda$.}
The starting point is the topological counterpart of diagram~\eqref{eq:eps}:
\begin{equation}\label{eq:eps-top}
\xymatrix{
  H^{-*}\Lambda \ar[r]^-\eps \ar[d]& H_*\Lambda \\
  H^{-*}M \ar[r]^-e & H_*M \ar[u]
}
\end{equation}
Here the map $\eps$ is induced by a chain map on the Morse complex
(with respect to the energy functional)
$$
   c:MC^{-*}(\Lambda)\to MC^{-*}(M)\to MC_*(M)\to MC_*(\Lambda),
$$
where the exterior maps are induced by the inclusion of constant loops,
and the middle map lives in degree zero and is given by multiplication
with the Euler characteristic of $M$. 
We define the {\em Rabinowitz loop homology} as the homology of the cone of $c$,
$$
   \wh H_*\Lambda := H_*(Cone(c)). 
$$
By general properties of the cone construction (see e.g.~\cite{CO}),
this fits into a long exact sequence 
\begin{equation}\label{eq:les-Lambda}
\xymatrix
@C=20pt
{
    \ar[r]& H^{-*}\Lambda \ar[r]^-\eps & H_*\Lambda \ar[r]^-\iota &
    \wh H_*\Lambda \ar[r]^-\pi & H^{1-*}\Lambda\ar[r]&  
}
\end{equation}
Our second result is

\begin{theorem}\label{thm:main2}
The Rabinowitz loop homology $\wh H_*\Lambda$ carries a natural product
of degree $-n$ such that the map $\iota$ in~\eqref{eq:les-Lambda} is a ring homomorphism. Moreover, for $n\neq 2$ there exists an isomorphism of rings
$SH_*(S^*M)\cong \wh H_*\Lambda$ such that the following diagram commutes:
\begin{equation*}%\label{eq:les-SH-Lambda}
{\scriptsize
\xymatrix
@C=20pt
{
    \cdots SH^{-*}(D^*M) \ar[r]^-\eps & SH_*(D^*M) \ar[d]^\cong \ar[r]^-\iota &
    SH_*(S^*M) \ar[d]^\cong\ar[r]^-\pi & SH^{1-*}(D^*M)\cdots \\
    \cdots H^{-*}\Lambda \ar[u]_\cong \ar[r]^-\eps & H_*\Lambda \ar[r]^-\iota &
    \wh H_*\Lambda \ar[r]^-\pi & H^{1-*}\Lambda\cdots \ar[u]_\cong 
}
}
\end{equation*}
\end{theorem}

\begin{remark}
(a) In~\cite{CHO-PD} we {\em defined} $\wh H_*\Lambda$ as $SH_*(S^*M)$,
and with this definition Theorem~\ref{thm:main2} is a tautology. 
The point of the present paper is to define $\wh H_*\Lambda$ in purely
topological terms as above, in which case Theorem~\ref{thm:main2}
becomes an actual theorem. It can be seen as an upgrade of Viterbo's
isomorphism~\cite{Viterbo-cotangent} from symplectic homology to Rabinowitz Floer homology. 

(b) The hypothesis $n\neq 2$ is only an artefact of our proof and can be removed by upgrading the theory of $A_2^+$-structures in~\cite{CO-cones} to a theory of $A_3^+$-structures, which would take into account arity 3 operations.
 
\end{remark}

One difficulty with the proof of Theorem~\ref{thm:main2} is the lack
of an obvious chain map inducing the isomorphism $SH_*(S^*M)\cong \wh H_*\Lambda$,
due to the fact that the natural chain maps inducing Viterbo's
isomorphisms on homology and cohomology go in opposite directions. 
We overcome this difficulty using the theory of {\em $A_2^+$-structures}
from~\cite{CO-cones}. We will prove that the Abbondandolo-Schwarz map on chain
level yields a quasi-isomorphism of $A_2^+$-structures, and then
appeal to algebraic results from~\cite{CO-cones} concerning such structures and their
associated cones.

Starting from the exact sequence~\eqref{eq:les-Lambda} we define in this paper \emph{reduced loop homology and cohomology}
$$
\ol H_*\Lambda = \coker\,\eps, \qquad \ol H^*\Lambda= \ker\eps.
$$ 
\begin{theorem}[{\cite{CHO-reduced}}]\label{thm:main3} 
  The loop product on $H_*\Lambda$ descends to $\ol H_*\Lambda$ and the loop coproduct on $H_*(\Lambda,\Lambda_0)$ extends to $\ol H_*\Lambda$ (canonically if we have $H_1M=0$). Each such extension $\lambda$ defines together with the loop product $\mu$ the structure of a commutative cocommutative unital infinitesimal anti-symmetric
%(cuias)
bialgebra on $\ol \H_*\Lambda=\ol H_{*+n}\Lambda$. In particular, the following relation holds 
$$
\lambda\mu = (\mu\otimes \one)(\one\otimes\lambda) + (1\otimes\one)(\lambda\otimes \one) - (\mu\otimes\mu)(\one\otimes\lambda 1 \otimes\one),
$$ 
where we denote $\one$ the identity map and $1$ the unit for the product $\mu$. 
\end{theorem}

We refer to~\cite{CHO-reduced,CHO-algebra} for the definition of a
%cuias bialgebra.
commutative cocommutative unital infinitesimal anti-symmetric bialgebra. 
The extensions of the coproduct depend on the choice of auxiliary data consisting of a Morse function on $M$ with a unique maximum, a Morse-Smale gradient vector field, and a vector field with nondegenerate zeroes located away from the $(n-1)$-skeleton. We discuss this dependence in~\S\ref{sec:reduced}. The coproduct is independent on all choices when $H_1M=0$ (Proposition~\ref{prop:coproduct-choices}), and in that case it also vanishes on the unit $1$ (Corollary~\ref{cor:lambda1=0}), so that the above relation becomes the \emph{unital infinitesimal relation}
$$
\lambda\mu = (\mu\otimes \one)(\one\otimes\lambda) + (1\otimes\one)(\lambda\otimes \one).
$$ 

\smallskip

{\bf Structure of the paper. }
%The paper contains four sections and one Appendix. These are more or less self-contained, although there are obvious interconnections. 
%In~\S\ref{sec:GH} we start by giving two descriptions of the loop coproduct, one topological (\S\ref{sec:GH-top}), the other one Morse theoretic (\S\ref{ss:A2+loop}). We compute in~\S\ref{sec:coproduct-S3} the extended loop coproduct on the sphere $S^3$ and see that it has contributions from the constant loops. This shows that it differs from the extended coproduct in~\cite{Hingston-Wahl}. Finally, we prove in~\S\ref{sec:Sullivan} Sullivan's relation in Morse homology. 
In~\S\ref{sec:A2+loop} we define the notion of a special
$A_2^+$-structure and prove that the Morse complex of the energy
functional on loop space carries such a structure. In particular, this
includes a Morse theoretic definition of the loop coproduct.  

In~\S\ref{sec:A2+symp} we construct a special $A_2^+$-structure on the
chain complexes underlying symplectic homology of $D^*M$. 

In~\S\ref{sec:reduced} we discuss extensions of the loop coproduct to reduced homology, and also the dependence of these extensions on choices.

In~\S\ref{sec:symp-loop-iso} we revisit the Viterbo isomorphism between
symplectic homology of the cotangent bundle and loop space homology.
%including the setting of reduced homologies and loop homologies
%relative to $\chi\cdot$point.
%Our main contribution is the following:
We show that the Abbondandolo-Schwarz map 
%marginpar{\tiny Should the iso be called A-S or Viterbo?}
%marginpar{{blue}\tiny I would suggest to speak of ``A-S map" and ``Viterbo isomorphism"}
$$
\Psi:SH_*(D^*M)\stackrel\simeq\to H_*(\Lambda;\eta),
$$ 
which was originally constructed using asymptotically quadratic
Hamiltonians and as such did not preserve the natural filtrations (at
the source by the non-Hamiltonian action, and at the target by the
square root of the energy), can be made to preserve these filtrations
when implemented for the linear Hamiltonians used in the definition of
symplectic homology. As such, $\Psi$ becomes an isomorphism at chain
level. This uses a length vs.~action estimate inspired
by~\cite{Cieliebak-Latschev}.  

In~\S\ref{sec:pop-GH-iso} we prove that the isomorphism $\Psi$ intertwines
%the continuation coproduct from~\cite{CO-cones} with the loop
%coproduct at the level of loop homologies relative to a point with
%$\chi R$-coefficients.
the special $A_2^+$-structures of~\S\ref{sec:A2+loop} and~\S\ref{sec:A2+symp},
which together with algebraic results from~\cite{CO-cones} yields Theorem~\ref{thm:main2}.
Our proof uses homotopies in certain compactified moduli spaces of punctured annuli. In Remark~\ref{rmk:PhiPsiAS} we discuss some related open questions involving the two chain level isomorphisms between Morse and Floer complexes constructed by Abbondandolo-Schwarz in~\cite{AS,AS-Legendre}.

In~\S\ref{sec:other-coproducts} we restrict to positive symplectic
homology on the symplectic side, respectively to loop homology rel
constant loops on the topological side. We relate there the coproduct
$\lambda^F$ resulting from~\S\ref{sec:A2+symp} to the other secondary
coproducts mentioned above, thus proving Theorem~\ref{thm:main1}. 
In particular, this implies that the secondary coproduct defined by
Abbondandolo and Schwarz in~\cite{AS-product-structures} corresponds
under the isomorphism $\Psi$ (restricted to the positive range) to the
loop coproduct. For completeness, we also give a direct proof of this
last fact in~\S\ref{sec:symplectic-Morse-coproducts}.  

In~\S\ref{sec:spheres} we compute the extended coproducts on reduced loop homology of odd-dimensional spheres $S^n$. For $n\ge 3$ these coproducts are canonical, but for $n=1$ one sees explicitly the dependence on the choice of auxiliary data discussed in~\S\ref{sec:reduced}. 

The Appendix contains a complete discussion of local systems on free
loop spaces and their behaviour with respect to the loop product and
coproduct. Local systems are unavoidable in the context of manifolds which are not orientable~\cite{Laudenbach-CS,Abouzaid-cotangent}, and
also in the context of the correspondence between symplectic homology
of $D^*M$ and loop space homology of
$M$~\cite{Kragh,Abouzaid-cotangent,AS-corrigendum}. They
also proved useful in applications~\cite{Albers-Frauenfelder-Oancea}.  
 
{\bf Acknowledgements. } 
The first author thanks Stanford University, Institut Mittag--Leffler,
and the Institute for Advanced Study for their hospitality over the
duration of this project. 
The second author is grateful for support over the years from the Institute for Advanced Study, especially from Helmut Hofer, and in particular during the academic year 2019-2020.
The third author thanks Helmut Hofer and the Institute for Advanced Study for their hospitality over the
duration of this project. The third author was partially funded by
the Agence Nationale de la Recherche, France under the grants
MICROLOCAL ANR-15-CE40-0007 and ENUMGEOM ANR-18-CE40-0009. In its late stages, this work has also benefited from support provided to the third author by the University of Strasbourg Institute for Advanced Study (USIAS) for a Fellowship, within the French national programme "Investment for the future" (IdEx-Unistra).

%%%%%%%%%%%%%%%%%%%%%%%%%%%%%%%%%%%%%%%%%%%%%%%%%%%%%%%%%%%%%%%%%
\section{$A_2^+$-structure for loop space homology}\label{sec:A2+loop}
%%%%%%%%%%%%%%%%%%%%%%%%%%%%%%%%%%%%%%%%%%%%%%%%%%%%%%%%%%%%%%%%%

%%%
\subsection{$A_2^+$-algebras}
%%%

In this subsection we recall from~\cite{CO-cones} the definition and
basic properties of $A_2^+$-algebras. We will restrict to the case of
{\em special} $A_2^+$-algebras which suffices for our purposes.

Let $R$ be a commutative ring
with unit, and $(\cA,\p)$ a differential graded $R$-module.
Let $\cA^\vee_*=\mathrm{Hom}_R(\cA_{-*},R)$ be its graded dual, and
$\ev :\cA^\vee\otimes \cA\to R$ the canonical evaluation map.  
We denote
$$
  \boldtau:\cA\otimes\cA\to\cA\otimes\cA,\qquad
  a\otimes b\mapsto(-1)^{\deg a\deg b}b\otimes a.
$$
   
\begin{definition}\label{def:A2+}
A \emph{special $A_2^+$-structure} on $(\cA,\p)$ consists of the following
$R$-linear maps: 
\begin{itemize}
\item \emph{the continuation quadratic vector} $c_0:R\to \cA\otimes \cA$, of degree $0$;
\item \emph{the secondary continuation quadratic vector} $Q_0:R\to\cA\otimes \cA$, of degree $1$; 
\item \emph{the product} $\mu:\cA\otimes \cA\to \cA$, of degree $0$;
\item \emph{the secondary coproduct} $\lambda:\cA\to \cA\otimes \cA$, of degree $1$.
\end{itemize}
The continuation quadratic vector $c_0$ gives rise to the {\em continuation map} 
$$
c:=(\ev\otimes 1)(1\otimes c_0) 
:\cA^\vee\to\cA.
$$
These maps are subject to the following conditions: 
\begin{enumerate}
\item $c_0$ is a cycle; 
\item $c_0$ is symmetric up to a homotopy given by $Q_0$, i.e.
$$
\boldtau c_0-c_0=[\p,Q_0];
$$
\item $\mu$ is a chain map; 
\item $\lambda$ satisfies the relation 
$$
[\p, \lambda]=(\mu\otimes 1)(1\otimes c_0) - (1\otimes \mu)(\boldtau c_0\otimes 1);
$$ 
\item Denoting $\lambda=\lambda_{c_0,c_0}$ and 
\begin{align*}
\lambda_{\boldtau c_0,\boldtau c_0} & = \lambda_{c_0,c_0} + (\mu\otimes 1)(1\otimes Q_0) - (1\otimes \mu)(\boldtau Q_0\otimes 1),\\
\lambda_{c_0,\boldtau c_0} & = \lambda_{c_0,c_0} + (\mu\otimes 1)(1\otimes Q_0),\\
\lambda_{\boldtau c_0,c_0} & = \lambda_{c_0,c_0} - (1\otimes \mu)(\boldtau Q_0\otimes 1),
\end{align*}
we require that 
$$
(\lambda_{c_0,\boldtau c_0} \otimes 1) \boldtau c_0 = (\lambda_{\boldtau c_0,c_0}\otimes 1)\boldtau c_0=(\lambda_{\boldtau c_0,\boldtau c_0}\otimes 1) c_0 =0.
$$
\end{enumerate}
\end{definition}

We call the tuple $(\cA,\p,c_0,Q_0,\mu,\lambda)$ a {\em special
  $A_2^+$-algebra}. 

\begin{proposition}[\cite{CO-cones}]\label{prop:TQFT+}
Let $(\cA,\p,c_0,Q_0,\mu,\lambda)$ be a special $A_2^+$-algebra. Then
the cone $Cone(c)$ carries a canonical product $\boldsymbol{\mu}$ which commutes with
the boundary operator and thus descends to homology. Moreover, in the
long exact sequence
\begin{equation*}
\xymatrix
@C=20pt
{
    \ar[r]& H^{-*}(\cA) \ar[r]^-{c_*} & H_*(\cA) \ar[r]^-\iota &
    H_*(Cone(c)) \ar[r]^-\pi & H^{1-*}(\cA)\ar[r]&  
}
\end{equation*}
the map $\iota$ is a ring map with respect to $\mu$ and $\boldsymbol{\mu}$.
\end{proposition}

Next we discuss morphisms (again only a special case). 

\begin{definition}\label{def:A2+mor}
A \emph{special morphism of special $A_2^+$-algebras} $(\Psi,\Gamma,\Theta):\cA\to\cA'$
consists of the following $R$-linear maps:  

(i) a degree $0$ chain map $\Psi:\cA\to\cA'$ satisfying
$$
  c_0'=(\Psi\otimes\Psi)c_0,\qquad Q'_0=(\Psi\otimes\Psi)Q_0;
$$
  (ii) a degree $1$ bilinear map $\Gamma:\cA\otimes\cA\to\cA'$ satisfying
$$
  [\p,\Gamma] = \mu'(\Psi\otimes\Psi)-\Psi\mu;
$$
(iii) a degree $2$ bilinear map $\Theta:\cA\to\cA'\otimes\cA'$
  satisfying $\Theta c=0$ and
$$
  [\p,\Theta] = \lambda'\Psi-(\Psi\otimes\Psi)\lambda 
  - (\Gamma\otimes\Psi)(1\otimes c_0) + (\Psi\otimes\Gamma)(\boldtau c_0\otimes 1).
$$
\end{definition}

\begin{proposition}[\cite{CO-cones}]\label{prop:A2+toA2-mor}
Let $(\Psi,\Gamma,\Theta):\cA\to\cA'$ be a special morphism of special
$A_2^+$-algebras such that the induced map $\Psi_*:H_*(\cA)\to H_*(\cA')$
is an isomorphism. Then there exists a canonical ring isomorphism
$H_*(Cone(c))\cong H_*(Cone(c'))$ such that the following
diagram commutes:
\begin{equation*}
\xymatrix
@C=20pt
{
    \cdots H^{-*}(\cA) \ar[r]^-{c_*} & H_*(\cA) \ar[d]^\cong_{\Psi_*} \ar[r]^-\iota &
    H_*(Cone(c)) \ar[d]^\cong\ar[r]^-\pi & H^{1-*}(\cA)\cdots \\
    \cdots H^{-*}(\cA') \ar[u]_\cong^{\Psi^*} \ar[r]^-{c_*'} & H_*(\cA') \ar[r]^-{\iota'} &
    H_*(Cone(c')) \ar[r]^-{\pi'} & H^{1-*}(\cA')\cdots \ar[u]_\cong^{\Psi^*} 
}
\end{equation*}
\end{proposition}

\begin{remark}
The word ``special'' refers to the conditions~(5) in
Definition~\ref{def:A2+} and $\Theta c=0$ in Definition~\ref{def:A2+mor}.
These conditions are imposed in order to simplify the algebra in~\cite{CO-cones}. 
These conditions, as well as the hypothesis
$n\neq 2$ from Theorems~\ref{thm:main2} and~\ref{thm:cont-loop-iso}, can be removed by upgrading the theory of $A_2^+$-structure to a theory of $A_3^+$-structures, which would include arity 3 operations.
\end{remark}

\begin{remark}
(a) The conditions in Definition~\ref{def:A2+} imply (Proposition~\ref{prop:TQFT+})
that $\mu$ and $\lambda$ induce a product $\boldsymbol{\mu}$ on $H_*(Cone(c))$.
Associativity of $\boldsymbol{\mu}$ requires further compatibilities between
$\mu$ and $\lambda$, one of them being the ``unital infinitesimal relation''~\cite{CHO-PD,CO-cones,CHO-reduced}.\\
(b) The conditions in Definition~\ref{def:A2+} imply that $\lambda$
descends to the ``reduced homology'' $H_*(\cA/\im c)$ and the map $\pi$ in~\eqref{eq:les-Lambda} intertwines it with a naturally defined coproduct $\boldsymbol{\lambda}$ on the cone; see~\cite{CHO-reduced} for further details. 
\end{remark}

%%%
\subsection{$A_2^+$-structure on the Morse complex of the loop space}\label{ss:A2+loop}
%%%

Let now $M$ be a closed connected manifold of dimension $n$. 
For simplicity, we assume that $M$ is oriented and we use untwisted
coefficients in a commutative ring $R$; the necessary adjustments with
twisted coefficients are explained in Appendix~\ref{sec:local-systems}.
We denote 
$$
   S^1:=\R/\Z\quad\text{and}\quad\Lambda :=W^{1,2}(S^1,M). 
$$
Our goal in this subsection is to construct an $A_2^+$-structure on
the Morse complex of $\Lambda$. The analysis underlying the Morse
complex is identical to the one in~\cite{AS,Cohen-Schwarz} and we
refer to there for details. 

{\bf The Morse complex. }
Consider a smooth Lagrangian $L:S^1\times TM\to\R$ which
outside a compact set has the form $L(t,q,v)=\frac12|v|^2-V_\infty(t,q)$ for
a smooth potential $V_\infty:S^1\times M\to\R$. 
It induces an action functional
$$
   S_L:\Lambda\to\R,\qquad q\mapsto\int_0^1L(t,q,\dot q)dt, 
$$
which we can assume to be a Morse function. This functional is continuously differentiable and twice G\^ateaux-differentiable  on the space of loops of class $W^{1,2}$, but in general it is not smooth (unless $L$ is everywhere quadratic)~\cite{AS-pseudo-gradient}. Abbondandolo and Schwarz proved in~\cite{AS-pseudo-gradient} that it admits a negative pseudo-gradient vector field
which is smooth and Morse--Smale. The latter condition means that for all $a,b\in\Crit(S_L)$ the unstable
manifold $W^-(a)$ and the stable manifold $W^+(b)$ with respect to
the negative pseudo-gradient 
intersect transversely in a manifold of dimension
$\ind(a)-\ind(b)$, where $\ind(a)$ denotes the Morse index with
respect to $S_L$. 

Let $(MC_*,\p)$ be the Morse complex of $S_L$ with $R$-coefficients. It is 
graded by the Morse index and the differential is given by
$$
   \p:MC_*\to MC_{*-1},\qquad a\mapsto\sum_{\ind(b)=\ind(a)-1}\#\MM(a;b)\,b,
$$
where $\#\MM(a;b)$ denotes the signed count of points in the oriented
$0$-dimensional manifold
$$
   \MM(a;b) := \bigl(W^-(a)\cap W^+(b)\bigr)/\R.
$$
Then $\p\circ\p=0$ and its homology $MH_*$ is isomorphic to the
singular homology $H_*\Lambda$. We will assume in addition that
near the zero section $L(t,q,v)=\frac12|v|^2-V(q)$ for a
time-independent Morse function $V:M\to\R$ such that all nonconstant
critical points of $S_L$ have action larger than $-\min V$. Then the
constant critical points define a subcomplex $MC_*^{=0}$ of $MC_*$
which agrees with the Morse cochain complex of $V$ on $M$, with
degrees of $q\in\Crit(V)$ related by $\ind(q)=n-\ind_V(q)$. 

We assume that $L|_M$ has a unique minimum $q_0$ and a unique maximum $q_\Max$. We denote by
$\chi=\chi(M)$ the Euler characteristic of $M$ and
define the $R$-linear map $c_0:R\to MC_0\otimes MC_0$ by
$$
   c_0(1) := \chi\,q_0\otimes q_0.
$$
The element $c_0$ is clearly a cycle and we actually have $\boldtau c_0=c_0$. Note however that the secondary continuation element $Q_0$ that we construct in the sequel may be nonzero. See also~\S\ref{sec:reduced}.

\begin{remark}
The operation $c_0$ can also be defined by a count of pairs of semi-infinite gradient lines with common starting point.  
\end{remark}  
   
{\bf The product $\mu$. }
For a path $\alpha:[0,1]\to M$ and $\tau\in[0,1]$ we define the restrictions
$\alpha|_{[0,\tau]},\,\alpha|_{[\tau,1]}:[0,1]\to M$ by
\begin{equation}\label{eq:restr}
   \alpha|_{[0,\tau]}(t) := \alpha(\tau t),\qquad
   \alpha|_{[\tau,1]}(t) := \alpha\bigl(\tau+(1-\tau)t\bigr).
\end{equation}
For paths $\alpha,\beta:[0,1]\to M$ with $\alpha(1)=\beta(0)$ we define
their concatenation $\alpha\#\beta:[0,1]\to M$ by
$$
   \alpha\#\beta(t) := \begin{cases}
      \alpha(2t) & t\leq 1/2, \\
      \beta(2t-1) & t\geq 1/2.
   \end{cases}
$$
For $a,b,c\in\Crit(S_L)$ set
\begin{align*}
   \MM(a,b;c) := \bigl\{(\alpha,\beta,\gamma)\in
   W^-(a)\times W^-(b)\times W^+(c)\mid \gamma=\alpha\#\beta\bigr\},
\end{align*}
which is a transversely cut out manifold of dimension 
$$
   \dim\MM(a,b;c) = \ind(a)+\ind(b)-\ind(c)-n. 
$$
If its dimension equals zero this manifold is compact and defines a map
$$
   \mu:(MC\otimes MC)_*\to MC_{*-n},\qquad
   a\otimes b\mapsto\sum_{c}\#\MM(a,b;c)\,c.
$$
If the dimension equals $1$ it can be compactified to a compact
$1$-dimensional manifold with boundary 
\begin{align*}
   \p\MM(a,b;c) 
   &= \coprod_{\ind(a')=\ind(a)-1}\MM(a;a')\times\MM(a',b;c) \cr
   &\amalg\coprod_{\ind(b')=\ind(b)-1}\MM(b;b')\times\MM(a,b';c) \cr
   &\amalg\coprod_{\ind(c')=\ind(c)+1}\MM(a,b;c')\times\MM(c';c).
\end{align*}
corresponding to broken pseudo-gradient lines. So we have
\begin{equation}\label{eq:mu-chain-map-loop}
   \mu(\p\otimes\id+\id\otimes\p) - \p\mu = 0,
\end{equation}
i.e.~$\mu$ satisfies condition~(3) in Definition~\ref{def:A2+}. The induced map on homology
$$
   \mu_*:(MH\otimes MH)_*\to MH_{*-n}
$$
agrees with the loop product under the canonical isomorphism
$MH_*\cong H_*\Lambda$.
The loop product is associative, and this is reflected at chain level by the fact that $\mu$ is associative up to chain homotopy. 

The critical point $q_\Max$ is a cycle which is a two-sided unit for $\mu$ up to homotopy. Moreover, the subcomplex of constant loops $MC_*^{=0}\subset MC_*$ is stable under $\mu$ and we can choose the Morse data such that $q_\Max$ is a strict unit for the restriction of $\mu$ to $MC_*^{=0}$. 

{\bf The coproduct $\lambda$. }
We fix a small vector field $v$ on $M$ with nondegenerate
zeroes such that the only periodic orbits of $v$ with period $\leq 1$
are its zeroes. (The last property can be arranged, e.g., by choosing
$v$ gradient-like near its critical points; then the periods of
nonconstant periodic orbits are uniformly bounded from below by a
constant $c>0$, so $v/2c$ has the desired property.) Denote by 
$$f_t:M\stackrel{\cong}\longrightarrow M, \qquad t\in\R$$ 
the flow of $v$, i.e. the solution of the ordinary differential equation $\frac{d}{dt}f_t=v\circ f_t$. 
It follows that the only fixed points of $f=f_1$ are
the zeroes of $v$, each zero $q$ is nondegenerate as a fixed point, and
$$
   \sign\det(T_qf-\id) = \ind_v(q),
$$
where $\ind_v(q)$ is the index of $q$ as a zero of $v$. The map 
$$
   f\times\id:M\to M\times M,\qquad q\mapsto\bigl(f(q),q\bigr)   
$$
is transverse to the diagonal $\Delta\subset M\times M$ and
$$
   (f\times\id)^{-1}(\Delta) = \{q\in M\mid f(q)=q\} = \Fix(f). 
$$
Since for $q\in\Fix(f)$ the map $T_qM\to T_qM\times T_qM$,
$w\mapsto\bigl((T_qf-\id)w,0\bigr)$ fills up the complement to $T_{(q,q)}\Delta$, 
the induced orientation on $\Fix(f)=(f\times\id)^{-1}(\Delta)$ endows
$q$ with the sign $\ind_v(q)$.  

\begin{remark}\label{rem:ft}
Alternatively, we could use the exponential map of some Riemannian
metric to define a map $M\to M$ by $q\mapsto\exp_qtv(q)$. Although
this map differs from $f_t$ above, for $v$ sufficiently small it
shares its preceding properties and could be used in place of $f_t$. 
\end{remark}
 
Consider now a generic family of vector fields $v^\tau$, $\tau\in[0,1]$ which interpolates between $v^0=v$ and $v^1=-v$. We denote $f_t^\tau$, $t\in\R$ the flow of $v^\tau$ and $f^\tau=f_1^\tau$. Note that, while $v$ and $-v$ have nondegenerate zeroes, this condition cannot be guaranteed for $v^\tau$. Genericity of the family means that the maps $f^0\times 1:M\to M\times M$, $f^1\times 1:M\to M\times M$, and $[0,1]\times M\to M\times M$, $(\tau,p)\mapsto (f^\tau(p),p)$, are transverse to the diagonal. 

For each $q\in M$ and $\tau\in[0,1]$ we denote the induced path from $q$ to $f^\tau(q)$ by
$$
   \pi_q^\tau:[0,1]\to M,\qquad \pi_q^\tau(t):=f_t^\tau(q),
$$
and the inverse path by
$$
   (\pi_q^\tau)^{-1}:[0,1]\to M,\qquad (\pi_q^\tau)^{-1}(t):=f_{1-t}^\tau(q).
$$
Recall from above the restriction and concatenation of paths. 
Now for $a,b,c\in\Crit(S_L)$ we set
\begin{align*}
   \MM^1(a;b,c) := \bigl\{&(\tau,\alpha,\beta,\gamma)\in[0,1]\times
   W^-(a)\times W^+(b)\times W^+(c)\mid \cr
   & \beta = \alpha^\tau_1,\ 
   \gamma = \alpha^\tau_2\bigr\}
\end{align*}
with
\begin{align*}
   \alpha^\tau_1(t) &:= \alpha|_{[0,\tau]}\#(\pi_{\alpha(0)}^\tau)^{-1}
   = \begin{cases}
      \alpha(2\tau t) & t\leq 1/2, \\
      f_{2-2t}^\tau\bigl(\alpha(0)\bigr) & t\geq 1/2, 
   \end{cases} \cr
   \alpha^\tau_2(t) &:= \pi_{\alpha(0)}^\tau\#\alpha|_{[\tau,1]}
   = \begin{cases}
      f_{2t}^\tau\bigl(\alpha(0)\bigr) & t\leq 1/2, \\
      \alpha\bigl(2\tau-1+(2-2\tau)t\bigr) & t\geq 1/2.
   \end{cases}
\end{align*}
See Figure~\ref{fig:lambda-new}. 

\begin{figure} [ht]
\centering
\input{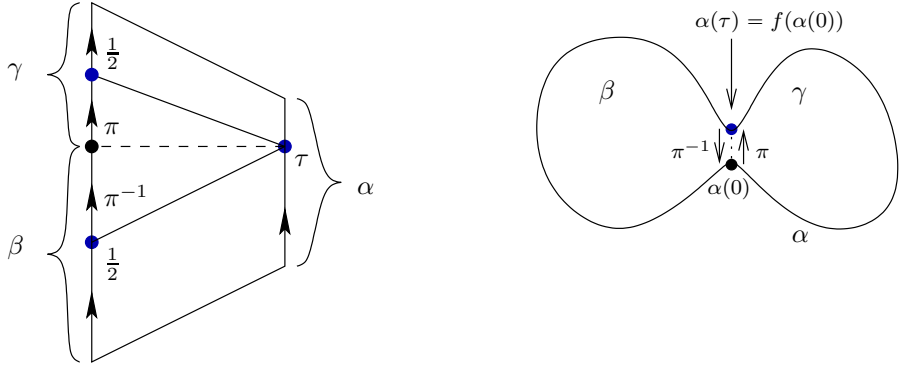}
\caption{Matching conditions for the definition of the 
loop coproduct via Morse chains.}
\label{fig:lambda-new}
\end{figure} 

Note that the matching conditions imply $\alpha(\tau)=f^\tau\circ\alpha(0)$. 
This is a codimension $n$ condition and, as the family $v^\tau$ is generic, $\MM^1(a;b,c)$ is a transversely cut out manifold of dimension 
$$
   \dim\MM^1(a;b,c) = \ind(a)-\ind(b)-\ind(c)+1-n. 
$$
If its dimension equals zero this manifold is compact and defines a map
$$
   \lambda:MC_*\to(MC\otimes MC)_{*+1-n},\qquad
   a\mapsto\sum_{b,c}\#\MM^1(a;b,c)\,b\otimes c.
$$
If the dimension equals $1$ it can be compactified to a compact
$1$-dimensional manifold with boundary 
\begin{align*}
   \p\MM^1(a;b,c) 
   &= \coprod_{\ind(a')=\ind(a)-1}\MM(a;a')\times\MM^1(a';b,c) \cr
   &\amalg\coprod_{\ind(b')=\ind(b)+1}\MM^1(a;b',c)\times\MM(b';b) \cr
   &\amalg\coprod_{\ind(c')=\ind(c)+1}\MM^1(a;b,c')\times\MM(c';c) \cr
   &\amalg\MM^1_{\tau=1}(a;b,c) \amalg\MM^1_{\tau=0}(a;b,c).
\end{align*}
Here the first three terms correspond to broken pseudo-gradient lines and the
last two terms to the intersection of $\MM^1(a;b,c)$ with the sets
$\{\tau=1\}$ and $\{\tau=0\}$, respectively. So we have
\begin{equation}\label{eq:lambda-chain-map-loop}
   (\p\otimes\id+\id\otimes\p)\lambda + \lambda\p = \lambda_1-\lambda_0,
\end{equation}
where for $i=0,1$ we set
$$
   \lambda_i:MC_*\to(MC\otimes MC)_{*-n},\qquad
   a\mapsto\sum_{b,c}\#\MM^1_{\tau=i}(a;b,c)\,b\otimes c.
$$
Let us look more closely at the map $\lambda_1$. For $\tau=1$ the
matching conditions in $\MM^1(a;b,c)$ imply that $\alpha(0)=q$ is a
fixed point of $f^1$, the time-one flow of $-v$, and $\gamma=q$ is the constant loop at $q$.
Assuming that $L|_M$ has a unique minimum $q_0$
and the fixed points of $f^1$ are in general position with respect to
the stable and unstable manifolds of $L|_M$, the condition $q\in W^+(c)$
is only satisfied for $c=q_0$. Thus $\MM^1_{\tau=1}(a;b,c)$ is
empty if $c\neq q_0$ and 
\begin{align*}
   \MM^1_{\tau=1}(a;b,q_0) \cong \coprod_{q\in\Fix(f^1)}\bigl\{(\alpha,\beta)\in
   W^-(a)\times W^+(b)\mid \beta = \alpha\# q\bigr\}.
\end{align*}
Choosing all fixed points of $f^1$ closely together, we can achieve that
the terms on the right hand side corresponding to different $q\in\Fix(f^1)$ 
are in canonical bijection to each other. By the discussion
before Remark~\ref{rem:ft}, the terms corresponding to a fixed point $q$ 
come with the sign $\ind_{-v}(q)$.
Since $\sum_q \ind_{-v}(q) = \chi$ and $W^-(q_0)=\{q_0\}$ we obtain 
\begin{align*}
  \#\MM^1_{\tau=1} & (a;b,q_0) \cr
  &= \sum_{q\in\Fix(f^1)}\#\bigl\{(\alpha,\beta)\in W^-(a)\times
  W^+(b)\mid \beta = \alpha\# q\bigr\} \cr
  &= \sum_{q\in\Fix(f^1)}(-1)^{\ind_{-v}(q)}\#\bigl\{(\alpha,\beta)\in W^-(a)\times
  W^+(b)\mid \beta = \alpha\# q_0\bigr\} \cr
  &= \chi\,\#\bigl\{(\alpha,\gamma,\beta)\in W^-(a)\times W^-(q_0)\times
  W^+(b)\mid \beta = \alpha\# \gamma\bigr\} \cr
  &= \chi\,\#\MM(a,q_0;b). 
\end{align*}
Since the last moduli space is the one in the definition of $\mu$, we conclude
$$
\lambda_1(a)=\chi\,\mu(a\otimes q_0)\otimes q_0, 
$$
or equivalently 
$$
\lambda_1=(\mu\otimes 1)(1\otimes c_0). 
$$
Similarly we have 
$$
\lambda_0(a)=\chi q_0\otimes \mu(q_0\otimes a),
$$
or equivalently 
$$
\lambda_0=(1\otimes \mu)(c_0\otimes 1) =(1\otimes \mu)(\boldtau c_0\otimes 1). 
$$
In conclusion we obtain condition (4) in Definition~\ref{def:A2+},
$$
[\p,\lambda]= (\mu\otimes 1)(1\otimes c_0) - (1\otimes \mu)(\boldtau c_0\otimes 1).
$$
We define the secondary continuation quadratic vector $Q_0$ by 
$$
Q_0=-\lambda(q_\Max).
$$
Condition~(2), i.e. $\boldtau c_0-c_0=[\p,Q_0]$, follows by inserting $q_\Max$ into the relation for $[\p,\lambda]$ and using that $q_\Max$ is a strict two-sided unit for $\mu$ on $MC_*^{=0}$. This is an instance of \emph{unital $A_2^+$-structure}~\cite{CO-cones}. Note that $Q_0\in MC_*^{=0}\otimes MC_*^{=0}$ for energy reasons. 

We now prove condition (5) in Definition~\ref{def:A2+}. For $n=1$ it holds because $\chi=0$, hence $c=0$. We therefore assume w.l.o.g. $n\ge 2$ and give the proof in two steps. 
\begin{enumerate}
\item We first prove $\lambda c=0$.
This follows from $\lambda(q_0)=0$, which is seen as follows.
The coefficient $\la\lambda(q_0),x\otimes y\ra$
can only be nonzero if $x,y$ are critical points of $K$. Since
$\lambda$ has degree $1-n$, we must have
$1-n = \ind(x)+\ind(y)-\ind(q_0) = n-\ind_V(x)-\ind_V(y)+\ind_V(q_0)$,
hence $\ind_V(x)+\ind_V(y)=\ind_V(q_0)+2n-1=3n-1$. Since
$\ind_V(x)+\ind_V(y)\leq 2n$, this is impossible for $n\ge 2$. 
\item We now show that $(1\otimes \mu\otimes 1)(\boldsymbol{a}\otimes\boldsymbol{b})=0$, where $(\boldsymbol{a},\boldsymbol{b})=(\boldtau Q_0,c_0), (\boldtau c_0,Q_0), (\boldtau Q_0,\boldtau c_0), (c_0,\boldtau Q_0)$. We identify $MC_*^{=0}(L)$ with the Morse cochain complex $MC^{n-*}(V)$. The cohomological index of $Q_0$ is $2n-1$, so its components must have degrees $n-1$ and $n$. If $n\ge 2$ these degrees are both positive, and therefore any component of $Q_0$ is killed by multiplication with $q_0$ because the latter has cohomological index $n$.    
\end{enumerate} 

In summary we have shown

\begin{proposition}\label{prop:A2+loop}
Each vector field $v$ on $M$ satisfying the preceding conditions gives
rise to a special $A_2^+$-structure $(c_0,Q_0,\mu,\lambda)$ on the Morse complex $MC_*$ of the
functional $S_L:\Lambda\to\R$. 
\end{proposition}

\begin{comment}
so the signed count
$\#\MM^1_{\tau=1}(a;b,q_0)$ is divisible by the Euler characteristic $\chi$. 
A similar discussion applies to $\lambda_0$, so we have shown that
\begin{equation*}
\begin{aligned}
   \lambda_1(a)=\sum_{\ind(b)=\ind(a)-n}\#\MM^1_{\tau=1}(a;b,q_0)\,b\otimes
   q_0 \in MC_{*-n}\otimes \chi q_0, \cr
   \lambda_0(a)=\sum_{\ind(c)=\ind(a)-n}\#\MM^1_{\tau=0}(a;q_0,c)\,q_0\otimes
   c \in \chi q_0\otimes MC_{*-n}.
\end{aligned}
\end{equation*}
Equation~\eqref{eq:lambda-chain-map-loop} and the preceding discussion
show that $\lambda$ 
descends to a chain map $MC_*(\Lambda,\chi\text{pt})\to (MC(\Lambda,\chi\text{pt})\otimes MC(\Lambda,\chi\text{pt}))_{*+1-n}$, where we denote 
$MC_*(\Lambda,\chi\text{pt})=MC_*/\chi R q_0$ the Morse complex relative to $\chi\cdot$point.
As in~\S\ref{sec:GH-top} we obtain a coproduct on Morse homology relative to $\chi\cdot$point 
$$
   \lambda:MH_*(\Lambda,\chi\text{pt})\to(MH(\Lambda,\chi\text{pt})\otimes MH(\Lambda,\chi\text{pt}))_{*+1-n}.   
$$
It is clear from the construction that $\lambda$ corresponds under the
isomorphism $MH_*(\Lambda,\chi\text{pt})\cong H_*(\Lambda,\chi\text{pt})$ to the
loop coproduct on singular homology defined in~\S\ref{sec:GH-top}. Under our standing assumption of orientability on $M$ and use of constant coefficients, this defines a coproduct on reduced Morse homology $\ol{MH}_*\to (\ol{MH}\otimes\ol{MH})_{*+1-n}$.
\end{comment}

\begin{remark}
In the previous construction we used an interpolating family of vector fields $v^\tau$ such that $v^1=-v^0$. This choice is important because it ensures that the product on the Rabinowitz loop homology obtained from the $A_2^+$-structure via the cone construction is associative, and much more: in view of the isomorphism with the $A_2^+$-structure on symplectic homology proved in~\S\ref{sec:pop-GH-iso} and in view of~\cite{CHO-PD,CO-cones}, the resulting product fits into a graded Frobenius algebra structure on $\wh H_*\Lambda$. 

While the construction of an $A_2^+$-structure would have worked with \emph{any} choice of nondegenerate vector fields $v^0$ and $v^1$ at the endpoints of the parametrizing interval, the necessity of the condition $v^1=-v^0$, which ensures these fine properties of the product, would become visible at chain level within a theory of $A_3^+$-structures. The development of such a theory is a matter for further study. 
\end{remark}

\begin{remark}
The description of $\MM^1_{\tau=1}(a;b,q_0)$ and $\MM^1_{\tau=0}(a;q_0,c)$
above implies that $\lambda_1,\lambda_0:MC_*\to(MC\otimes MC)_{*-n}$ are
chain maps. By equation~\eqref{eq:lambda-chain-map-loop} they are
chain homotopic, hence they induce the same ``primary" coproduct
$$
   [\lambda_0]=[\lambda_1]:MH_*\to(MH\otimes MH)_{*-n}
$$
and the preceding discussion recovers~\cite[Lemma 5.1]{AS-product-structures}.
\end{remark}

\begin{remark}\label{rem:GH-alternative}
Alternatively, we could define the loop coproduct using the spaces 
\begin{align*}
   \wt\MM^1(a;b,c) := \bigl\{&(\tau,\alpha,\beta,\gamma)\in[0,1]\times
   W^-(a)\times W^+(b)\times W^+(c)\mid \cr
   & \beta(t) = (f_t^\tau)^{-1}\circ\alpha(\tau t),\cr 
   & \gamma(t) = (f_{1-t}^\tau)^{-1}\circ\alpha(\tau+(1-\tau)t)\bigr\}.
\end{align*}
Again the matching conditions imply $\alpha(\tau)=f^\tau\circ\alpha(0)$,
and $\wt\MM^1(a;b,c)$ is a transversely cut out manifold of dimension 
$\ind(a)-\ind(b)-\ind(c)+1-n$ whose rigid counts define a map
$$
   \wt\lambda:MC_*\to(MC\otimes MC)_{*+1-n},\qquad
   a\mapsto\sum_{b,c}\#\wt\MM^1(a;b,c)\,b\otimes c.
$$
A discussion analogous to that for $\lambda$ shows that
Proposition~\ref{prop:A2+loop} also holds with $\wt\lambda$ in place
of $\lambda$. 
The obvious homotopies between the loops $\alpha_1^\tau$ and
$\alpha_2^\tau$ in the definition of $\lambda$ and the loops 
$t\mapsto (f_t^\tau)^{-1}\circ\alpha(\tau t)$ and
$t\mapsto (f_{1-t}^\tau)^{-1}\circ\alpha(\tau+(1-\tau)t)$ in the definition of
$\wt\lambda$ provide a
special morphism between $(c_0,Q_0,\mu,\lambda)$ and $(c_0,\wt Q_0,\mu,\wt\lambda)$, where $\wt Q_0=-\wt \lambda (q_\Max)$. 
We will use the restriction of the map $\wt\lambda$ to Morse chains
modulo constants in the proof of Proposition~\ref{prop:iso-ol-FH-MH}.
\end{remark}

%%%%%%%%%%%%%%%%%%%%%%%%%%%%%%%%%%%%%%%%%%%%%%%%%%%%%%%%%%%%%%%%%
\section{$A_2^+$-structure for symplectic homology}\label{sec:A2+symp}
%%%%%%%%%%%%%%%%%%%%%%%%%%%%%%%%%%%%%%%%%%%%%%%%%%%%%%%%%%%%%%%%%

As in the previous section, let $M$ be a closed oriented manifold. We
pick a Riemannian metric on $M$ and denote by $S^*M\subset D^*M\subset T^*M$ its
unit sphere resp.~unit disc cotangent bundle. The latter is a Liouville domain whose
completion is $T^*M$. Its {\em symplectic homology} $SH_*(D^*M)$ is defined
as the direct limit of the Floer homologies $FH_*(K)$ over
Hamiltonians $K:S^1\times T^*M\to\R$ that are negative on $D^*M$ and
linear outside a compact see; see~\cite{CO} for general background on
symplectic homology. The goal of this section is to construct a special
$A_2^+$-structure on the chain complex underlying symplectic homology. 

%%%
\subsection{The continuation map $c^F$}\label{sec:cont-map}
%%%

Recall from~\cite{CO} that for Hamiltonians $H\leq K$ we have a continuation map
$c_{H,K}:FC_*(H)\to FC_*(K)$, defined by counting Floer cylinders for
an $s$-dependent Hamiltonian $\wh H(s,\cdot)$ which agrees with $K$
for small $s$, with $H$ for large $s$, and which satisfies $\p_s\wh
H\leq 0$. In this subsection we will describe the continuation map
$$
   c^F=c_{-K,K}:FC_*(-K)\to FC_*(K)
$$
for a smooth Hamiltonian $K:T^*M\to\R$ of the form
$$
   K(q,p) = k(|p|) + V(q)
$$
for a convex function $k$ with $k(0)=0$ and $k(r)=\mu r$ for large
$r$, with $\mu>0$ not in the length spectrum, and a potential $V:M\to\R$ which has a unique maximum $q_0$ and a unique minimum $q_\Max$.
For $1$-periodic orbits $x$ of $-K$ and $y$ of $K$, the coefficient
$\la c^Fx,y\ra$ is given by the count of solutions $u:\R\times S^1\to T^*M$
of the Floer equation
\begin{equation}\label{eq:c-Floer}
   \p_su+J(u)\bigl(\p_tu-\phi(s)X_K(u)\bigr)=0
\end{equation}
converging to $x$ as $s\to+\infty$ and to $y$ as $s\to-\infty$. Here
$\phi:\R\to[-1,1]$ is a nonincreasing smooth function which equals $1$
for small $s$ and $-1$ for large $s$. For action reasons the
coefficient can only be nonzero if $x,y$ are constant solutions
corresponding to critical points of $V$ on the zero section $M\subset
T^*M$, in which case the solutions $u$ are $t$-independent and the
Floer equation becomes the Morse equation
\begin{equation}\label{eq:c-Morse}
   \p_su+\phi(s)\nabla K(u)=0. 
\end{equation}
The Fredholm index of this problem is
$$
   \CZ_{-K}(x)-\CZ_K(y) = -\CZ_K(x)-\CZ_K(y) =
   \ind_V(x)+\ind_V(y)-2n\leq 0,
$$
with equality iff $x=y=q_0$ for the maximum $q_0\in M$ of $V$. 
On the other hand, solutions $u$ of~\eqref{eq:c-Morse} are in one-to-one
correspondence to points $u(0)\in W_x^-\cap W_y^-$, where $W_x^-$
denotes the stable manifold of $x$ with respect to $\nabla V$. 
This shows that the Fredholm problem given by~\eqref{eq:c-Floer}
resp.~\eqref{eq:c-Morse} is degenerate.

To perturb it, we denote by $\Skel_k(V)\subset M$ the $k$-skeleton, i.e. the
union of the descending manifolds $W_x^-$ of critical points of index
$\leq k$. We pick a $1$-form $\eta$ on $M$ satisfying the following condition:
\begin{equation}\label{eq:c-eta}
   \text{All zeroes of $\eta$ are nondegenerate and lie in $M\setminus\Skel_{n-1}(V)$.}
\end{equation}
It gives rise to the flow
$$
   F^\eta_t:T^*M\to T^*M,\qquad F^\eta_t(q,p):=\bigl(q,p+t\eta(q)\bigr)
$$
generated by the vector field $\wh\eta$ on $T^*M$,
$$
   \wh\eta(q,p) := \frac{d}{dt}\Bigl|_{t=0}F_t^\eta(q,p).
$$
We pick a compactly supported function $\rho:\R\to[0,\infty)$ with
$\int_\R\rho=1$ and perturb equations~\eqref{eq:c-Floer} and~\eqref{eq:c-Morse} to
\begin{equation}\label{eq:c-Floer-pert}
   \p_su+J(u)\bigl(\p_tu-\phi(s)X_K(u)\bigr) = \rho(s)\wh\eta(u).
\end{equation}
and
\begin{equation}\label{eq:c-Morse-pert}
   \p_su+\phi(s)\nabla K(u) = \rho(s)\wh\eta(u). 
\end{equation}
To understand solutions of the perturbed Morse equation, we choose
$\phi,\rho$ such that $\phi\equiv 0$ on $[-1,1]$ and $\supp(\rho)\subset[-1,1]$.
Then solutions $u$ of~\eqref{eq:c-Morse-pert} are in one-to-one
correspondence to points
$$
   u(1)\in W_x^-\cap F^\eta_1(W_y^-),
$$
where the intersection is taken in $T^*M$. By condition~\eqref{eq:c-eta}
this intersection is empty unless $x=y=q_0$, in which case
intersection points correspond to zeroes of $\eta$ and their signed
count equals the Euler characteristic $\chi$ of $M$. This shows that
the only nontrivial term in the continuation map $c^F:FC_*(-K)\to FC_*(K)$ is
$$
   c^F q_0=\chi q_0 
$$
and the corresponding quadratic vector $c_0^F$ is given by
$$
   c_0^F(1) = \chi q_0\otimes q_0.
$$
In particular, $c_0^F$ satisfies the closedness condition in
Definition~\ref{def:A2+}, and it is also symmetric $\boldtau c_0^F=c_0^F$. Note that this holds without any symmetry
assumptions on the data such as $\phi(-s)=-\phi(s)$ or $\rho(-s)=\rho(s)$.
Note also that, although the definition of $c_0^F$ on the chain level
requires the choice of a pair $(V,\eta)$ consisting of a Morse function
$V:M\to\R$ and a $1$-form $\eta$ on $M$ subject to
condition~\eqref{eq:c-eta}, the result does not depend on this choice. In contrast, the secondary continuation quadratic vector $Q_0^F$ which we construct below may depend on this choice. See also~\S\ref{sec:reduced}.

%%%
\subsection{The product $\mu^F$ and coproduct $\lambda^F$}\label{sec:cont-coproduct}
%%%

The pair-of-pants product $\mu^F:FC_*(K)\otimes FC_*(K)\to FC_*(2K)$
(of degree $-n$) counts maps from a pair-of-pants satisfying a Floer equation with
weights $1$ at the two positive punctures and weight $2$ at the
negative puncture. The definition is entirely analogous to the
one for the coproduct $\lambda^F$ given below, without the additional
parameter $\tau$. It is well-known that $\mu^F$ is a chain map which
is associative and graded commutative up to chain homotopy (see e.g.~\cite{AS2}), 
so condition (2) in Definition~\ref{def:A2+} holds. 

The critical point $q_\Max$ is a constant orbit and is a cycle which is a two-sided unit for $\mu^F$ up to homotopy. The subcomplex $FC_*^{=0}(K)\subset FC_*(K)$ generated by small action orbits is stable under $\mu^F$ and we can choose the auxiliary data such that $q_\Max$ is a strict unit for the restriction of $\mu^F$ to $FC_*^{=0}(K)$. 

In~\cite{CHO-reduced}
a secondary coproduct $\lambda$ 
is defined in terms of continuation maps on the reduced symplectic
homology of a large class of Weinstein domains which includes cotangent bundles. See also~\cite{CHO-PD, CO-cones}. In this
subsection we recall its definition for $D^*M$; we will call it the
{\em continuation coproduct} and denote it by $\lambda^F$. 

The definition in~\cite{CHO-PD, CO-cones} is described in terms of real
parameters $\lambda_1,\lambda_2<0<\mu_1,\mu_2,\mu$ satisfying
$\mu\leq\min(\lambda_1+\mu_2,\mu_1+\lambda_2)$. For simplicity, we
choose the parameters as $\lambda_1=\lambda_2=-\mu$ and
$\mu_1=\mu_2=2\mu$ for some $\mu>0$. We assume that $\mu$ and $2\mu$
do not belong to the action spectrum of $S^*M$. 

As before, we denote by $r=|p|$ the radial coordinate on $T^*M$. Let
$K=K_\mu$ be a convex smoothing of the Hamiltonian which is zero on
$D^*M$ and equals $r\mapsto\mu r$ outside $D^*M$. Then $2K=K_{2\mu}$
and $-K=K_{-\mu}$ are the corresponding Hamiltonians of slopes $2\mu$
and $-\mu$, respectively.

Let $\Sigma$ be the $3$-punctured Riemann sphere, where we view one
puncture as positive (input) and the other two as negative (outputs). We fix
cylindrical coordinates $(s,t)\in[0,\infty)\times S^1$ near the positive puncture
and $(s,t)\in(-\infty,0]\times S^1$ near the negative punctures. Consider a
$1$-form $\beta$ on $\Sigma$ which equals $B\,dt$ near the positive
puncture and $A_idt$ near the $i$-th negative puncture $(i=1,2$) for
some $A_i,B\in\R$. We say that {\em $\beta$ has weights $B,A_1,A_2$}.
We moreover require $d\beta\leq 0$, which is possible iff
$$
   A_1+A_2\geq B.
$$
We consider maps $u:\Sigma\to T^*M$ satisfying the perturbed
Cauchy-Riemann equation
$$
   (du-X_K\otimes\beta)^{0,1}=0. 
$$
Near the punctures this becomes the Floer equation for the Hamiltonians 
$BK$ and $A_iK$, respectively, and the algebraic count of such maps
defines a (primary) coproduct
$$
   FC_*(BK)\to FC_*(A_1K)\otimes FC_*(A_2K)
$$
which has degree $-n$ and decreases the Hamiltonian action. 

To define the secondary coproduct $\lambda^F$, we choose a
$1$-parameter family of $1$-forms $\beta_\tau$, $\tau\in(0,1)$, with
the following properties (see Figure~\ref{fig:cont-coproduct}):
\begin{itemize}
\item $d\beta_\tau\leq 0$ for all $\tau$;
\item $\beta_\tau$ equals $dt$ near the positive puncture and $+2dt$
  near each negative puncture, i.e., $\beta_\tau$ has weights $1,2,2$;
\item as $\tau\to 0$, $\beta_\tau$ equals $-dt$ on cylinders near the
  first negative puncture whose length tends to $\infty$, so that
  $\beta_0$ consists of a $1$-form on $\Sigma$ with weights $1,-1,2$
  and a $1$-form with weights $-1,2$ on an infinite cylinder attached
  at the first negative puncture;  
\item as $\tau\to 1$, $\beta_\tau$ equals $-dt$ on cylinders near the
  second negative puncture whose length tends to $\infty$, so that
  $\beta_1$ consists of a $1$-form on $\Sigma$ with weights $1,2,-1$
  and a $1$-form with weights $-1,2$ on an infinite cylinder attached
  at the second negative puncture.
\end{itemize}

\begin{figure}
\begin{center}
\includegraphics[width=.7\textwidth]{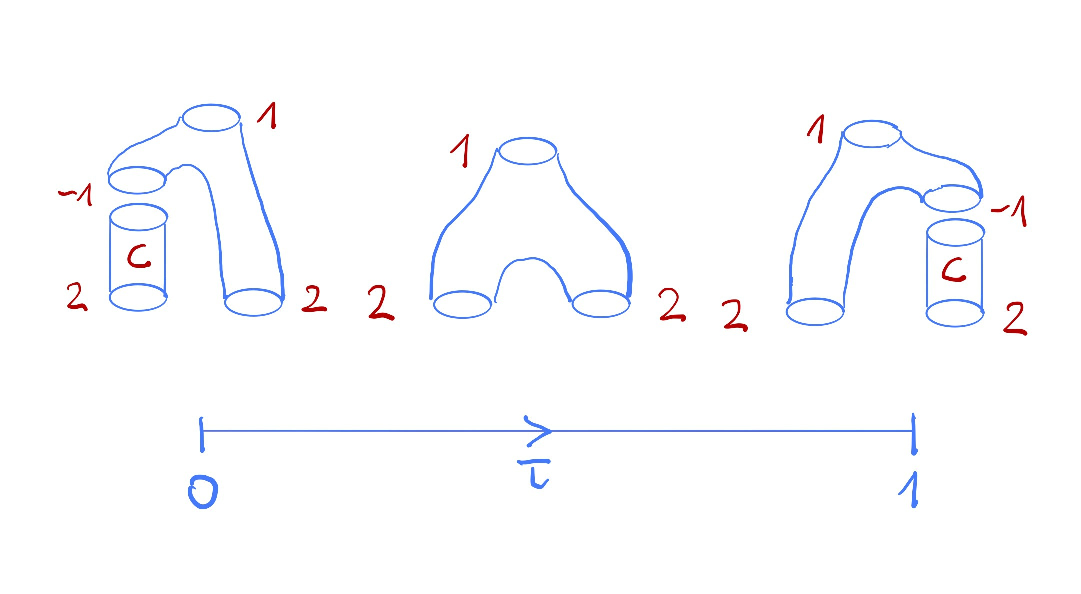}
\caption{The continuation coproduct $\lambda^F$}
\label{fig:cont-coproduct}
\end{center}
\end{figure}

Now we consider pairs $(\tau,u)$ where $\tau\in[0,1]$ and $u:\Sigma\to
T^*M$ satisfies the perturbed Cauchy-Riemann equation
$$
   (du-X_K\otimes\beta_\tau)^{0,1}=0. 
$$
The algebraic count of such pairs defines a (secondary) coproduct
$$
   \lambda^F:FC_*(K)\to FC_*(2K)\otimes FC_*(2K)
$$
which has degree $1-n$ and decreases the Hamiltonian action. 

Let us analyze the contributions from $\tau=0,1$. The algebraic count
of cylinders with weights $-1,2$ defines the {\em continuation map}
(of degree $0$) 
$$
   c^F=c_{-K,2K}:FC_*(-K)\to FC_*(2K). 
$$
As explained in the previous subsection, to define $c^F$ we perturb $K$
by a Morse function $V:M\to\R$ with a unique maximum at
$q_0$ and a unique minimum at $q_\Max$. Moreover, we choose a family of $1$-forms $\eta_\tau$ on $M$
such that $\eta_0=\eta_1$ satisfies condition~\eqref{eq:c-eta} (for all practical purposes one can think of $\eta_\tau$ as being constant).
Finally, we choose a family of compactly supported $1$-forms
$\alpha_\tau$ on $\Sigma$ which for $\tau=0,1$ agree with $\rho(s)ds$
supported in the split off cylinder, for a function
$\rho:\R\to[0,\infty)$ satisfying $\int_\R\rho=1$.
For example, we can take
$\alpha_\tau=(1-\tau)\alpha_0+\tau\alpha_1$ where $\alpha_0=\rho(s)ds$
supported on the first negative end, and $\alpha_1=\rho(s)ds$
supported on the second negative end (below the level where the
splitting happens at $\tau=0,1$).
With $\wh\eta_\tau$ the vector field on $T^*M$ corresponding to $\eta_\tau$, we replace
the Cauchy-Riemann equation in the definition of $\lambda^F$ by
\begin{equation}\label{eq:CR-tau-pert}
   (du-X_K\otimes\beta_\tau)^{0,1}=(\wh\eta_\tau\otimes\alpha_\tau)^{0,1}. 
\end{equation}
With these choices, it follows from the discussion in the previous
subsection that the only nontrivial terms in the continuation maps at
$\tau=0,1$ are $c^Fq_0=\chi q_0$, where $\chi$ is the Euler
characteristic of $M$.

As shown in Figure~\ref{fig:cont-coproduct}, the contribution at
$\tau=0$ consists of a pair-of-pants with one positive puncture of
weight $1$ and two negative punctures of weights $-1$ and $2$, with a
cylinder of weights $-1$ and $2$ attached at the first negative puncture.
We reinterpret this as a pair-of-pants with two positive punctures of
weights $1$ and one negative puncture of weight $2$, with a cylinder
with two negative punctures of weights $2$ and $1$ attached at the
first positive puncture. The preceding discussion shows that the count
of these configurations corresponds to the composition
$(1\otimes\mu^F)(\boldtau c_0^F\otimes 1)$. A similar discussion at
$\tau=1$ establishes that $\lambda^F$ satisfies condition~(4) in
Definition~\ref{def:A2+}. 

We define the secondary continuation quadratic vector $Q_0^F$ by 
$$
Q_0^F=-\lambda^F(q_\Max).
$$
Condition~(2), i.e. $\boldtau c_0^F-c_0^F=[\p,Q_0^F]$, follows by inserting $q_\Max$ into the relation for $[\p,\lambda^F]$ and using that $q_\Max$ is a strict two-sided unit for $\mu$ on $FC_*^{=0}(K)$. This is an instance of \emph{unital $A_2^+$-structure}~\cite{CO-cones}. Note that $Q_0^F\in FC_*^{=0}(K)\otimes FC_*^{=0}(K)$ for energy reasons. An inspection of the definition shows that $Q_0^F$ coincides with the secondary continuation element defined in~\S\ref{sec:secondary-cont-map} by interpolating between the perturbing $1$-form $\eta$ and its opposite $-\eta$. See also~\cite{CHO-reduced}.

\begin{comment}
Hence 
$$
   FC_*(2K)/\im c = FC_*(2K)/R\cdot\chi q_0 = \ol{FC}_*(2K)
$$
is the reduced Floer complex. Similarly, the algebraic count
of cylinders with weights $-1,1$ defines the continuation map
(of degree $0$) 
$$
   c=c_{-K,K}:FC_*(-K)\to FC_*(K)
$$
with image $R\cdot\chi q_0$, so that
$$
   FC_*(K)/\im c = FC_*(K)/R\cdot\chi q_0 = \ol{FC}_*(K). 
$$
It is shown in~\cite{CO-cones}
\marginpar{\tiny Add argument here.}
that $\lambda^F$ descends to a map
$$
   \lambda^F:\ol{FC}_*(K)\to \ol{FC}_*(2K)\otimes \ol{FC}_*(2K),
$$
which is a chain map and thus induces on homology the {\em continuation coproduct}
$$
   \lambda^F:\ol{FH}_*(K)\to \ol{FH}_*(2K)\otimes \ol{FH}_*(2K).
$$
Letting the slope of $K$ go to infinity we obtain in the limit the map 
$$
   \lambda^F:\ol{SH}_*(D^*M)\to \ol{SH}_*(D^*M)\otimes \ol{SH}_*(D^*M),
$$
which we call {\em continuation coproduct in symplectic homology}.
\end{comment}

It remains to prove condition (5) in Definition~\ref{def:A2+}. For $n=1$ it holds because $\chi=0$, so that $c_0^F=0$. We therefore assume w.l.o.g. $n\ge 2$ and, as in the Morse case, we prove condition~(5) in two steps. 
\begin{enumerate}
\item We first prove $\lambda^Fc^F=0$.
This follows from $\lambda^F(q_0)=0$, which is seen as follows.
For action reasons, the coefficient $\la\lambda^F(q_0),x\otimes y\ra$
can only be nonzero if $x,y$ are critical points of $K$. Since
$\lambda^F$ has degree $1-n$, we must have
$1-n = \CZ(x)+\CZ(y)-\CZ(q_0) = n-\ind_V(x)-\ind_V(y)+\ind_V(q_0)$,
hence $\ind_V(x)+\ind_V(y)=\ind_V(q_0)+2n-1=3n-1$. Since
$\ind_V(x)+\ind_V(y)\leq 2n$, this is impossible for $n\ge 2$. 
\item We are left to show that $(1\otimes \mu^F\otimes 1)(\boldsymbol{a}\otimes\boldsymbol{b})=0$, where $(\boldsymbol{a},\boldsymbol{b})=(\boldtau Q_0^F,c_0^F), (\boldtau c_0^F,Q_0^F), (\boldtau Q_0^F,\boldtau c_0^F), (c_0^F,\boldtau Q_0^F)$. We identify the Floer subcomplex $FC_*^{=0}(K)$ generated by orbits of small action with the Morse cochain complex $MC^{n-*}(V)$. The cohomological index of $Q_0^F$ is $2n-1$, so its components must have degrees $n-1$ and $n$. If $n\ge 2$ these degrees are both positive, and therefore any component of $Q_0$ is killed by multiplication with $q_0$ because the latter has cohomological index $n$.    
\end{enumerate}

In summary we have shown

\begin{proposition}\label{prop:A2+symphom}
The operations $c_0^F,Q_0^F,\mu^F,\lambda^F$ on the Floer chain complexes
$FC_*(K)$ resp.~$FC_*(2K)$ satisfy the relations of a special $A_2^+$-structure. 
\end{proposition}

The operations $c_0^F,Q_0^F\mu^F,\lambda^F$ are compatible with Floer
continuation maps between different Hamiltonians $H\leq K$. We will
refer to this structure as being the {\em special $A_2^+$-structure for
  symplectic homology $SH_*(D^*M)$}.  
 
\begin{remark} \label{rmk:about-eta0=eta1}
In the previous construction we imposed the condition $\eta_0=\eta_1$ at the endpoints of the family of $1$-forms $\eta_\tau$ for the same reason why we imposed $v^1=-v^0$ in the Morse case: this ensures that the product on Rabinowitz Floer homology obtained from the $A_2^+$-structure via the cone construction coincides with the product from~\cite{CHO-PD} and fits into a graded Frobenius algebra structure on $S\H_*(S^*M)$. 

The construction of an $A_2^+$-structure would have worked with any choice of interpolating family $\eta_\tau$ such that $\eta_0$ and $\eta_1$ satisfy~\eqref{eq:c-eta}. The necessity of the condition $\eta_1=\eta_0$ for this fine behavior of the product would become visible at chain level within a theory of $A_3^+$-structures. 
\end{remark}

%%%%%%%%%%%%%%%%%%%%%%%%%%%%%%%%%%%%%%%%%%%%%%%%%%%%%%%%%%%%%%%%%
\section{Reduced loop homology}\label{sec:reduced} 
%%%%%%%%%%%%%%%%%%%%%%%%%%%%%%%%%%%%%%%%%%%%%%%%%%%%%%%%%%%%%%%%%

This section expands material from~\cite[\S4]{CHO-reduced} in the particular case of cotangent bundles. 
We assume that $M$ is connected and orientable, and we work either with constant coefficients on the loop space, or with local coefficients $\eta$ obtained by transgressing the 2nd Stiefel-Whitney class. In each of these two cases we have a commutative diagram
$$
\xymatrix{
H^*(\Lambda) \ar[r]^-\eps \ar[d]& H_*(\Lambda) \\
H^0(M)\ar[r]^{\eps_0}& H_0(M), \ar[u]
}
\qquad \qquad 
\xymatrix{
H^*(\Lambda;\eta) \ar[r]^-\eps \ar[d]& H_*(\Lambda;\eta) \\
H^0(M)\ar[r]^{\eps_0}& H_0(M),\ar[u]
}
$$
where the vertical maps are restriction to, resp. inclusion of constant loops, and $\eps_0$ is induced by multiplication with the Euler characteristic $\chi$. From now on we omit from the notation the local system $\eta$. 

\begin{definition}
We define \emph{reduced loop homology, resp. cohomology},  
$$
\ol{H}_*(\Lambda)=\coker\, \eps,\qquad \ol{H}^*(\Lambda)=\ker \eps. 
$$
\end{definition}

In the sequel we restrict the discussion to reduced homology. Reduced cohomology features similar properties, with the roles of the product and coproduct being exchanged (as yet another instantiation of Poincar\'e duality for loop spaces~\cite{CHO-PD}). 

The behavior of reduced homology with respect to the product is very robust. The image of $\eps$ is an ideal in $H_*(\Lambda)$ (see for example~\cite{Tamanoi} or~\cite{CHO-PD}), and therefore the loop product canonically descends to reduced homology $\ol{H}_*(\Lambda)$. 

In contrast, the behavior of reduced homology with respect to the coproduct is very subtle. To describe it, the following variant of reduced loop homology arises naturally. 

\begin{definition}We define \emph{loop homology relative to $\chi\cdot$point} as 
$$
H_*(\Lambda,\chi\cdot\mathrm{point}) = H_*(C_*(\Lambda)/\chi C_*(\mathrm{point})).
$$
\end{definition}

A straightforward calculation shows that we have a canonical isomorphism 
$$
\ol{H}_*(\Lambda)\simeq H_*(\Lambda,\chi\cdot\textrm{point})
$$
whenever the map $\chi H_0(\textrm{point})\to H_0(\Lambda)$ is injective, see Appendix~\ref{sec:reduced-relpoint}. This is the case if $M$ is orientable and if we use a local system that is constant on the component of contractible loops, or if $\chi=0$, or if $R$ is 2-torsion. We place ourselves from now on in this setup, so that we do not need to distinguish between 
$\ol{H}_*(\Lambda)$ and $H_*(\Lambda,\chi\cdot\textrm{point})$. 

The loop coproduct is canonically defined on $H_*(\Lambda,\Lambda_0)$. We now explain that it always extends to $H_*(\Lambda,\chi\cdot\textrm{point})$ (and hence to $\ol{H}_*(\Lambda)$ under our assumptions). However, this extension is \emph{not} canonical. The extension depends on a choice of vector field with nondegenerate zeroes \emph{and} on the choice of a Morse function on $M$. We will completely describe the dependence of the extension on the choice of vector field, and give sufficient conditions for independence of the extension on the choice of Morse function. 

%%%
\subsection{Reduced symplectic homology} 
%%%

We work with symplectic homology of $D^*M$, our favorite model for loop space homology. Recalling notation from~\S\ref{sec:cont-map}, we fix the following \emph{continuation data}:  
\begin{itemize}
\item a Morse function $V:M\to\R$ with a unique maximum $q_0$.
\item a $1$-form $\eta$ on $M$ which satisfies condition~\eqref{eq:c-eta}, i.e., the zeroes of $\eta$ are nondegenerate and lie outside of $\textrm{Skel}_{n-1}(V)$ (this is equivalent to a vector field $v$ on $M$ whose zeroes have the same property). 
\end{itemize} 
 
We consider Hamiltonians $K:T^*M\to \R$ of the form $K(q,p)=k(|p|)+V(q)$, where $k(0)=0$ and $k=k(r)$ is a linear function of $r$ outside a compact set, of positive slope not belonging to the length spectrum. This data determines via equation~\eqref{eq:c-Floer-pert} the Floer continuation map 
$$
c^F:FC_*(-K)\to FC_*(K),
$$ 
which has the property that the only generator on which it may be nonzero is $q_0$. Moreover, we have computed in ~\S\ref{sec:cont-map} that 
$$
c^F(q_0)=\chi q_0.
$$
The continuation map can be equivalently interpreted as a quadratic vector 
$$
c_0^F(1)=\chi q_0\otimes q_0\in FC_*(K)\otimes FC_*(K).
$$
We emphasize that the chain level expression of the continuation map is the same for \emph{any} choice of continuation data $(V,\eta)$. 

\begin{definition}
The \emph{reduced Floer complex of $K$} is 
$$
\ol{FC}_*(K)=FC_*(K)/\im c^F.
$$ 
Its homology is the \emph{reduced Floer homology of $K$}, denoted $\ol{FH}_*(K)$.
\end{definition}

The \emph{reduced symplectic homology}  $\ol{SH}_*(D^*M)$ is the direct limit of reduced Floer homologies $\ol{FH}_*(K)$ over Hamiltonians $K$ which vanish on $D^*M$ and are linear outside a compact set, perturbed to have the form $k(|p|)+V(q)$ near the zero section as above. \footnote{This group is strictly speaking the analogue of $H_*(\Lambda;\chi\cdot \textrm{point})$. In~\cite{CHO-reduced} we use the more precise notation $FC_*(K;\im c^F)$ and $SH_*(D^*M; \im c^F)$.}

The relation 
$$
[\p,\lambda^F] = (\mu^F\otimes 1)(1\otimes c_0^F) - (1\otimes \mu^F)(\boldtau c_0^F\otimes 1)
$$
proved in~\S\ref{sec:cont-coproduct}, together with $\boldtau c_0^F=c_0^F$, shows that $\lambda^F$ descends to a chain map $\ol{FC}_*(K)\to \ol{FC}_*(2K)\otimes \ol{FC}_*(2K)$. These maps are compatible with the continuation maps obtained by increasing the slope of $K$, giving rise in the limit (with field coefficients) to a well defined coproduct of degree $-n+1$, denoted  
$$
\lambda^F:\ol{SH}_*(D^*M)\to \ol{SH}_*(D^*M)\otimes \ol{SH}_*(D^*M). 
$$

A straightforward enhancement of the Viterbo-Abbondandolo-Schwarz isomorphism shows that the map $\Psi$ induces an isomorphism between reduced homologies 
$$
\Psi_*:\ol{SH}_*(D^*M)\stackrel{\simeq}{\longrightarrow}\ol{H}_*(\Lambda). 
$$

In particular, associated to a choice of continuation data $(V,\eta)$ is a coproduct on $\ol{H}_*(\Lambda)$.  
The key to understanding the dependence of the coproduct on the choice of continuation data $(V,\eta)$ is the secondary continuation map, which we describe next.

%%%
\subsection{The secondary continuation map} \label{sec:secondary-cont-map}
%%%

Homotopies between different choices of pairs $(V,\eta)$ give rise to
secondary operations which we describe in this subsection. 

Consider two pairs $(V_i,\eta_i)$, $i=0,1$, satisfying the conditions
of the previous subsection, i.e., $V_i:M\to\R$ is a Morse function
with a unique maximum $q_i$ and $\eta_i$ a $1$-form on $M$ such that
condition~\eqref{eq:c-eta} holds. For $i=0,1$ let $K_i:T^*M\to\R$ be
associated Hamiltonians as in the previous subsection. After shifting
$V_0,V_1$ by constants we may assume without loss of generality that
$-K_0\leq -K_1\leq K_1\leq K_0$.

As in the previous subsection we pick a function $\phi:\R\to[-1,1]$
which equals $1$ for $s\leq-1$ and $-1$ for $s\geq 1$. 
Let $H_\sigma:\R\times T^*M\to\R$, $\sigma\in[0,\infty)$, be a smooth
family of $s$-dependent Hamiltonians with the following properties:
\begin{itemize}
\item $\p_sH_\sigma(s,x)\leq 0$ for all $\sigma,s,x$; 
\item $H_0(s,x)=\phi(s)K_0(x)$;
\item $H_\sigma(s,x)$ equals $K_0(x)$ for $s\leq-\sigma-1$ and
  $-K_0(x)$ for $s\geq\sigma+1$;
\item $H_\sigma(s,x)=\phi(s)K_1(x)$ for $|s|\leq\sigma$ and
  $\sigma\geq 1$. 
\end{itemize}
Let $\eta_\sigma$, $\sigma\in[0,\infty)$, be a smooth family of
$1$-forms with $\eta_\sigma=\eta_1$ for all $\sigma\geq 1$. 
We consider pairs $(\sigma,u)$ with $\sigma\in[0,\infty)$
and $u:\R\times S^1\to T^*M$ solving the Floer equation
\begin{equation*}%\label{eq:c-Floer}
   \p_su+J(u)\bigl(\p_tu-X_{H_\sigma}(s,u)\bigr)=0
\end{equation*}
and converging to $1$-periodic orbits of $\mp K_0$ as $s\to\pm\infty$.
Their algebraic count gives rise to a degree $1$ map
%\marginpar{\tiny alex Replaced $Q$ by $\vec Q$}
$$
  \vec Q:FC_*(-K_0) \to FC_{*+1}(K_0)
$$
satisfying
\begin{equation}\label{eq:vecQ}
  \p_{K_0}\vec Q + \vec Q\p_{-K_0} = c_{10}c_1c_{01} - c_0,%:FC_*(-K_0) \to FC_*(K_0)
\end{equation}
with the Floer continuation maps $c_i:FC_*(-K_i)\to FC_*(K_i)$ for $i=0,1$, 
$c_{01}:FC_*(-K_0)\to FC_*(-K_1)$, and $c_{10}:FC_*(K_1)\to FC_*(K_0)$.  
The map $\vec Q$ factors through the action zero part which we will 
denote by $\vec Q^{=0}$. Since the $V_i$ have unique maxima $q_i$, 
it follows from the previous subsection that the only nontrivial
contribution to $c_0$ is $c_0(q_0)=\chi q_0$. Similarly, the only
nontrivial contribution to the composition $c_{10}c_1c_{01}$ sends
$q_0\mapsto q_1\mapsto \chi q_1\mapsto \chi q_0$. 
This shows that the right hand side of equation~\eqref{eq:vecQ}
vanishes, and therefore $\vec Q$ descends to a map on homology
$$
  \vec Q:FH_*(-K_0) \to FH_{*+1}(K_0)
$$
which factors through the action zero part
$$
  \vec Q^{=0}:FH_*^{=0}(-K_0)\cong H_{n+*}(M) \to FH_{*+1}^{=0}(K_0)\cong H^{n-*-1}(M).
$$
For degree reasons nontrivial contributions can only occur for $*=0$
and $*=-1$ and give maps 
$$
   H_n(M)\to H^{n-1}(M)\quad\text{resp.}\quad H_{n-1}(M)\to H^n(M).
$$
In particular, we have shown

\begin{proposition}\label{prop:cont-map-choices}
If $H_1(M)=0$, then the secondary continuation map
$\vec Q:FH_*(-K_0) \to FH_{*+1}(K_0)$ associated to any interpolation
between pairs $(V_i,\eta_i)$, $i=0,1$, of continuation data vanishes. 
\qed
\end{proposition}

%%%
\subsection{Dependence of the continuation coproduct on choices}\label{sec:cont-coproduct-choices}
%%%

In this subsection we discuss the dependence of the continuation
coproduct $\lambda^F$ on the data $(V,\eta)$ of a Morse function and a
$1$-form on $M$. 

We consider the setup of the previous subsection and retain the
terminology from there. Thus we are given two pairs $(V_i,\eta_i)$,
$i=0,1$, with associated Hamiltonians $K_i:T^*M\to\R$ satisfying
$-K_0\leq -K_1\leq K_1\leq K_0$. They give rise to Floer continuation
maps $c_i:FC_*(-K_i)\to FC_*(K_i)$, $c_{01}:FC_*(-K_0)\to FC_*(-K_1)$,
and $c_{10}:FC_*(K_1)\to FC_*(K_0)$, and to a degree $1$ map
$\vec Q:FC_*(-K_0) \to FC_{*+1}(K_0)$ satisfying equation~\eqref{eq:vecQ}.

We denote by $\lambda^F_i:FC_*(K_i)\to FC_*(2K_i)\otimes FC_*(2K_i)$ the
continuation coproducts (of degree $1-n$) defined with the data $(V_i,\eta_i)$ 
and families $\eta_{i,\tau}$, $\tau\in[0,1]$ such that $\eta_{i,0}=\eta_{i,1}=\eta_i$ as in~\S\ref{sec:cont-coproduct}. 
Let
$$
  P:FC_*(K_1)\to FC_*(2K_0)\otimes FC_*(2K_0)
$$ 
be the degree $2-n$ map defined by the $2$-parametric family of Floer
problems depicted in Figure~\ref{fig:coproduct-choices}. 
\begin{figure}
\begin{center}
\includegraphics[width=.7\textwidth]{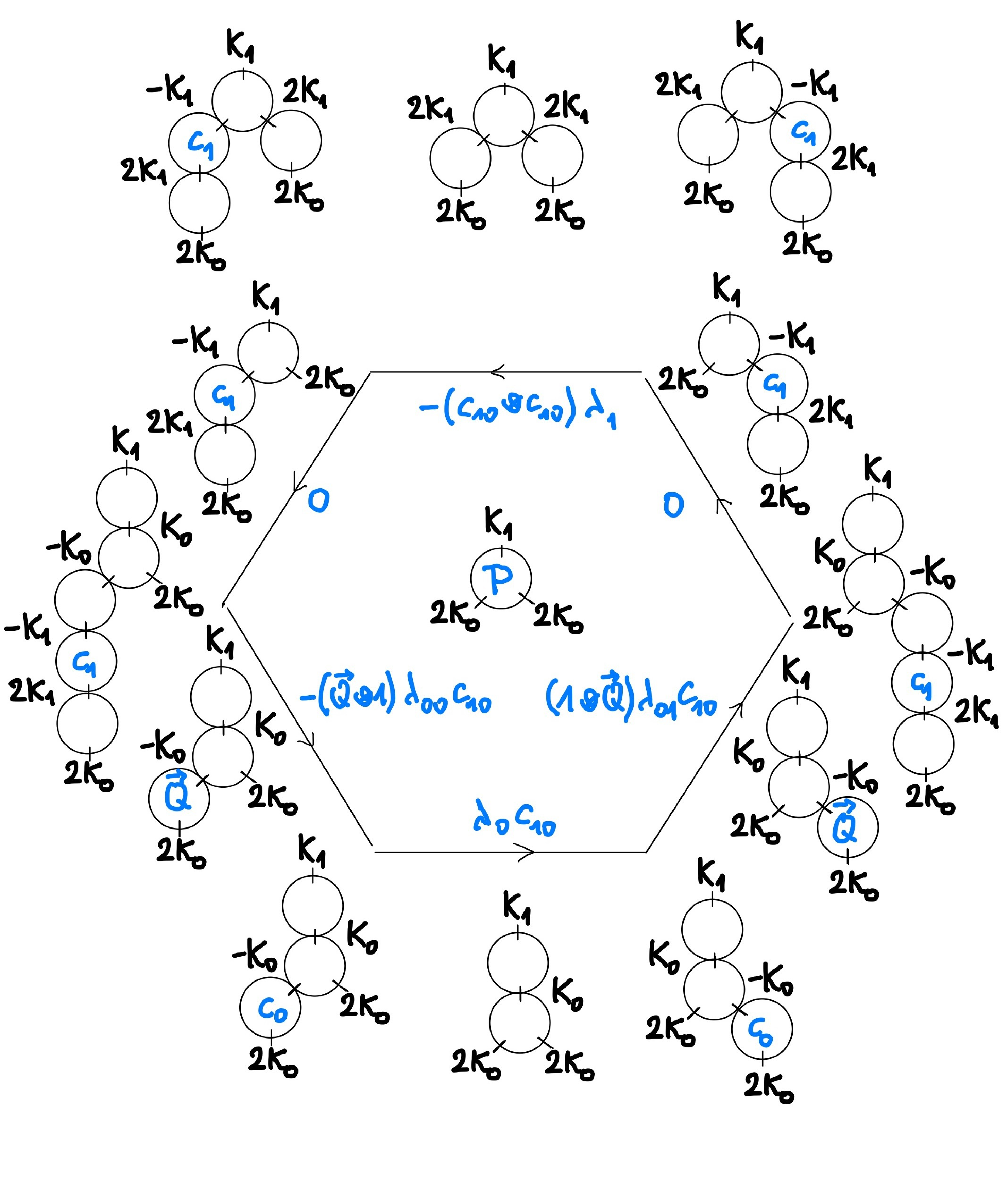}
\caption{The operation $P$}
\label{fig:coproduct-choices}
\end{center}
\end{figure}
These Floer problems are defined in terms of a $2$-parametric family
of Hamiltonian valued $1$-forms on the $3$-punctured sphere with
asymptotics and degenerations as in the figure, and a $2$-parametric
family of $1$-forms on $M$ which 
agree with $\eta_{1,\tau}$, $\tau\in[0,1]$ on the top side, with $\eta_1$ on the two top slanted sides, and with $\eta_{0,\tau}$, $\tau\in[0,1]$ on the bottom side of the hexagon. 

On the reduced Floer chain complex the compositions along the top
vertical sides vanish because they factor through the continuation map
$c_1:FC_*(-K_1)\to FC_*(K_1)$, so we obtain the relation  
\begin{equation*}
   [\p,P] = \lambda^F_0 c_{10} - (c_{10}\otimes c_{10})\lambda^F_1 +
   (1\otimes \vec Q)\lambda^F_{01}c_{10} - (\vec Q\otimes 1)\lambda^F_{00}c_{10}. 
\end{equation*}
Here $\lambda^F_{00}:FC_*(K_0)\to FC_*(-K_0)\otimes FC_*(2K_0)$ and 
$\lambda^F_{01}:FC_*(K_0)\to FC_*(2K_0)\otimes FC_*(-K_0)$ are the
degree $-n$ operations appearing at the ends of the continuation
coproduct $\lambda^F_0$ as in Figure~\ref{fig:cont-coproduct}. 

All the maps appearing on the right hand side of the last displayed
equation are chain maps, so they descend to maps on reduced Floer
homology (denoted by the same letters) satisfying
\begin{equation*}
   (c_{10}\otimes c_{10})\lambda^F_1 - \lambda^F_0 c_{10} =
   (1\otimes\vec Q)\lambda^F_{01}c_{10} - (\vec Q\otimes 1)\lambda^F_{00}c_{10}. 
\end{equation*}
Passing to the direct limit over Hamiltonians $K_0,K_1$ as above, we
have therefore shown

\begin{proposition}\label{prop:dependence-coproduct}
The continuation coproducts $\lambda_i^F$ on $\ol{SH}_*(D^*M)$
defined with continuation data $(V_i,\eta_i)$, $i=0,1$,
satisfy the relation 
\begin{equation*}
   \lambda^F_1 - \lambda^F_0 =
   (1\otimes\vec Q)\lambda^F_{01} - (\vec Q\otimes 1)\lambda^F_{00},
\end{equation*}
where $\vec Q$ is the secondary continuation map of the previous
subsection and $\lambda^F_{00}$, $\lambda^F_{01}$ are induced by the
maps defined above.
\qed
\end{proposition} 

\begin{remark}
The right hand side of the previous equation can be rephrased in
terms of the secondary continuation map and the product $\mu^F$.
We refer to~\cite[\S4.3]{CHO-reduced} for further details. 
\end{remark}

Proposition~\ref{prop:dependence-coproduct} shows that in general the continuation coproduct
may depend on the data $(V,\eta)$. 
If $H_1(M)=0$, however, the secondary coproduct $\vec Q$ vanishes by
Proposition~\ref{prop:cont-map-choices} and we obtain

\begin{proposition}\label{prop:coproduct-choices}
If $H_1(M)=0$, then the
continuation coproduct on reduced symplectic homology $\ol{SH}(D^*M)$ 
is independent of the choice of continuation data $(V,\eta)$. 
\qed
\end{proposition}

\begin{remark} \label{rmk:arbitrary-endpoints}
In~\S\ref{sec:cont-coproduct} we defined the coproduct $\lambda_F$ using a family $\eta_\tau$, $\tau\in[0,1]$ with equal endpoints $\eta_0=\eta_1=\eta$. The proof of Proposition~\ref{prop:dependence-coproduct} shows that, under the assumption $H_1(M)=0$, the coproduct can be defined using families $\eta_\tau$ with arbitrary endpoints satisfying condition~\eqref{eq:c-eta} (in particular we can take $\eta_1=-\eta_0$). This observation simplifies the computations in~\S\ref{sec:spheres} for spheres of dimension $>1$ by allowing the use of constant families of vector fields $v^\tau\equiv v$ for the topological definition of the coproduct.
\end{remark}

\begin{corollary} \label{cor:lambda1=0}
If $H_1(M)=0$ then, denoting $1\in \ol{SH}_*(D^*M)$ the unit and $\lambda_F$ the canonical coproduct, we have $\lambda_F(1)=0$. 
\end{corollary}

A proof of this result in a more general setting is given in~\cite[\S4]{CHO-reduced}, based on the vanishing of the secondary continuation map. We give here a topological proof, see also~\S\ref{sec:coproduct-S3} for the case of spheres of odd dimension $\ge 3$.  

\begin{proof}
We work on the topological side $\ol{H}_*\Lambda$ and compute, as in~\S\ref{sec:coproduct-S3}, the image of the fundamental class $1$ by representing it by constant loops and using a constant vector field $v^\tau\equiv v$ with isolated nondegenerate zeroes. If $v$ has no zeroes then its image under the coproduct is zero because it is represented by the empty chain. In the general case the image is a degenerate $1$-chain, hence vanishes in homology. 
\end{proof}

%%%%%%%%%%%%%%%%%%%%%%%%%%%%%%%%%%%%%%%%%%%%%%%%%%%%%%%%%%%%%%%%%
\section{Viterbo's isomorphism revisited}\label{sec:symp-loop-iso} 
%%%%%%%%%%%%%%%%%%%%%%%%%%%%%%%%%%%%%%%%%%%%%%%%%%%%%%%%%%%%%%%%%

As before, in this section $M$ is a closed oriented manifold, $T^*M$ its cotangent bundle with
the Liouville form $\lambda=p\,dq$, and $D^*M\subset T^*M$ its unit disc
cotangent bundle viewed as a Liouville domain. The symplectic homology
$SH_*(D^*M;\sigma)$ is isomorphic to the Floer homology $FH_*(H;\sigma)$ of a
fibrewise quadratic Hamiltonian $H:S^1\times T^*M\to\R$. On the other
hand, $FH_*(H;\sigma)$ is isomorphic to the loop homology $H_*(\Lambda)$
(Viterbo~\cite{Viterbo-cotangent}, Abbondandolo-Schwarz~\cite{AS,AS-corrigendum},
Salamon-Weber~\cite{SW}, Abouzaid~\cite{Abouzaid-cotangent}). Here we use coefficients twisted by the local system $\sigma$ defined by transgressing the second Stiefel-Whitney class, cf. Appendix~\ref{sec:local-systems}. We drop the local system $\sigma$ from the notation in the rest of this section.

The construction most relevant for our purposes is the chain map
$$
   \Psi:FC_*(H)\to MC_*(S)
$$
from the Floer complex of a Hamiltonian $H:S^1\times T^*M\to\R$ to the
Morse complex of an action functional $S:\Lambda\to\R$ on the loop space
defined in~\cite{AS-product-structures}. When applied to a fibrewise
quadratic Hamiltonian $H$ and the action functional $S_L$ associated to
its Legendre transform $L$, it induces an isomorphism on homology
$$
      \Psi_*:SH_*(D^*M)\cong FH_*(H)\to MH_*(S_L)\cong H_*\Lambda
$$
intertwining the pair-of-pants product with the loop product~\cite{AS-product-structures}. 

One annoying feature of the map $\Psi$ has been that, in contrast to its
chain homotopy inverse $\Phi:MC_*(S_L)\to FC_*(H)$, it does {\em not} preserve
the action filtrations. This would make it unsuitable for some of our
applications in~\cite{CHO-PD} such as those concerned with critical values. Using an
estimate inspired by~\cite{Cieliebak-Latschev}, we show in this section that
$\Psi$ {\em does} preserve suitable action filtrations when applied to
fibrewise linear Hamiltonians rather than fibrewise quadratic ones.

%%%
\subsection{Floer homology}\label{sec:Floer}
%%%

Consider a smooth time-periodic Hamiltonian $H:S^1\times T^*M\to\R$
which outside a compact set is either fibrewise quadratic, or linear
with slope not in the action spectrum. 
It induces a smooth Hamiltonian action functional  
$$
   A_H:C^\infty(S^1,T^*M)\to\R,\qquad x\mapsto\int_0^1\bigl(x^*\lambda-H(t,x)dt\bigr).
$$
Its critical points are $1$-periodic orbits $x$, which we can assume to
be nondegenerate with Conley--Zehnder index $\CZ(x)$. Let $J$ be a compatible
almost complex structure on $T^*M$
and denote the Cauchy--Riemann operator with Hamiltonian perturbation on $u:\R\times S^1\to
T^*M$ by
$$
   \pb_Hu := \p_su+J(u)\bigl(\p_tu-X_H(t,u)\bigr). 
$$
Let $FC_*(H)$ be the free $R$-module generated by $\Crit(A_H)$ and
graded by the Conley--Zehnder index. The Floer differential is given by
$$
   \p^F:FC_*(H)\to FC_{*-1}(H),\qquad x\mapsto\sum_{\CZ(y)=\CZ(x)-1}\#\MM(x;y)\,y,
$$
where $\#\MM(x;y)$ denotes the signed count of points in the oriented
$0$-dimensional manifold
\begin{align*}
   \MM(x;y) := \{ u:\R\times S^1\to T^*M\mid \, \, & \pb_Hu=0, \\
   & u(+\infty)=x,\ u(-\infty)=y\}/\R.
\end{align*}
Then $\p^F\circ\p^F=0$ and its homology $FH_*(H)$ is isomorphic to the
symplectic homology $SH_*(T^*M)$ if $H$ is quadratic. If $H$ is linear, we obtain an isomorphism to $SH_*(T^*M)$ in the direct limit as the slope goes to infinity.

%%%
\subsection{The isomorphism $\Phi$}\label{sec:Phi}
%%%

Suppose now that $H$ is fibrewise convex with fibrewise Legendre
transform $L:S^1\times TM\to\R$. As in~\S\ref{ss:A2+loop} we
consider the Morse complex $(MC_*,\p)$ of the action functional
$$
   S_L:\Lambda\to\R,\qquad S_L(q)=\int_0^1L(t,q,\dot q)dt. 
$$
Following~\cite{AS}, for $a\in\Crit(S_L)$ and
$x\in\Crit(A_H)$ we consider the space
\begin{align*}
   \MM(a;x) := \{u:(-\infty,0]\times S^1\to T^*M\mid
     &\pb_H(u)=0,\ u(-\infty)=x, \cr
     &\pi\circ u(0,\cdot)\in W^-(a)\},
\end{align*}
where $W^-(a)$ denotes the stable manifold for the pseudo-gradient flow of
$S_L$ and $\pi:T^*M\to M$ is the projection. (It is sometimes useful to view $W^-(a)$ as the unstable manifold for the negative pseudo-gradient flow of $S_L$.) For generic $H$ this is a manifold of dimension
$$
   \dim\MM(a;x) = \ind(a)-\CZ(x). 
$$
The signed count of $0$-dimensional spaces $\MM(a;x)$ defines a chain map
\begin{equation}
   \Phi:MC_*(S_L)\to FC_*(H),\qquad a\mapsto\sum_{\CZ(x)=\ind(a)}\#\MM(a;x)\,x.
\end{equation}
It was shown in~\cite{AS} that the induced map on homology is an isomorphism 
$$
   \Phi_*:MH_*(S_L)\stackrel{\cong}\longrightarrow FH_*(H). 
$$
For $u\in\MM(a;x)$ consider the loop $(q,p)=u(0,\cdot):S^1\to
T^*M$ at $s=0$. The definition of the Legendre transform yields the estimate
$$
   A_H(q,p) = \int_0^1\Bigl(\la p,\dot q\ra-H(t,q,p)\Bigr)dt \  
   \leq \int_0^1L(t,q,\dot q)dt = S_L(q).
$$
It follows that
$$
   A_H(x)\leq A_H\bigl(u(0,\cdot)\bigr)\leq S_L(q)\leq S_L(a)
$$
whenever $\MM(a;x)$ is nonempty, so $\Phi$ decreases action. 
  
%%%
\subsection{The isomorphism $\Psi$}\label{sec:Psi}
%%%

Consider once again a fibrewise quadratic Hamiltonian $H:S^1\times
T^*M\to\R$ as in~\S\ref{sec:Phi} with Legendre transform $L$. 
Following~\cite{AS-product-structures,Cieliebak-Latschev},
for $x\in\Crit(A_H)$ and $a\in\Crit(S_L)$ we define  
\begin{align*}
   \MM(x) := \{u:[0,\infty)\times S^1\to T^*M \mid \,\,  & 
     \pb_Hu=0,\\ 
     & u(+\infty,\cdot)=x,\ u(0,\cdot)\subset M\}
\end{align*}
and
\begin{equation*} 
   \MM(x;a) := \{u\in\MM(x)\mid u(0,\cdot)\in W^+(a)\},
\end{equation*}
where $W^+(a)$ is the stable manifold of $a$ for the negative pseudo-gradient flow of $S_L$,
see Figure~\ref{fig:Psi}. 

\begin{figure} [ht]
\centering
\input{Psi.pstex_t}
\caption{Moduli spaces for the map $\Psi$.}
\label{fig:Psi}
\end{figure} 

For generic $H$ these are manifolds of dimensions
$$
   \dim\MM(x) = \CZ(x),\qquad \dim\MM(x;a) = \CZ(x)-\ind(a). 
$$
The signed count of $0$-dimensional spaces $\MM(x;a)$ defines a chain map
\begin{equation*} 
   \Psi:FC_*(H)\to MC_*(S_L),\qquad x\mapsto\sum_{\ind(a)=\CZ(x)}\#\MM(x;a)\,a.
\end{equation*}
The induced map on homology is an isomorphism 
$$
   \Psi_*=\Phi_*^{-1}:FH_*(H)\stackrel{\cong}\longrightarrow MH_*(S_L)\cong H_*\Lambda,
$$
which is the inverse of $\Phi_*$ and intertwines the pair-of-pants product with the loop product.  
This was shown by Abbondandolo and Schwarz~\cite{AS-product-structures} 
with $\Z/2$-coefficients, and by Abouzaid~\cite{Abouzaid-cotangent} (following work of Kragh~\cite{Kragh}, see also Abbondandolo-Schwarz~\cite{AS-corrigendum}) with general coefficients, twisted on $H_*\Lambda$ by a suitable local system, see Appendix~\ref{sec:local-systems}. 
Moreover, Abouzaid proved that $\Psi_*$ is an isomorphism of twisted BV algebras.

Unfortunately, the map $\Psi$ does {\em not} preserve the action
filtrations. This already happens for a classical Hamiltonian
$H(q,p)=\frac12|p|^2+V(q)$: For $u\in\MM(x;a)$ the loop
$q=u(0,\cdot):S^1\to M$ satisfies
\begin{align*}
  A_H(x) &\geq A_H\bigl(u(0,\cdot)\bigr) = -\int_0^1V(q)dt \cr
  &\leq \int_0^1\Bigl(\frac12|\dot q|^2-V(q)\Bigr)dt = S_L(q)\geq S_L(a), 
\end{align*}
so the middle inequality goes in the wrong direction (even if $V=0$).

%%%
\subsection{An action estimate for Floer half-cylinders}\label{sec:action-estimate}
%%%

Now we will replace the quadratic Hamiltonians from the previous
subsections by Hamiltonians of the shape used in the definition of
symplectic homology. For Floer half-cylinders of such Hamiltonians, we
will estimate the length of their boundary loop on the zero section by 
the Hamiltonian action at $+\infty$. 

We equip $M$ with a Riemannian metric and choose the following data.

The Riemannian metric on $M$ induces a canonical almost complex 
structure $J_\st$ on $T^*M$ compatible with the symplectic form
$\om_\st=dp\wedge dq$ (Nagano~\cite{Nagano}, Tachibana-Okumura~\cite{Tachibana-Okumura}, see also~\cite[Ch.~9]{Blair}). In geodesic normal coordinates $q_i$ at a point $q$ and dual coordinates $p_i$ it is given by
$$
   J_\st:\frac{\p}{\p q_i}\mapsto -\frac{\p}{\p p_i},\qquad \frac{\p}{\p
   p_i}\mapsto\frac{\p}{\p q_i}.  
$$
We pick a nondecreasing smooth function $\rho:[0,\infty)\to(0,\infty)$
with $\rho(r)\equiv 1$ near $r=0$ and $\rho(r)=r$ for large $r$. Then$$
   J:\frac{\p}{\p q_i}\mapsto -\rho(|p|)\frac{\p}{\p p_i},\qquad
   \rho(|p|)\frac{\p}{\p p_i}\mapsto\frac{\p}{\p q_i}
$$
(in geodesic normal coordinates) defines a compatible almost complex
structure on $T^*M$ which agrees with $J_\st$ near the zero section and
is cylindrical outside the unit cotangent bundle. 

We view $r(q,p)=|p|$ as a function on $T^*M$. Then on $T^*M\setminus
M$ we have
$$
   \lambda=r\alpha,\qquad \alpha:=\frac{p\,dq}{|p|}.
$$
Consider a Hamiltonian of the form $H=h\circ r:T^*M\to\R$ for a
smooth function $h:[0,\infty)\to[0,\infty)$ vanishing near $r=0$. 
Then its Hamiltonian vector field equals $X_H=h'(r)R$, where $R$ is
the Reeb vector field of $(S^*M,\alpha)$. The symplectic and
Hamiltonian actions of a nonconstant 1-periodic Hamiltonian orbit
$x:S^1\to T^*M$ are given by 
$$
   \int_x\alpha = h'(r),\qquad A_H(x) = rh'(r)-h(r). 
$$
Given a slope $\mu>0$ which is not in the action spectrum of
$(S^*M,\alpha)$ and any $\eps>0$, 
we can pick $h$ with the following properties:
\begin{itemize}
\item $h(r)\equiv 0$ for $r\leq 1$ and $h'(r)\equiv \mu$ for $r\geq
  1+\delta$, with some $\delta>0$;
\item $h''(r)>0$ and $rh'(r)-h(r)-\eps\leq h'(r)\leq rh'(r)-h(r)$ for $r\in(1,1+\delta)$.
\end{itemize}
Specifically, we choose $0<\delta\le\eps/\mu$, we consider a smooth function $\beta:[0,\infty)\to [0,1]$ such that $\beta=0$ on $[0,1]$, $\beta=1$ on $[1+\delta,\infty)$ and $\beta$ is strictly increasing on $(1,1+\delta)$, and we define $h:[0,\infty)\to[0,\infty)$ by 
$$
h(r)=\mu \int_1^r \beta(\rho) \, d\rho.
$$
We have $rh'-h-h'=\mu\left( (r-1)\beta - \int_1^r\beta \right)$. This expression differentiates to $\mu(r-1)\beta'\ge 0$ and vanishes on $[0,1]$, hence it is nonnegative for $r\ge 0$. On the other hand, we have an upper bound $\mu\left( (r-1)\beta - \int_1^r\beta \right)\le \mu\delta$ for $r\in (1,1+\delta)$, and indeed for $r\ge 0$. Given our choice $\delta \le \eps/\mu$, this establishes the inequalities $rh'(r)-h(r)-\eps\leq h'(r)\leq rh'(r)-h(r)$ for all $r\ge 0$.

These inequalities imply that for each nonconstant $1$-periodic
Hamiltonian orbit $x$ we have
\begin{equation}\label{eq:ham-nonham-action}
   A_H(x)-\eps\leq \int_x\alpha \leq A_H(x). 
\end{equation}
With this choice of $J$ and $H$, consider now as in the previous
subsection a map $u:[0,\infty)\times S^1\to T^*M$ satisfying
$$
  \pb_Hu=0,\ u(+\infty,\cdot)=x,\ u(0,\cdot)\subset M. 
$$
Set $q(t):=u(0,t)$ and denote its length by
$$
   \ell(q) := \int_0^1|\dot q|dt. 
$$
The following proposition is a special case of~\cite[Lemma
  7.2]{Cieliebak-Latschev}. Since the proof was only sketched there,
we give a detailed proof below. 

\begin{proposition}\label{prop:action-estimate}
Let $H,J,u$ be as above with $q=u(0,\cdot)$ and a nonconstant
orbit $x=u(+\infty,\cdot)$. Then 
$$
   \ell(q)\leq \int_x\alpha \leq A_H(x). 
$$
The first inequality is an equality if and only if $u$ is contained in
the half-cylinder over a closed geodesic $q$, in particular $x$ is the
lift of the geodesic $q$. 
\end{proposition}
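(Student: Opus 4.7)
The second inequality $\int_x\alpha\le A_H(x)$ is immediate from~\eqref{eq:ham-nonham-action} applied to the nonconstant orbit $x$, so the real work is the first one. I would begin with the observation that the Floer equation degenerates on the zero-section boundary: since $h\equiv 0$ on $[0,1]$ and $u(0,\cdot)\subset M=\{r=0\}$, we have $X_H(u(0,t))=0$, so
$$
\partial_s u(0,t)=-J(u(0,t))\,\partial_t u(0,t)=-J_{\mathrm{st}}\dot q(t).
$$
Because $J_{\mathrm{st}}$ interchanges the horizontal and vertical subspaces isometrically along the zero section (where the almost-K\"ahler metric induced by $J$ restricts to the chosen Riemannian metric on $M$), the vector $-J_{\mathrm{st}}\dot q(t)$ is the vertical metric dual of $\dot q(t)$, of norm $|\dot q(t)|$. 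Consequently,
$$
\ell(q)=\int_0^1|\dot q(t)|\,dt=\int_0^1|\partial_s u(0,t)|\,dt.
$$

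The heart of the proof is estimating this $L^1$-norm at $s=0$ by $\int_x\alpha=h'(r_0)$. I would run a length-area argument on the sublevel sets $\Sigma_c:=(r\circ u)^{-1}([0,c])\subset[0,\infty)\times S^1$ for generic regular values $c$. For $c\in(0,1)$ we have $h\equiv 0$ on $\Sigma_c$, so the Floer equation reduces to $\overline{\partial}_Ju=0$ and $u|_{\Sigma_c}$ is $J_{\mathrm{st}}$-holomorphic; Stokes' theorem applied to $\omega=d\lambda$, together with $\lambda|_M=0$ on the boundary $\{s=0\}\times S^1$ and $\lambda=c\alpha$ on the remaining boundary $\gamma_c\subset\{r(u)=c\}$, yields
$$
\int_{\Sigma_c}|\partial_s u|^2\,ds\,dt=\int_{\Sigma_c}u^*\omega=c\int_{\gamma_c}u^*\alpha.
$$
I would then combine a Cauchy--Schwarz passage between $L^1$- and $L^2$-norms in $t$ with the radial identities
$$
\alpha(\partial_t u)=\partial_s\log r(u)+h'(r(u)),\qquad \alpha(\partial_s u)=-\partial_t\log r(u),
$$
which follow from $dr\circ J=-r\alpha$ on the cylindrical region and the Floer equation, to bound $\int_0^1|\partial_s u(0,t)|\,dt$ by $\int_{\gamma_c}u^*\alpha$. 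Letting $c\to r_0$ and using the exponential convergence of $u$ to $x$ at $s=+\infty$ then identifies the limit with $\int_x\alpha$, delivering the inequality. For $c>1$ the Stokes formula acquires a contribution from $\partial_s(H\circ u)$ that must be tracked, controlled by the uniform bound $h(c)\le h(r_0)\le\varepsilon$ built into the construction of $h$.

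For the equality case, saturation of Cauchy--Schwarz at $\{s=0\}\times S^1$ forces $|\dot q(t)|$ to be constant in $t$, so $q$ is traversed at constant speed; saturation of the length-area estimate forces $u|_{\Sigma_c}$ to be the explicit $J_{\mathrm{st}}$-holomorphic half-cylinder over $q$, and asymptotic convergence to $x$ then identifies $q$ with a closed geodesic parameterized with speed $h'(r_0)$ and $x$ with its Reeb lift at level $r_0$. The main technical obstacle is executing the length-area estimate cleanly, particularly controlling the level-set geometry of $\gamma_c$ as $c\to r_0$ and handling the transition across the level $\{r=1\}$ where the Hamiltonian perturbation switches on; this is precisely where the sketch in~\cite{Cieliebak-Latschev} leaves the details unchecked.
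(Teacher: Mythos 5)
Your handling of the second inequality is fine, and the opening observation that $\partial_s u(0,t)=-J_{\mathrm{st}}\dot q(t)$ (hence $\ell(q)=\int_0^1|\partial_su(0,t)|\,dt$) is correct and is implicit in the paper too. But the core of your argument does not close, and it also takes a route rather different from the paper's, so let me address both points.

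The paper does not run a length-area argument on sublevel sets. Its mechanism is: (i) a pointwise inequality, Lemma~\ref{lem:nonneg}, showing $d\alpha_\eps(v,Jv)\ge 0$ for the specific $J$ and the regularized form $\alpha_\eps=\tau_\eps(|p|)p\,dq/|p|$; this gives $u^*d\alpha_\eps\ge 0$ on all of $(0,\infty)\times S^1$; (ii) a careful computation showing that $\lim_{\sigma\to 0}\lim_{\eps\to 0}\int_{S^1}u_\sigma^*\alpha_\eps=\ell(q)$, where $u_\sigma=u(\sigma,\cdot)$; and (iii) Stokes on $[\sigma,\infty)\times S^1$ giving $\int u^*d\alpha=\int_x\alpha-\ell(q)$. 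The regularization $\alpha_\eps$ is exactly what makes the boundary loop on the zero section (where $\alpha$ is undefined) accessible to Stokes, and the pointwise positivity of $u^*d\alpha_\eps$ is what propagates the sign.

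Your proposal replaces this with Stokes for $\omega=d\lambda$ on the sublevel set $\Sigma_c=(r\circ u)^{-1}([0,c])$, which yields $\int_{\Sigma_c}|\partial_s u|^2\,ds\,dt=c\int_{\gamma_c}u^*\alpha$. This is correct as far as it goes, but the next step — ``a Cauchy--Schwarz passage between $L^1$- and $L^2$-norms in $t$'' that bounds $\ell(q)=\int_0^1|\partial_s u(0,t)|\,dt$ by $\int_{\gamma_c}u^*\alpha$ — is not a valid deduction. The energy identity compares an $L^2$-bulk quantity on $\Sigma_c$ to a boundary integral over $\gamma_c$; Cauchy--Schwarz only gives $\ell(q)\le E(q)^{1/2}$, which does not connect to $\int_{\gamma_c}u^*\alpha$. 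A heuristic expansion $r\circ u(s,t)\approx s|\dot q(t)|$ near $s=0$ suggests that $\int_{\gamma_c}u^*\alpha\to\ell(q)$ as $c\to 0$, but this is an \emph{equality} in the limit, not a bound. To upgrade this to the inequality $\ell(q)\le\int_x\alpha$ you need monotonicity of $c\mapsto\int_{\gamma_c}u^*\alpha$, which again requires exactly the pointwise statement $u^*d\alpha\ge 0$ (i.e.\ Lemma~\ref{lem:nonneg}). That lemma — the actual engine of the argument — is missing from your proposal. Moreover, the level sets $\gamma_c$ are genuinely badly behaved as $c\to 0$ near points where $\dot q(t)=0$ (they can run off to $s=\infty$ and need not be smooth circles), which is precisely the degeneracy the paper's $\alpha_\eps$-regularization is designed to tame. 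Your radial identities $\alpha(\partial_t u)=\partial_s\log r(u)+h'(r(u))$, $\alpha(\partial_s u)=-\partial_t\log r(u)$ are correct in the cylindrical region $r\ge 1$, but they do not hold near the zero section (where $J=J_{\mathrm{st}}$ is not cylindrical and $\alpha$ is not even defined), which is exactly where the difficulty lies; so they cannot carry the estimate across the crucial region $r\in(0,1)$.

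In short: the approach is structurally different from the paper's and, as written, the central estimate is not established. Supplying the missing ingredients (pointwise nonnegativity of $u^*d\alpha$ from Lemma~\ref{lem:nonneg}, the regularization near the zero section, and a rigorous computation of the $c\to 0$ boundary limit) would essentially reconstruct the paper's Steps 1--3, but starting from coordinate circles $\{s=\sigma\}$ rather than level sets $\{r\circ u=c\}$ is the cleaner choice since the former do not develop singularities.
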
   

The idea of the proof is to show that
\begin{equation*}
   0\leq \int_{(0,\infty)\times S^1}u^*d\alpha = \int_x\alpha-\ell(q). 
\end{equation*}
Since the image $u\bigl((0,\infty)\times S^1\bigr)$ can hit the zero section $M$
where $\alpha$ is undefined, the quantity $\int_{(0,\infty)\times S^1}u^*d\alpha$ has to be interpreted as an improper integral as follows. Given $\eps>0$, let $\tau=\tau_\eps:[0,\infty) \to
[0,\infty)$ be a smooth function with $\tau'(r)\geq 0$ for all $r$, $\tau(r)=0$
near $r=0$, and $\tau(r)=1$ for $r\geq\eps$, and consider the globally defined $1$-form on $T^*M$ given by  
$$
   \alpha_\eps := \frac{\tau(|p|) p\,dq}{|p|}. 
$$
We now \emph{define} 
\begin{equation} \label{eq:intdalpha}
\int_{(0,\infty)\times S^1}u^*d\alpha = \lim \limits_{\sigma\searrow 0}\lim \limits_{\eps\searrow 0}\int_{[\sigma,\infty)\times S^1}u^*d\alpha_\eps.
\end{equation}

The proof of Proposition~\ref{prop:action-estimate} is based on the following lemma. 

\begin{lemma}\label{lem:nonneg}
For any $v\in T_{(q,p)}T^*Q$, we have  
$$
   d\alpha_\eps(v,Jv)\geq 0.
$$
At points where $\tau'(|p|)>0$, equality only holds for $v=0$, whereas at
points where $\tau'=0$ and $\tau\neq 0$ equality holds if and only if 
$v$ is a linear combination of $p\p_p$ and $p\p_q$. 
\end{lemma}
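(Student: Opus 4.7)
The plan is to compute $d\alpha_\eps(v,Jv)$ directly in geodesic normal coordinates $(q_i,p_i)$ centered at $q$, in which both $J$ and the relevant forms take very clean shapes. Away from the zero section one can factor $\alpha_\eps=\tau(r)\alpha$ with $r=|p|$ and $\alpha=\lambda/r$, and using $d\lambda=\om_\st$ this yields
\[
d\alpha_\eps \;=\; \tfrac{\tau'(r)}{r}\,dr\wedge\lambda \;+\; \tau(r)\!\left(-\tfrac{1}{r^2}\,dr\wedge\lambda \;+\; \tfrac{1}{r}\,\om_\st\right).
\]
At the chosen point $dr=\sum_i(p_i/r)\,dp_i$; writing $v=\sum_i(x_i\p_{q_i}+y_i\p_{p_i})$ one gets $Jv=\sum_i(-\rho x_i\p_{p_i}+\rho^{-1}y_i\p_{q_i})$. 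At a point of the zero section $\alpha_\eps$ vanishes identically on a neighbourhood (since $\tau\equiv 0$ near $r=0$), so the inequality is trivial there and neither equality clause applies.

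A direct pairing computation gives $\om_\st(v,Jv)=\rho^{-1}|y|^2+\rho|x|^2$ and $(dr\wedge\lambda)(v,Jv)=(r\rho)^{-1}(p\cdot y)^2+(\rho/r)(p\cdot x)^2$, and after regrouping the $\tau(r)/r^2$-contribution from $dr\wedge\lambda$ with the $\om_\st$-contribution one arrives at
\[
d\alpha_\eps(v,Jv) \;=\; \tfrac{\tau'(r)}{r^2\rho}(p\!\cdot\! y)^2 + \tfrac{\tau'(r)\rho}{r^2}(p\!\cdot\! x)^2 + \tfrac{\tau(r)}{r\rho}\!\left(|y|^2-\tfrac{(p\cdot y)^2}{r^2}\right) + \tfrac{\tau(r)\rho}{r}\!\left(|x|^2-\tfrac{(p\cdot x)^2}{r^2}\right).
\]
All four summands are non-negative: $\tau,\tau',\rho\geq 0$ by construction, and the two bracketed expressions are the Cauchy--Schwarz defects for the pairs $(p,y)$ and $(p,x)$ in $\R^n$. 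This proves $d\alpha_\eps(v,Jv)\geq 0$.

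For the equality analysis, note first that if $\tau'(r)>0$ then $\tau(r)>0$ as well (since $\tau$ is smooth, nondecreasing, and vanishes near $0$), so all four coefficients are strictly positive; the first two summands force $p\cdot y=p\cdot x=0$, and the last two force $y$ proportional to $p$ and $x$ proportional to $p$, which together force $v=0$. If instead $\tau'(r)=0$ but $\tau(r)\neq 0$, the first two summands vanish automatically and $d\alpha_\eps(v,Jv)=0$ reduces to the conditions $y,x$ both proportional to $p$, i.e.\ $v\in\operatorname{span}\{p\,\p_p,\,p\,\p_q\}$. The only delicate point is the bookkeeping of $r$- and $\rho$-factors in the pairings $(dr\wedge\lambda)(v,Jv)$ and $\om_\st(v,Jv)$; once those are in hand, the recombination into the Cauchy--Schwarz form is what makes the non-negativity manifest, and no further analytic input is required.
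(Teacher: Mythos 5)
Your proof is correct and follows essentially the same approach as the paper: a direct computation of $d\alpha_\eps(v,Jv)$ in geodesic normal coordinates, with the non-negativity reduced to Cauchy--Schwarz defects and the equality case read off from the sign of each coefficient. The only difference is organizational — you factor $d\alpha_\eps$ cleanly through $dr\wedge\lambda$ and $\om_\st$ and treat the general $v$ in one go, whereas the paper expands $d\alpha_\eps$ componentwise and first handles $v\in\ker dq$, relegating the general $v$ to a ``similarly'' — but the underlying argument is identical.
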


\begin{proof}
In geodesic normal coordinates we compute
\begin{align*}
   d\alpha_\eps &= d\left(\sum_i\frac{\tau(|p|)p_idq_i}{|p|}\right) \\ 
   & =  \sum_i\frac{\tau(|p|)dp_i\wedge dq_i}{|p|} +
   \sum_{i,j}\frac{(\tau'(|p|)|p|-\tau(|p|))p_ip_jdp_i\wedge dq_j}{|p|^3}.
\end{align*}
For a vector of the form $v=\sum_ia_i\rho(|p|)\p_{p_i}$ we obtain
$J_\rho v=\sum_ia_i\p_{q_i}$ and hence by the Cauchy-Schwarz inequality
\begin{align*}
   d\alpha_\eps(v,Jv) &= \sum_i\frac{\tau(|p|)\rho(|p|)a_i^2}{|p|} +
   \sum_{i,j}\frac{(\tau'(|p|)|p|-\tau(|p|))\rho(|p|)p_ip_ja_ia_j}{|p|^3} \cr
   &= \frac{\tau(|p|)\rho(|p|)}{|p|^3}(|a|^2|p|^2-\la a,p\ra^2) +
   \frac{\tau'(|p|)\rho(|p|)}{|p|^2}\la a,p\ra^2 \cr 
   &\geq 0.
\end{align*}
At points where $\tau'>0$, equality only holds for $a=0$, and at points where
$\tau'=0$ and $\tau>0$ equality holds iff $a$ is a multiple of $p$. Similarly, for a general
vector $v=\sum_ia_i\rho(|p|)\p_{p_i} - \sum_ib_i\p_{q_i}$ we get
$d\alpha_\eps(v,Jv)\geq 0$, with equality iff either $a=b=0$ or $\tau'=0$ and both
$a$ and $b$ are multiples of $p$. 
\end{proof}

\begin{proof}[Proof of Proposition~\ref{prop:action-estimate}]
The proof consists in 3 steps.

\smallskip 

{\it Step 1. We prove that $\int_{(0,\infty)\times S^1} u^*d\alpha\ge 0$.}

\smallskip 

In view of Definition~\eqref{eq:intdalpha}, it is enough to show that $u^*d\alpha_\eps\geq 0$ on all of $(0,\infty)\times S^1$.  
To see this, recall that $u$ satisfies the equation
$\p_su+J(u)\bigl(\p_tu-X_H(u)\bigr)=0$, so that 
$$
   u^*d\alpha_\eps = d\alpha_\eps(\p_su,\p_tu)ds\wedge dt =
   d\alpha_\eps\bigl(\p_su,J(u)\p_su+X_H(u)\bigr)ds\wedge dt.
$$
Now at points in $D^*M$ the Hamiltonian vector field $X_H$ vanishes.
At points outside $D^*M$ we have $X_H=h'(r)R$ and
$\alpha_\eps=\alpha$ (we can assume w.l.o.g. $\eps\le 1$), so that $i_{X_H}d\alpha_\eps=h'(r)i_Rd\alpha=0$.
In either case we have 
$$
u^*d\alpha_\eps =
d\alpha_\eps(\p_su,J(u)\p_su)ds\wedge dt,
$$ 
which is nonnegative by Lemma~\ref{lem:nonneg}. 

\smallskip 

{\it Step 2. Denote $u_\sigma=u(\sigma,\cdot)$ for $\sigma>0$. We have 
$$
\lim \limits_{\sigma\searrow 0}\lim \limits_{\eps\searrow 0}\int_{S^1}u_\sigma^*\alpha_\eps = \ell(q).
$$
}

\smallskip 

To see this we consider the map 
$$
   \tilde q:[0,\infty)\times S^1\to T^*M,\qquad \tilde q(s,t):=\bigl(q(t),s\dot q(t)\bigr),
$$
and denote as above $\tilde q_\sigma=\tilde q(\sigma,\cdot)$ for $\sigma>0$. 
Since $J=J_\st$ near the zero section, the maps $u$ and $\tilde q$ 
agree with their first derivatives along the boundary loop $q$ at $s=0$, hence $u_\sigma$ and $\tilde q_\sigma$ are $C^1$-close for $\sigma$ close to $0$. On the other hand $\alpha_\eps$ is $C^0$-bounded near the zero section uniformly with respect to $\eps\to 0$. These two facts imply that the integrals $\int_{S^1}u_\sigma^*\alpha_\eps$ and $\int_{S^1} \tilde q_\sigma^*\alpha_\eps$ are $C^0$-close for $\sigma$ close to $0$, uniformly with respect to $\eps\to 0$, and therefore
$$
\lim \limits_{\sigma\searrow 0}\lim \limits_{\eps\searrow 0}\int_{S^1}u_\sigma^*\alpha_\eps = 
\lim \limits_{\sigma\searrow 0}\lim \limits_{\eps\searrow 0}\int_{S^1}\tilde q_\sigma^*\alpha_\eps.
$$

We now prove that  
\begin{equation}\label{eq:limepsqsigmaalphaeps}
\lim \limits_{\eps\searrow 0}\int_{S^1}\tilde q_\sigma^* \alpha_\eps = \ell(q)
\end{equation}
for all $\sigma>0$, which implies the desired conclusion. Fix therefore $\sigma>0$. Let $I_\eps=\{t\in S^1\, : \, |\sigma\dot q(t)|\le \eps\}$, so that $I_\eps\subset I_{\eps'}$ for $\eps\le \eps'$ and $\int_{\eps>0} I_\eps = I_0 = \{t\, : \, \dot q(t)=0\}$. On the one hand we have 
\begin{align*}
\int_{S^1\setminus I_\eps}\tilde q_\sigma^*\alpha_\eps & = \int_{S^1\setminus I_\eps}\tilde q_\sigma^*\alpha 
= \int_{S^1\setminus I_\eps}\alpha_{(q(t),\sigma\dot q(t))}\cdot \dot{\tilde q}(t) \\
& = \int_{S^1\setminus I_\eps} \frac{\sigma |\dot q(t)|^2}{|\sigma\dot q(t)|}dt 
 = \int_{S^1\setminus I_\eps} |\dot q(t)|dt 
 = \ell(q|_{S^1\setminus I_\eps}).
\end{align*}

We can therefore estimate 
\begin{align*}
\left| \int_{S^1} \tilde q_\sigma^*\alpha_\eps - \ell(q|_{S^1\setminus I_\eps})\right| 
& = \left|\int_{I_\eps} \tilde q_\sigma^*\alpha_\eps\right|
= \left|\int_{I_\eps}\alpha_\eps(\tilde q_\sigma(t))\cdot \dot{\tilde q}_\sigma(t) dt\right| \\
& \le C \cdot \frac{\eps}{\sigma}\cdot m(I_\eps) \to0 \quad \textrm{for } \eps\to 0.
\end{align*}
Here $m(I_\eps)$ is the measure of $I_\eps$, uniformly bounded by the length of the circle, $C>0$ is a $C^0$-bound on $\alpha_\eps$ near the $0$-section, uniform with respect to $\eps\to 0$, and $\eps/\sigma$ is by definition the bound on $|\dot q(t)|$ on $I_\eps$. The estimate follows from $\dot{\tilde q}_\sigma = (\dot q,\sigma \ddot q)$ and the fact that the $1$-form $\alpha_\eps$ only acts on the first component of the vector $\dot{\tilde q}_\sigma$. 

Since $\lim\limits_{\eps\searrow 0}\ell(q|_{S^1\setminus I_\eps}) = \ell(q|_{S^1\setminus I_0}) =\ell(q)$, equality~\eqref{eq:limepsqsigmaalphaeps} follows.

\smallskip 

{\it Step 3. We prove 
$$
\int_{(0,\infty)\times S^1}u^*d\alpha = \int_x \alpha -\ell(q). 
$$
}

\smallskip 

Indeed, for $\sigma,\eps>0$ Stokes' theorem gives
$$
\int_{[\sigma,\infty)\times S^1}u^*d\alpha_\eps =
     \int_x\alpha - \int_{u_\sigma}\alpha_\eps.  
$$
(The $1$-form $\alpha_\eps$ is equal to $\alpha$ near the orbit $x$.) The desired equality follows from the definition of $\int_{(0,\infty)\times S^1}u^*d\alpha$ and Step~2. 

{\it Conclusion.} Combining Step~3 with Step~1 we obtain the first inequality $\ell(q)\leq \int_x\alpha$ in
Proposition~\ref{prop:action-estimate}. Moreover, Lemma~\ref{lem:nonneg} (in the limit $\eps\to 0$) shows that
this inequality is an equality if and only if $u$ is contained in
the half-cylinder over a closed geodesic. 

The second inequality
$\int_x\alpha\leq A_H(x)$ follows from~\eqref{eq:ham-nonham-action}.
\end{proof}

%%%
\subsection{The isomorphism $\Psi$ from symplectic to loop homology}\label{sec:Psi-symp}
%%%

Now we adjust the definition of $\Psi$ to symplectic homology.
For $J,H$ as in the previous subsection and $x\in\Crit(A_H)$ we define
as before
\begin{align*}
   \MM(x) := \{u:[0,\infty)\times S^1\to T^*M \mid \, \, & 
     \pb_Hu=0,\\ 
     & u(+\infty,\cdot)=x,\ u(0,\cdot)\subset M\}.
\end{align*}
By Proposition~\ref{prop:action-estimate} the loop $q=u(0,\cdot)$
satisfies $\ell(q)\leq A_H(x)$. Moreover, the loop $q$ is smooth and in particular has Sobolev class $H^1$, hence following Anosov~\cite{Anosov} it has a unique $H^1$-reparametrization $\ol q:S^1\to M$, with $|\dot{\ol q}|\equiv{\rm const}$ and $\ol q(0)=q(0)$ (we say that $\ol q$ is \emph{parametrized proportionally to arclength, or PPAL}).
We have
$$
   \ell(q) = \ell(\ol q) = \int_0^1|\dot{\ol q}|dt =
   \Bigl(\int_0^1|\dot{\ol q}|^2dt\Bigr)^{1/2} = E(\ol q)^{1/2}
$$
with the {\em energy}
$$
   E:\Lambda\to\R,\qquad E(q) := \int_0^1|\dot q|^2dt.
$$
The energy defines a smooth Morse-Bott function on the loop space
whose critical points are constant loops and geodesics parametrized
proportionally to arclength. We denote by $W^\pm(a)$ the
unstable/stable manifolds of $a\in\Crit(E)$ with respect to $\nabla E$.
Now for $x\in\Crit(A_H)$ and $a\in\Crit(E)$ we define  
\begin{equation*} 
   \MM(x;a) := \{u\in\MM(x)\mid \ol{u(0,\cdot)}\in W^+(a)\}.
\end{equation*}
An element $u$ in this moduli space still looks as in Figure~\ref{fig:Psi},
where now the loop $q=u(0,\cdot)$ is reparametrized proportionally to
arclength and then flown into $a$ using the flow of $-\nabla E$. 
By Proposition~\ref{prop:action-estimate}, for $u\in \MM(x;a)$ we have
the estimate
\begin{equation}\label{eq:Psi-action-estimate}
   A_H(x) \geq \ell(q) = E(\ol q)^{1/2} \geq E(a)^{1/2}.
\end{equation}
To define the map $\Psi$, we now perturb $H$ and $E$ by small Morse
functions near the constant loops on $M$ and the closed geodesics, and
we generically perturb the almost complex structure $J$ from the
previous subsection. For generic such perturbations, each $\MM(x;a)$
is a manifold of dimension 
$$
   \dim\MM(x;a) = \CZ(x)-\ind(a). 
$$
The signed count of $0$-dimensional spaces $\MM(x;a)$ defines a chain map
\begin{equation*} 
   \Psi:FC_*(H)\to MC_*(E^{1/2}),\qquad x\mapsto\sum_{\ind(a)=\CZ(x)}\#\MM(x;a)\,a.
\end{equation*}
Here $MC_*(E^{1/2})$ denotes the Morse chain complex of $E:\Lambda\to\R$,
graded by the Morse indices of $E$, but filtered by the {\em square
  root} $E^{1/2}$ (which is decreasing under the negative gradient flow of $E$).  
The action estimate~\eqref{eq:Psi-action-estimate} continues to hold
for the perturbed data up to an arbitrarily small error, which we can
make smaller than the smallest difference between lengths of geodesics
below a given length $\mu$. Thus $\Psi$ preserves the filtrations
$$
   \Psi:FC_*^{<b}(H)\to MC^{<b}_*(E^{1/2}).
$$
The induced maps on filtered Floer homology 
$$
   \Psi_*:FH_*^{(a,b)}(H)\to MH_*^{(a,b)}(E^{1/2})\cong H_*^{(a,b)}\Lambda
$$
have upper triangular form with respect to the filtrations with $\pm1$
on the diagonal (given by the half-cylinders over closed geodesics in
Proposition~\ref{prop:action-estimate}), so they are isomorphisms.  
It follows from~\cite{AS-product-structures,Abouzaid-cotangent} that
$\Psi_*$ intertwines the pair-of-pants product with the loop product,
as well as the corresponding BV operators. Passing to the direct limit
over Hamiltonians $H$, we have thus proved

\begin{theorem}\label{thm:Psi-symp}
The map $\Psi$ induces isomorphisms on filtered symplectic homology 
$$
   \Psi_*:SH_*^{(a,b)}(D^*M)\stackrel{\cong}\longrightarrow
   MH_*^{(a,b)}(E^{1/2})\cong H_*^{(a,b)}\Lambda, 
$$
where the left hand side is filtered by non-Hamiltonian action and
the right hand side by the square root of the energy. 
These isomorphisms intertwine the pair-of-pants product with the
Chas-Sullivan loop product, as well as the corresponding BV operators.
\qed
\end{theorem}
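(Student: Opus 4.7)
The plan is to assemble Theorem~\ref{thm:Psi-symp} from four ingredients: chain map property, filtration preservation, bijective behaviour on the ``diagonal'' of the filtration, and compatibility with products and BV. The chain map property is standard Floer--Morse continuation analysis on the moduli spaces $\MM(x;a)$, which are cut out transversely for generic perturbations of $(H,J,E)$; a standard gluing and compactness argument for the $1$-dimensional components shows $\p^{MC}\circ\Psi-\Psi\circ\p^F=0$. I would take this as essentially the same argument as in~\cite{AS,AS-product-structures}, modified only by the boundary condition ``$\ol{u(0,\cdot)}\in W^+(a)$'' for $E$ (rather than for $S_L$) and the fact that compactness for $\MM(x;a)$ follows from the a~priori length bound $\ell(u(0,\cdot))\leq A_H(x)$ supplied by Proposition~\ref{prop:action-estimate}.

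Next I would address filtration preservation. For the unperturbed data, Proposition~\ref{prop:action-estimate} combined with $\ell(q)=E(\ol q)^{1/2}$ gives $E(a)^{1/2}\leq A_H(x)$ whenever $\MM(x;a)\neq\emptyset$. Perturbing $H$, $J$ and $E$ by Morse functions supported near the critical manifolds introduces an error which, by the argument used to choose $h$ satisfying~\eqref{eq:ham-nonham-action}, can be chosen smaller than any prescribed $\eps>0$, in particular smaller than the minimal gap between distinct values of $E^{1/2}$ on the finite set of geodesics of length $<\mu$. Thus $\Psi$ sends $FC_*^{<b}(H)$ into $MC_*^{<b}(E^{1/2})$ for each $b$ strictly below the slope~$\mu$, and induces a map on filtered homology.

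The key step, and the main obstacle, is to show that the induced map on filtered homology is an isomorphism. I would argue this by showing that at the ``diagonal'' of the filtration, $\Psi$ is a signed bijection. Concretely, fix a critical value of $E^{1/2}$ (a length of a closed geodesic $\ol q$) and its image under the correspondence $\ol q\leftrightarrow x$ where $x$ is the Hamiltonian orbit that lifts $\ol q$ at the unique level $r$ with $h'(r)=\ell(\ol q)$. The equality case of Proposition~\ref{prop:action-estimate} shows that any $u\in\MM(x;a)$ realising the equality $A_H(x)=E(a)^{1/2}$ must be the Floer half-cylinder over $\ol q$, and this half-cylinder is rigid and contributes $\pm 1$ after the perturbation is turned on. Therefore, ordering generators of $FC_*$ and of $MC_*(E^{1/2})$ by their filtration values, the matrix of $\Psi$ is block upper-triangular with blocks of the form ``$\pm\id$ plus lower-order filtration corrections'' on the diagonal, so $\Psi$ is an isomorphism at each filtration level, and hence on each window $FH^{(a,b)}_*\to MH^{(a,b)}_*$ by passing to the associated five-lemma argument on the short exact sequences
\begin{equation*}
0 \to FC^{<a}_*(H) \to FC^{<b}_*(H) \to FC^{(a,b)}_*(H) \to 0
\end{equation*}
and the analogous ones on the Morse side.

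Finally, the intertwining of the pair-of-pants product with the Chas--Sullivan loop product and the compatibility with the BV operators is not a new computation: after passing to the direct limit $\mu\to\infty$, the induced map $\Psi_*$ on unfiltered homology coincides with the Abbondandolo--Schwarz isomorphism (a cobordism argument joining a linear Hamiltonian to a quadratic one through admissible shapes produces a chain homotopy between the two versions of $\Psi$). Hence the product and BV compatibility follow from the results of Abbondandolo--Schwarz~\cite{AS-product-structures,AS-corrigendum} and Abouzaid~\cite{Abouzaid-cotangent}, and pass to the direct limit over slopes to give the statement for $SH_*(D^*M)$. The filtered refinement then follows by taking the limit of the filtered isomorphisms constructed above.
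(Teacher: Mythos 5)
Your proposal follows essentially the same route as the paper: Proposition~\ref{prop:action-estimate} gives the filtration preservation, the equality case of that proposition gives the upper-triangular matrix with $\pm 1$ diagonal entries (the rigid half-cylinders over closed geodesics), and the product/BV statements are imported from Abbondandolo--Schwarz and Abouzaid after passing to the direct limit over slopes. The only cosmetic difference is that you spell out the five-lemma passage to filtration windows and the chain-homotopy between the linear and quadratic versions of $\Psi$, which the paper leaves implicit; both are standard and your added detail is sound.
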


\section{Viterbo's isomorphism intertwines $A_2^+$-structures}\label{sec:pop-GH-iso}
%%%%%%%%%%%%%%%%%%%%%%%%%%%%%%%%%%%%%%%%%%%%%%%%%%%%%%%%%%%%%%%%%

We keep the setup from the previous section, so $M$ is a closed
oriented Riemannian manifold and $D^*M\subset T^*M$ its unit disc cotangent bundle.
In this section we prove Theorem~\ref{thm:main2}, which will be an
immediate consequence of earlier results and the following theorem.  

\begin{theorem}\label{thm:cont-loop-iso} 
The chain maps underlying the isomorphism 
$$
\Psi_*:SH_*(D^*M)\stackrel{\cong}\longrightarrow H_*\Lambda
$$
from Theorem~\ref{thm:Psi-symp} are morphisms of special
$A_2^+$-algebras. For $n\neq 2$ these morphisms are special.  
\end{theorem}

\begin{proof}[Proof of Theorem~\ref{thm:main2}]
The first assertion follows from Proposition~\ref{prop:A2+loop} and
Proposition~\ref{prop:TQFT+}. The second assertion follows from 
Theorem~\ref{thm:cont-loop-iso} and Proposition~\ref{prop:A2+toA2-mor}.
\end{proof}

\begin{remark}[open questions about the various identifications between Morse and Floer complexes for cotangent bundles] 
\label{rmk:PhiPsiAS}
We have already discussed in~\S\ref{sec:Phi} the action preserving chain-level isomorphism 
$\Phi:MC_*(S_L)\to FC_*(H)$ of Abbondandolo-Schwarz~\cite{AS}, defined for an asymptotically quadratic Hamiltonian $H$. It would be interesting to clarify whether $\Phi$ also defines a morphism of special $A_2^+$-algebras. 

Abbondandolo-Schwarz constructed in~\cite{AS-Legendre} an action-preserving chain level isomorphism $\Psi_{AS}:FC_*(H)\to MC_*(S_L)$ which is a chain homotopy inverse of $\Phi$. They also argued that, from the perspective of the Legendre transform, the moduli spaces that define $\Psi_{AS}$ arise naturally from the moduli spaces for $\Phi$. We expect that $\Psi_{AS}$ and our morphism $\Psi$ can be connected by a suitable chain homotopy (we know that they induce the same map $\Phi^{-1}_*$ in homology). It would also be interesting to clarify whether $\Psi_{AS}$ is a morphism of special $A_2^+$-algebras. We expect this to hold or fail for both $\Psi_{AS}$ and $\Phi$ simultaneously. 

One can further ask whether $\Psi_{AS}$ and $\Psi$ are homotopic as morphisms of $A_2^+$-structures. This would require in particular to develop the discussion of $A_2^+$-structures from~\cite{CO-cones} by defining such a notion of homotopy.
\end{remark}

To prove Theorem~\ref{thm:cont-loop-iso}, we need to verify the conditions in Definition~\ref{def:A2+mor}
for each chain map $\Psi:FC_*(K)\to MC_*(E^{1/2})$ associated to a
Hamiltonian $K=K_\mu$ as in the previous subsection. 
The first part of condition (i) holds because $\Psi q_0=q_0$, which follows directly
from the definition of $\Psi$. Moreover, seen through the canonical identifications $FC_*^{=0}(K)\equiv MC_*^{=0}(E^{1/2})\equiv MC^{n-*}(V)$, the restriction of $\Psi$ to the energy zero Floer subcomplex acts as the identity. This shows that the second part of condition~(i) is also satisfied. 

The map $\Gamma:FC_*(K)\otimes FC_*(K)\to MC_*(E^{1/2})$ in condition (ii) is 
defined by the count of elements in $0$-dimensional moduli spaces of solutions to a $1$-parametric mixed Floer-Morse problem which we describe below. Inspection of the boundary of the $1$-dimensional moduli spaces of solutions shows that $\Gamma$ satisfies condition (ii). This fact was previously proved in~\cite{AS-product-structures}, which contains the description of an essentially equivalent map $\Gamma$. 

The $1$-parametric Floer-Morse problem is a count of Floer discs in $T^*M$ with two positive
punctures and boundary on the zero section, followed by a Morse pseudo-gradient line in the loop space of $M$. It is obtained as a concatenation of 3 distinct $1$-parametric Floer-Morse problems described by Figure~\ref{fig:Gamma}. {\it On the first interval of parametrization} the underlying moduli space of curves is that of discs with 2 interior punctures and one boundary marked point, where the punctures and the marked point are requested to be aligned. At the negative end of the interval the 2 interior punctures collide and form a sphere bubble (this gives rise to the term $-\Psi\mu^F$ in the expression of $[\p,\Gamma]$), whereas at the positive end of the interval the second puncture collides with the marked point and gives rise to a disc bubble containing the marked point. In this configuration the interior punctures, the node and the marked point are all aligned. {\it On the second interval of parametrization} we allow the marked point to move clockwise towards the node. At the positive end of this interval the marked point collides with the node and forms a disc bubble. However, this disc bubble is constant because the 0-section is an exact Lagrangian, so that we directly replace the configuration by one where the marked point lies at the node. {\it On the third and last interval of parametrization} we insert length $T>0$ pseudo-gradient lines before imposing the incidence condition at the marked point. The positive end of this interval of parametrization corresponds to $T=\infty$ and gives rise to the term $\mu (\Psi\otimes\Psi)$ in the expression of $[\p,\Gamma]$. 

In Figure~\ref{fig:Gamma} the dashed lines represent pseudo-gradient flow lines for the energy functional on loop space. We only represent them in the last two configurations depicted in  Figure~\ref{fig:Gamma} in order not to burden excessively the drawing. However, the reader should be aware that such pseudo-gradient lines are also present in the first five configurations from Figure~\ref{fig:Gamma}. 

For further reference it is convenient to write 
\begin{equation}\label{eq:Gamma123}
\Gamma=\Gamma_1+\Gamma_2+\Gamma_3,
\end{equation}
where $\Gamma_i$, $i=1,2,3$ corresponds to the count of elements in the $0$-dimensional moduli spaces of solutions to the Floer-Morse problem restricted to the $i$-th interval of parametrization for $\Gamma$.

\begin{figure}
\begin{center}
\includegraphics[width=.8\textwidth]{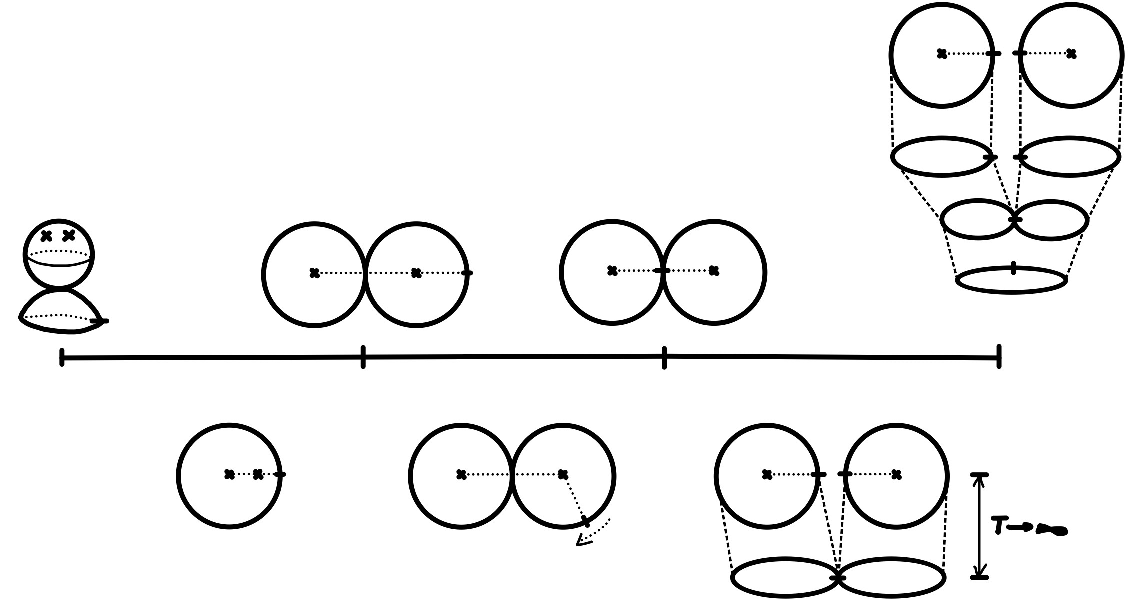}
\caption{The map $\Gamma$}
\label{fig:Gamma}
\end{center}
\end{figure}

The remainder of this section is devoted to the proof of condition (iii). 
For this we need to construct a chain homotopy 
$$
   \Theta: FC_*(K) \to MC_*(E^{1/2})\otimes MC_*(E^{1/2}) 
$$
satisfying $\Theta c^F=0$ and
\begin{equation}\label{eq:Theta-Floer}
   [\p,\Theta] = \lambda\Psi - (\Psi\otimes\Psi)\lambda^F
  - (\Gamma\otimes\Psi)(1\otimes c_0^F) + (\Psi\otimes\Gamma)(\boldtau c_0^F\otimes 1).
\end{equation}
The map $\Theta$ will be defined by a count of Floer maps to $T^*M$
defined over a $2$-parametric family of punctured annuli. In the first
subsection we describe the underlying moduli space of conformal annuli.

%%%
\subsection{Conformal annuli}\label{sec:conformal-annuli}
%%%

A {\em (conformal) annulus} is a compact genus zero Riemann surface
with two boundary components. By the uniformization theorem (see for example~\cite{Buser}), 
each annulus is biholomorphic to $[0,R]\times\R/\Z$ with its standard complex
structure for a unique $R>0$ called its {\em (conformal) modulus}.  
The exponential map $s+it\mapsto e^{2\pi(s+it)}$ sends the standard annulus
onto the annulus
$$
   A_R=\{z\in\C\mid 1\leq|z|\leq e^{2\pi R}\}\subset\C.  
$$
It will be useful to consider slightly more general annuli in the Riemann
sphere $S^2=\C\cup\{\infty\}$. A {\em circle in $S^2$} is the
transverse intersection of $S^2\subset\R^3$ with a plane. We will
call a {\em disc in $S^2$} an open domain $D\subset S^2$ bounded by a circle,
and an {\em annulus in $S^2$} a set $\ol{D}\setminus D'$ for two discs
$D,D'\subset S^2$ satisfying $\ol{D'}\subset D$ (with the induced complex structure). 

\begin{lemma}\label{lem:conf-annuli}
Every annulus $A$ in $S^2$ of conformal modulus $R$ can be mapped by a
M\"obius transformation onto the standard annulus $A_R\subset\C\subset
S^2$ above.  
\end{lemma}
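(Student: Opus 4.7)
The plan is to reduce the pair of boundary circles to a pair of concentric round circles in $\C\subset S^2$, and then to normalize by a scaling. Let $C_1, C_2 \subset S^2$ denote the two disjoint boundary circles of the annulus $A$. Since the M\"obius group $\PSL(2,\C)$ acts transitively on the set of round circles in $S^2$, I first apply a M\"obius transformation sending $C_1$ to the unit circle $\{|z|=1\}\subset\C$. The image of $C_2$ is then a round circle in $S^2$ disjoint from $\{|z|=1\}$, hence either entirely contained in the open unit disc $\D\subset\C$ or in its complement. Composing with $z\mapsto 1/\bar z$ if necessary (a M\"obius transformation preserving $\{|z|=1\}$), I may assume $C_2\subset \D$.

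The main step is then to move $C_2$ to a Euclidean circle centered at the origin by a M\"obius automorphism of $\D$. The group of M\"obius transformations preserving $\{|z|=1\}$ and mapping $\D$ to itself is $\mathrm{PSU}(1,1)$, the orientation-preserving isometry group of the Poincar\'e disc. Every Euclidean circle contained entirely in $\D$ is a hyperbolic circle and so has a hyperbolic center $c_h\in\D$; by transitivity of the hyperbolic isometry action on $\D$, some element of $\mathrm{PSU}(1,1)$ sends $c_h$ to $0$, and the image of $C_2$ is then a hyperbolic circle around $0$, which coincides with a Euclidean circle $\{|z|=\rho\}$ for some $0<\rho<1$.

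After these reductions the annulus has become $\{\rho\le |z|\le 1\}$. Apply the M\"obius scaling $z\mapsto z/\rho$ to rewrite it as $\{1\le |z|\le 1/\rho\}$. Since all transformations used are M\"obius, hence conformal, they preserve the modulus; combined with the standard identification $[0,R']\times\R/\Z\to\{1\le |z|\le e^{2\pi R'}\}$ via $s+it\mapsto e^{2\pi(s+it)}$, this forces $\frac{1}{2\pi}\log(1/\rho)=R$, i.e.\ $1/\rho=e^{2\pi R}$. The resulting domain is exactly $A_R$.

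The delicate point is the middle step, which relies on the fact that Euclidean circles inside $\D$ are hyperbolic circles and that $\mathrm{PSU}(1,1)$ acts transitively on $\D$; this is classical but should be invoked explicitly. A slicker alternative that avoids hyperbolic geometry is to use the \emph{limit points} of the coaxial pencil determined by $C_1$ and $C_2$: two disjoint circles in $S^2$ possess a unique pair of points $\{p_1,p_2\}$ which are mutually inverse with respect to both $C_i$ (equivalently, the two common fixed points of the composition of the two circle inversions). Sending $p_1,p_2$ to $0,\infty$ by a M\"obius transformation leaves both $C_i$ invariant under an antiholomorphic involution fixing $\{0,\infty\}$, and any such involution has the form $z\mapsto r^2/\bar z$; hence both $C_i$ are forced to be Euclidean circles centered at the origin, and one concludes as before.
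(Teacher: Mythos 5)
Your proof is correct but takes a genuinely different route from the paper's. You normalize both boundary circles to be concentric around the origin---either by observing that a Euclidean circle lying inside $\D$ is a hyperbolic circle and sending its hyperbolic center to $0$ by an element of $\mathrm{PSU}(1,1)$, or, in your alternative, by sending the two limit points of the non-intersecting coaxial pencil determined by $C_1,C_2$ to $\{0,\infty\}$---and then finish by a scaling, with $\rho=e^{-2\pi R}$ forced by conformal invariance of the modulus. The paper proceeds differently: it normalizes only the outer circle (to $\{|z|=e^{2\pi R}\}$), then uses the three-parameter M\"obius group of that disc to match one boundary point and the tangent direction there between the inner circle and the unit circle, so the two inner circles become tangent; it then concludes $\phi(\partial D')=\partial D_1$ from the equality of moduli, implicitly using that strictly nesting one of two internally tangent circles inside the other strictly changes the modulus. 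Your argument buys an explicit concentric normal form and a constructive value for $\rho$; the paper's avoids any appeal to hyperbolic geometry or coaxial pencils at the cost of the slightly more delicate tangency-plus-modulus step.

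One small slip: to move $C_2$ from the outside to the inside of the unit circle you invoke $z\mapsto 1/\bar z$, which is an \emph{anti}-M\"obius transformation (inversion in the unit circle). For a genuine M\"obius transformation you should use $z\mapsto 1/z$, which also swaps the inside and outside of $\{|z|=1\}$ and is what the lemma requires. This is trivially fixable and does not affect the rest of the argument.
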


\begin{proof}
Write $A=\ol{D}\setminus D'$ for discs $D,D'\subset S^2$. After
applying a M\"obius transformation, we may assume that $D$ is the 
disc $\{z\in\C\mid |z|<e^{2\pi R}\}$. Let $D_1\subset D$ be the unit disc.
There exists a M\"obius transformation $\phi$ of $D$ sending a point $z'\in\p
D'$ to a point $z_1\in\p D_1$ and the positive tangent direction to $\p
D'$ at $z'$ to the positive tangent direction to $\p D_1$ at
$z_1$. Thus $\phi$ sends $\p D'$ to a circle tangent to $\p D_1$ at
$z_1$, and since the annuli $\ol{D}\setminus D'$ and $\ol{D}\setminus D_1$ both
have modulus $R$ we must have $\phi(\p D')=\p D_1$, hence $\phi(D')=D_1$.
\end{proof}

For each $R$, the standard annulus $[0,R]\times\R/\Z$ carries two
canonical foliations: one by the line segments $[0,R]\times\pt$
and one by the circles $\pt\times\R/\Z$. Moreover, these two foliations are invariant under the automorphism group of the annulus. Hence by Lemma~\ref{lem:conf-annuli}
each annulus in $S^2$ also carries two canonical foliations, one by
circle segments connecting the two boundary components and one by circles,
such that the foliations are orthogonal and the second one contains the two
boundary loops. These two foliations can be intrinsically described as follows: the automorphism group of an annulus $A$ is $\Aut(A)\simeq S^1$. The second foliation consists of the orbits of the $S^1$-action. The first foliation is the unique foliation orthogonal to the first one. Its leaves connect the two boundary components because this is the case for a standard annulus.

Figure~\ref{fig:conf-annuli}
shows a $1$-parametric family of annuli in $\C$ whose conformal moduli
tend to $0$ together with their canonical foliations. The domain at
modulus $0$ is the difference of two discs touching at one point, the node.
Putting the node at the origin, the inversion $z\mapsto 1/z$ maps this
domain onto a horizontal strip in $\C$ (with the node at $\infty$)
with its standard foliations by straight line segments and lines.
Opening up the node, we can conformally map it onto the standard disc
with two boundary points corresponding to the node (since the map is
not a M\"obius transformation, the two foliations will not be by
circle segments).  

\begin{figure}
\begin{center}
\includegraphics[width=1\textwidth]{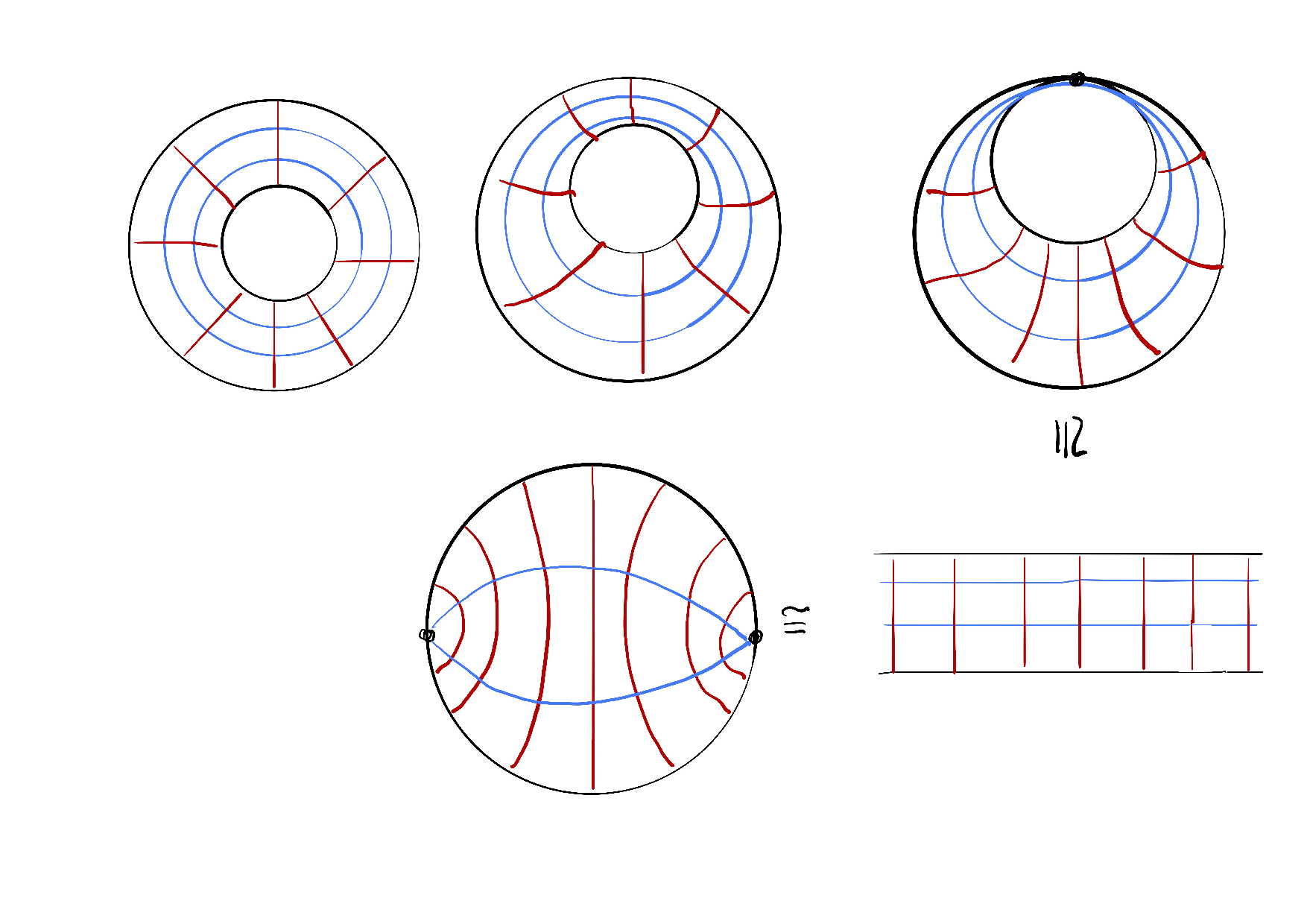}
\caption{Conformal annuli and their canonical foliations}
\label{fig:conf-annuli}
\end{center}
\end{figure}

{\bf Annuli with aligned marked points. }
The relevant domains for our purposes are annuli with $3$ marked
points, one interior and one on each boundary component. We require
that the $3$ points are {\em aligned}, by which we mean that they lie
on the same leaf of the canonical foliation connecting the two
boundary components. (In the next subsection the interior marked point
will correspond to the input from the Floer complex and the boundary
marked points will be the initial points of the boundary loops on the
zero section.) 

Figure~\ref{fig:annuli-fixed-modulus}
shows the moduli space of such annuli with {\em fixed} finite conformal modulus. 
It is an interval over which the interior marked point moves from one
boundary component to the other. Each end of the interval corresponds
to a rigid nodal curve consisting of an annulus with one boundary marked point
and a disc with an interior and a boundary marked point, where the
marked point and the node are aligned in the annulus, and the two
marked points and the node are aligned in the disc (i.e., they lie on
a circle segment perpendicular to the boundary). 

\begin{figure}
\begin{center}
\includegraphics[width=.7\textwidth]{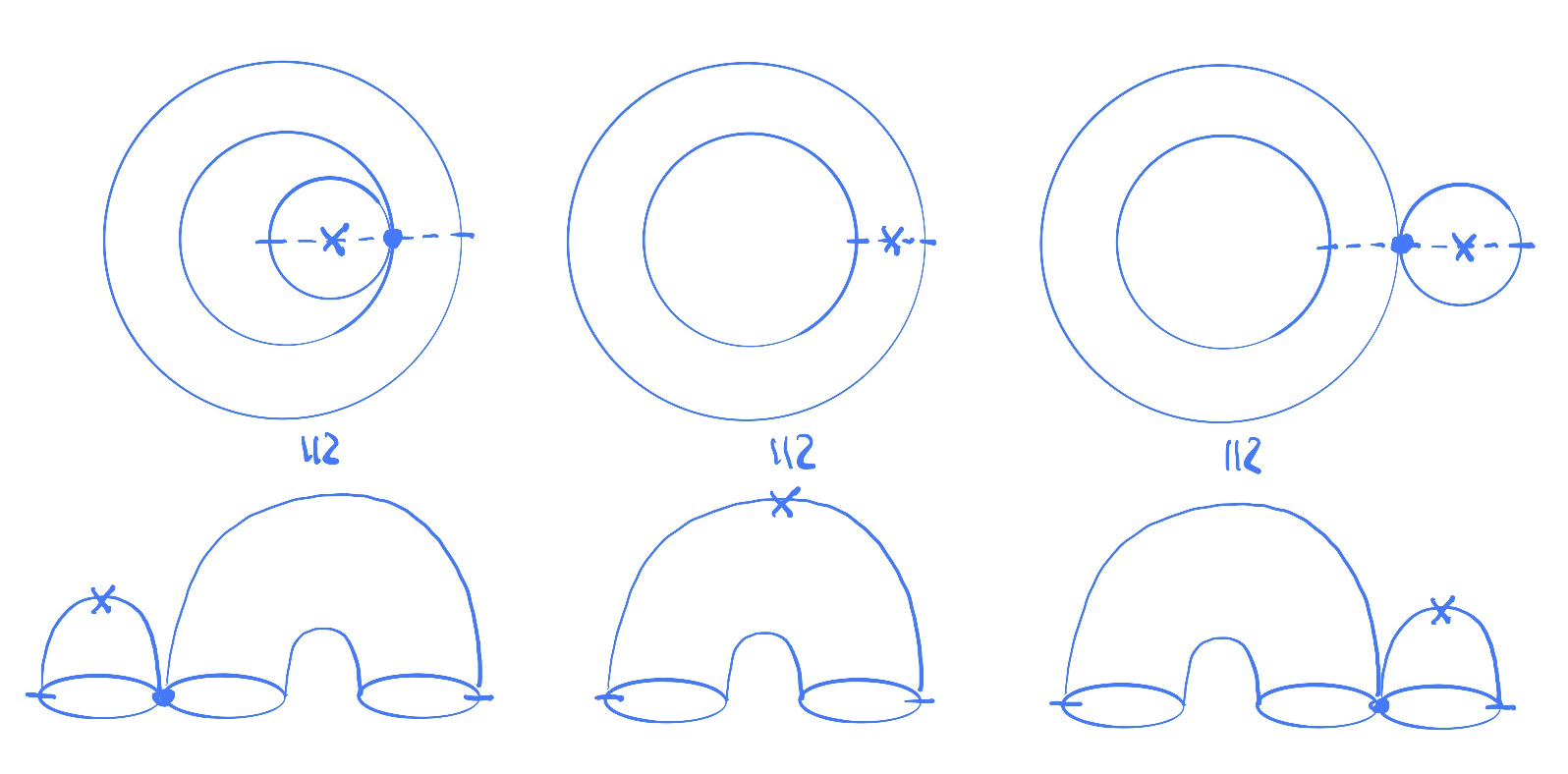}
\caption{Annuli with aligned marked points and fixed modulus}
\label{fig:annuli-fixed-modulus}
\end{center}
\end{figure}

Figure~\ref{fig:annuli-varying-modulus}
shows the moduli space of such annuli with {\em varying} conformal modulus. 
It is a pentagon in which we will view the two lower sides as being
``horizontal'' (although they meet at an actual corner).
Then in the vertical direction the conformal modulus increases from
$0$ (on the top side) to $\infty$ (on the two lower sides), while in
the horizontal direction the interior marked point moves from one   
boundary component to the other. In all configurations the marked
points and nodes are aligned.
The interior nodes occuring along the bottom sides carry
asymptotic markers (depicted as arrows) that are aligned 
with the boundary marked points. In particular, each interior node comes with
an orientation reversing isomorphism between the tangent circles
matching the asymptotic markers (this is the ``decorated compactification'').
\begin{figure}
\begin{center}
\includegraphics[width=1\textwidth]{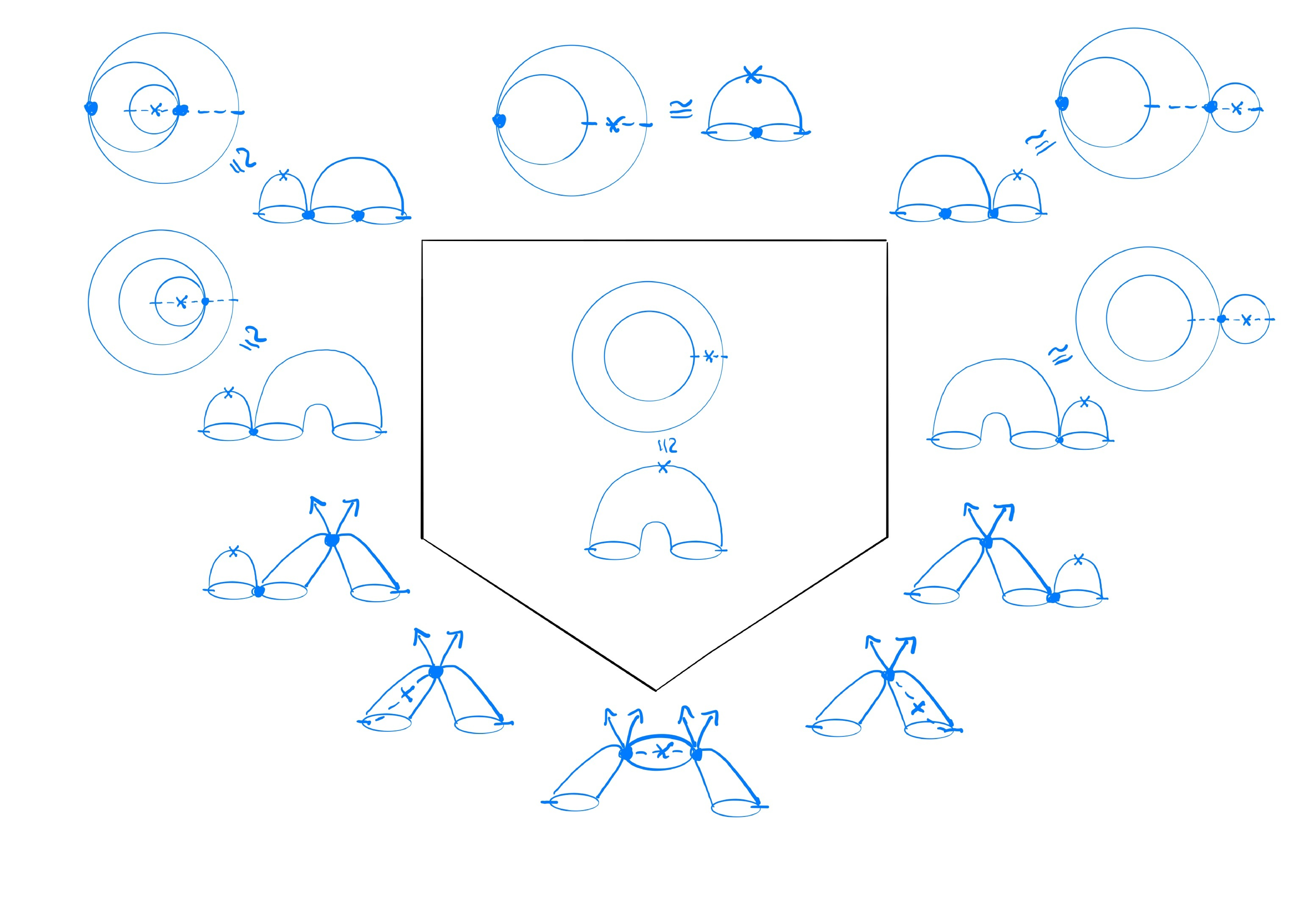}
\caption{Annuli with aligned marked points and varying modulus}
\label{fig:annuli-varying-modulus}
\end{center}
\end{figure}

%%%
\subsection{Floer annuli}\label{sec:Floer-annuli}
%%%

Now we define a moduli space of Floer maps into $T^*M$ over the moduli
space $\PP$ of annuli in Figure~\ref{fig:annuli-varying-modulus}.
For this, we choose a family of $1$-forms $\beta_\tau$, $\tau\in\PP$, with
the following properties (see Figure~\ref{fig:Floer-annuli}):
\begin{itemize}
\item $d\beta_\tau\leq 0$ for all $\tau$;
\item $\beta_\tau$ equals $dt$ near the (positive) interior puncture,
  and $2\,dt$ in coordinates $(s,t)\in[0,\eps)\times\R/\Z$ near each
    (negative) boundary component, i.e., it has weights $1,2,2$;
\item on annuli of infinite modulus, $\beta_\tau$ has weights at the
  punctures (positive or negative) as shown in the figure. 
\end{itemize}

\begin{figure}
\begin{center}
\includegraphics[width=1\textwidth]{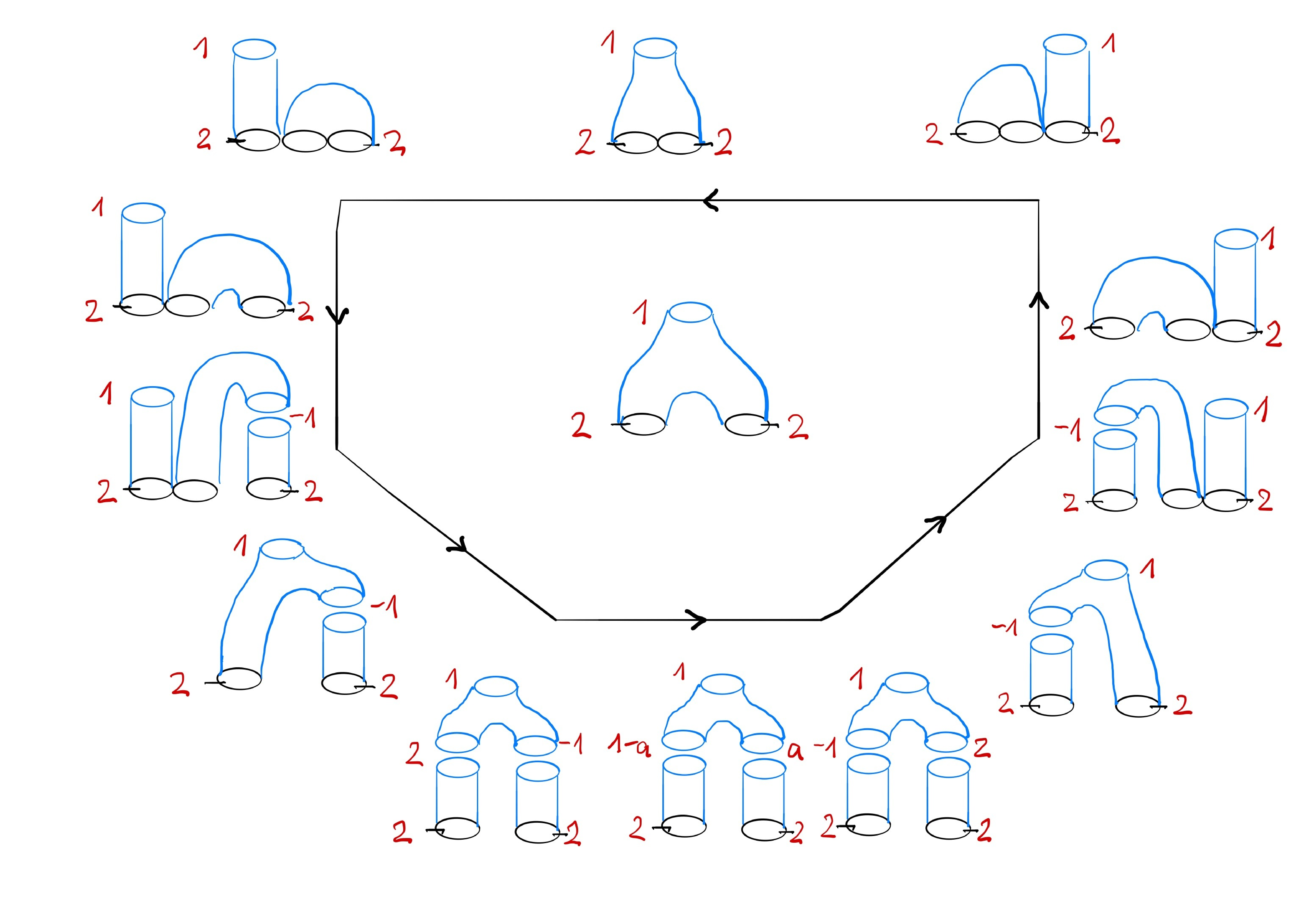}
\caption{The hexagon of Floer annuli}
\label{fig:Floer-annuli}
\end{center}
\end{figure}

In the figure the (black) bottom circles are boundary components, (blue) intermediate circles are
interior punctures (viewed as positive or negative  when going upwards or downwards),
and (red) numbers denote the weights. Such a family $\beta_\tau$ exists
because on each component of each broken curve the sum of negative
weights is greater or equal to the sum of positive weights. 

The annuli carry two marked points on their boundary circles (depicted
as black dashes) which are aligned with the interior puncture. Again, all 
interior punctures carry asymptotic markers (not drawn) that are aligned 
with the boundary marked points, also over broken curves and are
matching across each pair of positive/negative punctures.

Note that the bottom corner of the pentagon in Figure~\ref{fig:annuli-varying-modulus}
has been replaced by a new side over which the underlying stable
domain is fixed, but the weights at the positive/negative puncture vary
as depicted with $a\in[-1,2]$. 
Thus the conformal modulus is $0$
along the top side, and $\infty$ along the three bottom sides.  

We fix a nonnegative Hamiltonian $K:T^*M\to\R$ as in~\S\ref{sec:cont-coproduct}.
For $\tau\in\PP$ we denote by $\Sigma_\tau$ the corresponding
(possibly broken) annulus with one positive interior puncture $z_+$ and two
numbered boundary marked points $z_1,z_2$ on the boundary components
$C_1,C_2$, equipped with the $1$-form $\beta_\tau$. Given $x\in
FC_*(K)$ we define the moduli space 
\begin{align*}
   \PP(x) := \{&(\tau,u)\mid \tau\in\PP,\; u:\Sigma_\tau\to
   T^*M,\;(du-X_K\otimes\beta_\tau)^{0,1}=0,\cr
   & u(z_+)=x,\;u(C_i)\subset M \text{ for }i=1,2\}, 
\end{align*}
where the condition $u(z_+)=x$ is understood as being $C^\infty$-convergence
$u(s,\cdot)\to x$ as $s\to\infty$ in cylindrical coordinates
$(s,t)\in[0,\infty)\times S^1$ near the positive puncture $z_+$.
By Anosov~\cite{Anosov}, 
the restriction $u|_{C_i}$ 
can be uniquely parametrized over $[0,1]$ as an $H^1$-curve proportionally to arclength
such that time $0$ corresponds to the marked point $z_i$,
$i=1,2$. Viewing $u|_{C_i}$ with these parametrizations thus yields a
boundary evaluation map
$$
   \ev_\p:\PP(x)\to\Lambda\times\Lambda,\qquad (\tau,u)\mapsto(u|_{C_1},u|_{C_2}).
$$
Note that this map is also canonically defined over the boundary of
$\PP$. Indeed, this is clear everywhere except possibly over the two
vertical sides where one boundary loop is split into two.
There one component of $\Sigma_\tau$ is an annulus without interior
puncture, on which the map $u$ is therefore constant (see the next subsection).
Hence in the split boundary loop one component is constant, and we map
it simply to the other component parametrized proportionally to arclength. 

The expected dimension of $\PP(x)$ is
$$
   \dim\PP(x) = n\chi(\Sigma_\tau) + \CZ(x) + \dim\PP = \CZ(x) + 2 - n,
$$
where $\chi(\Sigma_\tau)=-1$ is the Euler characteristic of the
punctured annulus. However, the moduli space $\PP(x)$ is not
transversely cut out over the vertical sides of $\PP$. Indeed, the
moduli space of non-punctured annuli appearing there has Fredholm
index $n\chi(A) + 1=1$, where $\chi(A)=0$ is the Euler characteristic
of the annulus $A$ and the $+1$ corresponds to the varying conformal modulus.
But the actual dimension of this space is $n+1$, where $n$ is the
dimension of the space of constant maps $A\to M$. In the following
subsections we explain how to achieve transversality by perturbing
the Floer equation by a section in the obstruction bundle. 

%%%
\subsection{Moduli problems and obstruction bundles}\label{sec:moduli problems}
%%%

To facilitate the discussion in the next subsection, we introduce in
this subsection a general setup for moduli problems and obstruction
bundles. Our notion of a moduli problem will be a slight
generalization of that of a $G$-moduli problem in~\cite{Cieliebak-Mundet-Salamon}
for the case of the trivial group $G$, which allows us to work with
integer rather than rational coefficients. 

A {\em moduli problem} is a quadruple $(\cB,\cF,\cS,\cZ)$ with the following properties:
\begin{itemize}
\item $p:\cF\to\cB$ is a Banach fibre bundle over a Banach manifold;
\item $\cZ\subset\cF$ is a Banach submanifold transverse to the fibres;\footnote{In particular, $T_z\cZ\subset T_z\cF$ is a closed subspace which has a closed complement for all $z\in\cZ$.}
\item $\cS:\cB\to\cF$ is a smooth section such that the solution set
$$
    \cM := \cS^{-1}(\cZ)\subset\cB
$$
is compact and for each $b\in\cM$ the composed operator
$$
   D_b\cS:T_b\cB\stackrel{T_b\cS}\longrightarrow
   T_{\cS(b)}\FF\longrightarrow T_{\cS(b)}\cF/T_{\cS(b)}\cZ
$$
is Fredholm with constant index $\ind(\cS)=\ind(D_b\cS)$, and its determinant bundle 
$$
   \det(\cS) = \Lambda^{\rm top}\ker(D\cS)\otimes\Lambda^{\rm
     top}\coker(D\cS)^* \to \MM
$$
is oriented.
\end{itemize}

A {\em morphism} between moduli problems $(\cB,\cF,\cS,\cZ)$ and
$(\cB',\cF',\cS',\cZ')$ is a pair $(\psi,\Psi)$ with the following properties:
\begin{itemize}
\item $\psi:\cB\into\cB'$ is a smooth embedding;
\item $\Psi:\cF\to\cF'$ is a smooth injective bundle map covering
$\psi$ such that
$$
     \cS'\circ\psi=\Psi\circ\cS,\qquad 
     \cM'=\psi(\cM),\qquad \cZ'=\Psi(\cZ).
$$
Moreover, the linear operators $T_b\psi:T_b\cB\to T_{\psi(b)}\cB'$ 
and $D_z\Psi:T_z\cF/T_z\cZ\to T_{\Psi(z)}\cF/T_{\Psi(z)}\cZ$ induce
for each $b\in\cM$ isomorphisms
$$
        T_b\psi:\ker D_b\cS\to\ker D_{\psi(b)}\cS',\quad 
        D_{\cS(b)}\Psi:\coker D_b\cS\to\coker D_{\psi(b)}\cS'
$$
such that the resulting isomorphism from $\det(\cS)$ to $\det(\cS')$ is orientation preserving. 
\end{itemize}

\begin{proposition}
Each moduli problem $(\cB,\cF,\cS,\cZ)$ has a canonical {\em Euler class}
$$
   \chi(\cB,\cF,\cS,\cZ)\in H_{\ind(\cS)}(\cB;\Z).  
$$
Moreover, if $(\psi,\Psi)$ is a morphism between moduli problems $(\cB,\cF,\cS,\cZ)$ and
$(\cB',\cF',\cS',\cZ')$, then $\ind(\cS)=\ind(\cS')$ and
$$
   \psi_*\bigl(\chi(\cB,\cF,\cS,\cZ)\bigr)=\chi(\cB',\cF',\cS',\cZ')\in H_{\ind(\cS)}(\cB';\Z).
$$
\end{proposition}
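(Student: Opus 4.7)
The plan is to construct the Euler class via the standard obstruction-bundle technique of Cieliebak--Mundet--Salamon, simplified to the case of trivial symmetry group so that it lands in integral homology. The central idea is to perturb $\cS$ by a finite-dimensional family of sections of $\cF$ until the perturbed equation is transverse to $\cZ$, so that the cut-out solution set becomes a compact oriented topological manifold whose fundamental class represents $\chi(\cB,\cF,\cS,\cZ)$.

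First I would produce a finite-dimensional oriented real vector space $E$ together with a smooth linear map $\lambda\colon E\to\Gamma(\cF)$ such that for every $b\in\cM$ the composition
$$
E\xrightarrow{\lambda}T_{\cS(b)}\cF\twoheadrightarrow T_{\cS(b)}\cF/T_{\cS(b)}\cZ
$$
surjects onto $\coker D_b\cS$. Since $\cM$ is compact, one can patch together finitely many local sections spanning each cokernel by a partition of unity. The thickened moduli space
$$
\widetilde\cM := \{(b,e)\in\cB\times E\mid \cS(b)+\lambda(e)(b)\in\cZ\}
$$
is, by the implicit function theorem, a smooth manifold of dimension $\ind(\cS)+\dim E$ in a neighbourhood $\cU$ of $\cM\times\{0\}$; by compactness of $\cM$ one can shrink so that $\widetilde\cM\cap(\cB\times B_\varepsilon)\subset\cU$ and the projection $\pi\colon\widetilde\cM\cap(\cB\times B_\varepsilon)\to B_\varepsilon$ is proper. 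The orientation of $\det(\cS)$ together with a chosen orientation on $E$ furnishes a canonical orientation on $\widetilde\cM$.

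Next I would pick a regular value $e\in B_\varepsilon$ of $\pi$, which exists by Sard--Smale; then $\pi^{-1}(e)\subset\cB$ is a compact oriented smooth manifold of dimension $\ind(\cS)$, and I would define
$$
\chi(\cB,\cF,\cS,\cZ) := [\pi^{-1}(e)]\in H_{\ind(\cS)}(\cB;\Z).
$$
Independence of the regular value follows by connecting two regular values through a smooth path of generic values, producing an oriented cobordism in $\cB$. Independence of the pair $(E,\lambda)$ is shown by the usual direct-sum trick: given $(E_0,\lambda_0)$ and $(E_1,\lambda_1)$, one compares both to the amalgamated choice $(E_0\oplus E_1,\lambda_0\oplus\lambda_1)$ and exhibits explicit cobordisms.

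The step that requires the most care is the naturality statement. Given a morphism $(\psi,\Psi)$, I would start from obstruction data $(E',\lambda')$ for the target and use the injectivity of $\Psi$ together with the cokernel isomorphism $D_{\cS(b)}\Psi$ to pull $\lambda'$ back to a family $\lambda\colon E'\to\Gamma(\cF)$ defined on a neighbourhood of $\psi^{-1}(\cM')=\cM$ inside $\cB$, extended arbitrarily elsewhere. The kernel isomorphism $T_b\psi$ and the cokernel isomorphism $D_{\cS(b)}\Psi$ then identify the two thickened moduli spaces near $\cM\times\{0\}$ via $\psi\times\id_{E'}$, in an orientation-preserving manner and compatibly with the projections to $E'$; hence a common regular value $e$ yields preimages related by $\psi$, so that $\psi_*\chi(\cB,\cF,\cS,\cZ)=\chi(\cB',\cF',\cS',\cZ')$ and in particular $\ind(\cS)=\ind(\cS')$. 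The main obstacle here is checking that the pullback $\lambda$ achieves transversality also at points of $\cB\smallsetminus\psi(\cB)$ if needed; this is handled by observing that transversality is only required on $\cM$ itself, and enlarging $E'$ if necessary to cover the complement.
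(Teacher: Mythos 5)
Your construction of the Euler class follows the same route as the paper, which simply cites Cieliebak--Mundet--Salamon and sketches the perturbation step in one sentence; your version unpacks the finite-dimensional stabilization argument (parameter space $E$, thickened moduli space $\widetilde\cM$, Sard applied to the projection $\pi\colon\widetilde\cM\to E$, independence by cobordism and the direct-sum trick). That part is correct and is the content of the cited reference.

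The one place where your argument does not quite go through as written is the naturality step. You propose to \emph{pull back} obstruction data $(E',\lambda')$ from the target to the source ``using the injectivity of $\Psi$.'' Injectivity of $\Psi$ only lets you invert on the image of $\Psi$; there is no reason the perturbation terms $\lambda'(e)(\psi(b))$ should lie in $\Psi(\cF_b)$, so the pullback $\lambda$ you want is not actually defined. The clean way around this is to go in the other direction: choose obstruction data $(E,\lambda)$ for $(\cB,\cF,\cS,\cZ)$, push it forward to $\Psi\circ\lambda$ over $\psi(\cB)$, and extend it by a cutoff to obstruction data for the target. The cokernel isomorphisms $D_{\cS(b)}\Psi$ guarantee that this extended family still surjects onto $\coker D_{\psi(b)}\cS'$ at every point of $\cM'=\psi(\cM)$. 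Then, using $\cS'\circ\psi=\Psi\circ\cS$, $\cZ'=\Psi(\cZ)$ and injectivity of $\Psi$, the thickened equations correspond exactly on $\psi(\cB)\times E$, and the kernel isomorphisms $T_b\psi$ together with the implicit function theorem rule out additional branches of $\widetilde\cM'$ near $\cM'\times\{0\}$ lying off $\psi(\cB)$. The orientation-preserving condition on $\det(\cS)\to\det(\cS')$ then gives $\psi_*\chi(\cB,\cF,\cS,\cZ)=\chi(\cB',\cF',\cS',\cZ')$, and the index equality is immediate from the kernel/cokernel isomorphisms. Your conclusion is right, but replace the pullback by the pushforward to make the step airtight.
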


\begin{proof}
This follows directly from the corresponding results in~\cite{Cieliebak-Mundet-Salamon}.
To construct the Euler class, we compactly perturb $\cS$ to a section $\wt\cS$
which is transverse to $\cZ$; then $\wt\cM=\wt\cS^{-1}(\cZ)$ is a
compact manifold of dimension $d=\ind(\cS)$ which inherits a canonical
orientation and thus represents a class in $H_d(\cB;\Z)$, and it is
easy to see that this class is independent of the choice of perturbation.
The assertion about morphisms is obvious. 
\end{proof}

A special case of a moduli problem arises if $\cF=\cE\to\cB$ is a
Banach {\em vector} bundle and $\cZ=\cZ_\cE$ is the zero section in $\cE$.
In this case $D_b\cS$ is the vertical differential of $\cS$ at
$b\in\cM=\cS^{-1}(0)$ and we arrive at the usual notion of a Fredholm section.
This is the setup considered in~\cite{Cieliebak-Mundet-Salamon};
the general case can be reduced to this one (via a morphism of moduli
problems) by passing to the normal bundle of $\cZ$. 

Consider now a moduli problem $(\cB,\cF,\cS,\cZ)$ such that
\begin{enum}
\item $\cM=\cS^{-1}(\cZ)\subset\cB$ is a smooth submanifold, and
\item $\ker(D_b\cS)=T_b\cM$ for each $b\in\cM$. 
\end{enum}
Then the cokernels $\coker(D_b\cS)$ fit together into the
smooth {\em obstruction bundle}
$$
   \cO := \coker(D\cS)\to\cM
$$
whose rank is related to the Fredholm index of $\cS$ by
$$
   \dim\cM = \ind(\cS) + \rk\,\cO.
$$
We thus obtain a finite dimensional moduli problem
$(\cM,\cO,0,\cZ_\cO)$, where $0:\cM\to\cO$ denotes the zero section
and $\cZ_\cO\subset\cO$ its graph. 

\begin{lemma}\label{lem:obstruction-bundle}
In the preceding situation there exists a canonical morphism of
moduli problems
$$
   (\iota,\exp):(\cM,\cO,0,\cZ_\cO)\to (\cB,\cF,\cS,\cZ),
$$
where $\psi=\iota:\cM\into\cB$ is the inclusion and $\exp:\cO\into\cF$
is a fibrewise exponential map. 
\end{lemma}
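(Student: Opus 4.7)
The plan is to construct the fibrewise exponential map $\exp:\cO\to\cF$ in two steps---a linear lift of the cokernel bundle to a vertical subbundle, followed by a fibrewise Riemannian exponential---and then to verify each defining property of a morphism of moduli problems.

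\textbf{Step 1 (lifting the cokernel).} Because $\cZ$ is transverse to the fibres of $p:\cF\to\cB$, for every $b\in\cM$ the restriction of the quotient map $T_{\cS(b)}\cF\to T_{\cS(b)}\cF/T_{\cS(b)}\cZ$ to the vertical subspace $T_{\cS(b)}\cF_b$ is surjective. Since $\cO=\coker(D\cS)\to\cM$ is a smooth finite-dimensional vector bundle over the finite-dimensional manifold $\cM$, a partition-of-unity argument yields a smooth finite-dimensional vertical subbundle $\wt\cK\hookrightarrow p^*\cF|_{\cS(\cM)}$ whose fibres $\wt\cK_b\subset T_{\cS(b)}\cF_b$ satisfy: (a) the composition $\wt\cK_b\hookrightarrow T_{\cS(b)}\cF_b\to T_{\cS(b)}\cF/T_{\cS(b)}\cZ$ is an isomorphism onto a complement of $\im D_b\cS$, so that $\wt\cK_b\cong\cO_b$ canonically; and (b) $\wt\cK_b\cap\bigl(T_{\cS(b)}\cZ\cap T_{\cS(b)}\cF_b\bigr)=0$.

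\textbf{Step 2 (exponentiation).} Equip $\cF$ with a smooth fibrewise Riemannian metric (obtained from local trivialisations via a partition of unity on $\cB$) and let $\exp^{\cF_b}$ denote the associated fibrewise exponential. Define
$$
   \exp:\cO\to\cF,\qquad \exp(v) := \exp^{\cF_b}_{\cS(b)}(\wt v),
$$
where $v\in\cO_b$ and $\wt v\in\wt\cK_b$ is the preimage of $v$ under the isomorphism from Step~1. After restricting $\cO$ to a neighbourhood of the zero section, $\exp$ is a smooth injective fibrewise bundle map covering $\iota:\cM\into\cB$, with $\exp\circ 0=\cS\circ\iota$ by construction.

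\textbf{Step 3 (verification).} The identities $\cM=\iota(\cM)$ and $\cS\circ\iota=\exp\circ 0$ hold tautologically. Property~(b) of Step~1 implies that $\wt\cK_b$ meets $\cZ\cap\cF_b$ transversely only at $\cS(b)$, so that (after shrinking the neighbourhood of the zero section) $\exp^{-1}(\cZ)=\cZ_\cO$, the required compatibility of zero loci. The tangent map $T_b\iota:\ker D_b 0=T_b\cM\to\ker D_{\iota(b)}\cS=T_b\cM$ is the identity. The induced cokernel map $D_{0_b}\exp:\cO_b=\coker D_b 0\to\coker D_{\iota(b)}\cS=\cO_b$ is the quotient of the inclusion $\wt\cK_b\hookrightarrow T_{\cS(b)}\cF_b\to T_{\cS(b)}\cF/T_{\cS(b)}\cZ$, which by property~(a) is the identity on $\cO_b$ via the chosen isomorphism. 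Consequently the induced map on determinants $\det(0)\to\det(\cS)$ is the identity, and orientations are preserved tautologically.

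\textbf{Main obstacle.} The only non-routine technical point is the smooth and global existence of the complementary subbundle $\wt\cK$; this is standard provided the cokernels have locally constant rank, as guaranteed by the Fredholm hypothesis and the identification $\ker(D_b\cS)=T_b\cM$. Independence of the construction from the auxiliary choices (complement $\wt\cK$ and fibrewise metric) is a homotopy-theoretic check: any two admissible choices can be joined through a smooth family of data, producing a homotopy of morphisms that leaves the Euler class invariant. This is the sense in which $(\iota,\exp)$ is canonical.
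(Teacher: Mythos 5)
Your construction is essentially the same as the paper's: the paper first chooses a Banach vector bundle $\cN\to\cZ$ that is vertical and complementary to $T\cZ$ in $T\cF$, then realizes $\cO$ as a subbundle of $\cN$ complementary to $\im D\cS$, and finally applies a fibrewise Riemannian exponential based at $\cS(b)$; your $\wt\cK$ plays exactly the role of $\cO\subset\cN$, and Steps 2--3 match. The only cosmetic difference is that you build the vertical lift directly over $\cS(\cM)$ rather than via a normal bundle over all of $\cZ$, and you spell out the verification of the morphism axioms (including $\exp^{-1}(\cZ)=\cZ_\cO$ and the identity maps on $\ker$/$\coker$) which the paper leaves as ``easy to check.''
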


\begin{proof}
Choose $\cN\to\cZ$ a smooth Banach vector bundle
such that for each $z\in\cZ$, 
$$
   \cN_z\subset T_z\cF_{p(z)}\quad\text{and}\quad T_z\cF=T_z\cZ\oplus \cN_z.
$$
Since $\cN$ represents the normal bundle to $\cZ$ in $\cF$, we can assume that
$D\cS$ takes values in $\cN$ and $\cO$ is a subbundle of $\cN$
complementary to $\im D\cS$. Pick a fibrewise Riemannian metric on
$\cF$ whose exponential map restricts to a fibre preserving embedding
$$
   \exp:\cO\into\cF,\qquad \cO_z\into\cF_{p(z)}.
$$
Now it is easy to check that $(\iota,\exp)$ with the inclusion $\iota:\cM\into\cB$
defines a morphism $(\cM,\cO,0,\cZ_\cO)\to (\cB,\cF,\cS,\cZ)$. 
\end{proof}

In the situation of Lemma~\ref{lem:obstruction-bundle}, the Euler
class of $(\cB,\cF,\cS,\cZ)$ is therefore represented by the zero
set $\eta^{-1}(0)$ of a section $\eta:\cM\to\cO$ in the obstruction
bundle which is transverse to the zero section. Concretely, keeping
the notation from the proof, $\exp\circ\eta$ defines a section
of the fibre bundle $\cF|_\cM\to\cM$. We extend the bundle $\cO\to\cM$
to a bundle $\wt\cO\to\wt\cB$ on a neighbourhood $\wt\cB\subset\cB$ of
$\cM$ and $\eta$ to a section $\wt\eta$ of the bundle $\wt\cO\to\wt\cB$
vanishing near the boundary of $\wt\cB$. Then the perturbed section
$\wt\cS=\cS+\exp\circ\wt\eta$ of $\cF\to\cB$ is transverse to $\cZ$ 
and its solution set $\wt\cS^{-1}(\cZ)$ represents the Euler class of
$(\cB,\cF,\cS,\cZ)$. 

\begin{remark}[orientations]\label{rem:orientations}
In the situation of Lemma~\ref{lem:obstruction-bundle} we are given an
orientation of 
\begin{equation}\label{eq:detS}
   \det(\cS) = \Lambda^{\rm top}T\MM\otimes\Lambda^{\rm
     top}\OO^*.
\end{equation}
Let now $\eta:\MM\to\OO$ be a section transverse to the zero section.
Its zero set $\cA:=\eta^{-1}(0)\subset\MM$ is a submanifold and at each
$b\in\cA$ the linearization $D_b\eta:T_b\MM\to\OO_b$ is
surjective with kernel $\ker D_b\eta=T_b\cA$, so we get a
canonical isomorphism of line bundles
$$
   \Lambda^{\rm top}T\MM|_\cA \cong \Lambda^{\rm top}T\cA\otimes\Lambda^{\rm
     top}\OO|_\cA\to\cA.
$$
Combined with~\eqref{eq:detS} this yields a canonical isomorphism
$$
   \Lambda^{\rm top}T\cA\cong\det(\cS)|_\cA,
$$
so an orientation of $\det(\cS)$ induces an orientation of $\cA$.  
In the case $\ind(\cS)=0$ this can be made more explicit as follows. 
Then $\rk\,\OO=\dim\MM$ and an orientation of $\det(\cS)$ induces
an isomorphism
$$
   \Lambda^{\rm top}T\MM \cong \Lambda^{\rm top}\OO. 
$$
For $b\in\eta^{-1}(0)$ we define the sign $\sigma(b)$ to be $+1$ if the
isomorphism $D_b\eta:T_b\MM\stackrel{\cong}\longrightarrow\OO_b$ 
preserves orientations, and $-1$ otherwise. Then the signed count
$$
   \chi(\OO) = \sum_{b\in\eta^{-1}(0)}\sigma(b) 
$$
is the {\em Euler number} of the obstruction bundle $\OO\to\MM$. 
\end{remark}

Finally, consider a moduli problem $(\cB,\cF,\cS,\cZ)$ which splits as follows:
\begin{itemize}
\item $p=(p_0,p_1):\cF=\cF_0\times_{\cB}\cF_1\to\cB$;
\item $\cZ=\cZ_0\times_{\cB}\cZ_1$;
\item $\cS=\cS_0\times\cS_1$ for sections $\cS_i:\cB\to\cF_i$ such
  that $\cS_1$ is transverse to $\cZ_1$. 
\end{itemize}

\begin{lemma}\label{lem:reduces-moduli-problem}
In the situation above there exists a reduced moduli problem
$$
  (\ol\cB,\ol\cF,\ol\cS,\ol\cZ)=\bigl(\cS_1^{-1}(\cZ_1),\cF_0|_{\ol\cB},\cS_0|_{\ol\cB},
  \cZ_0|_{\ol\cB}\bigr)  
$$
and a morphism $(\psi,\Psi)$ of moduli problems from
$(\ol\cB,\ol\cF,\ol\cS,\ol\cZ)$ to\break $(\cB,\cF,\cS,\cZ)$, with
$\psi:\ol\cB\into\cB$ the inclusion and $\Psi(f_0)=\bigl(f_0,S_1\circ p_0(f_0)\bigr)$.
\end{lemma}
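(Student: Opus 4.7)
The plan is to build the reduced moduli problem by restricting the original data to the zero set of $\cS_1$ in $\cZ_1$, and then to verify the morphism axioms directly. I expect no serious conceptual obstacle here: the statement is essentially a formal consequence of the transversality hypothesis on $\cS_1$.

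First I would check that $\ol\cB=\cS_1^{-1}(\cZ_1)$ is a Banach submanifold of $\cB$. This follows from the implicit function theorem applied to the transversality of $\cS_1$ to $\cZ_1$, with tangent space at $b\in\ol\cB$ equal to the kernel of the composition
$$
   T_b\cB\stackrel{T_b\cS_1}\longrightarrow T_{\cS_1(b)}\cF_1\longrightarrow T_{\cS_1(b)}\cF_1/T_{\cS_1(b)}\cZ_1.
$$
Once this is in place, the restrictions $\ol\cF=\cF_0|_{\ol\cB}$, $\ol\cZ=\cZ_0|_{\ol\cB}$ and $\ol\cS=\cS_0|_{\ol\cB}$ inherit the requisite structure (Banach fibre bundle, Banach submanifold transverse to the fibres, smooth section) automatically from their counterparts over $\cB$, and an unpacking of definitions shows $\ol\cS^{-1}(\ol\cZ)=\cS^{-1}(\cZ)=\cM$, so the solution set is the same (in particular still compact).

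Next, for the Fredholm analysis, I would exploit the splitting $D_b\cS=(D_b\cS_0,D_b\cS_1)$ at any $b\in\cM$. Transversality forces $D_b\cS_1:T_b\cB\to T\cF_1/T\cZ_1$ to be surjective with kernel $T_b\ol\cB$, so $D_b\ol\cS$ is simply the restriction of $D_b\cS_0$ to this kernel. A short diagram chase then produces canonical isomorphisms $\ker D_b\ol\cS\cong\ker D_b\cS$ and $\coker D_b\ol\cS\cong\coker D_b\cS$, the latter using that the image of $D_b\cS$ already surjects onto the $\cF_1$ factor, so the cokernel collapses onto the $\cF_0$ direction. Consequently $\ol\cS$ is Fredholm of the same index as $\cS$, and I would declare the orientation on $\det(\ol\cS)$ to be the one transported from the given orientation on $\det(\cS)$ via these isomorphisms.

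Finally, I would verify that the pair $(\psi,\Psi)$, with $\psi$ the inclusion $\ol\cB\into\cB$ and $\Psi(f_0)=(f_0,\cS_1(p_0(f_0)))$, satisfies the morphism axioms. The map $\Psi$ is well-defined and injective, because $p_0(f_0)\in\ol\cB$ forces $\cS_1(p_0(f_0))\in\cZ_1$; it is a smooth bundle map covering $\psi$. The identities $\cS\circ\psi=\Psi\circ\ol\cS$, $\cM=\psi(\ol\cM)$, and the required compatibility of $\ol\cZ$ with $\cZ$ under $\Psi$ follow directly from the definitions. Since the kernel and cokernel isomorphisms induced by $T\psi$ and $D\Psi$ coincide by construction with those used to transport the orientation in the previous step, the morphism is orientation preserving.
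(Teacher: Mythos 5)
Your proposal matches the paper's proof in approach and substance: both use transversality of $\cS_1$ to $\cZ_1$ to cut out $\ol\cB$ as a Banach submanifold, and both identify $\ker D_b\ol\cS$ and $\coker D_b\ol\cS$ with $\ker D_b\cS$ and $\coker D_b\cS$ via the splitting $D_b\cS=(D_b\cS_0,D_b\cS_1)$ (with $D_b\ol\cS=D_b\cS_0|_{\ker D_b\cS_1}$), from which the morphism property and the orientation transport follow. The paper's proof is terser but records exactly these canonical identities.
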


\begin{proof}
Since $\cS_1$ is transverse to $\cZ_1$, it follows that
$\ol\cB\subset\cB$ is a submanifold and
$(\ol\cB,\ol\cF,\ol\cS,\ol\cZ)$ defines a moduli problem. 
Now it follows directly from the definitions that $(\psi,\Psi)$ as
in the lemma induces for $b\in\ol\cB$ the canonical identities
\begin{gather*}
   T_b\psi:\ker D_b\ol\cS = \ker D_b\cS_0\cap\ker D_b\cS_1 = \ker D_{\psi(b)}\cS,\cr
   D_{\ol\cS(b)}\Psi:\coker D_b\ol\cS = \coker(D_b\cS_0|_{\ker D_b\cS_1}) = \coker D_{\psi(b)}\cS,
\end{gather*}
hence it defines a morphism of moduli problems. 
\end{proof}

%%%
\subsection{Constant Floer annuli}\label{sec:constant-annuli}

In this subsection we apply the results of the previous subsection to moduli
spaces of annuli. We begin with a rather general setup. Let $(\Sigma,j)$
be a compact Riemann surface with boundary, and $(V,J)$ be an almost
complex manifold with a half-dimensional totally real submanifold
$L\subset V$. For $m\in\N$ and $p\in\R$ with $mp>2$ we consider the
Banach manifold
$$
   \BB = W^{m,p}\bigl((\Sigma,\p\Sigma),(V,L)\bigr)
$$
and the Banach space bundle $\EE\to\BB$ whose fibre over $u\in\BB$ is
$$
   \EE_u = W^{m-1,p}\bigl(\Sigma,\Hom^{0,1}(T\Sigma,u^*TV)\bigr). 
$$
Denote $\cZ_\cE$ the zero section. The Cauchy-Riemann operator
$$
   \pb u = (du)^{0,1} = \frac12\bigl(du+J(u)\circ du\circ j\bigr)
$$
defines a Fredholm section $\pb:\BB\to\EE$. Assuming a setup in which the space of solutions $\pb^{-1}(\cZ_\cE)$ is compact (e.g. if $J$ is tamed by an exact symplectic structure on $V$, the totally real submanifold $L$ is exact Lagrangian, and $\Sigma$ has a compact group of automorphisms), we obtain a moduli problem $(\cB,\cE,\dbar,\cZ_\cE)$.

{\bf Constant annuli of positive modulus. }
Now we apply the preceding discussion to the moduli space of constant
annuli appearing in the previous subsection. Consider a fixed annulus
$(\Sigma,j)$ of finite conformal modulus $R>0$, equipped with a $1$-form
$\beta$ as above satisfying $d\beta\leq 0$ and $\beta=2dt$ in
cylindrical coordinates near the two (negative) boundary loops. Let $K$ be the nonnegative Hamiltonian from~\S\ref{sec:cont-coproduct}. Then the Floer operator
$\pb_Ku:=(du-X_K\otimes\beta)^{0,1}$ defines a Fredholm section in the
appropriate bundle $\EE\to\BB$ over the Banach manifold 
$$
   \BB = W^{m,p}\bigl((\Sigma,\p\Sigma),(T^*M,M)\bigr).
$$
We denote its zero set by $\MM:=\pb_K^{-1}(0)$. For $u\in\MM$ the
usual energy estimate (see e.g.~\cite{Ritter}) gives 
$$
   E(u) = \frac12\int_\Sigma|du-X_K(u)\otimes\beta|^2{\rm vol}_\Sigma \leq
   - A_{2K}(u|_{\p\Sigma}) = 0,
$$
where the Hamiltonian action of $u|_{\p\Sigma}$ vanishes because both 
the Liouville form and the Hamiltonian $K$ vanish on the zero section $M$.
This implies that $du-X_K(u)\otimes\beta\equiv 0$. Since $X_K$
vanishes near the zero section, it follows that $du\equiv 0$ near $\p\Sigma$
and therefore, by unique continuation, $u$ is constant equal to a
point in $M$. Hence the moduli space
$$
   \MM = M
$$
consists of points in $M$, viewed as constant maps $\Sigma\to
M$. Since $X_K$ vanishes near the zero section, the Floer operator
$\pb_K$ agrees with the Cauchy-Riemann operator $\pb$ near $\MM$, so
we can and will replace $\pb_K$ by $\pb$ in the following discussion of
obstruction bundles. 

We identify $\Sigma$ with the standard annulus $[0,R]\times\R/\Z$ and
its trivial tangent bundle $T\Sigma = \Sigma\times\C$. 
Consider a point $u\in M$, viewed as a constant map $u:\Sigma\to M$. 
We identify 
$$
   T_u^*M= \R^n,\qquad T_uM=i\R^n,\qquad T_u(T^*M)=\C^n. 
$$
Then we have
\begin{align*}
  T_u\BB &= W^{m,p}\bigl((\Sigma,\p\Sigma),(\C^n,i\R^n)\bigr), \cr
  \EE_u &= W^{m-1,p}\bigl(\Sigma,\Hom^{0,1}(\C,\C^n)\bigr) = W^{m-1,p}(\Sigma,\C^n),
\end{align*}
where for the last equality we use the canonical isomorphism
$$
   \Hom^{0,1}(\C,\C^n)\stackrel{\cong}\longrightarrow \C^n,\qquad \eta\mapsto\eta(\p_s).
$$
With these identifications, the linearized Cauchy-Riemann operator reads
$$
   D_u\pb:W^{m,p}\bigl((\Sigma,\p\Sigma),(\C^n,i\R^n)\bigr) \to
   W^{m-1,p}(\Sigma,\C^n),\qquad \xi\mapsto\p_s\xi+i\p_t\xi. 
$$
An easy computation using Fourier series (see~\cite{Ci95}) shows that
$$
   \ker(D_u\pb)=i\R^n=T_uM,\qquad \coker(D_u\pb)=\R^n=T_u^*M.
$$
So the Cauchy-Riemann operator, and thus the Floer operator,
satisfies conditions (i) and (ii) in the previous subsection with the obstruction bundle 
$$
   \cO=\coker(D\pb_K)\cong T^*M\to M=\MM,
$$
and Lemma~\ref{lem:obstruction-bundle} implies

\begin{corollary}\label{cor:constant-annuli}
In the preceding situation there exists a canonical morphism of
moduli problems
$$
   (\iota,I):(M,T^*M,0,\cZ_{T^*M})\to (\cB,\cE,\pb,\cZ_{\cE}),
$$
where $\iota:M\into\cB$ is the inclusion as constant maps and $I$
converts a cotangent vector into a constant $(0,1)$-form. 
\qed
\end{corollary}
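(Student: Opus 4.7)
The plan is to apply Lemma~\ref{lem:obstruction-bundle} directly to the moduli problem $(\cB,\cE,\dbar,\cZ_\cE)$, after verifying its two hypotheses and making a canonical choice of the obstruction bundle. Both hypotheses were established in the paragraphs preceding the corollary: the solution set $\MM = \dbar^{-1}(\cZ_\cE)$ equals $M$ embedded as constant maps $\Sigma \to M$, which is a smooth finite-dimensional submanifold of $\cB$; and the Fourier series computation gave $\ker(D_u \dbar) = i\R^n = T_u M = T_u \MM$ for each $u \in M$. Since $X_K$ vanishes near the zero section, $\dbar_K$ agrees with $\dbar$ near $\MM$, so it is harmless to work with $\dbar$ rather than $\dbar_K$.

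In the construction underlying Lemma~\ref{lem:obstruction-bundle}, the obstruction bundle $\cO$ is realized as a subbundle of $\cE|_M$ complementary to $\im D\dbar$. I would choose $\cO$ to be the subbundle of constant antilinear forms valued in the vertical subspace: using the canonical splitting $T(T^*M)|_M = TM \oplus T^*M$, set
$$\cO_u := \{\eta \in \Hom^{0,1}(T\Sigma, T_u(T^*M)) \mid \eta \text{ is constant and } \eta(\partial_s) \in T_u^*M\}.$$
The fibrewise Fourier computation from the preceding paragraph shows that $\cO_u$ is genuinely complementary to $\im D_u\dbar$ in $\cE_u$. The map $I : T^*M \to \cE|_M$ sending $p \in T_u^*M$ to the unique constant $(0,1)$-form $\eta_p$ with $\eta_p(\partial_s) = p$ (and hence $\eta_p(\partial_t) = -J_\st p$) is then a tautological vector bundle isomorphism from $T^*M$ onto $\cO$.

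With this explicit model for $\cO$, Lemma~\ref{lem:obstruction-bundle} directly yields the morphism $(\iota, I) : (M, T^*M, 0, \cZ_{T^*M}) \to (\cB, \cE, \dbar, \cZ_\cE)$ of the statement: the fibrewise exponential $\exp$ in the lemma's conclusion is the linear embedding $\cO \hookrightarrow \cE|_M$, which under the identification $\cO \cong T^*M$ via $I$ is precisely the map of the statement. The orientation condition will be verified by matching the canonical orientation of $\det(\dbar)|_M \cong \Lambda^{\rm top}TM \otimes \Lambda^{\rm top}(T^*M)^*$ with the one induced by the orientation of $M$ together with the canonical dual orientation of $T^*M$.

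The main obstacle is confirming that the pointwise recipe for $\cO_u$ assembles into a smooth finite-rank subbundle of $\cE|_M$. This reduces to observing that the notion of a constant antilinear form on $\Sigma$ with values in the parallel subbundle $T^*M \subset T(T^*M)|_M$ is globally well defined over $\Sigma$ thanks to the $S^1$-symmetry of the annulus, and that smoothness in $u \in M$ follows from the smoothness of the vertical subbundle and of $J_\st$ along the zero section.
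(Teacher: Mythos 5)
Your proposal is correct and follows the same route as the paper, which simply invokes Lemma~\ref{lem:obstruction-bundle} after the preceding verification that $\MM=M$, $\ker(D_u\pb)=T_uM$, and $\coker(D_u\pb)\cong T^*_uM$. Your explicit model of $\cO$ as constant vertical $(0,1)$-forms and your identification of the fibrewise exponential with the linear inclusion (since $\cE$ is a vector bundle) make precise exactly what the paper means by ``$I$ converts a cotangent vector into a constant $(0,1)$-form.''
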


Note in particular that $\pb_K$ has index zero. 
A section in the obstruction bundle transverse to the zero section
corresponds under the isomorphism $\cO\cong T^*M$ to a $1$-form $\eta$ on $M$ with nondegenerate
zeroes $p_1,\dots,p_k$, and the zero set of the perturbed Floer
operator $\pb_K+\wt\eta$ consists of $p_1,\dots p_k$ viewed as
constant maps $\Sigma\to M$. 
Having chosen the orientation of $\det(\dbar)$ to be induced by the canonical isomorphism $TM\cong T^*M$, we obtain that the signed count 
$$
  \sum_{i=1}^k\sigma(p_i) = \chi(T^*M)
$$
agrees  
with the Euler number of $T^*M$. Note that
the Euler number of $T^*M$ equals the Euler characteristic of $M$ 
(this follows from the canonical isomorphism $T^*M\cong TM$ and the Poincar\'e-Hopf theorem). 

{\bf Constant annuli of modulus zero. } 
Annuli of conformal modulus zero can be viewed as moduli problems in
two equivalent ways. For the first view, we take as domain the compact
region $A\subset\C$ bounded by two circles touching at one point, the node.
Given $(V,J)$ and $L\subset V$ as above, we therefore obtain a moduli
problem $(\cB^A,\cE^A,\cS^A,\cZ_{\cE^A})$ with 
$$
   \BB^A = W^{m,p}\bigl((A,\p A),(V,L)\bigr),\quad \EE^A_u =
   W^{m-1,p}\bigl(A,\Hom^{0,1}(TA,u^*TV)\bigr),
$$
the Cauchy-Riemann operator $\cS^A=\pb^A$, and the zero section $\cZ_{\cE^A}\subset\cE^A$.  

For the second view, we take as domain the closed unit disk
$D\subset\C$ with $\pm i$ viewed as nodal points which are identified. This gives rise to a 
moduli problem $(\cB^D,\cF^D,\cS^D,\cZ^D)$ with 
\begin{align*}
  \BB^D &= W^{m,p}\bigl((D,\p D),(V,L)\bigr),\cr
  \cF^D &= \cE^D\times(L\times L),\qquad
  \EE^D_u = W^{m-1,p}\bigl(D,\Hom^{0,1}(TD,u^*TV)\bigr),\cr
  \cS^D &= \pb^D\times\ev:\cB^D\to\cE^D\times(L\times L),\qquad \ev(u)=\bigl(u(i),u(-i)\bigr),\cr 
  \cZ^D &= \cZ_{\cE^D}\times\Delta,\qquad \Delta=\{(q,q)\mid q\in L\}\subset L\times L. 
\end{align*}
Note that the indices of the two moduli problems agree, 
$$
   \ind(\cS^D) = \ind(\pb^D)-n = \ind(\cS^A). 
$$
Let $\phi:D\to A$ be a continuous map which maps $\pm i$ onto the
nodal point and is otherwise one-to-one, and which is biholomorphic in
the interior.\footnote{We may construct $\phi$ as a composition $\phi=\vartheta\circ\log\circ\psi$ where $\psi$ is the M\"obius transformation sending $D$ onto the upper halfplane $\H$ with $\psi(-i)=0$ and $\psi(i)=\infty$, $\log$ is the logarithm sending $\H$ onto the strip $S=\{z\in\C\mid 0\leq\Im z\leq\pi\}$, and $\vartheta$ is the M\"obius transformation sending $S$ onto $A$.}
Then composition with $\phi$ defines a diffeomorphism
$$
   \cB^D\supset\ev^{-1}(\Delta)\cong \cB^A
$$
(where we use as area form on $A$ the pullback under $\phi$ of an
area form on $D$). Since $\ev:\cB^D\to L\times L$ is transverse to the
diagonal $\Delta$, we are in the situation of Lemma~\ref{lem:reduces-moduli-problem}.
We conclude that there exists a morphism of moduli problems 
$$
   (\psi,\Psi):(\cB^A,\cE^A,\cS^A,\cZ_{\cE^A})\to (\cB^D,\cF^D,\cS^D,\cZ^D),
$$
where $\psi:\cB^A=\ev^{-1}(\Delta)\into\cB^D$ is the inclusion and
$\Psi(u;\eta)=\bigl(u;\eta,\ev(u)\bigr)$.

Now we specialize to the case $(V,L) = (T^*M,M)$ with its canonical
almost complex structure $J$. Then both solution spaces
$\cM^A=(\pb^A)^{-1}(0)$ and $\cM^D=(\cS^D)^{-1}(\cZ^D)=(\pb^D)^{-1}(0)=M$
consist of constant maps to $M$. Moreover, in view of the preceding
discussion and the fact that the Cauchy-Riemman operator
$\pb^D:\cB^D\to\cE^D$ over the disk is transverse to the zero section,
they both satisfy the hypotheses (i) and (ii) of
Lemma~\ref{lem:obstruction-bundle}, so combined with the
preceding discussion we obtain

\begin{corollary}\label{cor:constant-mod-zero-annuli}
There exists a commuting diagram of morphisms of moduli problems
\begin{equation*}
\xymatrix
@C=30pt
{
   (\cB^A,\cE^A,\cS^A,\cZ_{\cE^A})\ar[r]^{(\psi,\Psi)} & (\cB^D,\cF^D,\cS^D,\cZ^D) \cr
   (M,T^*M,0_{T^*M},\cZ_{T^*M}) \ar[u]^{(\iota^A,\Psi^A)}
  \ar[r]^{(\id,\exp)} & (M,M\times M,\ev,\Delta) \ar[u]^{(\iota^D,\Psi^D)}
}
\end{equation*}
where $\iota^A:M\into\cB^A$ and $\iota^D:M\into\cB^D$ are the
inclusions as constant maps, the bundle $M\times M\to M$ is given
by projection onto the first factor, and $\exp:T^*M\to M\times
M$ is the composition of the isomorphism $T^*M\cong TM$ induced by a
metric on $M$ with the exponential map $TM\to M\times M$.  
Thus the Euler class of each of these moduli problems is represented 
by the nondegenerate zeroes $p_1,\dots,p_k$ of a $1$-form $\eta$ on
$M$ (or equivalently, of a vector field $v$ on $M$), with signs that
add up (up to a global sign) to the Euler characteristic $\chi$ of $M$. 
\qed
\end{corollary}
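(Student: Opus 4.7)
The plan is to assemble the diagram from three pieces already in hand---the top-row morphism $(\psi,\Psi)$ just constructed, an application of Lemma~\ref{lem:obstruction-bundle} to the annulus of modulus zero, and an application of Lemma~\ref{lem:reduces-moduli-problem} to the disk problem---and then verify that the two compositions across the square agree as morphisms of moduli problems.

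For the left vertical arrow I would repeat the analysis of Corollary~\ref{cor:constant-annuli} on the nodal annulus $A$. The energy estimate still gives $E(u)\leq -A_{2K}(u|_{\p\Sigma})=0$, so $\MM^A=M$ consists of constant maps and near them the Floer operator coincides with $\pb$. The Fourier computation, transported from the standard flat model via $\phi$, yields $\ker D_q\pb^A=T_qM$ and $\coker D_q\pb^A\cong T_q^*M$, so conditions (i) and (ii) preceding Lemma~\ref{lem:obstruction-bundle} are satisfied and the lemma produces the morphism $(\iota^A,\Psi^A)$, where $\Psi^A$ sends $\alpha\in T_q^*M$ to the constant $(0,1)$-form $\alpha\otimes d\bar z$.

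For the right vertical arrow I would use that $\pb^D:\cB^D\to\cE^D$ is transverse to the zero section. Indeed, a disk with totally real boundary on the zero section of $T^*M$ has Maslov index zero, so the Fredholm index of $\pb^D$ at a constant disk equals $n$, matching the dimension of $\ker D_q\pb^D=T_qM$; hence the cokernel vanishes and $(\pb^D)^{-1}(0)=M$ is cut out transversely. Since $\cS^D=\pb^D\times\ev$ splits as in Lemma~\ref{lem:reduces-moduli-problem} with $\pb^D$ the transverse factor, that lemma yields the reduced moduli problem $(M,M\times M,\ev,\Delta)$ with $\ev(q)=(q,q)$ together with the morphism $(\iota^D,\Psi^D)$. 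The bottom horizontal map $(\id,\exp)$ is a morphism of moduli problems since the zero section of $T^*M$ is sent onto $\Delta$ and the vertical linearization of $\exp$ at the zero section descends modulo $T\Delta$ to the musical isomorphism $T^*M\cong TM$ induced by the metric.

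The remaining, and main, task is commutativity. On solution sets both routes send $q\in M$ to the constant map $q\in\cB^D$, so the square of underlying maps commutes tautologically. On cokernels the two routes produce a priori distinct identifications of the obstruction bundle $T^*M$ with the normal bundle of $\Delta$ in $M\times M$, and the content is that they agree up to a common overall sign. I would verify this by tracking a representative constant $(0,1)$-form $\alpha\otimes d\bar z\in\coker D_q\pb^A$ through $D\psi=\phi^*$ and then through $D\ev$ at the two nodes $\pm i$: the contributions from the two boundary circles of $A$ combine to the class of $g^{-1}\alpha\in T_qM$ in $T_{(q,q)}(M\times M)/T\Delta$, matching the linearization of $\exp$. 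This orientation compatibility is the delicate point; once established, naturality of the Euler class under morphisms of moduli problems forces the Euler classes of all four problems to coincide, and the bottom-left corner is represented in the usual way by the zero set of a transverse section of $T^*M$, that is, by the nondegenerate zeroes of a $1$-form $\eta$ (equivalently the Poincar\'e-dual vector field $v$) on $M$, counted with signs that sum, up to the common global sign, to $\chi(T^*M)=\chi(M)$.
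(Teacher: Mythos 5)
Your proposal assembles the diagram from the same ingredients as the paper: Lemma~\ref{lem:reduces-moduli-problem} gives the top arrow (with $\ev$ the transverse factor), a variant of the same lemma with $\pb^D$ the transverse factor gives the right arrow, Lemma~\ref{lem:obstruction-bundle} gives the left arrow, and the bottom arrow is $(\id,\exp)$. You also correctly identify that the substance of commutativity is the compatibility of the induced identifications of obstruction bundles, so the structure matches the paper's.

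Two details are imprecise. First, for the left vertical arrow you propose to ``repeat the analysis of Corollary~\ref{cor:constant-annuli}'' via ``the Fourier computation, transported from the standard flat model via $\phi$.'' But the nodal annulus $A$ of modulus zero is not conformally equivalent to $[0,R]\times\R/\Z$, so the Fourier-series computation of Corollary~\ref{cor:constant-annuli} does not directly apply to $A$ --- and $\phi$ maps the disk $D$ to $A$, not a flat cylinder to $A$. The cleaner route (and the one the paper takes implicitly) is to establish hypotheses (i)--(ii) of Lemma~\ref{lem:obstruction-bundle} for the annulus problem by transporting them across the morphism $(\psi,\Psi)$, using the transversality of $\pb^D$ on the disk --- a fact you do verify correctly via the Maslov-index argument for the right vertical arrow. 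Second, in the commutativity discussion the claim $D\psi=\phi^*$ conflates the differential of the embedding $\psi:\cB^A\into\cB^D$ with the bundle map $\Psi$ acting on $(0,1)$-forms; by Lemma~\ref{lem:reduces-moduli-problem} the relevant map is $\Psi(f_0)=\bigl(f_0,\cS_1\circ p_0(f_0)\bigr)$, so tracking a cokernel class through the top-then-right route proceeds somewhat differently than you describe. Neither point affects the conclusion and the paper itself is terse about commutativity, but both would need to be tightened in a complete write-up.
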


%%%
\subsection{Proof of Theorem~\ref{thm:cont-loop-iso}}\label{sec:PD-loop-iso}
%%%

Now we can conclude the proof of Theorem~\ref{thm:cont-loop-iso}.

For $x\in FC_*(K)$ consider the moduli space $\PP(x)$ of Floer
annuli described in~\S\ref{sec:Floer-annuli} with its boundary
evaluation map $\ev_\p:\PP(x)\to\Lambda\times\Lambda$. Pick a $1$-form
$\eta$ on $M$ with nondegenerate zeroes $p_1,\dots,p_k$. As in~\S\ref{sec:constant-annuli}, we view $\eta$ as a section of the
obstruction bundle over the vertical sides of the hexagon in
Figure~\ref{fig:Floer-annuli}. We extend this section by a cutoff
function to a section $\wt\eta$ over the whole hexagon and add it as a right hand side to the
Floer equation. We choose the data such that the moduli space $\PP(x)$
is transversely cut out, and thus defines a compact manifold with
corners of dimension $\CZ(x)+2-n$. 

We may assume without loss of generality that $M$ is connected. We
pick a $C^2$-small Morse function $V:M\to\R$ with a unique maximum at
$q_0\in M$ such that $p_1,\dots,p_k$ flow to $q_0$ under the positive
gradient flow of $V$. Let $MC_*(S)$ denote the Morse complex of the
perturbed energy functional
$$
   S:\Lambda\to\R,\qquad S(q):=\int_0^1\bigl(|\dot q|^2-V(q)\bigr)dt
$$
(note that there is no factor $1/2$ in front of $|\dot q|^2$). 
For $x\in FC_*(K)$ and $a,b\in MC_*(S)$ we define
$$
   \PP(x;a,b) := \{(\tau,u)\in\PP(x)\mid \ev_\p(u)\in W^+(a)\times W^+(b)\},
$$
where $W^+(a)$ is the stable manifold of $a$ with respect to the
negative pseudo-gradient flow of $S$. Recall that the boundary evaluation map
involves reparametrization of the boundary loops proportionally to arclength.
For generic choices, these are manifolds of dimension
$$
   \dim\PP(x;a,b) = \CZ(x)-\ind(a)-\ind(b)+2-n.
$$
If the dimension is $0$ these spaces are compact and their signed counts
$$
   \Theta_1(x) := \sum_{a,b}\#\PP_{\dim=0}(x;a,b)\,a\otimes b
$$
define a degree $2-n$ map
\begin{align*}
   \Theta_1:  FC_*(K)\to MC_*(S)\otimes MC_*(S).
\end{align*}
Next we consider a $1$-dimensional moduli space $\PP_{\dim=1}(x;a,b)$
and compute its boundary. Besides splitting off index $1$ Floer cylinders 
and negative pseudo-gradient flow lines, which give rise to the term
$[\p,\Theta_1]$, there are contributions from the sides of the
hexagon in Figure~\ref{fig:Floer-annuli} which we analyze separately.
Note that the indices now satisfy
$$
   \CZ(x)-\ind(a)-\ind(b)=n-1. 
$$

\underline{Vertical left side}: Here the broken curves consist of a
half-cylinder attached at a boundary node to an annulus without
interior puncture, where the two boundary loops flow into $a,b$ under
the negative pseudo-gradient flow of $S$. By the discussion in~\S\ref{sec:constant-annuli} the moduli space of annuli is
$[0,\infty]\times\eta^{-1}(0)$, where $[0,\infty]$ encodes the
conformal modulus and $\eta^{-1}(0)$ consists of the points
$p_1,\dots,p_k$ (with signs $\sigma(p_i)$). In particular, we must
have $b=q_0$ and therefore $\ind(b)=\ind(q_0)=0$.
The half-cylinders belong to the moduli space 
\begin{align*}
   \MM(x;a) = \{ & u:[0,\infty)\times S^1\to T^*M\mid
     (du-X_K\otimes\beta)^{0,1}=\wt\eta,\cr
     & u(\infty,\cdot)=x,\ u(0,\cdot)\in W^+(a)\}.
\end{align*}
They carry a boundary nodal point which is aligned with the boundary marked
point $(0,0)$ and the puncture at $\infty$, and is therefore given by
$(0,1/2)$. The evaluation at the nodal point defines an evaluation map
$$
   \ev_{1/2}:\MM(x;a)\to M,\qquad u\mapsto u(0,1/2). 
$$
For the broken curve to exist this evaluation map must meet one of the
constant annuli, i.e.~one of the points $p_1,\dots,p_k\in M$, which
generically does not happen because
$$
   \dim\MM(x;a) = \CZ(x)-\ind(a) = n-1. 
$$
Hence the vertical left side gives no contribution to the boundary. 
   
\underline{Vertical right side}: 
Similarly, the vertical right side gives no contribution to the boundary.

\underline{Lower left side}: 
Here the broken curves consist of a disc with two interior punctures, one
positive and one negative, attached at its negative puncture to the
positive puncture of a half-cylinder along an orbit in $FC_*(-K)$,
where the two boundary loops flow into $a,b$ under the negative pseudo-gradient flow of $S$. 
By choosing the $1$-form $\beta$ equal to $dt$ on a long cylindrical
piece of the half-cylinder, we can achieve that these half-cylinders
are in one-to-one correspondence with broken curves consisting of a
cylinder with weights $(-1,1)$ and a half-cylinder with weights $(1,2)$,
as shown in the middle of Figure~\ref{fig:deg-half-cyl}. Reinterpreting
these curves as on the right of that figure, we see that their
count corresponds to the composition $-(\Gamma_1\otimes\Psi)(1\otimes c_0^F)$, where we recall that $\Gamma_1$ denotes the first term in the expression $\Gamma=\Gamma_1+\Gamma_2+\Gamma_3$ from~\eqref{eq:Gamma123}. 

\begin{figure}
\begin{center}
\includegraphics[width=.7\textwidth]{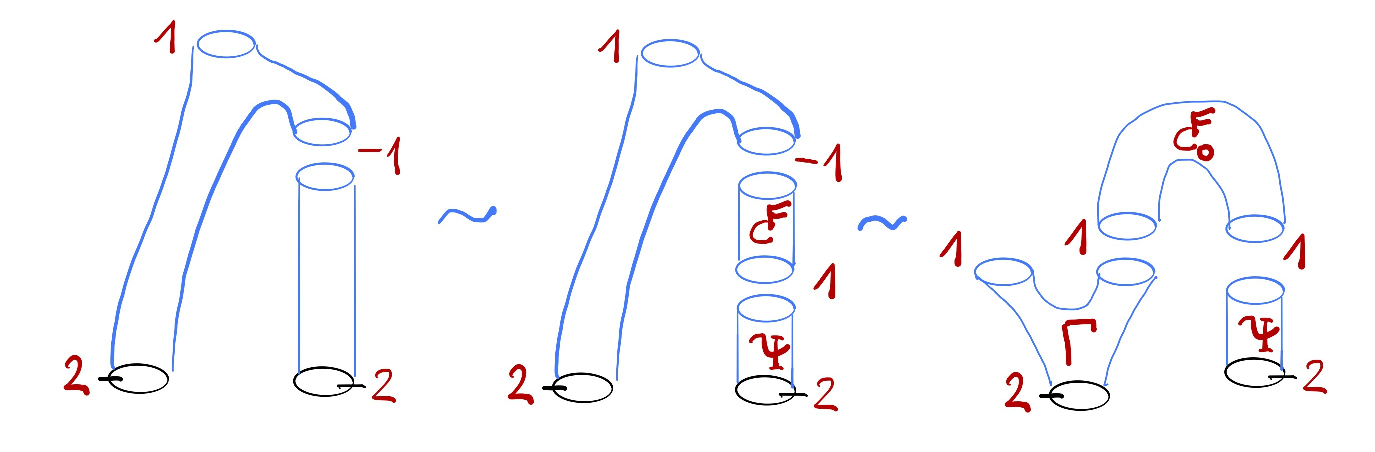}
\caption{Degenerating the half-cylinders}
\label{fig:deg-half-cyl}
\end{center}
\end{figure}

\underline{Lower right side}: 
Similarly, the contribution from the lower right side
corresponds to the composition $(\Psi\otimes\Gamma_1)(c_0^F\otimes 1)$.

The discussion so far shows that 
\begin{equation}\label{eq:Theta1} 
   [\p,\Theta_1] = (\Psi\otimes\Gamma_1)(\boldtau c_0^F\otimes 1)
   -(\Gamma_1\otimes\Psi)(1\otimes c_0^F) + \Theta^{\rm top} + \Theta^{\rm bottom},
\end{equation}
where $\Theta^{\rm top}$ and $\Theta^{\rm bottom}$ are the degree $1-n$ maps
arising from the contributions of the top and bottom sides of the
hexagon to the boundary of $\PP_{\dim=1}(x;a,b)$ which we discuss next.

\underline{Bottom side}: 
The family of broken curves on the bottom side can be deformed in an
obvious way to the family of broken curves shown in Figure~\ref{fig:Theta-bottom}.
Since the half-cylinders with weights $(2,2)$ define the map $\Psi$
and the family of 3-punctured spheres above them defines the
continuation coproduct $\lambda^F$ from~\S\ref{sec:cont-coproduct},
this shows that $\Theta^{\rm bottom}$ is equal to $-(\Psi\otimes\Psi)\lambda^F$.

\begin{figure}
\begin{center}
\includegraphics[width=.7\textwidth]{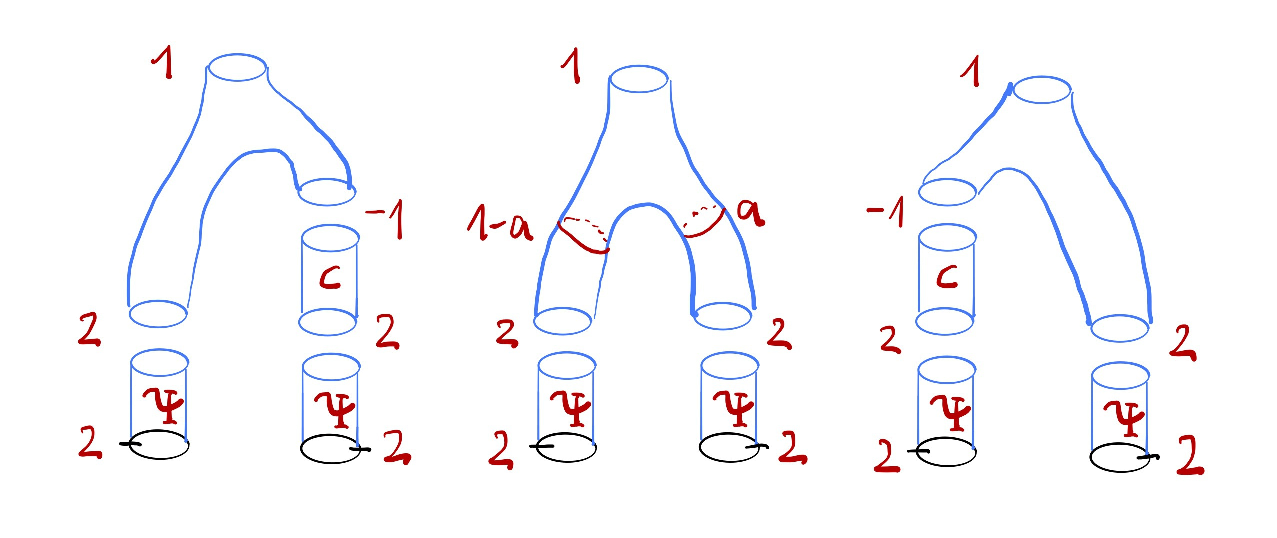}
\caption{Degenerating the curves on the bottom side}
\label{fig:Theta-bottom}
\end{center}
\end{figure}

\underline{Top side}: 
The family on the top side of the hexagon consists of punctured annuli
of modulus $0$, i.e., punctured discs with two nodal points on the
boundary that are identified to a node. Moreover, the boundary carries
two marked points that are separated by the nodal points and aligned
with the interior puncture. We wish to relate this family to the loop coproduct, 
but for this we face two problems: First, the
boundary loops carry two marked points whereas the loops for the
loop coproduct carry only one (the initial time $t=0$); and second,
the self-intersection of the boundary loop occurs at the nodal points
and not at one of the marked points. 

Both problems are resolved simultaneously as follows. We enlarge this $1$-parametric family to a
$2$-parametric family in which we keep the two boundary marked points
aligned, but drop the condition that the interior puncture is aligned
with them. The $2$-parametric family forms the hexagon shown in
Figure~\ref{fig:mod-zero-annuli}. Here the interior puncture is
depicted as a cross, the aligned boundary marked points as endpoints
of a dashed line, and the nodal points as thick dots. The bottom side
of the hexagon (drawn in black) corresponds to the $1$-parametric
family on the top side of Figure~\ref{fig:Floer-annuli}. 
Note that here we made a choice by letting the interior puncture move
freely {\em above} the dashed line connecting the two boundary marked points;
we could equally well have taken the mirror hexagon where the interior
puncture moves below the dashed line. 

\begin{figure}
\begin{center}
\includegraphics[width=1\textwidth]{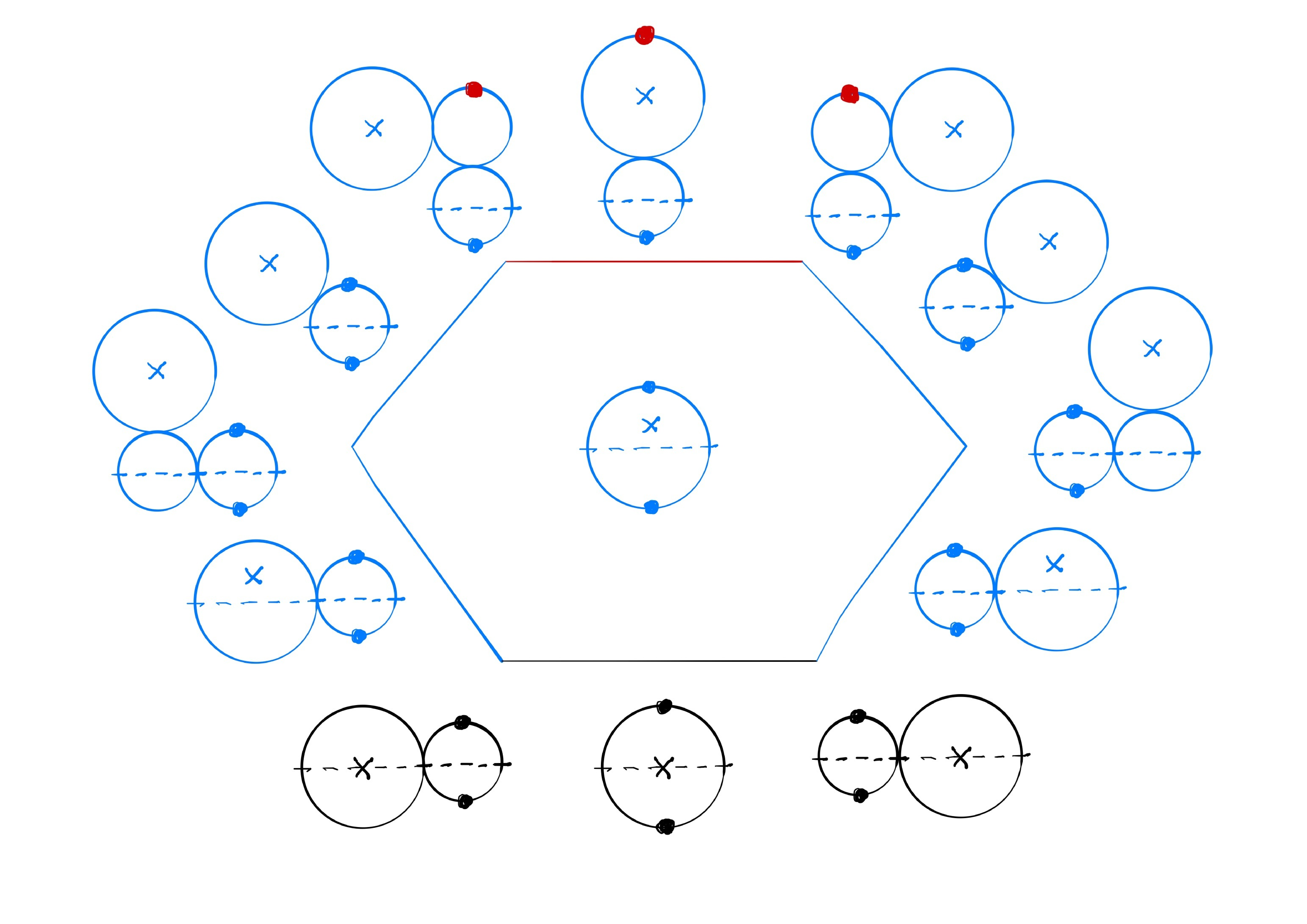}
\caption{Floer annuli of modulus zero}
\label{fig:mod-zero-annuli}
\end{center}
\end{figure}

The hexagon in Figure~\ref{fig:mod-zero-annuli} defines a deformation
from the bottom (black) side to the top side (drawn in red). The
configurations in this figure are to be interpreted as follows. 

$\bullet$ Each configuration has two boundary loops obtained by going
around in the counterclockwise direction: the first loop from the
bottom to the top nodal point, and the second one from the top to the
bottom nodal point. Each boundary loop carries a marked point. As
before, each boundary loop of the zero section is reparametrized
proportionally to arclength and then flown into a critical point on
$\Lambda$ under the negative pseudo-gradient flow of the functional $S:\Lambda\to\R$.

$\bullet$ In each configuration the unique component carrying the interior
puncture (which may be nonconstant) is drawn as a large disc, so the
small discs are all constant. In particular, each small disc carrying
the two nodal points is a constant annulus of modulus zero. Under the
perturbation of the Cauchy-Riemann equation described in
Corollary~\ref{cor:constant-mod-zero-annuli}, such a component lands on
the transverse zeroes $p_1,\dots,p_k$ of a $1$-form $\eta$ and further
flows into the basepoint $q_0$. 
In particular, since the signs add up to $\chi$ we have that all configurations on the upper and lower left sides
land in $R\chi q_0\otimes MC_*(S)$, while those on the upper and lower
right sides land in $MC_*(S)\otimes R\chi q_0$. 
The upper and lower left sides therefore compute $-(\Gamma_2\otimes\Psi)(1\otimes c_0^F)$, whereas the lower and upper right sides compute $(\Psi\otimes\Gamma_2)(\boldtau c_0^F\otimes 1)$, with $\Gamma_2$ being the second term in~\eqref{eq:Gamma123}.

Thus the hexagon in
Figure~\ref{fig:mod-zero-annuli} provides a chain homotopy $\Theta_2$ from $\Theta^{\rm top}$ (defined by the bottom side) to the
operation $\wt\Theta^{\rm top} + (\Psi\otimes\Gamma_2)(\boldtau c_0^F\otimes 1) -(\Gamma_2\otimes\Psi)(1\otimes c_0^F)$, where $\wt\Theta^{\rm top}$ is defined by the top side, i.e.
\begin{equation}\label{eq:Theta2}
[\p,\Theta_2]= \wt\Theta^{\rm top} + (\Psi\otimes\Gamma_2)(\boldtau c_0^F\otimes 1) -(\Gamma_2\otimes\Psi)(1\otimes c_0^F) - \Theta^{\rm top}.
\end{equation} 

$\bullet$ Consider now the top side. Since both marked points and the
black nodal point lie on the same constant component, we can remove
this component and replace the three points by one nodal/marked point
as shown in Figure~\ref{fig:Theta-top}. 
\begin{figure}
\begin{center}
\includegraphics[width=.7\textwidth]{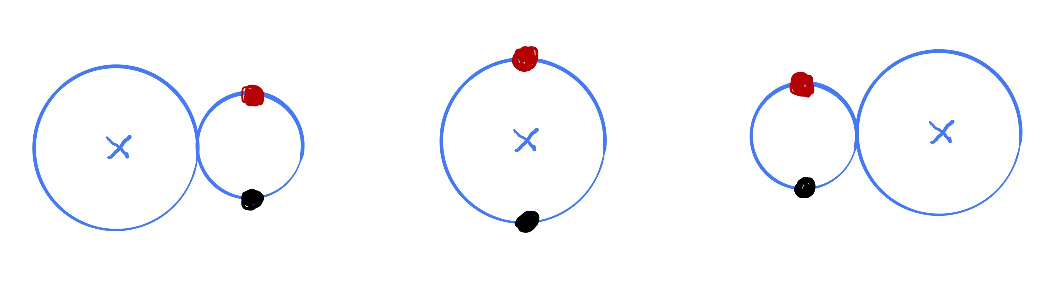}
\caption{Interpreting the curves on the top side}
\label{fig:Theta-top}
\end{center}
\end{figure}
The boundary of these configurations consists of loops $q:[0,1]\to M$
with one (black) marked/nodal point at time $0$ and an additional (red)
nodal point at time $s$ which moves from $0$ to $1$ as we traverse the
side from left to right.
In view of Corollary~\ref{cor:constant-mod-zero-annuli} and Remark~\ref{rem:orientations}, the map 
$\wt\Theta^{\rm top}:FC_*(K)\to MC_*(S)\otimes MC_*(S)$ is defined by counting isolated configurations consisting of punctured discs as in the definition of the moduli spaces $\cM(x)$ from~\S\ref{sec:Psi-symp}, additionally decorated with two marked points, with an incidence condition at the marked points, followed by semi-infinite negative pseudo-gradient lines of $S$ starting at the de-concatenated loops. Now we deform $\wt\Theta^{\rm top}$ once more by 
inserting a negative pseudo-gradient trajectory of $S$ of finite length $T\ge 0$ between the boundary loop of the disc and the loop on which we impose the incidence condition at the marked points. 
As $T\to\infty$ this becomes the chain map $\Psi:FC_*(K)\to MC_*(S)$
followed by the Morse theoretic coproduct $\lambda$, whereas on the boundary of the top side we see appear the terms $(\Psi\otimes\Gamma_3)(\boldtau c_0^F\otimes 1) -(\Gamma_3\otimes\Psi)(1\otimes c_0^F)$, with $\Gamma_3$ being the third term from~\eqref{eq:Gamma123}. 
We obtain therefore a homotopy $\Theta_3$ between the operation $\wt\Theta^{\rm top}$ defined by the top side and $\lambda\Psi +(\Psi\otimes\Gamma_3)(\boldtau c_0^F\otimes 1) -(\Gamma_3\otimes\Psi)(1\otimes c_0^F)$, i.e.
\begin{equation}\label{eq:Theta3}
[\p,\Theta_3]= \lambda\Psi +(\Psi\otimes\Gamma_3)(\boldtau c_0^F\otimes 1) -(\Gamma_3\otimes\Psi)(1\otimes c_0^F) - \wt\Theta^{\rm top}.
\end{equation}

Summing together equations~\eqref{eq:Theta1}, \eqref{eq:Theta2} and~\eqref{eq:Theta3}, and recalling that $\Gamma=\Gamma_1+\Gamma_2+\Gamma_3$, we obtain the desired relation~\eqref{eq:Theta-Floer}. 

For $n\neq 2$, the condition $\Theta c^F=0$ follows by an index
argument analogous to the proof of the relation $\lambda^Fc^F=0$ in
Proposition~\ref{prop:A2+symphom}. 
Together with the discussion at the beginning of this section, this
concludes the proof of Theorem~\ref{thm:cont-loop-iso}. 
\qed

\begin{comment}
\begin{proposition} \label{prop:iso-ol-FH-MH}
The map $\Psi$ descends to an isomorphism on reduced homology
$$
   \Psi_*:\ol{FH}_*\stackrel{\cong}\longrightarrow \ol{MH}_*\cong \ol{H}_*\Lambda
$$
which intertwines the Abbondandolo--Schwarz coproduct $\lambda^F$ with the
loop coproduct $\lambda$. 
\end{proposition}
\end{comment}

\begin{remark}[Perturbation by $1$-form/vector field]\label{rem:perturbation}
Let us analyze how the perturbation by a $1$-form $\eta$ with transverse
zeroes propagates to the diagrams in the preceding proof. 
Along the left hand sides of the hexagon in Figure~\ref{fig:Floer-annuli},
we perturb the Floer operator by $\wt\eta$ at the second output. 
This continues along the left hand sides of the hexagon in
Figure~\ref{fig:mod-zero-annuli} as the perturbation of the constant
modulus zero annuli by the vector field $v$ corresponding to $\eta$
at the second output. 
As a result, the left hand configuration in Figure~\ref{fig:Theta-top}
is perturbed by applying the time-one-map $f$ of $v$ as we go
{\em counterclockwise} from the black to the red dot.
This means for $s\geq 0$ close to $0$ the evaluations of the corresponding loops $q:[0,1]\to M$
at time $0$ (the black dot) and $s$ (the red dot) are related by $q(s)=f(q(0))$.\\ 
Along the right hand sides of the hexagon in Figure~\ref{fig:Floer-annuli},
we perturb the Floer operator by $\wt\eta$ at the first output. 
As a result, the right hand configuration in Figure~\ref{fig:Theta-top}
is perturbed by applying the time-one-map $f$ of $v$ as we go
{\em clockwise} from the black to the red dot.
This means that for $s\leq 1$ close to $1$ the evaluations of the corresponding loops $q:[0,1]\to M$
at time $0$ (the black dot) and $s$ (the red dot) are related by
$q(0)=f(q(s))$, or equivalently, $q(s)=f^{-1}(q(0))$.\\ 
Therefore, the perturbation by the $1$-form $\eta$ on the Floer side
translates on the loop side into the perturbation by an $s$-dependent
vector field which agrees with $v$ near $s=0$ and with $-v$ near $s=1$. 
\end{remark}

%%%%%%%%%%%%%%%%%%%%%%%%%%%%%%%%%%%%%%%%%%%%%%%%%%%%%%%%%%%%%%%%%
\section{Relation to other Floer-type coproducts}\label{sec:other-coproducts} 
%%%%%%%%%%%%%%%%%%%%%%%%%%%%%%%%%%%%%%%%%%%%%%%%%%%%%%%%%%%%%%%%%

The continuation coproduct $\lambda^F$ discussed in the previous
sections descends to {\em positive action} symplectic homology
$SH_*^{>0}(D^*M)$ (since the action inequality implies that if the
input orbit is constant, then so must be the output orbits).  
In this section we relate $\lambda^F$ to other coproducts on
$SH_*^{>0}(D^*M)$ that have appeared in the literature, thus proving
Theorem~\ref{thm:main1} from the Introduction. 

In particular, we will prove that $\lambda^F$ agrees with the {\em
  Abbondandolo--Schwarz coproduct} $\lambda^{AS}$ defined in~\cite{AS-product-structures}. 
Abbondandolo and Schwarz defined in~\cite{AS-product-structures} the ring isomorphism
$\Psi_*:SH_*(D^*M)\stackrel{\cong}\longrightarrow H_*\Lambda$ 
and they asserted~\cite[Theorem 1.4]{AS-product-structures} that its
reduction modulo constant loops
$\Psi_*^{>0}:SH_*^{>0}(D^*M)\stackrel\cong\longrightarrow
H_*(\Lambda,\Lambda_0)$ intertwines the coproduct $\lambda^{AS}$ with
the loop coproduct $\lambda$. However, to our knowledge no proof of
this result has appeared. We will actually give two proofs in this
section: the first one uses Theorem~\ref{thm:cont-loop-iso} and the
identification $\lambda^F=\lambda^{AS}$, the second one uses a direct
argument and suitable interpolating moduli spaces.  

This section is structured as follows. In~\S\ref{sec:varying-weights-coproduct} we recall from~\cite{CO-cones} the definition of the varying weights coproduct $\lambda^w$, which coincides with $\lambda^F$ by~\cite[Lemma~7.2]{CO-cones} and which can be more easily related to $\lambda^{AS}$. In~\S\ref{sec:AS-coproduct} we recall from~\cite{AS-product-structures}
the definition of the Abbondandolo--Schwarz coproduct $\lambda^{AS}$. 
In~\S\ref{sec:varying-weights-AS-coproduct} we show that $\lambda^{AS}$ is equal to $\lambda^w$. 
In~\S\ref{sec:symplectic-Morse-coproducts} we prove directly that $\lambda^{AS}$
corresponds to the loop coproduct $\lambda$ under the
isomorphism $SH_*^{>0}(D^*M)\cong H_*(\Lambda,\Lambda_0)$. 

The situation is summarized in the following diagram. 
\begin{equation}\label{eq:isos-coproducts}
\xymatrix
@C=30pt
{
      \lambda \,\ar@{=}[r]^-{\S{\tiny \ref{sec:symplectic-Morse-coproducts}}}
      \ar@/_20pt/@{=}[rrr]_-{\text{Thm.}{\tiny \ref{thm:cont-loop-iso}}}
      &\lambda^{AS} \ar@{=}[r]^-{\S{\tiny \ref{sec:varying-weights-AS-coproduct}}}
      &\lambda^w \ar@{=}[r]^-{\mbox{\tiny \cite{CO-cones}}}
      &\lambda^F.
 }
\end{equation}

The whole discussion concerns the free loop space, but
it carries over verbatim to the based loop space. 

For simplicity, we assume throughout this section that $M$ is oriented
and we use untwisted coefficients in a commutative ring $R$; the
necessary adjustments in the nonorientable case and with 
twisted coefficients are explained in Appendix~\ref{sec:local-systems}.
We denote 
$$
   S^1:=\R/\Z\quad\text{and}\quad\Lambda:=W^{1,2}(S^1,M). 
$$

%%%
\subsection{Varying weights coproduct}\label{sec:varying-weights-coproduct}
%%%

We recall the definition of the varying weights coproduct $\lambda^w$ on $SH_*^{>0}(V)$ from~\cite[\S7.1]{CO-cones}. Since there we actually describe the algebraically dual product on $SH_*^{<0}(V,\p V)$, we will recap in some detail the necessary notation and arguments.  
The construction goes back to Seidel, see also~\cite{Ekholm-Oancea}.  We work with a Liouville domain $V$ of dimension $2n$, the symplectic completion is denoted $\wh V=V\cup [1,\infty)\times \p V$ and the radial coordinate in the positive symplectization $[1,\infty)\times \p V$ is denoted $r$. 

Let $\Sigma$ be the genus zero Riemann surface with three punctures, one of them labeled as positive $\chi_+$ and the other two labeled as negative $\upsilon_-$, $\zeta_-$, endowed with cylindrical ends $[0,\infty)\times S^1$ at the positive puncture and $(-\infty, 0]\times S^1$ at the negative punctures. Denote $(s,t)$, $t\in S^1$ the induced cylindrical coordinates at each of the punctures. Consider a smooth family of $1$-forms $\beta_\tau\in \Omega^1(\Sigma)$, $\tau\in (0,1)$ satisfying the following conditions: 
\begin{itemize}
\item {\sc (nonpositive)} $d\beta_\tau\le 0$; 
\item {\sc (weights)} $\beta_\tau=dt$ near each of the punctures;
\item {\sc (interpolation)} we have $\beta_\tau=\tau dt$ on $[-R(\tau),0]\times S^1$ in the cylindrical end near $\upsilon_-$, and 
$\beta_\tau=(1-\tau) dt$ on $[-R(1-\tau),0]\times S^1$ in the cylindrical end near $\zeta_-$, for some smooth function $R:(0,1)\to \R_{>0}$. In other words, the family $\{\beta_\tau\}$ interpolates between a $1$-form which varies a lot near $\upsilon_-$ and very little near $\zeta_-$, and a $1$-form which varies a lot near $\zeta_-$ and very little near $\upsilon_-$;
\item {\sc (neck stretching)} we have $R(\tau)\to +\infty$ as $\tau\to 0$. 
\end{itemize} 
We can assume without loss of generality that for $\tau$ close to
$0$ we have $\beta_\tau=f_\tau(s)dt$ in the cylindrical end at
the negative puncture $\upsilon_-$, with $f'_\tau\le 0$, $f_\tau=1$
near $-\infty$, and $f_\tau=\tau$ on $[-R(\tau),0]$, and
similarly for $\tau$ close to $1$ in the cylindrical end at the negative puncture $\zeta_-$.

Let $H:\wh V\to\R$ be a convex smoothing localized near $\p V$ of a Hamiltonian which is zero on $V$ and linear with respect to $r$ with positive slope on $[1,\infty)\times \p V$. The Hamiltonian $H$ further includes a small time-dependent perturbation localized near $\p V$, so that all $1$-periodic orbits are nondegenerate. Assume the slope is not equal to the period of a closed Reeb orbit. Denote $\cP(H)$ the set of $1$-periodic orbits of $H$. The elements of $\cP(H)$ are contained in a compact set close to $V$. 

Let $J=(J_\tau^\zeta)$, $\zeta\in \Sigma$, $\tau\in(0,1)$ be a generic family of compatible almost complex structures, independent of $\tau$ and $s$ near the punctures, cylindrical and independent of $\tau$ and $\zeta$ in the symplectization $[1,\infty)\times \p V$.
For $x,y,z\in\cP(H)$ denote 
\begin{align*}
   \MM^1(x;y,z) := \bigl\{&(\tau,u)\;\bigl|\; 
   \tau\in(0,1),\ u:\Sigma\to \wh V, \cr
   & (du-X_H\otimes \beta_\tau)^{0,1}=0, \cr
   & \lim_{\stackrel{s\to+\infty}{\zeta=(s,t)\to \chi_+}}u(\zeta)=x(t),\cr 
   & \lim_{\stackrel{s\to-\infty}{\zeta=(s,t)\to \upsilon_-}} u(\zeta)=y(t),\quad \lim_{\stackrel{s\to-\infty}{\zeta=(s,t)\to \zeta_-}} u(\zeta)=z(t) \bigr\}.
\end{align*}
In the symplectization $[1,\infty)\times \p V$ we have $H\ge 0$ and therefore $d(H\beta)\le 0$, so that elements of the above moduli space are contained in a compact set. The dimension of the moduli space is 
$$
\dim\, \MM^1(x;y,z) = \CZ(x)-\CZ(y)-\CZ(z)-n+1. 
$$
When it has dimension zero the moduli space $\MM^1_{\dim=0}(x;y,z)$ is compact. When it has dimension $1$ the moduli space $\MM^1_{\dim=1}(x;y,z)$ admits a natural compactification into a manifold with boundary 
\begin{align*}
   \p\MM^1_{\dim=1}(x;y,z)
   &= \coprod_{\CZ(x')=\CZ(x)-1}\MM(x;x')\times\MM^1_{\dim=0}(x';y,z) \cr
   &\amalg\coprod_{\CZ(y')=\CZ(y)+1}\MM^1_{\dim=0}(x;y',z)\times\MM(y';y) \cr
   &\amalg\coprod_{\CZ(z')=\CZ(z)+1}\MM^1_{\dim=0}(x;y,z')\times\MM(z';z) \cr
   &\amalg\MM^1_{\tau=1}(x;y,z) \amalg\MM^1_{\tau=0}(x;y,z).
\end{align*}
Here $\MM^1_{\tau=1}(x;y,z)$ and $\MM^1_{\tau=0}(x;y,z)$ denote the fibers of the first projection $\MM^1_{\dim=1}(x;y,z)\to (0,1)$, $(\tau,u)\mapsto \tau$ near $1$, respectively near $0$. (By a standard gluing argument the projection is a trivial fibration with finite fiber near the endpoints of the interval $(0,1)$.) 

Consider the degree $-n+1$ operation 
$$
\lambda^w:FC_*(H)\to FC_*(H)\otimes FC_*(H)
$$
defined on generators by 
$$
\lambda^w(x)=\sum_{\CZ(y)+\CZ(z)=\CZ(x)-n+1} \# \MM^1_{\dim=0}(x;y,z) y\otimes z,
$$
where $\# \MM^1_{\dim=0}(x;y,z)$ denotes the signed count of elements in the $0$-dimen\-sional moduli space $\MM^1_{\dim=0}(x;y,z)$. 
Consider also the degree $-n$ operations 
$$
\lambda^w_i:FC_*(H)\to FC_*(H)\otimes FC_*(H), \qquad i=0,1
$$
defined on generators by 
$$
\lambda^w_i(x)=\sum_{\CZ(y)+\CZ(z)=\CZ(x)-n} \# \MM^1_{\tau=i}(x;y,z) y\otimes z,
$$
where $\# \MM^1_{\tau=i}(x;y,z)$ denotes the signed count of elements in the $0$-dimen\-sional moduli space $\MM^1_{\tau=i}(x;y,z)$. 

Denote by $\p^F$ the Floer differential on the Floer complex of $H$. The formula for $\p\MM^1_{\dim=1}(x;y,z)$ translates into the algebraic relation  
\begin{equation} \label{eq:product_with_weights}
\p^F \lambda^w + \lambda^w(\p^F\otimes\mathrm{id}+ \mathrm{id}\otimes \p^F)=\lambda^w_1-\lambda^w_0.
\end{equation}
We now claim that  
$$
\mathrm{Im}(\lambda^w_0)\subset FC_*^{=0}(H)\otimes FC_*(H),\quad \mathrm{Im}(\lambda^w_1)\subset FC_*(H)\otimes FC_*^{=0}(H). 
$$
To prove the claim for $\lambda^w_0$, note that this map can be
expressed as a composition $(c\otimes\mathrm{id})\circ \lambda_0$, where
$\lambda_0:FC_*(H)\to FC_*(\tau H)\otimes FC_*(H)$ is a
pair-of-pants coproduct with $\tau>0$ small, and $c:FC_*(\tau H)\to FC_*(H)$ is a
continuation map. Taking into account that $\tau H$ has no nontrivial
$1$-periodic orbits for $\tau$ small, and because the action decreases along continuation maps, we obtain
$c(FC_*(\tau H))\subset FC_*^{=0}(H)$, which proves the claim. The argument for $\lambda^w_1$ is similar.  

It follows that $\lambda^w$ induces a degree $-n+1$ chain map
\begin{equation}\label{eq:sigmaF}
   \lambda^w:FC_*^{>0}(H)\to FC_*^{>0}(H)\otimes FC_*^{>0}(H).
\end{equation}
Passing to the limit as the slope of $H$ goes to $+\infty$ we obtain the degree $-n+1$ \emph{varying weights coproduct} $\lambda^w$ on $SH_*^{>0}(V)$. 

\begin{proposition}[{\cite[Lemma~7.2]{CO-cones}}]
The continuation coproduct and the varying weights coproduct coincide on $SH_*^{>0}(V)$: 
$$
\lambda^F=\lambda^w. 
$$ 
\qed
\end{proposition}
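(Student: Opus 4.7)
The plan is to build a chain homotopy between the chain-level maps defining $\lambda^{\mathrm{cont}}$ and $\lambda^w$ by counting Floer solutions over a $2$-parameter family of auxiliary data on the $3$-punctured sphere $\Sigma$ that interpolates between the two defining $1$-parameter families. The essential observation I would exploit is the \emph{weight–Hamiltonian rescaling}: if $\beta = c\,dt$ near a puncture and the asymptotic Hamiltonian is $H$, the Floer equation $(du - X_H\otimes\beta)^{0,1}=0$ is identical to the one with weight $1$ and Hamiltonian $cH$. In particular, the $(1,2,2)$-weighted setup of $\lambda^{\mathrm{cont}}$ relative to $K=K_\mu$ and the $(1,1,1)$-weighted setup of $\lambda^w$ relative to $H$ can, after choosing slopes large enough, be realized over a common moduli problem with the same underlying Hamiltonian and domain, but with different profiles of the sub-closed $1$-form $\beta$.

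First I would construct a $2$-parameter family $\beta_{s,\tau}$ with $(s,\tau)\in [0,1]\times(0,1)$ satisfying $d\beta_{s,\tau}\le 0$ throughout, such that at $s=0$ we recover (up to the rescaling above) the continuation family from~\S\ref{sec:cont-coproduct}, with the weight-$(-1)$ cylinder degenerating as $\tau\to 0,1$, and at $s=1$ we recover the varying-weights family of~\S\ref{sec:varying-weights-coproduct}, with neck lengths $R(\tau),R(1-\tau)\to\infty$ as $\tau\to 0,1$. I would then form the corresponding moduli space $\MM^2(x;y,z)$ of Floer solutions and count its rigid elements to produce an operator $\Theta:FC_*(H)\to (FC_*(H)\otimes FC_*(H))_{*-n+2}$.

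The analysis of $\partial \MM^2(x;y,z)$ should yield four types of contributions: (i) Floer cylinder breakings at the three punctures giving $\partial^F\Theta + \Theta(\partial^F\otimes\mathrm{id}+\mathrm{id}\otimes\partial^F)$; (ii) the edges $s=0$ and $s=1$, which recover $\lambda^{\mathrm{cont}}$ and $\lambda^w$ respectively; (iii) the limits $\tau\to 0$ and $\tau\to 1$ at each fixed $s\in[0,1]$, which in both setups produce configurations whose relevant output factors through a chain complex supported on constant loops — on the continuation side via the continuation map $c_{-K,2K}$ whose image is $R\chi q_0$ (as recalled in~\S\ref{sec:cont-coproduct}), on the varying-weights side via a pair-of-pants coproduct involving the nearly-zero Hamiltonian $\tau H$ (compare the discussion after~\eqref{eq:product_with_weights}); in either case the output lands in $FC_*^{=0}(H)\otimes FC_*(H)\cup FC_*(H)\otimes FC_*^{=0}(H)$ and hence vanishes after projecting to $FC^{>0}_*(H)\otimes FC^{>0}_*(H)$. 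Assembling (i)–(iii) gives the desired chain homotopy identity $\lambda^{\mathrm{cont}}-\lambda^w = \partial^F\Theta + \Theta(\partial^F\otimes\mathrm{id}+\mathrm{id}\otimes\partial^F)$ on $FC_*^{>0}(H)$, and passing to the direct limit over slopes yields the equality on $SH_*^{>0}(V)$.

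The main obstacle I anticipate is the compactness and corner analysis of the $2$-parameter family: one must simultaneously control Floer breaking at the punctures, the degeneration of the weight-$(-1)$ cylinder in the $s=0$ direction, and the independent neck-stretching in the $s=1$ direction, together with their interaction near the four corners of $[0,1]\times(0,1)$. The energy estimate coming from $d\beta_{s,\tau}\le 0$ together with $H\ge 0$ in the symplectization confines all solutions to a compact subset of $\widehat V$, so the real work is combinatorial: identifying exactly which degenerate configurations contribute at each corner and checking that no further boundary strata appear beyond those listed above. Once this bookkeeping is in place, the conclusion is essentially formal.
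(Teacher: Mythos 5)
The paper does not prove this proposition; it records it as a cited result (\cite[Lemma~7.2]{CO-cones}) and closes immediately with \qed, so there is no in-paper proof against which to compare. On its own merits, your strategy --- a chain homotopy $\Theta$ obtained by counting Floer solutions over a $2$-parameter family $\beta_{s,\tau}$ of sub-closed $1$-forms on the $3$-punctured sphere, with the $s=0,1$ edges recovering the two coproducts and the $\tau\to 0,1$ edges factoring through a continuation map into a complex supported on constant orbits and hence vanishing on $FC_*^{>0}$ --- is the natural one and should in principle work. Two small remarks on the bookkeeping: the homotopy relation should read $(\partial^F\otimes\mathrm{id}+\mathrm{id}\otimes\partial^F)\Theta + \Theta\partial^F$ rather than $\partial^F\Theta + \Theta(\partial^F\otimes\mathrm{id}+\mathrm{id}\otimes\partial^F)$, since $\Theta$ has one Floer input and two outputs; and the corner concern you raise is not actually an issue, because for a $1$-dimensional compactified moduli space over a $2$-dimensional parameter square the corners contribute only in codimension two and do not enter the homotopy identity.

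The one genuine gap is in the normalization step, which your ``rescaling observation'' names but does not resolve. As defined in the paper, $\lambda^{\mathrm{cont}}$ uses weights $(1,2,2)$ with Hamiltonian $K=K_\mu$ (so effective asymptotics $K_\mu,K_{2\mu},K_{2\mu}$), whereas $\lambda^w$ uses weights $(1,1,1)$ with a single Hamiltonian $H$ at all three punctures. A parametrized moduli problem over a square $[0,1]_s\times[0,1]_\tau$ with fixed asymptotic Floer orbits requires the pair (weight, Hamiltonian) at each puncture to be constant in $(s,\tau)$; the observation that weight $c$ with Hamiltonian $H$ gives the same equation as weight $1$ with Hamiltonian $cH$ is true but does not by itself place the two families over the same asymptotic data. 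You must either (a) re-express $\lambda^w$ as a family of $1$-forms with weights $(1,2,2)$ and asymptotics $K_\mu,K_{2\mu},K_{2\mu}$ --- i.e.\ with a profile on each output end decreasing monotonically from $2$ at the puncture down through a small value $\tau$ --- and then argue, via the continuation maps $FC_*(K_\mu)\to FC_*(K_{2\mu})$ that become identities in the direct limit, that this reproduces the operation $\lambda^w$ on $SH_*^{>0}(V)$; or (b) allow the Hamiltonians themselves to vary in the $s$-direction, which turns the $s$-edges into continuation-type strata and requires a further identification. Either route is manageable, but ``after choosing slopes large enough'' does not address it, and as written the family $\beta_{s,\tau}$ is not yet well-defined. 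Once one of (a) or (b) is carried out, the space of sub-closed $1$-forms with the fixed weights $(1,2,2)$ is convex, the two defining paths are homotopic rel endpoints, and the rest of your argument --- the energy estimate from $d\beta\le 0$ and $H\ge 0$ giving compactness, the $\tau\to 0,1$ necks having Hamiltonian of slope in $[-\mu,\tau\mu]$ and hence no nonconstant orbits, so that these contributions vanish in $FC^{>0}\otimes FC^{>0}$ --- goes through.
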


%%%
\subsection{Abbondandolo--Schwarz coproduct}\label{sec:AS-coproduct} 
%%%

In this subsection we recall from~\cite{AS-product-structures} the
definition of a secondary pair-of-pants product on Floer homology
of a cotangent bundle, which we will
refer to as the {\em Abbondandolo--Schwarz coproduct} $\lambda^{AS}$.
We recall the notation and conventions from~\S\ref{sec:Floer}
regarding the Floer complex. In particular near the zero section $H(q,p)=\varepsilon |p|^2 + V(q)$ for a small $\varepsilon>0$ and a Morse function $V:M\to \R$ such that all nonconstant critical points of $A_H$ have action larger than $\min\, V$. 

For $x,y,z\in\Crit(A_H)$ set (see Figure~\ref{fig:lambdaF}) 
\begin{align*}
   \MM^{1,AS}& (x;y,z) \\ 
   & := \bigl\{ (\tau,u,v,w)\;\bigl|\;
   \tau\in[0,1],\ u:[0,\infty)\times S^1\to T^*M \cr
   & \qquad \, v,w: (-\infty,0]\times S^1\to T^*M,\ \pb_Hu=\pb_Hv=\pb_Hw=0, \cr
   & \qquad \, u(+\infty,\cdot)=x,\ v(-\infty,\cdot)=y,\ w(-\infty,\cdot)=z,\cr
   & \qquad \, v(0,t) = u(0,\tau t),\ 
   w(0,t)=u(0,\tau+(1-\tau)t)\bigr\}.
\end{align*}
Note that the matching conditions imply $u(0,\tau)=u(0,0)$. 

\begin{figure} [ht]
\centering
\input{lambdaF.pstex_t}
\caption{The moduli spaces $\MM^{1,AS}(x;y,z)$.}
\label{fig:lambdaF}
\end{figure}

\begin{lemma}[{\cite[\S5]{AS-product-structures}}]  \label{lem:secondary-coproduct-moduli}
For generic choices of Hamiltonian and almost complex structure the space $\MM^{1,AS}(x;y,z)$ is a transversely cut out
manifold of dimension 
$$
   \dim\MM^{1,AS}(x;y,z) = \CZ(x)-\CZ(y)-\CZ(z)-n+1. 
$$\qed
\end{lemma}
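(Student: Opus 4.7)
The plan is to realize $\MM^{1,F}(x;y,z)$ as the zero locus of a Fredholm section of a Banach bundle over a Banach manifold parametrized by $\tau$, compute its Fredholm index, and obtain transversality via a standard universal moduli space argument.

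First I would fix $p>2$ and a weight $\delta>0$ smaller than the spectral gap of the asymptotic operators at $x,y,z$. Let $\cB$ denote the Banach manifold of tuples $(\tau,u,v,w)$ with $\tau\in(0,1)$, where $u,v,w$ lie in weighted Sobolev spaces of $W^{1,p}_\delta$-maps from the respective half-cylinders to $T^*M$ with the prescribed Hamiltonian orbits as asymptotics at $\pm\infty$, subject to the two matching conditions $v(0,t)=u(0,\tau t)$ and $w(0,t)=u(0,\tau+(1-\tau)t)$. Since the relevant reparameterizations are smooth on $\tau\in(0,1)$, these matching relations cut out a smooth Banach submanifold of the unconstrained product. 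Over $\cB$ I take the Banach bundle $\cE$ whose fiber at $(\tau,u,v,w)$ consists of triples of $L^p_\delta$-valued $(0,1)$-forms, together with the section $\cS(\tau,u,v,w)=(\pb_H u,\pb_H v,\pb_H w)$; its zero set is precisely $\MM^{1,F}(x;y,z)$ restricted to $\tau\in(0,1)$.

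Second, I would compute the Fredholm index of the vertical linearization of $\cS$. The three half-cylinders can be viewed as a single Floer map on a pair-of-pants surface with one positive and two negative punctures, where the matching circle plays the role of the pants curve and contributes codimension $n$ from the diagonal condition. The standard index formula for such pair-of-pants Floer maps gives Fredholm index $\CZ(x)-\CZ(y)-\CZ(z)-n$ at fixed $\tau$. Allowing $\tau$ to vary in a $1$-parameter family contributes an additional $+1$, yielding the claimed dimension $\CZ(x)-\CZ(y)-\CZ(z)-n+1$.

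Third, I would establish regularity of the universal section by varying $J$ in a domain-dependent family $J_\zeta$ within a sufficiently large Banach manifold of perturbations that keeps $J$ fixed near the asymptotic orbits. The standard two-step argument then applies: at any element of the universal moduli space one finds an injective point of $u$, $v$ or $w$ disjoint from the other two maps and from the asymptotic orbits, and perturbing $J$ locally near such a point surjects onto any prescribed element of the cokernel. Applying the Sard-Smale theorem yields a comeager set of regular almost complex structures, and for any such choice $\MM^{1,F}(x;y,z)$ is a smooth manifold of the stated dimension.

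The main obstacle I anticipate is confirming the existence of injective points in the presence of the matching constraints, since these identify substantial portions of the boundary loops of $u$, $v$, $w$. This is handled by locating injective points in the interior of the cylindrical ends (away from $s=0$), using nondegeneracy of the asymptotic orbits and an open-mapping argument in the spirit of the transversality setup for the primary pair-of-pants product in~\cite{AS-product-structures}.
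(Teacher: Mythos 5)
Your proposal is sound in spirit but takes a genuinely different technical route from the paper, and it glosses over the one place where the paper's route actually buys something.

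The paper does not run a direct Sard--Smale argument on half-cylinders with matching conditions. Instead, following Abbondandolo--Schwarz, it cuts the cylinders $v,w$ open along $t=0$ to obtain strips $\wt v,\wt w\colon(-\infty,0]\times[0,1]\to T^*M$ and re-encodes the periodicity as the Lagrangian boundary condition $(\wt v(s,0),{\bf C}\wt v(s,1))\in N^*\Delta$ (and likewise for $\wt w$). The moduli space then becomes a Floer problem with \emph{jumping Lagrangian boundary conditions} of exactly the type whose Fredholm and transversality theory is developed in~\cite{AS2}, and the dimension formula is read off from the index formula~\cite[(37)]{AS-product-structures}. Your approach keeps the three half-cylinders and imposes the matching conditions directly, then identifies the result with a pair-of-pants and invokes the standard closed-surface index formula. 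The numbers come out the same, but note that your attribution of the $-n$ to a ``diagonal condition'' at the pants curve is off: the matching conditions are tautological gluings, not transversal constraints, and the $-n$ is simply $n\chi(\Sigma)$ with $\chi(\Sigma)=-1$ for the thrice-punctured sphere.

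The substantive gap is in your assertion that the matching conditions ``cut out a smooth Banach submanifold of the unconstrained product.'' The matching $v(0,t)=u(0,\tau t)$ identifies the full boundary trace of $v$ with a rescaled arc of the boundary trace of $u$; this is a condition on Sobolev traces, and it is not automatic that it is a split submanifold. Moreover, the hidden constraint $u(0,0)=u(0,\tau)$ forces the glued domain to have a cone point (the glued surface is the one with conformal coordinate $z\mapsto z^2$ near the node, as in the discussion of $\lambda^w$), so the Sobolev framework near that point needs care. The reformulation in terms of strips with jumping Lagrangian boundary conditions is precisely the device that converts this singular-domain/trace-matching problem into a clean Fredholm problem with boundary, for which the elliptic estimates, Fredholm property, and somewhere-injectivity arguments are already in place in~\cite{AS2}. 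If you want to keep your direct route you would have to re-derive these in the cone-point setting; the paper avoids that by citing the reformulation.
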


The dimension of $\MM^{1,AS}(x;y,z)$ is calculated in~\cite{AS-product-structures} using an equivalent description of the moduli space as follows. Define $\wt v,\wt w: (-\infty,0]\times [0,1]\to T^*M$ by $\wt v(s,t) = v(s,t)$ and $\wt w(s,t) = w(s,t)$, and also $\wt y, \wt z:[0,1]\to T^*M$ by $\wt y(t) = y(t)$ and $\wt z(t) = z(t)$. Then there is a canonical identification between elements of $\MM^{1,AS}(x;y,z)$ and elements of 
\begin{align*}
   \wt{\MM}^{1,AS} & (x;\wt y,\wt z) \\ 
   & := \bigl\{(\tau,u,\wt v,\wt w)\;\bigl|\;
   \tau\in[0,1],\ u:[0,\infty)\times S^1\to T^*M \cr
   & \qquad \, \wt v,\wt w: (-\infty,0]\times [0,1]\to T^*M,\cr 
   & \qquad \, \pb_Hu=\pb_H\wt
   v=\pb_H\wt w=0, \cr
   & \qquad \, u(+\infty,\cdot)=x,\ \wt v(-\infty,\cdot)=\wt y,\ \wt
   w(-\infty,\cdot)=\wt z,\cr
   & \qquad \, \bigl(\wt v(s,0),{\bf C}\wt v(s,1)\bigr)\in N^*\Delta,\ 
   \bigl(\wt w(s,0),{\bf C}\wt w(s,1)\bigr)\in N^*\Delta, \cr 
   & \qquad \, \wt v(0,t) = u(0,\tau t),\ 
   \wt w(0,t)=u(0,\tau+(1-\tau)t)\bigr\}.
\end{align*}
Here ${\bf C}:T^*M\to T^*M$ is the antisymplectic involution $(q,p)\mapsto(q,-p)$, $\Delta\subset M\times M$ is the diagonal, and $N^*\Delta\subset T^*(M\times M)$ its conormal bundle. The space $\wt\MM^{1,AS}(x;\wt y,\wt z)$ is a moduli space with jumping Lagrangian boundary conditions as in~\cite{AS2}, so for generic $H$ and $J$ it is a
transversely cut out manifold. Its dimension is given by the Fredholm 
index of the linearized problem~\cite[(37)]{AS-product-structures}.

If $\MM^{1,AS}(x;y,z)$ has dimension zero it is compact and defines a map
$$
   \lambda^{AS}:FC_*\to(FC\otimes FC)_{*-n+1},\qquad
   x\mapsto\sum_{y,z}\#\MM^{1,AS}_{\dim=0}(x;y,z)\,y\otimes z.
$$
If it has dimension $1$ it can be compactified to a compact
$1$-dimensional manifold with boundary 
\begin{align*}
   \p\MM^{1,AS}_{\dim=1}(x;y,z) 
   &= \coprod_{\CZ(x')=\CZ(x)-1}\MM(x;x')\times\MM^{1,AS}_{\dim=0}(x';y,z) \cr
   &\amalg\coprod_{\CZ(y')=\CZ(y)+1}\MM^{1,AS}_{\dim=0}(x;y',z)\times\MM(y';y) \cr
   &\amalg\coprod_{\CZ(z')=\CZ(z)+1}\MM^{1,AS}_{\dim=0}(x;y,z')\times\MM(z';z) \cr
   &\amalg\MM^{1,AS}_{\tau=1}(x;y,z) \amalg\MM^{1,AS}_{\tau=0}(x;y,z).
\end{align*}
Here the first three terms correspond to broken Floer cylinders and the
last two terms to the intersection of $\MM^{1,AS}(x;y,z)$ with the sets
$\{\tau=1\}$ and $\{\tau=0\}$, respectively. Therefore we have
\begin{equation}\label{eq:lambda-chain-map-Floer}
   (\p^F\otimes\id+\id\otimes\p^F)\lambda^{AS} + \lambda^{AS}\p^F = \lambda^{AS}_1-\lambda^{AS}_0,
\end{equation}
where for $i=0,1$ we set
$$
   \lambda^{AS}_i:FC_*\to(FC\otimes FC)_{*-n},\qquad
   x\mapsto\sum_{y,z}\#\MM^{1,AS}_{\tau=i}(x;y,z)\,y\otimes z.
$$
Let us look more closely at the map $\lambda^{AS}_1$. For $\tau=1$ the
matching conditions in $\MM^{1,AS}(x;y,z)$ imply that $w(0,t)=u(0,0)$ is a constant loop.
For action reasons $z$ must then be a critical point, so that $\mathrm{Im}(\lambda^{AS}_1)\subset FC_*(H)\otimes FC_*^{=0}(H)$. Similarly we have $\mathrm{Im}(\lambda^{AS}_0)\subset FC_*^{=0}(H)\otimes FC_*(H)$, and therefore
$\lambda^{AS}$ descends to a chain map  
\begin{equation} \label{eq:coproduct-positive-homology-matching}
   \lambda^{AS}:FH_*^{>0}\to(FH^{>0}\otimes FH^{>0})_{*-n+1}  
\end{equation}
with $FC_*^{>0}=FC_*(H)/FC_*^{=0}(H)$. Note that we have $FH_*^{>0}(H)\cong SH_*^{>0}(D^*M)$ for the quadratic Hamiltonians considered in this section.

%%%
\subsection{The varying weights coproduct equals the Abbondandolo--Schwarz coproduct}\label{sec:varying-weights-AS-coproduct} 
%%%

\begin{proposition}\label{prop:products-AS-weights}
Let $M$ be a closed connected oriented manifold. The secondary coproducts $\lambda^w$ defined via~\eqref{eq:sigmaF} and $\lambda^{AS}$ defined via~\eqref{eq:coproduct-positive-homology-matching} agree on $SH_*^{>0}(D^*M)$.
\end{proposition}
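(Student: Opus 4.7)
The strategy is to construct a chain homotopy on $FC_*^{>0}(H)$ between the two chain-level operations $\lambda^F$ and $\lambda^w$, via an interpolating two-parameter family of Floer moduli spaces. Note that both $\lambda^F$ and $\lambda^w$ are degree $-n+1$ operations, are defined by rigid counts in $1$-parameter families with parameter $\tau$, have weight $1$ at all three punctures (input and two outputs), and degenerate at $\tau=0,1$ in a way that forces one output to lie in the subcomplex of constant orbits $FC_*^{=0}(H)$. The matching condition $v(0,t)=u(0,\tau t)$, $w(0,t)=u(0,\tau+(1-\tau)t)$ in $\MM^{1,F}(x;y,z)$ forces the self-intersection $u(0,0)=u(0,\tau)$ and glues the three Floer half-cylinders into a nodal configuration which, topologically, is a $3$-punctured sphere with a boundary node pinching the boundary of $u$'s half-cylinder into two arcs carrying the reparametrized loops $v(0,\cdot)$ and $w(0,\cdot)$. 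Thus $\lambda^F$ should be viewed as counting Floer maps on a nodal $3$-punctured sphere equipped with the singular $1$-form obtained by gluing the forms $dt$ on the three half-cylinders.

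The second step is to interpolate between this nodal pair-of-pants and the smooth $3$-punctured sphere used to define $\lambda^w$. I would construct a moduli space $\QQ(x;y,z)$ parametrized by $(\tau,\sigma)\in[0,1]^2$, where $\sigma$ plays the role of a node-smoothing parameter: at $\sigma=0$ the node is present and the Floer problem reduces to the figure-eight problem of $\MM^{1,F}(x;y,z)$, while for $\sigma>0$ the node is opened up into a narrow neck via the standard gluing construction, and the $1$-form is deformed from the singular form of Step~1 to the form $\beta_\tau$ of \S\ref{sec:varying-weights-coproduct}, so that at $\sigma=1$ the Floer problem coincides with the one defining $\lambda^w$. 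For generic choices of Hamiltonian, almost complex structure, and the $2$-parameter family $\beta_{\tau,\sigma}$ of $1$-forms on the (possibly nodal) $3$-punctured sphere, $\QQ(x;y,z)$ is a transversely cut out manifold of dimension $\CZ(x)-\CZ(y)-\CZ(z)-n+2$.

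The third step is to analyze the boundary of the $1$-dimensional components of $\QQ(x;y,z)$. The decomposition is
\begin{itemize}
\item Floer breaking at each of $x,y,z$, yielding the usual chain-homotopy differential terms;
\item the face $\sigma=0$, contributing precisely $\lambda^F$ by the identification of Step~1;
\item the face $\sigma=1$, contributing $\lambda^w$;
\item the faces $\tau=0$ and $\tau=1$, where one of the output loops of the resulting configuration becomes constant in the $t$-variable, forcing $y$ or $z$ to lie in $FC_*^{=0}(H)$ (by the same mechanism as for $\lambda^F_{\tau=0,1}$ in \S\ref{sec:AS-coproduct} and $\lambda^w_{\tau=0,1}$ in \S\ref{sec:varying-weights-coproduct}).
\end{itemize}
The last contributions vanish when projected to $FC_*^{>0}(H)\otimes FC_*^{>0}(H)$. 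Therefore the signed count of the $0$-dimensional slices of $\QQ$ defines a map $\Xi:FC_*^{>0}(H)\to (FC^{>0}(H)\otimes FC^{>0}(H))_{*-n+2}$ satisfying
$$
   \p^F\Xi + \Xi\p^F + (\p^F\otimes\id+\id\otimes\p^F)\Xi = \lambda^F-\lambda^w
$$
on $FC_*^{>0}(H)$, which proves that $\lambda^F$ and $\lambda^w$ induce the same map on $SH_*^{>0}(D^*M)=\lim FH_*^{>0}(H)$.

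\textbf{Main obstacle.} The technical heart of the proof is the rigorous construction of the interpolating family and the identification of the face $\sigma=0$ with the figure-eight moduli space $\MM^{1,F}(x;y,z)$. This is a parametrized version of the Abbondandolo--Schwarz correspondence between pair-of-pants Floer theory and figure-eight Floer theory with jumping Lagrangian boundary conditions on the conormal of the diagonal, and requires the standard gluing/stretching analysis in that setup, now carried out coherently with the $\tau$-parameter and with the weight-deformation $\beta_\tau$ active simultaneously. A secondary technical point is compactness: one must rule out bubbling of Floer cylinders off the figure-eight or pair-of-pants in the interior of $[0,1]^2$, and verify that the neck-length parameter $\sigma$ and the weight-varying parameter $\tau$ can be combined consistently so that the energy of curves in $\QQ(x;y,z)$ remains uniformly bounded.
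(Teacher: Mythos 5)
Your proposal is essentially the paper's proof, phrased as an explicit two-parameter chain homotopy rather than as a Fredholm-equivalence-near-the-limit argument. The key step is identical: realize $\MM^{1,F}(x;y,z)$ as a Floer problem on a single (glued) pair-of-pants and interpolate the Floer $1$-form to the $\beta_\tau$ of $\lambda^w$, with the $\tau=0,1$ faces landing in $FC_*^{=0}(H)$ and hence vanishing in $FC_*^{>0}(H)$.

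There is, however, one substantive inaccuracy in your description that would cause trouble if taken literally. The matching conditions $v(0,t)=u(0,\tau t)$, $w(0,t)=u(0,\tau+(1-\tau)t)$ do \emph{not} produce a nodal domain. The glued surface $\Sigma$ is a genuinely smooth Riemann surface (a $3$-punctured sphere); the paper exhibits this by gluing the strips $\R\times[-\tau,0]$ and $\R\times[0,1-\tau]$ and observing that near the apparent singular point one has a conformal chart induced by $z\mapsto z^2$ (a branch point, not a node). What is singular in the $\lambda^F$ picture is the \emph{$1$-form} $\beta_\tau$, which jumps from $dt$ to $\frac{1}{\tau}dt$ and $\frac{1}{1-\tau}dt$ across $\{s=0\}$; equivalently, after cutting $\Sigma$ open along $\{s=0\}$, one gets a problem with jumping Lagrangian boundary conditions on $N^*\Delta$ in the style of $\wt\MM^{1,F}$. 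The self-intersection $u(0,0)=u(0,\tau)$ is a node of the \emph{image} loop (the figure-eight in $M$), not of the domain. Consequently your $\sigma$-parameter should not be a node-smoothing/gluing parameter — there is no node to glue — but purely the parameter already present in your plan deforming the discontinuous $1$-form to the smooth interpolation $\beta_\tau$ (or equivalently squeezing the weight-interpolation region towards $s=0$). With that reading your interpolating family is precisely the one the paper considers, and the "main obstacle" you identify — rigorously matching the degenerate face to $\MM^{1,F}$ and verifying compactness — is exactly what the paper's regularity/compactness assertion handles. Where the approaches differ is in packaging: the paper chooses the interpolation region for $\lambda^w$ to be sufficiently squeezed so that, by Fredholm equivalence near the limit, the counts \emph{coincide} on the nose (invoking independence of the choice of $\beta_\tau$ for $\lambda^w$ implicitly); you instead record the difference as an explicit chain homotopy $\Xi$. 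Both are valid; your version makes the well-definedness dependence more transparent at the cost of one extra moduli space.
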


\begin{proof} We assume without loss of generality that the Hamiltonian used in the definition of the coproduct $\lambda^{AS}$ is the same as the one used in the definition of the coproduct $\lambda^w$, i.e. a convex smoothing of a Hamiltonian which vanishes on $D^*M$ and is linear with respect to the radial coordinate $r=|p|$ outside of $D^*M$. The point of the proof is to exhibit the Floer problem defining the moduli spaces $\MM^{1,AS}(x;y,z)$ for $\lambda^{AS}$ as a limiting case of the Floer problem defining the moduli spaces $\MM^1(x;y,z)$ for $\lambda^w$. 

Note first that for $0$-dimensional moduli spaces $\MM^{1,AS}_{\dim=0}(x;y,z)$ we can restrict $\tau$ to $(0,1)$. Given $\tau\in(0,1)$ a triple $(u,v,w)$ as in the definition of $\MM^{1,AS}(x;y,z)$ can be interpreted as a single map $\tilde u:\Sigma\to T^*M$ satisfying $(d\tilde u-X_H\otimes \beta_\tau)^{0,1}=0$,  where $\Sigma$ is a Riemann surface and $\beta_\tau$ is a $1$-form explicitly described as follows. The Riemann surface is 
$$
\Sigma=\R\times [-\tau,0]\, \amalg \, \R\times [0,1-\tau]\, /  \sim
$$ 
with  
$$
(s,-\tau)\sim (s,1-\tau), \quad (s,0^-)\sim (s,0^+) \quad \mbox{ for }s\ge 0,
$$
$$
(s,-\tau)\sim (s,0^-), \quad (s,0^+)\sim (s,1-\tau) \quad \mbox{for }s\le 0.
$$
(We use the notation $(s,0^-)$ for points in $\R\times\{0\}\subset \p(\R\times [-\tau,0])$, and $(s,0^+)$ for points in $\R\times\{0\}\subset \p(\R\times [0,1-\tau])$.) This is a smooth Riemann surface with canonical cylindrical ends $[0,\infty)\times S^1$ at the positive puncture and $(-\infty,0]\times \R/\tau \Z$ and $(-\infty,0]\times \R/(1-\tau)\Z$ at the negative punctures. See Figure~\ref{fig:tubes}. 

\begin{figure} [ht]
\centering
\input{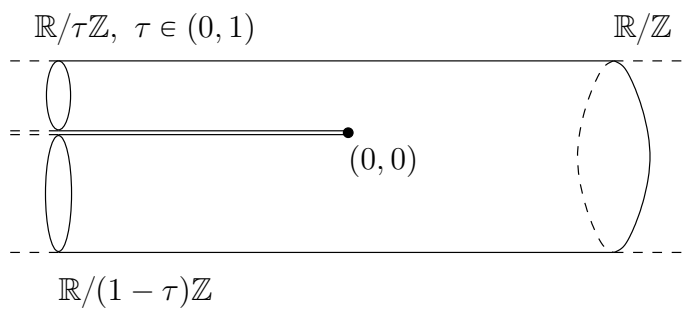}
\caption{A pair-of-pants $\Sigma$ with large cylindrical ends.}
\label{fig:tubes}
\end{figure}

A conformal parametrization of $\Sigma$ near the point $(0,0)$ is induced from the map $\C\to \C$, $z\mapsto z^2$.
\footnote{Consider the half-pair-of-pants $\Sigma_{\frac 1 2}=\R\times [-\tau,0]\, \amalg \, \R\times [0,1-\tau]\, /\sim$, where $(s,0^-)\sim (s,0^+)$ for $s\ge 0$. A conformal parametrization near $(0,0)$ is given by the map $z\mapsto z^2$ defined in a neighborhood of $0\in\{\Re z\ge 0\}$. This map actually establishes a global conformal equivalence between $H= \{z\in\C \, : \, \Re z\ge 0, \, 2(\Re z) (\Im z)\in [-\tau,1-\tau]\}$ and $\Sigma_{\frac 1 2}$. The Riemann surface $\Sigma$ admits a natural presentation as the gluing of two copies of $\Sigma_{\frac 1 2}$. Accordingly, it can be identified to $H\cup-H/\sim$ where the equivalence relation $\sim$ stands for suitable identifications of boundary components. The map $z\mapsto z^2$ defined in a neighborhood of $0\in H\cup-H/\sim$ provides a conformal parametrization of $\Sigma$ near the point $(0,0)$. }
The Riemann surface $\Sigma$ carries a canonical smooth closed $1$-form $dt$.
\footnote{Read through the identification of $\Sigma$ with $H\cup -H/\sim$, this is $2d(xy)$ in a neighborhood of $0$.} 
Upon identifying the cylindrical ends at the negative punctures with
$(-\infty,0]\times S^1$, this canonical $1$-form becomes equal to
$\tau dt$, respectively $(1-\tau)dt$ at those punctures. The
$1$-form $\beta_\tau$ is defined to be the {\it discontinuous}
$1$-form equal to $dt$ on the cylindrical end $[0,\infty)\times S^1$
  at the positive puncture, equal to $\frac 1\tau dt$ on the
  cylindrical end $(-\infty,0]\times \R/\tau \Z$ at the first
negative puncture, and equal to $\frac 1{1-\tau} dt$ on the
cylindrical end $(-\infty,0]\times \R/(1-\tau)\Z$ at the second
negative puncture. Equivalently, upon normalizing the cylindrical ends
at the negative punctures into $(-\infty,0]\times S^1$, the $1$-form
$\beta_\tau$ is simply $dt$. This discontinuous $1$-form
$\beta_\tau$ can be interpreted as a limit of $1$-forms which are
obtained by interpolating  
from $\tau\,dt$ and $(1-\tau)dt$
(near $0$) towards $dt$ (near $-\infty$) in the normalized cylindrical ends at the negative punctures, where the interpolation region shrinks and approaches $s=0$. It was noted in~\S\ref{sec:AS-coproduct} that the limit case defines a Fredholm problem $\wt{\MM}^{1,AS}(x;\wt y,\wt z)$ with jumping Lagrangian boundary conditions. The Fredholm problem before the limit is naturally phrased in terms of the Riemann surface $\Sigma$ without boundary, but it can be reinterpreted as a problem with Lagrangian boundary conditions by cutting $\Sigma$ open along $\{s=0\}$. As such, it converges in the limit to the Fredholm problem with jumping Lagrangian boundary conditions described above. By regularity and compactness, the two Fredholm problems are equivalent near the limit, and the corresponding counts of elements in $0$-dimensional moduli spaces are the same. 
\end{proof}

%%%
\subsection{Abbondandolo--Schwarz coproduct equals loop 
coproduct}\label{sec:symplectic-Morse-coproducts} 
%%%

Recall the Hamiltonian $H:S^1\times T^*M\to\R$ from~\S\ref{sec:AS-coproduct} and its fibrewise
Legendre transform $L:S^1\times TM\to\R$ from~\S\ref{ss:A2+loop}. 
Also recall from~\S\ref{ss:A2+loop} the notations concerning the Morse complex $MC_*$
of the action functional $S_L$ which we will use freely. In
particular, $\p$ denotes the Morse boundary operator and $\wt\lambda$
the coproduct from Remark~\ref{rem:GH-alternative}. 

We assume that $M$ is oriented and we use the Morse complex twisted by the local system $\sigma$ obtained by transgressing the second Stiefel-Whitney class. 

Following~\cite{AS-product-structures}, for $x\in\Crit(A_H)$ and $a\in\Crit(S_L)$ we
define 
\begin{align*}
   \MM(x) := \{u:[0,\infty)\times S^1\to T^*M\mid  &
     \ \pb_Hu=0,\\
     & \ u(+\infty,\cdot)=x,\ u(0,\cdot)\subset M\}
\end{align*}
and
\begin{equation} \label{eq:Mxa}
   \MM(x;a) := \{u\in\MM(x)\mid u(0,\cdot)\in W^+(a)\},
\end{equation}
where $W^+(a)$ is the stable manifold of $a$ for the negative pseudo-gradient flow of $S_L$. See Figure~\ref{fig:Psi}.

For generic $H$ these are manifolds of dimensions
$$
   \dim\MM(x) = \CZ(x),\qquad \dim\MM(x;a) = \CZ(x)-\ind(a). 
$$
The signed count of $0$-dimensional spaces $\MM(x;a)$ defines a chain map
\begin{equation*} 
   \Psi:FC_*\to MC_*,\qquad a\mapsto\sum_{\ind(a)=\CZ(a)}\#\MM(x;a)\,a.
\end{equation*}
The induced map on homology is an isomorphism 
$$
   \Psi_*:FH_*\stackrel{\cong}\longrightarrow MH_*\cong H_*(\Lambda;\sigma)
$$
intertwining the pair-of-pants product with the loop product.  

\begin{proposition} \label{prop:iso-ol-FH-MH}
The map $\Psi$ descends to an isomorphism on homology modulo the constant loops
$$
   \Psi_*:FH_*^{>0}\stackrel{\cong}\longrightarrow MH_*^{>0}\cong H_*(\Lambda,\Lambda_0;\sigma)
$$
which intertwines the Abbondandolo--Schwarz coproduct $\lambda^{AS}$ with the
loop coproduct $\lambda$. 
\end{proposition}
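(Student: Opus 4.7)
The plan is to first establish the isomorphism by a five-lemma argument, and then construct an explicit chain homotopy to intertwine the two coproducts.

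For the isomorphism, I would show that $\Psi$ preserves the constant-orbit subcomplexes: for a constant critical point $q_0\in \Crit(V)\subset \Crit(A_H)$, action considerations together with a small variant of the estimate underlying~\S\ref{sec:action-estimate} force any $u\in \MM(q_0;a)$ to satisfy $a\in\Crit(V)\subset\Crit(S_L)$, and the induced chain map $\Psi|_{FC_*^{=0}}$ reduces (up to canonical identification) to the identity on the common Morse complex of $V$. Combined with the isomorphism $\Psi_*:FH_*\stackrel{\cong}\longrightarrow MH_*$ of Theorem~\ref{thm:Psi-symp}, the five lemma applied to the long exact sequences of the pairs $(FC_*,FC_*^{=0})$ and $(MC_*,MC_*^{=0})$ yields the desired isomorphism $\Psi_*:FH_*^{>0}\stackrel{\cong}\longrightarrow MH_*^{>0}\cong H_*(\Lambda,\Lambda_0;\sigma)$.

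For the coproduct intertwining, I would use the variant $\wt\lambda$ of the Morse coproduct from Remark~\ref{rem:GH-alternative}, which agrees with $\lambda$ on reduced Morse homology and hence also on the subquotient $MH_*^{>0}$, and construct a chain homotopy $K:FC_*^{>0}\to (MC^{>0}\otimes MC^{>0})_{*+2-n}$ satisfying
\[
   \p K + K\p^F = (\Psi\otimes\Psi)\lambda^F - \wt\lambda\,\Psi.
\]
Both operations count configurations built around a Floer half-cylinder $u\in\MM(x)$ with boundary loop $\alpha=u(0,\cdot)\subset M$, a splitting parameter $\tau\in[0,1]$, and two outputs in $MC_*$; they differ in how the self-intersection of $\alpha$ at $\tau$ is enforced (via Floer half-cylinders breaking at orbits $y,z$ in the $\lambda^F$ case, versus via the flow $f_t$ of the small vector field $v$ on $M$ in the $\wt\lambda$ case) and in how the two sub-loops are carried to the outputs. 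The homotopy $K$ would be defined by a two-parameter moduli space parametrized by $(\tau,R)\in[0,1]\times[0,\infty]$, where $R$ neck-stretches two Floer tubes attached to the half-loops of $\alpha$; as $R\to\infty$ these tubes break at orbits and produce $(\Psi\otimes\Psi)\lambda^F$, while as $R\to 0$ they collapse to give $\wt\lambda\Psi$ (after an auxiliary continuation in the vector field deforming it from zero to its final form). The boundary contributions at $\tau=0$ and $\tau=1$ consist of configurations whose first or second output is a constant loop, hence vanish in $MC^{>0}\otimes MC^{>0}$ by the same mechanism as in~\S\ref{sec:AS-coproduct}.

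The main analytical obstacle is the compactness and transversality of the interpolating moduli space near $R=0$, where the Floer tubes degenerate to constant maps into the zero section $M$. This is exactly the situation analyzed in~\S\ref{sec:constant-annuli}: the linearized Cauchy--Riemann operator has an obstruction bundle isomorphic to $T^*M\to M$, and a generic section --- here naturally provided by the vector field $v$ interpolating between zero (giving the Floer-type matching $\alpha(\tau)=\alpha(0)$) and a small generic field (giving the $\wt\lambda$-type matching $\alpha(\tau)=f\circ\alpha(0)$) --- resolves the transversality failure. Perturbing the Floer equation by this obstruction-bundle section on a neighbourhood of the degenerate stratum should make the full moduli space a transversely cut out manifold with corners, whose codimension-one boundary is precisely the combination $\p K+K\p^F$ together with the two operations being compared and the constant-output terms that die in the positive quotient. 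This parallels, in simpler form, the construction of the chain homotopy $\Theta$ in~\S\ref{sec:PD-loop-iso}.
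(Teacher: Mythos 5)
Your overall strategy — a two-parameter moduli space over $[0,1]\times[0,\infty]$ producing a chain homotopy between $(\Psi\otimes\Psi)\lambda^F$ and $\wt\lambda\Psi$, with the $\tau=0,1$ edges killed in the positive quotient — is the right shape, and your five-lemma argument for the isomorphism is a reasonable way to fill in the part the paper leaves implicit. However, there is a genuine gap in how you treat the $R\to 0$ end. When the two Floer tubes collapse to zero length, the resulting configuration counts $u\in\MM(x)$, cuts the boundary loop $u(0,\cdot)$ at time $\tau$, and flows the two half-loops into $W^+(b),W^+(c)$. This is \emph{not} $\wt\lambda\,\Psi$: in $\wt\lambda\Psi$ one first flows $u(0,\cdot)$ all the way to a Morse critical point $a$, and only then cuts $a$. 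To bridge this gap the paper extends the parameter past $\sigma=0$ to $\sigma\in(-\infty,0]$, defining $\MM^-(x;b,c)$ by inserting a finite-length negative gradient trajectory $\phi_{-\sigma}$ of $S_L$ between $u(0,\cdot)$ and the loop that gets cut. The two pieces $\MM^+$ and $\MM^-$ glue along their common $\sigma=0$ boundary, and $\wt\lambda\Psi$ appears only as the $\sigma\to-\infty$ Morse-breaking boundary. Your "auxiliary continuation in the vector field" is not a substitute for this: the relevant flow is $-\nabla S_L$ on $\Lambda$, not the flow of the vector field $v$ on $M$; indeed the paper takes $f_t=\id$ (i.e., $v=0$) throughout this proof, since on $MC^{>0}$ the $\tau\in\{0,1\}$ boundary contributions already die for action reasons with no need for the nondegenerate vector field $v$.

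The second problem is that the obstruction-bundle analysis you invoke from \S\ref{sec:constant-annuli} does not arise here. That analysis is needed in the proof of Theorem~\ref{thm:cont-loop-iso} because the compactified moduli space of Floer annuli contains \emph{unpunctured} annuli, whose solutions are forced to be constant maps into the zero section with a nontrivial cokernel. In the moduli space $\MM^2(x;b,c)$ relevant to Proposition~\ref{prop:iso-ol-FH-MH}, every configuration carries the interior puncture at $x$, so no constant annulus ever appears and the linearized problem is transversely cut out for generic data by the standard argument of~\cite{AS-product-structures}. At $\sigma=0$ the Floer tubes $v,w:[0,\sigma]\times S^1\to T^*M$ shrink to loops (not constants), and this is a smooth interior seam of the moduli space, not a degenerate boundary. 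So your proposal both misses the negative-$\sigma$ regime that is actually required, and installs an unnecessary repair (obstruction-bundle perturbation and vector-field deformation) at a locus where nothing goes wrong.
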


\begin{proof}
For $x\in\Crit(A_H)$ and $b,c\in\Crit(S_L)$
define
\begin{align*}
   \MM^+(x) := \bigl\{&(\sigma,\tau,u,v,w)\;\bigl|\;
   \sigma\in[0,\infty),\ \tau\in[0,1],\cr 
   & u:[\sigma,\infty)\times S^1\to T^*M, v,w: [0,\sigma]\times S^1\to T^*M, \cr
   & \ \pb_Hu=\pb_Hv=\pb_Hw=0, \cr
   & u(+\infty,\cdot)=x,\ v(0,t)\in M,\ w(0,t)\in M,\cr
   & v(\sigma,t) = u(\sigma,\tau t),\ 
   w(\sigma,t)= u(\sigma,\tau+(1-\tau)t)\bigr\},
\end{align*}
\begin{align*}
   \MM^+(x;b,c) := \{&(\sigma,\tau,u,v,w)\in\MM^+(x)\mid \cr 
   & v(0,\cdot)\in W^+(b),\ w(0,\cdot)\in W^+(c)\}, \cr
   \MM^-(x;b,c) := \bigl\{&(\sigma,\tau,u,\alpha,\beta,\gamma)\;\bigl|\;
   \sigma\in(-\infty,0],\ \tau\in[0,1],\ u\in\MM(x), \cr
   & \alpha=\phi_{-\sigma}(u(0,\cdot)),\ \beta\in W^+(b),\ \gamma\in
     W^+(c), \cr
   & \beta(t) = \alpha(\tau t),\ 
   \gamma(t)=\alpha(\tau+(1-\tau)t) \bigr\},
\end{align*}
where $\MM(x)$ was defined above and $\phi_s:\Lambda\to\Lambda$
for $s\geq 0$ denotes the flow of 
the negative pseudo-gradient of $S_L$. 
Note that
$\alpha,\beta,\gamma$ in the definition of $\MM^-(x;b,c)$ are actually
redundant and just included to make the definition more
transparent. As above it follows that for generic $H$ these spaces are
transversely cut out manifolds of dimensions $\dim\MM^+(x)=\CZ(x)-n+2$ and 
$$
   \dim\MM^+(x;b,c) = \dim\MM^-(x;b,c) = \CZ(x)-\ind(b)-\ind(c)-n+2.
$$
We set
\begin{align*}
   \MM^2(x;b,c) := \MM^+(x;b,c) \amalg \MM^-(x;b,c). 
\end{align*}
If this space has dimension zero it is compact and defines a map
$$
   \Theta:FC_*\to(MC\otimes MC)_{*-n+2},\qquad
   x\mapsto\sum_{b,c}\#\MM^2_{\dim=0}(x;b,c)\,b\otimes c.
$$
If it has dimension $1$ it can be compactified to a compact
$1$-dimensional manifold with boundary 
\begin{align*}
   \p\MM^2_{\dim=1}(x;b,c) 
   &= \coprod_{\CZ(x')=\CZ(x)-1}\MM(x;x')\times\MM^2_{\dim=0}(x';b,c) \cr
   &\amalg\, \coprod_{\ind(b')=\ind(b)+1}\MM^2_{\dim=0}(x;b',c)\times\MM(b';b) \cr
   &\amalg\, \coprod_{\ind(c')=\ind(c)+1}\MM^2_{\dim=0}(x;b,c')\times\MM(c';c) \cr
   &\amalg \, \coprod_{y,z}\MM^1_{\dim=0}(x;y,z)\times\MM(y;b)\times\MM(z;c) \cr
   &\amalg \, \coprod_a\MM(x;a)\times\wt\MM^1_{\dim=0}(a;b,c) \cr
   &\amalg\MM^2_{\tau=1}(x;b,c) \amalg\MM^2_{\tau=0}(x;b,c),
\end{align*}
where $\wt\MM^1(a;b,c)$ are the moduli spaces in
Remark~\ref{rem:GH-alternative} defining the coproduct $\wt\lambda$ with $f_t=\mathrm{id}$.
Here the first term corresponds to splitting off of Floer cylinders,
the second and third ones to splitting off of Morse pseudo-gradient lines, the
fourth one to $\sigma=+\infty$, the fifth one to $\sigma=-\infty$, and
the last two terms to the intersection of $\MM^2(x;b,c)$ with the sets
$\{\tau=1\}$ and $\{\tau=0\}$, respectively. The intersections
of $\MM^\pm(x;b,c)$ with the set $\{\sigma=0\}$ are equal with
opposite orientations and thus cancel out. Therefore we have
\begin{equation}\label{eq:lambda-chain-homotopy}
   (\p\otimes\id+\id\otimes\p)\Theta + \Theta\p^F =
  (\Psi\otimes\Psi)\lambda^{AS} - \wt\lambda\Psi + \Theta_1-\Theta_0,
\end{equation}
where for $i=0,1$ we set
$$
   \Theta_i:FC_*\to(MC\otimes MC)_{*-n+1},\qquad
   x\mapsto\sum_{b,c}\#\MM^2_{\tau=i}(x;b,c)\,b\otimes c.
$$
Arguing as in the previous subsection, we see that the $\Theta_0$ has image in $MC_*^{=0}\otimes MC_*$, and $\Theta_1$ has image in $MC_*\otimes MC_*^{=0}$. 
Together with equation~\eqref{eq:lambda-chain-homotopy} this shows
that $\Theta$ descends to a map 
$$
   \Theta:FC_*^{>0}\to(MC^{>0}\otimes MC^{>0})_{*-n+2}
$$
between the positive chain complexes which is a chain homotopy between
$(\Psi\otimes\Psi)\lambda^{AS}$ and $\wt\lambda\Psi$, which concludes the proof. 
\end{proof}

%%%%%%%%%%%%%%%%%%%%%%%%%%%%%%%%%%%%%%%%%%%%%%%%%%%%%%%%%%%%%%%%%
\section{Loop coproduct for odd-dimensional spheres}\label{sec:spheres}
%%%%%%%%%%%%%%%%%%%%%%%%%%%%%%%%%%%%%%%%%%%%%%%%%%%%%%%%%%%%%%%%%

In this section we compute the loop coproduct on reduced loop homology
$\ol{H}_*(\Lambda S^n)=H_*(\Lambda S^n)$ of odd-dimensional spheres $S^n$.
For its definition we use a Morse function $S^n\to\R$ with only two
critical points, the minimum and the maximum, and a vector field $v$
(or equivalently a $1$-form $\eta$) which is nowhere vanishing. 
By Proposition~\ref{prop:coproduct-choices}, the coproduct
does not depend on these choices if $n\geq 2$.
For the same reason, in the definition of the loop coproduct we can
use a constant family $v^\tau\equiv v$ instead of the family $v^\tau$
interpolating between $v^0=v$ and $v^1=-v$ from~\S\ref{ss:A2+loop}
(see Remark~\ref{rmk:arbitrary-endpoints}). 
In the case $n=1$ we will see that the loop coproduct actually depends
on the choice of $v$.  

For our computation we first give a third definition of the loop coproduct on reduced loop homology in terms of singular homology. 

%%%
\subsection{Topological description of the loop coproduct} 
%%%

We define the loop coproduct on singular loop homology relative to $\chi\cdot$point. It is induced by a densely defined operation 
$$
   \lambda: C_*(\Lambda)\to C_{*+1-n}(\Lambda\times\Lambda)
$$
on singular chains constructed as follows. 
The beginning of the construction is like in~\S\ref{ss:A2+loop}. We fix a small vector field $v$ on $M$ with nondegenerate
zeroes such that the only periodic orbits of $v$ with period $\leq 1$
are its zeroes, and we consider a generic family of vector fields $v^\tau$, $\tau\in [0,1]$ which interpolates between $v^0=v$ and $v^1=-v$. We denote $f_t^\tau:M\stackrel{\cong}\longrightarrow M$, $t\in\R$ the flow of $v^\tau$, and $f^\tau=f_1^\tau$. For each $q\in M$ we denote as in~\S\ref{ss:A2+loop} the induced path from $q$ to $f^\tau(q)$ by
$\pi_q^\tau:[0,1]\to M$, $\pi_q^\tau(t):=f_t^\tau(q)$, 
and the inverse path by $(\pi_q^\tau)^{-1}$. 

In the spirit of~\cite{CS}, let $a:K_a\to\Lambda$ be a chain such that the map
$$
   \ev_a:K_a\times[0,1]\to M\times M,\qquad (x,\tau)\mapsto\Bigl(f^\tau\bigl(a(x)(0)\bigr),a(x)(\tau)\Bigr)
$$
is transverse to the diagonal $\Delta\subset M\times M$. Then
$$
   K_{\lambda(a)} := \ev_a^{-1}(\Delta) 
   = \{(x,\tau)\in K_a\times[0,1]\mid a(x)(\tau) = f^\tau\bigl(a(x)(0)\bigr)\}
$$
is a compact manifold with corners and we define 
$$
   \lambda(a): K_{\lambda(a)}\to\Lambda\times\Lambda
$$
by
$$
   \lambda(a)(x,\tau) := \Bigl(a(x)|_{[0,\tau]}\#(\pi_{a(x)(0)}^\tau)^{-1}\,,\,
   \pi_{a(x)(0)}^\tau\#a(x)|_{[\tau,1]}\Bigr). 
$$
See Figure~\ref{fig:lambda-new} where $\alpha=a(x)$.
At $\tau=0$ and $\tau=1$ the condition in $K_{\lambda(a)}$ becomes
$a(x)(0)=q\in\Fix(f^0)$, respectively $a(x)(1)=q\in \Fix(f^1)$, and denoting the constant loop at $q$ by the
same letter we find
$$
   \lambda(a)(x,1) = \bigl(a(x)\#q,q\bigr),\qquad
   \lambda(a)(x,0) = \bigl(q,q\#a(x)\bigr).
$$
It follows that
$$
   \p\lambda(a) \pm \lambda(\p a) =
   \sum_{q\in\Fix(f^1)}\ind_{-v}(q)(a\bullet q)\times q -
   \sum_{q\in\Fix(f^0)}\ind_v(q)q\times(q\bullet a),
$$
where $q$ is viewed as a $0$-chain and the loop products with the
constant loop $q$ are given by  
\begin{align*}
   a\bullet q &: K_{a\bullet q}=\{x\in K_a\mid a(x)(0)=q\}\to\Lambda
   ,\qquad x\mapsto a(x)\#q, \cr
   q\bullet a &: K_{q\bullet a}=\{x\in K_a\mid q=a(x)(0)\}\to\Lambda
   ,\qquad x\mapsto q\#a(x).
\end{align*}
Here the signs $\ind_{\pm v}(q)$ arise from the discussion before
Remark~\ref{rem:ft}, noting that the restriction of $\ev_a$ to $\tau=0$
or $\tau=1$ is the composition of the evaluation $K_a\to M$, $x\mapsto
a(x)(0)$ and the map $M\to M\times M$, $q\mapsto\bigl(f^0(q),q\bigr)$, respectively $q\mapsto\bigl(f^1(q),q\bigr)$.
  
Let us now fix a basepoint $q_0\in M$ and consider $a$ such that the
map 
$$
   \ev_{a,0}:K_a\to M,\qquad x\mapsto a(x)(0)
$$ 
is transverse to $q_0$. We choose all zeroes of $v$ (i.e.~fixed points
of $f^0$ and $f^1$) so close to $q_0$ that $\ev_{a,0}$ is transverse to each of them. 
Then after identifying the domains $K_{q\bullet a}$
with $K_{q_0\bullet a}$ and transferring loops at $q$ to loops at $q_0$
we have
\begin{equation}
   \p\lambda(a) \pm \lambda(\p a) = \chi\Bigl((a\bullet q_0)\times
   q_0 - q_0\times (q_0\bullet a)\Bigr),
\end{equation}
where $\chi=\sum_{q\in\Fix(f^1)}\ind_{-v}(q)=\sum_{q\in\Fix(f^0)}\ind_{v}(q)$ is the Euler characteristic of $M$.
Recalling the notation $C_*(\Lambda,\chi\text{pt}):=C_*(\Lambda)/\chi R q_0$ for the chains relative to $\chi\cdot$point, we see that $\lambda$ induces a chain map $C_*\Lambda\to (C(\Lambda,\chi\text{pt})\otimes C(\Lambda,\chi\text{pt}))_{*+1-n}$. Moreover, this factors through $C_*(\Lambda,\chi\text{pt})$: if $n\ge 2$ this holds for degree reasons, and if $n=1$ this holds tautologically because the Euler characteristic is zero.  The outcome is a coproduct $H_*(\Lambda,\chi\text{pt})\to (H(\Lambda,\chi\text{pt})\otimes H(\Lambda,\chi\text{pt}))_{*+1-n}$ on the 
homology relative to $\chi\cdot$point. Under our standing assumption of orientability on $M$, this is the same as a coproduct on reduced loop homology $\ol H_*\Lambda\to (\ol H\Lambda\otimes \ol H\Lambda)_{*+1-n}$. 

%%%
\subsection{Loop coproduct for spheres of odd dimension $n\geq 3$}\label{sec:coproduct-S3}
%%%

In this subsection we use $\Z$-coefficients and assume $n\geq 3$ is odd.
Recall from~\cite{Cohen-Jones-Yan} that the degree shifted homology
of the free loop space of $S^n$ is the free graded commutative algebra
$$
   \H_*(\Lambda S^n)=H_{*+n}(\Lambda S^n) \cong \Lambda[A,U],\qquad |A|=-n,\quad |U|=n-1,
$$
where the shifted degree $|a|$ is related to the geometric degree by
$|a|=\deg a-n$. Here $A$ is the class of a point (of geometric degree
$0$) and $U$ is represented by the descending manifold of the Bott family of simple great circles tangent at their basepoint to a given non-vanishing vector field on the sphere (of geometric degree $2n-1$). 
Since $\chi(S^n)=0$, the coproduct $\lambda$ is defined on
$\H_*(\Lambda S^n)$ and has shifted degree $1-2n$ (and geometric
degree $1-n$). The unit $1$ is represented by the fundamental chain of
all constant loops (of geometric degree $n$). 

We begin with some explicit computations of coproducts, to be compared to~\cite{Hingston-Wahl}.

\begin{lemma}\label{lem:Sn}
For $n\ge 3$ odd, the loop coproduct on $\H_*(\Lambda S^n)$ satisfies

(a) $\lambda(A)=\lambda(1)=0$,

(b) $\lambda(AU) = A\otimes A$,

(c) $\lambda(AU^2) = A\otimes AU+AU\otimes A$,

(d) $\lambda(U) = A\otimes 1-1\otimes A$.
\end{lemma}

\begin{proof}
We will actually prove these relations in $H_*(\Lambda S^n)$, in which case (a-c) remain unchanged, but (d) becomes $\lambda(U)=A\otimes 1 + 1 \otimes A$ (the sign change comes from the odd degree shift by $n$).

We recall the observation made at the beginning of this section that in the definition of the loop coproduct we can use a constant family of vector fields $v^\tau\equiv v$. We fix such a choice in the sequel, with $v$ small and nowhere vanishing. We denote $f^\tau_t=f_t$ the flow of $v^\tau=v$, and we denote $f^\tau=f$ the time one flow.  

(a) To prove $\lambda(A)=0$ we represent $A$ by the constant loop at $q_0$. Then $f^\tau(q_0)\neq q_0$ for all values of $\tau\in [0,1]$ and therefore $\lambda(A)$ is supported by the empty set. 
To prove $\lambda(1)=0$ we represent $1$ by the $S^n$-family of constant loops and note that $f^\tau(q)\neq q$ for all $q\in S^n$. Thus $\lambda(1)$ is supported by the empty set. 

(b) We fix a unit tangent vector $v_0$ at $q_0$ and we represent $AU$ by the $(n-1)$-chain $a:K^{n-1} \to\Lambda S^n$ of all circles
with fixed initial point $q_0$ and initial direction $v_0$. ($K^{n-1}$ is
the $(n-1)$-disc of all $2$-planes in $\R^{n+1}$ through $q_0$ containing the
vector $v_0$, whose boundary is mapped to $q_0$.) Then $a(k)(0)=q_0$
for all $k\in K^{n-1}$. Since the evaluation map $(k,\tau)\mapsto a(k)(\tau)$
covers $S^n$ once, there exists a unique $(k,\tau)$ for which
$a(k)(\tau)=f^\tau(q_0)=f(q_0)$. Therefore, $\lambda(a)$ is homologous to the
$0$-cycle $A\otimes A$. 

(c) We represent $AU^2$ by the $(2n-2)$-chain $a:K^{2n-2} \to\Lambda S^n$ of all circles
with fixed initial point $q_0$. ($K^{2n-2}$ is a fibre bundle $K^{n-1}\to
K^{2n-2}\to S^{n-1}$, where $S^{n-1}$ is the $(n-1)$-sphere of all initial directions
at $q_0$ and $K^{n-1}$ is the $(n-1)$-disc from (b) of all circles through
$q_0$ in a given initial direction.) Then $a(k)(0)=q_0$ for all $k\in K^{2n-2}$. 
Recall that $f(q_0)\neq q_0$ is a point close to $q_0$. Let us fix
some initial direction $v_0$ at $q_0$. For every
sufficiently large circle (whose diameter is bigger than the distance
from $q_0$ to $f(q_0)$) with initial point $q_0$ and initial
direction $v_0$ there exist precisely two rotations of the initial
direction such that the rotated circles pass through $f(q_0)$. One of
these rotated circles passes though $f(q_0)$ near $\tau=0$ and the other
one near $\tau=1$. As the circle varies over the $(n-1)$-chain $K^{n-1}$ of all 
circles with initial point $q_0$ and initial direction $v_0$ (and we
let $f(q_0)$ move to $q_0$), these two families of rotated circles
give rise to cycles representing the classes $A\otimes AU$ and
$AU\otimes A$, respectively. 

(d) We represent $U$ by the $(2n-1)$-chain $a:K^{2n-1}\to \Lambda S^n$ of all circles
starting at their basepoint $q\in S^n$ in direction $v(q)$. ($K^{2n-1}$ is a fibre bundle $K^{n-1}\to K^{2n-1}\to S^n$, where $S^n$ corresponds to the
initial points and $K^{n-1}$ is the $(n-1)$-disc from (b).)
For every $q$ there exists a unique circle $a(x_q)$ starting at $q$ in direction $v(q)$ and passing through $f(q)$. Since all the circles constituting the chain $a$ are simple, there is a unique $\tau_q$ such that $a(x_q)(\tau_q)=f(q)=f^{\tau_q}(a(x_q)(0))$.  By splitting each $a(x_q)$ at the parameter value $\tau_q$ using the path $\pi_q(t)=f_t(q)$ we obtain a cycle $s:S^n\to \Lambda\times \Lambda$ that represents $\lambda(a)$. This cycle has degree $n$, it sits over the diagonal $\Delta\subset S^n\times S^n$ as an element of the fibration $(\ev,\ev):\Lambda\times\Lambda\to S^n\times S^n$, and denoting $\pi:\Lambda\times_{S^n} \Lambda\to S^n$ the restriction of this fibration to the diagonal we have $\pi\circ s=\mathrm{Id}_{S^n}$. On the other hand, $H_n(\Lambda\times_{S^n} \Lambda)$ has rank $1$, generated by the class of the diagonal: that the rank is at most $1$ follows by inspection of the spectral sequence of the fibration $\Omega S^n\times \Omega S^n\hookrightarrow \Lambda \times_{S^n}\Lambda\to S^n$,
using the fact that $H_*(\Om S^n)$ is a polynomial ring on one generator in degree $n-1$, and that it is at least one follows from the fact that the diagonal is a section. This implies that the cycle $s$ is homologous to the diagonal $\Delta\subset S^n\times S^n$ in $\Lambda \times_{S^n}\Lambda$, hence also in $\Lambda\times \Lambda$, and we conclude $\lambda(U)=[\Delta]=[pt]\otimes [S^n] + [S^n]\otimes [pt] = A\otimes 1 + 1 \otimes A$ in $H_*(\Lambda)\otimes H_*(\Lambda)$. 

The previous proof uses an algebraic argument related to the diagonal. An alternative, entirely geometric proof can be given in case the sphere $S^n$ admits two orthogonal non-vanishing vector fields. (By Adams' theorem~\cite{Adams}, this is the case if and only $n+1$ is divisible by $4$.) We pick $v$ to be one of these and denote $w$ the other one. We represent $U$ by the $(2n-1)$-chain $a:K^{2n-1} \to\Lambda S^n$ of all circles starting at their basepoint $q\in S^n$ in direction $w(q)$.  
Thus for every $q\in S^n$ there exists a unique circle $a(x_q)$ starting at $q$ in direction $w(q)$ and passing through $f(q)$, and since all the circles constituting the chain $a$ are simple there is a unique $\tau_q$ such that $a(x_q)(\tau_q)=f(q)=f^{\tau_q}(a(x_q)(0))$. Since $v$ is orthogonal to $w$, each circle $a(x_q)$ is small and the resulting cycle $\lambda(a)$ can be deformed in $\Lambda\times \Lambda$ to the diagonal $\Delta\subset \Lambda_0\times \Lambda_0$. In turn, this is represented in $H_*(\Lambda_0)\otimes H_*(\Lambda_0)$ by $[q_0]\otimes [\Lambda_0]+[\Lambda_0]\otimes [q_0]$, i.e. $A\otimes 1 + 1 \otimes A$.
\end{proof}

Note that Lemma~\ref{lem:Sn} is compatible with graded cocommutativity of $\lambda$ on $\H_*\Lambda$, i.e. $\boldtau\lambda=-\lambda$. To compute the full expression of the coproduct we use the following structural result from~\cite{CHO-reduced}. 

\begin{theorem}[{\cite[Theorem~6.4]{CHO-reduced}}] \label{thm:uiasccSn}
Let $M$ be a closed manifold of dimension $n\ge 2$. Then the loop homology $\H_*(\Lambda M)$ endowed with the loop product $\mu$ and the loop coproduct $\lambda$ is a commutative and cocommutative unital infinitesimal anti-symmetric bialgebra. In particular, the following ``unital infinitesimal relation" holds: 
$$
\lambda\mu = (\mu\otimes \one)(\one\otimes\lambda) + (\one\otimes \mu)(\lambda\otimes \one) - (\mu\otimes\mu)(\one\otimes \lambda1\otimes \one),
$$
where we denote $\one$ the identity map and $1$ the unit for the product. \qed
\end{theorem}

For $M=S^n$ with $n\geq 3$ odd we proved in Lemma~\ref{lem:Sn} that $\lambda1=0$, so the unital infinitesimal relation reduces to the so-called ``infinitesimal relation", or ``Sullivan relation" 
$$
\lambda\mu = (\mu\otimes \one)(\one\otimes\lambda) + (\one\otimes \mu)(\lambda\otimes \one). 
$$
Such a relation was conjectured in~\cite{Sullivan-open-closed}. Note that Sullivan's relation is not satisfied by the ``extension by $0$" loop coproduct from~\cite{Hingston-Wahl}. 

\begin{proposition}\label{prop:spheres}
For $n\geq 3$ odd, the loop coproduct on $\H_*(\Lambda S^n)$ satisfies for all $k\geq 0$
\begin{align*}
   \lambda(U^k) &= \sum_{i,j\geq 0,\ i+j=k-1}\bigl(AU^i\otimes
   U^j - U^i\otimes AU^j\bigr), \cr
   \lambda(AU^k) &= \sum_{i,j\geq 0,\ i+j=k-1}AU^i\otimes AU^j. 
\end{align*}
\end{proposition}

\begin{proof} The proof is a straightforward induction on $k$ using knowledge of $\mu$, Sullivan's relation, and the values $\lambda(A)=0$
and $\lambda(U)=A\otimes 1-1\otimes A$ from Lemma~\ref{lem:Sn}. As an example, the values of $\lambda(AU)$ and $\lambda(AU^2)$ from Lemma~\ref{lem:Sn} can be recovered as follows. For the computation, recall that the shifted degrees of $A$, $AU$, $\lambda$ are odd, the shifted degree of $U$ is even, and $A^2=0$.   
\begin{align*}
\lambda(AU) & = \lambda\mu(A\otimes U) \\
& = (\mu\otimes \one)(\one\otimes\lambda)(A\otimes U) + (\one\otimes\mu)(\lambda\otimes \one)(A\otimes U) \\
& = - (\mu\otimes \one)(A\otimes\lambda(U)) + (\one\otimes \mu)(\lambda(A)\otimes U) \\
& = - (\mu\otimes \one)(A\otimes (A\otimes 1 - 1 \otimes A)) \\
& = A\otimes A. 
\end{align*}
\begin{align*}
\lambda(AU^2) & = \lambda\mu(AU\otimes U) \\
& = (\mu\otimes \one)(\one\otimes\lambda)(AU\otimes U) + (\one\otimes\mu)(\lambda\otimes \one)(AU\otimes U) \\
& = -(\mu\otimes \one)(AU\otimes \lambda(U)) + (\one\otimes \mu)(\lambda(AU)\otimes U) \\
& = -(\mu\otimes \one)(AU\otimes (A\otimes 1 - 1 \otimes A)) + (\one\otimes\mu)(A\otimes A \otimes U) \\
& = AU\otimes A + A\otimes AU. 
\end{align*}
\end{proof} 

\begin{remark}
We note in particular that this extended coproduct on reduced homology has contributions from the constant loops, unlike the one from~\cite{Hingston-Wahl}. These contributions from the constant loops play an essential role for the unital infinitesimal algebra structure. 
\end{remark}

The previous computation allows us to recover the Sullivan-Goresky-Hingston coproduct on $\H_*(\Lambda S^n,\Lambda_0)=H_{*+n}(\Lambda S^n,\Lambda_0)$~\cite{Hingston-Wahl,Goresky-Hingston}. Our method ultimately relies on the infinitesimal relation and involves a minimal geometric input in the form of Lemma~\ref{lem:Sn} (a) and (d). In comparison, the computation from~\cite{Hingston-Wahl} of the coproduct on $H_*(\Lambda S^n,\Lambda_0)$ relies on geometric input which is quite involved. In a sense, the ``algebra" of the infinitesimal relation replaces the ``geometry" of spaces of circles from~\cite{Hingston-Wahl}. 

\begin{corollary} For $n\ge 3$ odd, the Sullivan-Goresky-Hingston coproduct on $\H_*(\Lambda S^n,\Lambda_0)$ is given by 
\begin{align*}
   \lambda(U^k) &= \sum_{i,j\geq 1,\ i+j=k-1}\bigl(AU^i\otimes
   U^j - U^i\otimes AU^j\bigr), \cr
   \lambda(AU^k) &= \sum_{i,j\geq 1,\ i+j=k-1}AU^i\otimes AU^j. 
\end{align*}
\end{corollary} 

\begin{proof}
It is enough to discard the terms involving constant loops from the formulas of Proposition~\ref{prop:spheres}.
\end{proof}

%%%
\subsection{Loop coproduct for $S^1$}\label{sec:S1}
%%%

In this section we study the loop coproduct on the loop space
of $S^1=\R/\Z$. The degree shifted loop homology with $R$-coefficients
is as a ring with respect to the loop product given by
$$
   \H_*(\Lambda S^1) = H_{*+1}(\Lambda S^1) =
   \Lambda[A,U,U^{-1}],\qquad |U|=0,\,|A|=-1, 
$$
where the classes $AU^k$ and $U^k$ are represented by the cycles
$$
   AU^k(t) = kt,\qquad U^k(r,t) = r+kt,\qquad r,t\in S^1,\ k\in\Z. 
$$
To define the loop coproduct $\lambda$ (of shifted degree $-1$), we
need to pick a nowhere vanishing vector field on $S^1$. Up to homotopy
there are two choices of nonvanishing vector fields on $S^1$, 
$$
   v_\pm(x) = \pm\eps,
$$
for some fixed small $\eps>0$. We associate to $v_\pm$ the $\tau$-dependent vector fields
$$
   v_\pm^\tau(x) = \pm(1-2\tau)\eps,\qquad \tau\in [0,1]
$$
which agree with $v_\pm$ at $\tau=0$ and with $-v_\pm$ at $\tau=1$. 
Their time-one maps are
$$
   f_\pm^\tau(x) = x\pm(1-2\tau)\eps. 
$$
In the next Proposition we compute the coproducts $\lambda_\pm$ associated
to this choice of $\tau$-dependent vector fields. 

\begin{proposition}\label{prop:S1-reduced}
The loop coproducts $\lambda_\pm$ on $\H_*(\Lambda S^1)$ defined with the $\tau$-dependent vector fields
$v_\pm^\tau$ are given for $k\in\Z$ by
\begin{align*}
  \lambda_+(AU^k) &= \begin{cases}
     \quad \sum_{i=0}^{k}AU^i\otimes AU^{k-i}, & k\geq 0, \\
     -\sum_{i=k+1}^{-1}AU^i\otimes AU^{k-i}, & k<0,
  \end{cases} \cr 
  \lambda_+(U^k) &= \begin{cases}
     \quad \sum_{i=0}^{k}(AU^i\otimes U^{k-i} - U^i\otimes AU^{k-i}), & k\geq 0, \\
     -\sum_{i=k+1}^{-1}(AU^i\otimes U^{k-i} - U^i\otimes AU^{k-i}), & k<0,
  \end{cases}  
\end{align*}
\begin{align*}
  \lambda_-(AU^k) &= \begin{cases}
     \quad \sum_{i=1}^{k-1}AU^i\otimes AU^{k-i}, & k > 0, \\
     -\sum_{i=k}^{0}AU^i\otimes AU^{k-i}, & k\leq 0,
  \end{cases} \cr 
  \lambda_-(U^k) &= \begin{cases}
     \quad \sum_{i=1}^{k-1}(AU^i\otimes U^{k-i} - U^i\otimes AU^{k-i}), & k> 0, \\
     -\sum_{i=k}^{0}(AU^i\otimes U^{k-i} - U^i\otimes AU^{k-i}), & k\leq 0.
  \end{cases}  
\end{align*}
\end{proposition}
   
\begin{proof}
Let us compute $\lambda_\pm(AU^k)$. By definition, we need to
determine the times $\tau\in (0,1)$ such that
\begin{align*}
   AU^k(\tau) = k\tau = f_\pm^\tau(AU^k(0)) = \pm(1-2\tau)\eps \mod\Z, 
\end{align*}
i.e.~$k\tau = i\pm(1-2\tau)\eps$ with $i\in\Z$. In other words, we are
looking for the $i\in\Z$ such that
$$
   \tau = \frac{i\pm\eps}{k\pm 2\eps} \in (0,1).
$$
For $\lambda_+$ we obtain
\begin{align*}
  \tau = \frac{i+\eps}{k+2\eps} \in (0,1)
  \Longleftrightarrow \begin{cases}
     i=0,\dots,k, & k\geq 0, \\
     i=k+1,\dots,-1, & k<0,
  \end{cases}
\end{align*}
while for $\lambda_-$ we get
\begin{align*}
  \tau = \frac{i-\eps}{k-2\eps} \in (0,1)
  \Longleftrightarrow \begin{cases}
     i=1,\dots,k-1, & k > 0, \\
     i=k,\dots,0, & k\leq 0.
  \end{cases}
\end{align*}
This yields the expressions for $\lambda_\pm(AU^k)$, and
$\lambda_\pm(U^k)$ is computed similarly. 
\end{proof}

Proposition~\ref{prop:S1-reduced} shows that for $M=S^1$ the coproduct on
reduced loop homology does depend on the choice of a nowhere vanishing
vector field. One can verify that both coproducts $\lambda_\pm$ define
together with the loop product a commutative cocommutative infinitesimal anti-symmetric bialgebra in the sense of~\cite{CHO-reduced} with 
$$
   \lambda_\pm(1) = \pm(A\otimes 1-1\otimes A).
$$
The unital infinitesimal relation reads now 
$$
\lambda_\pm\mu = (\mu\otimes \one)(\one\otimes \lambda_\pm) + (\one\otimes\mu)(\lambda_\pm\otimes\one) - (\mu\otimes\mu)(\one\otimes\lambda_\pm(1)\otimes\one).
$$

\begin{remark} Just like in the case of higher-dimensional spheres, the expressions of the coproducts $\lambda_\pm$ on $\H_*(\Lambda S^1)$ can be derived from the unital infinitesimal relation combined with knowledge of the product $\mu$ and of the values 
\begin{align*} 
\lambda_\pm(1)&=\pm(A\otimes 1-1\otimes A),\\
\lambda_\pm(A)&=\pm A\otimes A,\\
\lambda_+(U)&=(A\otimes U - U\otimes A) + (AU\otimes 1 - 1\otimes AU),\\
\lambda_-(U)&=0.
\end{align*}
For example, to compute $\lambda_\pm(U^{-1})$ one applies the unital infinitesimal relation to $U\otimes U^{-1}$, to compute $\lambda_\pm(AU^{-1})$ one applies the unital infinitesimal relation to $A\otimes U^{-1}$ (or to $AU^{-1}\otimes U$) etc. 
\end{remark}

\begin{remark} The example of the circle is very rich in that it also shows that the condition $v^1=-v^0$ for the family of vector fields $v^\tau$ is necessary in order for the coproducts to have a good algebraic behaviour. For example, with a constant family $v^\tau\equiv v_+$ we find an operation $\lambda_{v_+,v_+}$ given by 
\begin{align*}
  \lambda_{v_+,v_+}(AU^k) &= \begin{cases}
     \quad \sum_{i=0}^{k-1}AU^i\otimes AU^{k-i}, & k\geq 0, \\
     -\sum_{i=k}^{-1}AU^i\otimes AU^{k-i}, & k<0,
  \end{cases} \cr 
  \lambda_{v_+,v_+}(U^k) &= \begin{cases}
     \quad \sum_{i=0}^{k-1}(AU^i\otimes U^{k-i} - U^i\otimes AU^{k-i}), & k\geq 0, \\
     -\sum_{i=k}^{-1}(AU^i\otimes U^{k-i} - U^i\otimes AU^{k-i}), & k<0.
  \end{cases}  
\end{align*}
A direct check shows that this operation is neither coassociative, nor cocommutative, though it satisfies the unital infinitesimal relation with $\lambda_{v_+,v_+}(1)=0$, i.e. Sullivan's relation. Similarly, with the constant family $v^\tau\equiv v_-$ we find an operation $\lambda_{v_-,v_-}$ given by 
\begin{align*}
  \lambda_{v_-,v_-}(AU^k) &= \begin{cases}
     \quad \sum_{i=1}^{k}AU^i\otimes AU^{k-i}, & k> 0, \\
     -\sum_{i=k+1}^{0}AU^i\otimes AU^{k-i}, & k\leq 0,
  \end{cases} \cr 
  \lambda_{v_-,v_-}(U^k) &= \begin{cases}
     \quad \sum_{i=1}^{k}(AU^i\otimes U^{k-i} - U^i\otimes AU^{k-i}), & k> 0, \\
     -\sum_{i=k+1}^{0}(AU^i\otimes U^{k-i} - U^i\otimes AU^{k-i}), & k\leq 0.
  \end{cases}  
\end{align*}
Again, this is neither coassociative, nor cocommutative, though it satisfies Sullivan's relation. 
\end{remark}

\appendix

%%%%%%%%%%%%%%%%%%%%%%%%%%%%%%%%%%%%%%%%%%%%%%%%%%%%%%%%%%%%%%%%%
\section{Local systems}\label{sec:local-systems}
%%%%%%%%%%%%%%%%%%%%%%%%%%%%%%%%%%%%%%%%%%%%%%%%%%%%%%%%%%%%%%%%%

We describe in this section the loop product and the
loop coproduct with general twisted coefficients. This allows us in particular to dispose of the usual orientability assumption for the underlying manifold. 
To the best of our knowledge, the Chas-Sullivan product on loop space homology was constructed for the first time on nonorientable manifolds by Laudenbach~\cite{Laudenbach-CS}, 
and the BV algebra structure 
by Abouzaid~\cite{Abouzaid-cotangent}. In this appendix we extend the definitions to more general local systems, we take into account the coproduct, and we discuss the adaptations to reduced homology and cohomology groups $\ol H_*\Lambda$ and $\ol H^*\Lambda$. We also discuss the formulation and properties of the isomorphism between symplectic homology and loop homology with twisted coefficients. 

%%%
\subsection{Conventions.} 
%%%

We use the following conventions from~\cite[\S9.7]{Abouzaid-cotangent}. Given a finite dimensional real vector space $V$, its \emph{determinant line} is the $1$-dimensional real vector space $\det V=\Lambda^{\max} V$. We view it as being a $\Z$-graded real vector space supported in degree $\dim_\R V$. To any $1$-dimensional graded real vector space $L$ we associate an \emph{orientation line} $|L|$, which is the rank $1$ graded free abelian group generated by the two possible orientations of $L$, modulo the relation that their sum vanishes. The orientation line $|L|$ is by definition supported in the same degree as $L$. When $L=\det V$ we denote its orientation line $|V|$. 

Given a $\Z$-graded line $\ell$ (rank $1$ free abelian group), its \emph{dual line} $\ell^{-1}=\mathrm{Hom}_\Z(\ell,\Z)$ is by definition supported in opposite degree as $\ell$. There is a canonical isomorphism $\ell^{-1}\otimes \ell\cong \Z$ induced by evaluation. 

Given a $\Z$-graded object $F$, we denote $F[k]$ the $\Z$-graded object obtained by shifting the degree down by $k\in\Z$, i.e. $F[k]_n=F_{n+k}$. For example, the shifted orientation line $|V|[\dim\, V]$ is supported in degree $0$. 
A linear map $f:E\to F$ between $\Z$-graded vector spaces or free abelian groups has \emph{degree $d$} if $f(E_n)\subset F_{n+d}$ for all $n$. In an equivalent formulation, the induced map $f[d]:E\to F[d]$ has degree $0$. For example, the dual of a vector space or free abelian group supported in degree $k$ is supported in degree $-k$. This is compatible with the grading convention for duals of $\Z$-graded orientation lines. Given a $\Z$-graded rank $1$ free abelian group $\ell$, we denote $\ul{\ell}$ the same abelian group with degree set to $0$. For example $\ul{|V|}=|V|[\dim\, V]$.

Given two oriented real vector spaces $U$ and $W$, we induce an orientation on their direct sum $U\oplus W$ by defining a positive basis to consist of a positive basis for $U$ followed by a positive basis for $W$. This defines a canonical isomorphism at the level of orientation lines 
$$
|U|\otimes |W|\cong |U\oplus W|. 
$$

Given an exact sequence of vector spaces 
$$
0\to U \to V\to W\to 0
$$ 
we induce an orientation on $V$ out of orientations of $U$ and $W$ by defining a positive basis to consist of a positive basis for $U$ followed by the lift of a positive basis for $W$. This defines a canonical isomorphism 
$$
|U|\otimes |W|\cong |V|. 
$$

The following example will play a key role in the sequel. 

\begin{example}[normal bundle to the diagonal]\label{ex:normal-bundle} 
Let $M$ be a manifold of dimension $n$. Consider the diagonal $\Delta\subset M\times M$ and denote $\nu \Delta$ its normal bundle. Let $p_{1,2}:M\times M\to M$ be the projections on the two factors, so that we have a canonical isomorphism $T(M\times M)\cong p_1^*TM \oplus p_2^*TM$. When restricted to $\Delta$ the projections coincide with the canonical diffeomorphism $p:\Delta \stackrel\simeq\longrightarrow M$. We obtain an exact sequence of bundles 
$$
0\to T\Delta \to p^*TM\oplus p^*TM \to \nu\Delta \to 0. 
$$
This gives rise to a canonical isomorphism $|\Delta|\otimes |\nu\Delta|\cong p^*|M|\otimes p^*|M|$ and, because $p^*|M|\otimes p^*|M|$ is canonically trivial, we obtain a canonical isomorphism 
\begin{equation*} 
|\Delta|\cong |\nu\Delta|. 
\end{equation*}
Explicitly, this isomorphism associates to the equivalence class of a basis $((v_1,v_1),\dots, (v_n,v_n))$, $v_i\in T_qM$ of $T_{(q,q)}\Delta$
the equivalence class of the basis $([(0,v_1)],\dots ,[(0,v_n)])$ of $\nu_{(q,q)}\Delta$. 
\end{example}

%%%
\subsection{Homology with local systems} 
%%%

By {\em local system} we mean a local system of $\Z$-graded rank $1$ free $\Z$-modules. On each path-connected component of the underlying space we think of such a local system in one of the following three equivalent ways: either as the data of the parallel transport representation of the fundamental groupoid, or as the data of the monodromy representation from the fundamental group $\pi$ to the multiplicative group $\{\pm 1\}$ together with the data of an integer (the degree), or as the data of a $\Z$-graded $\Z[\pi]$-module which is free and of rank $1$ as a $\Z$-module. 
Isomorphism classes of local systems on a path connected space $X$ are thus in bijective correspondence with $H^1(X;\Z/2)\times\Z$, where the first factor corresponds to the monodromy representation and the second factor to the grading. 
Here and in the sequel we identify the multiplicative group $\{\pm 1\}$ with the additive group $\Z/2$.
We refer to~\cite{Albers-Frauenfelder-Oancea} for a comprehensive discussion with emphasis on local systems on free loop spaces. One other point of view on local systems describes these as locally constant sheaves, but we will only marginally touch upon it in~\S\ref{sec:PD-local}. 

Given a local system $\nu$, we can change the coefficients to any commutative ring $R$ by considering $\nu_R=\nu\otimes_\Z R$. The monodromy of such a local system still takes values in $\{\pm 1\}$, and this property characterizes local systems of rank $1$ free $R$-modules which are obtained from local systems of rank $1$ free $\Z$-modules by tensoring with $R$. 

Let $X$ be a path connected space admitting a universal cover $\tilde X$. Denote its fundamental group at some fixed basepoint $\pi=\pi_1(X)$. Interpreting a local system $\nu$ on $X$ as a $\Z[\pi]$-module, one defines \emph{singular homology/cohomology with coefficients in $\nu$} in terms of singular chains on $\tilde X$ as 
$$
H_*(X;\nu)=H_*(C_*(\tilde X;\Z)\otimes_{\Z[\pi]}\nu), 
$$
$$
H^*(X;\nu)=H_*(\mathrm{Hom}_{\Z[\pi]}(C_*(\tilde X;\Z),\nu)).
$$
The homology/cohomology with local coefficients extended to a commutative ring $R$ are the $R$-modules 
$$
H_*(X;\nu_R) = H_*(C_*(\tilde X;\Z)\otimes_{\Z[\pi]}\nu_R),
$$
$$
H^*(X;\nu_R)=H_*\bigl(\mathrm{Hom}_{\Z[\pi]}(C_*(\tilde X;\Z),\nu_R)\bigr).
$$
In our grading convention the cohomology with constant coefficients is supported in \emph{nonpositive} degrees and equals the usual cohomology in the \emph{opposite} degree. The induced differential on the dual group $\mathrm{Hom}_{\Z[\pi]}(C_*(\tilde X;\Z),\nu_R)$ has degree $-1$.

The \emph{tensor product} $\nu_1\otimes\nu_2$ of two local systems is again a local system. Its $\Z[\pi]$-module structure is the diagonal one and its degree is the sum of the degrees of the factors. Note that viewing $\nu_1,\nu_2$ as elements in $H^1(X;\Z/2)\times\Z$, their tensor product is given by their sum $\nu_1+\nu_2$. Operations like cap or cup product naturally land in homology/cohomology with coefficients in the tensor product of the coefficients of the factors. 

Homology/cohomology with local coefficients behave functorially in the following sense. Given a continuous map $f:X\to Y$ and a local system $\nu$ on $Y$ described as a $\Z[\pi_1(Y)]$-module, the \emph{pullback local system} $f^*\nu$ on $X$ is defined by inducing a $\Z[\pi_1(X)]$-module structure via $f_*$. We then have canonical maps 
$$
f_*:H_*(X;f^*\nu)\to H_*(Y;\nu),\qquad f^*:H^*(Y;\nu)\to H^*(X;f^*\nu).
$$

The \emph{algebraic duality isomorphism} with coefficients in a field $\K$ takes the form  
$$
H^{-k}(X;\nu_\K^{-1})\stackrel\cong\longrightarrow H_k(X; \nu_\K)^\vee, \qquad k\in\Z. 
$$
The map is induced by the canonical evaluation of cochains on chains. We check that degrees fit in the case of graded local systems: given a local system $\nu_\K$ of degree $d$, and recalling our notation $\ul{\nu_\K}=\nu_\K[d]$ and $\ul{\nu_\K^{-1}}= \nu_\K^{-1}[-d]$, we have  
$$
H_k(X;\nu_\K) = H_{k-d}(X;\ul{\nu_\K}),\qquad H^{-k}(X;\nu_\K^{-1})=H^{-k+d}(X;\ul{\nu_\K^{-1}}),
$$ 
so $H_k(X;\nu_\K)^\vee$ and $H^{-k}(X;\nu_\K^{-1})$ both live in degree $d-k$.
%%%
\subsection{Poincar\'e duality}\label{sec:PD-local} 
%%%

Consider a manifold $M$ of dimension $n$. We denote by $|M|$ the local system on $M$ whose fiber at any point $q\in M$ is the orientation line $|T_qM|$, supported by definition in degree $n$. We refer to $|M|$ as the \emph{orientation local system of $M$}. The monodromy along a loop $\gamma$ is $+1$ if the loop preserves the orientation (i.e., the pullback bundle $\gamma^*TM$ is orientable), and $-1$ if the loop reverses it. The local system $|M|$ is trivial if and only if the manifold $M$ is orientable. A choice of orientation is equivalent to the choice of one of the two possible isomorphisms $\ul{|M|}\simeq \Z$. The local system $|M|\otimes \Z/2$ is  trivial, and this reflects the fact that any manifold is $\Z/2$-orientable. 

Suppose now that $M$ is closed. Then it carries a {\em fundamental class} $[M]\in H_n(M;\ul{|M|})=H_0(M;|M|^{-1})$. 

For any local system $\nu$ on $M$, the cap product with a fundamental class defines a {\em Poincar\'e duality isomorphism}
\begin{equation*}
   H^*(M;\nu) \stackrel{\cong}\longrightarrow H_*(M;\nu\otimes |M|^{-1}),\qquad \alpha\mapsto[M]\cap\alpha.
\end{equation*}

\begin{remark}Here is a description of the fundamental class using the interpretation of local systems as locally constant sheaves (see for example~\cite{Hatcher}, Lemma~3.27 and Example~3H.3). 
Let $M'\stackrel{p}\to M$ be the orientation double cover. Given the constant local system $\Z$ on $M'$, the pushforward $p_*\Z$ to $M$ has rank $2$ and can be decomposed as $\ul{|M|}\oplus\Z$ (the map $\Z\oplus \Z\to \Z\oplus \Z$, $(x,y)\mapsto (y,x)$ fixes the diagonal and acts by $-\mathrm{Id}$ on the anti-diagonal). 
The composition $H_*(M';\Z)\stackrel{p_*}\to H_*(M;p_*\Z)\stackrel\simeq\to H_*(M;\ul{|M|})\oplus H_*(M;\Z)$ is an isomorphism because $p_*$ is an isomorphism. Since $H_n(M;\Z)=0$ if $M$ is nonorientable, we obtain that $H_n(M;\ul{|M|})$ has rank $1$. A generator is the image of a generator in $H_n(M';\Z)$ via the above composition.
\end{remark}

%%%
\subsection{Thom isomorphism and Gysin sequence} 
%%%

Let $E\stackrel{p}\longrightarrow X$ be a real vector bundle of rank $r$, and denote $\dot E$ the complement of the zero section. Let $|E|$ be the local system on $X$ whose fiber at a point $x\in X$ is the orientation line $|E_x|$ of the fiber of $E$ at $x$. The local system $|E|$ is called \emph{the orientation local system of $E$} and is supported in degree $r$. The \emph{Thom class} is a generator 
$$
\tau\in H^{-r}(E,\dot E;\ul{p^*|E|})=H^0(E,\dot E;p^*|E|).
$$ 
The  
\emph{Thom isomorphism} takes the form 
$$
H_k(E,\dot E)\stackrel \simeq\longrightarrow H_{k-r}(X;\ul{|E|})=H_k(X;|E|), \qquad k\in\Z
$$
(cap product with $\tau$), respectively 
$$
H^k(X;|E|^{-1})=H^{k+r}(X;\ul{|E|^{-1}})\stackrel \simeq \longrightarrow H^k(E,\dot E), \qquad k\in\Z
$$
(cup product with $\tau$). 
More generally, for any local system $\nu$ on $X$ we have isomorphisms 
$$
H_k(E,\dot E;p^*\nu) \stackrel \simeq\longrightarrow H_{k-r}(X;\nu\otimes \ul{|E|})=H_k(X;\nu\otimes |E|),
$$
$$
H^k(X;\nu\otimes |E|^{-1}) = H^{k+r}(X;\nu\otimes \ul{|E|^{-1}})\stackrel \simeq \longrightarrow H^k(E,\dot E;p^*\nu).
$$
Pulling back the Thom class under the inclusion $i:X\to E$ of the zero section yields the {\em Euler class}
$$
   e = i^*\tau\in H^{-r}(X;\ul{|E|})=H^0(X;|E|). 
$$
Denote by $S\subset\dot E$ the sphere bundle with projection $\pi=p|_S:S\to X$. Then the long exact sequence of the pair $(E,\dot E)$ fits into the commuting diagram
$$
\xymatrix
@C=25pt
{
   \cdots H^k(E,\dot E) \ar[r] & H^k(E) \ar[d]_{i^*}^\cong \ar[r] & H^k(\dot E) \ar[r] & H^{k-1}(E;\dot E)\cdots \\ 
   \cdots H^k(X;|E|^{-1}) \ar[u]^{\cup\tau}_\cong \ar[r]^{\ \ \ \ \ \cup e} & H^k(X) \ar[r]^{\pi^*} & H^k(S) \ar@{=}[u] \ar[r]^{\pi_*\ \ \ \ \ } & H^{k-1}(X;|E|^{-1})\cdots \ar[u]^{\cup\tau}_\cong
}
$$
where the lower sequence is the {\em Gysin sequence}. More generally, for each local system $\nu$ on $X$ we get a Gysin sequence
\begin{align*}
 \cdots \, H^k(X;\nu\otimes|E|^{-1}) & \stackrel{\cup e}\longrightarrow   H^k(X;\nu) \\
 & \qquad \stackrel{\pi^*}\longrightarrow H^k(S;\pi^*\nu) \stackrel{\pi_*}\longrightarrow  H^{k-1}(X;\nu\otimes|E|^{-1})\, \cdots 
\end{align*}

%%%
\subsection{Spaces of loops with self-intersection} \label{sec:spaces-of-loops}
%%%

Let $M$ be a manifold of dimension $n$, $\Lambda=\Lambda M$ its space of free loops of Sobolev class $W^{1,2}$, and $\ev_s:\Lambda\to M$ the evaluation of loops at time $s$. We define 
$$
\cF=\{(\gamma,\delta)\in \Lambda\times \Lambda \mid \gamma(0)=\delta(0)\}\subset \Lambda\times\Lambda
$$
(pairs of loops with the same basepoint), and
$$
\cF_s = \{\gamma\in\Lambda \mid \gamma(s)=\gamma(0)\}\subset \Lambda, \qquad s\in (0,1)
$$
(loops with a self-intersection at time $s$). Denoting $f:\Lambda\times \Lambda\to M\times M$, $f=\ev_0\times \ev_0$ and $f_s:\Lambda\to M\times M$, $f_s=(\ev_0,\ev_s)$, we can equivalently write 
$$
\cF=f^{-1}(\Delta),\qquad \cF_s=f_s^{-1}(\Delta). 
$$
The maps $f$ and $f_s$ are smooth and transverse to the diagonal $\Delta$, 
so that $\cF$ and $\cF_s$ are Hilbert submanifolds of codimension $n$. Denoting $\nu\cF$ and $\nu\cF_s$ their normal bundles we obtain canonical isomorphisms
$$
\nu\cF\cong f^*\nu\Delta,\qquad \nu\cF_s\cong f_s^*\nu\Delta.
$$
In view of Example~\ref{ex:normal-bundle} we infer canonical isomorphisms
\begin{equation}\label{eq:nuF-nuFt}
|\nu\cF|\cong f^*|\Delta|\cong \ev_0^*|M|,\qquad |\nu\cF_s|\cong f_s^*|\Delta|\cong \ev_0^*|M|,
\end{equation}
where, in the first formula, $\ev_0:\cF\to M$ is the evaluation of pairs of loops at their common origin. 

Denote $i:\cF\hookrightarrow \Lambda\times \Lambda$ and $i_s:\cF_s\hookrightarrow \Lambda$ the inclusions. Recall the restriction maps~\eqref{eq:restr}. Define the cutting map at time $s$  
$$
c_s: \cF_s\to \cF,\qquad c_s(\gamma)=(\gamma|_{[0,s]},\gamma|_{[s,1]})
$$
and the concatenation map at time $s$
$$
g_s:\cF\to \cF_s, \qquad g_s(\gamma_1,\gamma_2)(t)=\left\{\begin{array}{ll}\gamma_1(\frac{t}{s}), & t\in [0,s],\\
\gamma_2(\frac{t-s}{1-s}), & t\in [s,1].\end{array}\right. 
$$
The maps $c_s$ and $g_s$ are smooth diffeomorphisms 
inverse to each other. The situation is summarized in the diagram
$$
\xymatrix
@C=50pt
{
\Lambda \times \Lambda & \ar[l]^-{i} \cF \ar@<.5ex>[r]^-{g_s}_-\sim & \ar@<1.1ex>[l]^-{c_s}\cF_s \ar[r]_-{i_s}& \Lambda.
}
$$
 
\begin{lemma} \label{lem:loc-sys-compatibility-with-products} 
Let $\nu$ be a local system (of rank $1$ free abelian groups) on $\Lambda$ supported in degree $0$. Denote $p_{1,2}:\Lambda\times \Lambda\to \Lambda$ the projections on the two factors. The following two conditions are equivalent:
\begin{equation}\label{eq:loc-sys-cut} 
c_s^*(p_1^*\nu\otimes p_2^*\nu)|_\cF\simeq \nu|_{\cF_s},
\end{equation}
and
\begin{equation}\label{eq:loc-sys-concat} 
(p_1^*\nu\otimes p_2^*\nu)|_\cF\simeq g_s^*(\nu|_{\cF_s}).
\end{equation}
\end{lemma}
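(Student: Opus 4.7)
The plan is to exploit the fact, established just above the statement of the lemma, that $c_s$ and $g_s$ are mutually inverse diffeomorphisms between $\cF$ and $\cF_s$, so that pullback by $c_s$ and pullback by $g_s$ are mutually inverse equivalences of categories of local systems. Under this observation the two conditions~\eqref{eq:loc-sys-cut} and~\eqref{eq:loc-sys-concat} become formally equivalent.

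First I would argue that \eqref{eq:loc-sys-cut} implies \eqref{eq:loc-sys-concat}. Starting from the isomorphism
\[
c_s^*(p_1^*\nu\otimes p_2^*\nu)|_\cF \simeq \nu|_{\cF_s}
\]
on $\cF_s$, I apply $g_s^*$ to both sides. On the left, functoriality gives $g_s^*c_s^*=(c_s\circ g_s)^*=\mathrm{id}_\cF^*$, so the left-hand side becomes $(p_1^*\nu\otimes p_2^*\nu)|_\cF$. On the right, we simply obtain $g_s^*(\nu|_{\cF_s})$. This yields \eqref{eq:loc-sys-concat}.

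Conversely, to deduce \eqref{eq:loc-sys-cut} from \eqref{eq:loc-sys-concat}, I apply $c_s^*$ to both sides of
\[
(p_1^*\nu\otimes p_2^*\nu)|_\cF\simeq g_s^*(\nu|_{\cF_s}).
\]
On the right, $c_s^*g_s^*=(g_s\circ c_s)^*=\mathrm{id}_{\cF_s}^*$ returns $\nu|_{\cF_s}$, and on the left one gets $c_s^*(p_1^*\nu\otimes p_2^*\nu)|_\cF$. This is \eqref{eq:loc-sys-cut}.

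There is no real obstacle here beyond making sure the functoriality of pullback of local systems is applied correctly; the content of the lemma is purely the observation that two isomorphism classes of local systems on $\cF$ coincide iff their transports under the diffeomorphism $g_s:\cF\to\cF_s$ (equivalently $c_s:\cF_s\to\cF$) coincide.
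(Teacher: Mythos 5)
Your proof is correct and uses the same idea as the paper: since $c_s$ and $g_s$ are mutually inverse diffeomorphisms, applying $g_s^*$ (resp.\ $c_s^*$) to one isomorphism and cancelling $(c_s\circ g_s)^*=\mathrm{id}$ (resp.\ $(g_s\circ c_s)^*=\mathrm{id}$) yields the other. The paper states this once as a bi-implication ("since $g_s$ is a homeomorphism, this is equivalent to\,\dots"), while you spell out both directions, but the argument is the same.
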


\begin{proof} The first condition is $c_s^*i^*(p_1^*\nu\otimes p_2^*\nu)\simeq i_s^*\nu$. 
Since $g_s$ is a homeomorphism, this is equivalent to 
$$
g_s^*c_s^*i^*(p_1^*\nu\otimes p_2^*\nu)\simeq g_s^*i_s^*\nu. 
$$
In view of $c_sg_s=\mathrm{Id}_{\cF_s}$, this is the same as the second condition. 
\end{proof}

\begin{definition} \label{defi:loc_sys_compat_products} A degree $0$ local system $\nu$ on $\Lambda$ is \emph{compatible with products} if it satisfies the equivalent conditions of Lemma~\ref{lem:loc-sys-compatibility-with-products}. 
\end{definition}

A local system $\nu$ which is compatible with products must necessarily have degree $0$ (and rank $1$). Also, $\nu|_M$ must be trivial: restricting both sides of~\eqref{eq:loc-sys-cut} or~\eqref{eq:loc-sys-concat} to the constant loops yields $\nu|_M\otimes \nu|_M\simeq\nu|_M$.

\begin{remark}
Local systems which are compatible with products play a key role in the sequel definition of the loop product and loop coproduct with local coefficients. Condition~\eqref{eq:loc-sys-cut} is the one that ensures the coproduct is defined with coefficients twisted by $\nu$, whereas condition~\eqref{eq:loc-sys-concat} is the one that ensures the product is defined with coefficients twisted by $\nu$. That the two conditions are equivalent can be seen as yet another instance of Poincar\'e duality for free loops. 

We refer to Remark~\ref{rmk:associativity-local-systems} for an additional condition on the isomorphisms~\eqref{eq:loc-sys-concat} which is needed for the associativity of the product and coassociativity of the coproduct.  
\end{remark}

\begin{example}[Transgressive local systems] \label{example:transgressive}
Let $\Lambda=\sqcup_\alpha \Lambda_\alpha M$ be the decomposition of the free loop space into connected components, indexed by conjugacy classes $\alpha$ in the fundamental group. We view loops $\gamma:S^1\to \Lambda$ as maps $\gamma\times S^1:S^1\times S^1\to M$, $(u,t)\mapsto \gamma(u)(t)$. This induces a map $\pi_1(\Lambda_\alpha M)\to H_1(\Lambda_\alpha M;\Z)\to H_2(M;\Z)$, $[\gamma]\mapsto [\gamma\times S^1]$. Dually, and specializing to $\Z/2$-coefficients, any cohomology class $c\in H^2(M;\Z/2)$ determines a cohomology class $\tau_c\in H^1(\Lambda;\Z/2)=\prod_\alpha\mathrm{Hom}(\pi_1(\Lambda_\alpha M);\Z/2)$ via 
$$
\langle \tau_c,[\gamma]\rangle = \langle c,[\gamma\times S^1]\rangle. 
$$ 
We denote the corresponding local system on $\Lambda$ also by $\tau_c$. Degree $0$ local systems obtained in this way are called \emph{transgressive}~\cite{Abouzaid-cotangent}. 

Transgressive local systems are compatible with products. Indeed, the identity~\eqref{eq:loc-sys-concat} is a direct consequence of the equality $[g_s(\gamma_1,\gamma_2) \times S^1]=[\gamma_1\times S^1]+[\gamma_2\times S^1]$, which holds in $H_2(M;\Z)$ for all 
$(\gamma_1,\gamma_2):S^1\to\cF$.

The transgressive local system 
\begin{equation} \label{eq:sigma=tauw2}
\sigma=\tau_{w_2}
\end{equation} 
defined by the second Stiefel-Whitney class $w_2\in H^2(M;\Z/2)$ will play a special role in the sequel. 
 \end{example}

\begin{example} \label{example:w}Following Abouzaid~\cite{Abouzaid-cotangent}, define for each loop $\gamma\in\Lambda$ the \emph{shift} 
$$
w(\gamma)=\left\{\begin{array}{ll}0, & \mbox{if } \gamma \mbox{ preserves the orientation}, \\
-1, & \mbox{if } \gamma \mbox{ reverses the orientation}.
\end{array}\right. 
$$
Define the local system 
\begin{equation} \label{eq:tildeo}
\tilde o=\ev_0^*\ul{|M|}^{-w}
\end{equation}
to be trivial on the components where $\gamma$ preserves the orientation, and equal to $\ev_0^*\ul{|M|}$ on components where $\gamma$ reverses the orientation. 

The local system $\tilde o$ is compatible with products: the equality $w(\gamma_1)+w(\gamma_2)=w(g_s(\gamma_1,\gamma_2))$  holds in $\Z/2$ for all $(\gamma_1,\gamma_2)\in\cF$. Note that the local system $\tilde o$ is \emph{not} transgressive and, in case $M$ is nonorientable, it is nontrivial on all connected components $\Lambda_\alpha M$ whose elements reverse orientation. 
\end{example}

\begin{question}
Characterize in cohomological terms the local systems on $\Lambda$ which are compatible with products. For example, it follows from~\cite[Lemma~1]{Albers-Frauenfelder-Oancea} that, on a simply connected manifold, a local system $\nu$ is compatible with products if and only if $\nu|_M$ is trivial. A mild generalization is given by~\cite[Proposition~10]{Albers-Frauenfelder-Oancea}. 
\end{question}

%%%
\subsection{Loop product with local coefficients}
%%%

Following~\cite{Goresky-Hingston}, we view the loop product as being defined by going from left to right in the diagram 
$$
\Lambda\times \Lambda \hookleftarrow \cF \stackrel{g}\longrightarrow \Lambda,
$$
where $g=i_sg_s$ for some fixed $s\in(0,1)$. More precisely, the \emph{loop product with integer coefficients} is defined as the composition
\begin{align*}
H_i (\Lambda;\Z) \otimes H_j (\Lambda;\Z) & \stackrel{\epsilon\times}{\longrightarrow} H_{i+j}(\Lambda\times \Lambda;\Z) \\
& \longrightarrow  H_{i+j}(\nu \cF,\dot\nu\cF;\Z) \\
& \stackrel{\simeq}{\longrightarrow} H_{i+j}(\cF;\ev_0^*{|M|}) \\
& \stackrel{g_*}{\longrightarrow} H_{i+j}(\Lambda;\ev_0^*{|M|}).
\end{align*}
The first map is the homology cross-product corrected by a sign $\epsilon=(-1)^{n(i+n)}$ (\cite[Appendix~B]{Hingston-Wahl}), the second map is the composition of the map induced by inclusion $\Lambda\times\Lambda \hookrightarrow (\Lambda\times \Lambda,\Lambda\times \Lambda \setminus \cF)$ with excision and the tubular neighbourhood isomorphism, and the third map is the Thom isomorphism. In case $M$ is not orientable the loop product does not land in homology with integer coefficients and thus fails to define an algebra structure on $H_*(\Lambda;\Z)$. This can be corrected by using at the source homology with local coefficients.

\begin{definition}
Define on $\Lambda$ the local system 
$$
   \mu := \ev_0^*|M|^{-1}.
$$ 
The \emph{archetypal loop product} is the bilinear map 
$$
\bullet: H_i(\Lambda;\mu) \otimes H_j(\Lambda;\mu) \to H_{i+j}(\Lambda;\mu)
$$ 
defined as the composition
\begin{align*}
H_i (\Lambda;\mu) \otimes H_j (\Lambda;\mu) 
& \stackrel{\epsilon\times}{\longrightarrow} H_{i+j}(\Lambda\times \Lambda;p_1^*\mu\otimes p_2^*\mu) \\
& \longrightarrow H_{i+j}(\nu \cF,\dot\nu\cF;p_1^*\mu\otimes p_2^*\mu|_{\nu\cF}) \\
& \stackrel{\simeq}{\longrightarrow} H_{i+j}(\cF;(p_1^*\mu\otimes p_2^*\mu)|_\cF\otimes \ev_0^*|M|) \\ 
& \stackrel{g_*}{\longrightarrow} H_{i+j}(\Lambda;\mu).
\end{align*}
\end{definition} 
The description of the maps is the same as above, with $\epsilon=(-1)^{ni}$ because of the shift $H_i(\Lambda;\mu)=H_{i+n}(\Lambda;\ul{\mu})$. However, one still needs to check that the local systems of coefficients are indeed as written. For the first, second and third map the behavior of the coefficients follows general patterns. For the last map we use that 
$$
(p_1^*\mu\otimes p_2^*\mu)|_\cF\otimes \ev_0^*|M|\simeq g^*\mu,
$$
which is true for our specific $\mu=\ev_0^*|M|^{-1}$. 

The archetypal loop product is associative, graded commutative,
and it has a unit represented by the fundamental class 
$$
   [M]\in H_0(M;|M|^{-1})=H_n(M;\ul{|M|})
$$
from~\S\ref{sec:PD-local}. With our grading conventions, the archetypal loop product has degree $0$ and the local system $\mu$ is supported in degree $-n$.
In the case where $M$ is oriented we recover the usual loop product. 

More generally, the loop product can be defined with further twisted coefficients. 

\begin{definition} Let $\nu$ be a degree $0$ local system (of rank $1$ free $\Z$-modules) on $\Lambda$ which is compatible with products. The \emph{loop product with coefficients twisted by $\nu$} is the bilinear map
$$
\bullet: H_i(\Lambda;\nu\otimes\mu) \otimes H_j(\Lambda;\nu\otimes\mu) \to H_{i+j}(\Lambda;\nu\otimes\mu)
$$ 
(with $\mu=\ev_0^*|M|^{-1}$ as above) defined as the composition
\begin{align*}
H_i (\Lambda;\nu\otimes \mu) & \otimes H_j (\Lambda;\nu\otimes \mu) \\
& \stackrel{\epsilon\times}{\longrightarrow} H_{i+j}(\Lambda\times \Lambda;(p_1^*\nu\otimes p_2^*\nu) \otimes (p_1^*\mu\otimes p_2^*\mu)) \\
& \longrightarrow  
H_{i+j}(\nu \cF,\dot\nu\cF;(p_1^*\nu\otimes p_2^*\nu) \otimes (p_1^*\mu\otimes p_2^*\mu)|_{\nu\cF})  \\
& \stackrel{\simeq}{\longrightarrow} H_{i+j}(\cF;(p_1^*\nu\otimes p_2^*\nu) \otimes (p_1^*\mu\otimes p_2^*\mu)|_\cF\otimes \ev_0^*|M|) \\
& \stackrel{g_*}{\longrightarrow} H_{i+j}(\Lambda;\nu\otimes \mu).
\end{align*}
\end{definition}
As before, we have $\epsilon=ni$. For the last map we use the isomorphism $(p_1^*\mu\otimes p_2^*\mu)|_\cF\otimes \ev_0^*|M|\simeq g^*\mu$, and the isomorphism $(p_1^*\nu\otimes p_2^*\nu)|_\cF\simeq g^*\nu$ which expresses the compatibility with products for $\nu$. 

The loop product with twisted coefficients is graded commutative
and unital. Recalling that the compatibility with products for $\nu$ forces its restriction to $M$ to be trivial, the unit is again represented by the fundamental class 
$$
[M]\in H_0(M;\nu|_M\otimes|M|^{-1})=H_0(M;|M|^{-1})=H_n(M;\ul{|M|}).
$$
\begin{remark}\label{rmk:associativity-local-systems}
Associativity of the loop product with twisted coefficients depends 
on the following associativity condition on the isomorphisms~\eqref{eq:loc-sys-cut} and~\eqref{eq:loc-sys-concat}. Given $s,s'\in (0,1)$ denote $s''=(s'-ss')/(1-ss')$, so that $g_{s'}\circ (g_s\times\mathrm{id})=g_{ss'}\circ(\mathrm{id}\times g_{s''})$. Denoting $\Phi_s:(p_1^*\nu\otimes p_2^*\nu)|_\cF\stackrel\simeq\to g_s^*\nu|_{\cF_s}$ the isomorphism from~\eqref{eq:loc-sys-concat}, we require the associativity condition 
$$
\Phi_{ss'}\circ (\mathrm{Id}\otimes \Phi_{s''})=\Phi_{s'}\circ (\Phi_s\otimes\mathrm{Id}).
$$
This holds for the transgressive local systems from Example~\ref{example:transgressive} and for the local system in Example~\ref{example:w}. 

Also, because~\eqref{eq:loc-sys-cut} and~\eqref{eq:loc-sys-concat} are equivalent, this condition on~\eqref{eq:loc-sys-concat} will guarantee coassociativity of the coproduct, see below.  
\end{remark}

%%%
\subsection{Loop coproduct with coefficients} \label{sec:coproduct-coefficients}
%%%

Again following~\cite{Goresky-Hingston}, we view the primary coproduct on loop homology as being defined by going from left to right in the diagram 
$$
\Lambda \hookleftarrow \cF_s \stackrel{c_s}\longrightarrow \Lambda\times \Lambda
$$
for some fixed $s\in(0,1)$, where $c_s$ stands for $ic_s$ in the notation of~\S\ref{sec:spaces-of-loops}. We restrict in this section to coefficients in a field $\K$ and all local systems are accordingly understood in this category. The reason for this restriction is explained below. The \emph{primary coproduct with constant coefficients} is defined as the composition
$$
\xymatrix{
H_k (\Lambda;\K) \ar[r] & H_k(\nu\cF_s,\dot\nu\cF_s;\K) &  \ar[l]_-\simeq H_k(\cF_s;\ev_0^*|M|) 
}
$$
$$
\xymatrix{
\ar[r]^-{c_{s*}} & H_k(\Lambda\times\Lambda;p_1^*\ev_0^*|M|) \ar[r]^-{AW} & \bigoplus_{i+j=k}H_i(\Lambda;\ev_0^*|M|)\otimes H_j(\Lambda;\K).
}
$$
The first map is the composition of the map induced by inclusion $\Lambda\to (\Lambda,\Lambda\setminus \cF_s)$ with the excision isomorphism towards the homology rel boundary of a tubular neighbourhood of $\cF_s$. The second map is the Thom isomorphism. For the third map we use that $c_s^*p_1^*\ev_0^*=\ev_0^*$. The fourth map is the Alexander-Whitney diagonal map followed by the K\"unneth isomorphism.\footnote{The Alexander-Whitney diagonal map~\cite[VI.12.26]{Dold} takes values in $H_*(C_*(\Lambda;\ev_0^*|M|)\otimes C_*(\Lambda))$ with arbitrary coefficients. In order to further land in $H_*(\Lambda;\ev_0^*|M|)\otimes H_*(\Lambda)$ we need to restrict to field coefficients so the K\"unneth isomorphism holds. Alternatively, one needs to modify the target of the coproduct to be $H_*(\Lambda\times\Lambda;p_1^*\ev_0^*|M|)$, see~\cite{Hingston-Wahl}.}
Just like for the loop product, we see that if $M$ is nonorientable the primary coproduct fails to define a coalgebra structure on $H_*(\Lambda;\K)$. This is corrected by using homology with local coefficients as follows. 

\begin{definition}
Define on $\Lambda$ the local system
$$
   o := \ev_0^*|M| = \mu^{-1}.
$$
The \emph{archetypal primary coproduct} is the bilinear map 
$$
\vee_s: H_k(\Lambda;o) \to \bigoplus_{i+j=k}H_i(\Lambda;o) \otimes H_j(\Lambda;o)
$$ 
(for some fixed $s\in[0,1]$) defined as the composition
\begin{align*}
H_k (\Lambda;o) 
& \longrightarrow H_k(\nu\cF_s,\dot\nu\cF_s;o|_{\nu\cF_s}) \\
& \stackrel{\simeq}{\longleftarrow} H_k(\cF_s;o\otimes \ev_0^*|M|) \\
& \stackrel{c_{s*}}{\longrightarrow} H_k(\Lambda\times\Lambda;p_1^*o\otimes p_2^*o) \\
& \stackrel{AW}{\longrightarrow} \bigoplus_{i+j=k}H_i(\Lambda;o)\otimes H_j(\Lambda;o).
\end{align*}
\end{definition}

With our grading conventions this coproduct has degree $0$. Taking into account that $o=\ev_0^*|M|$ is supported in degree $n$, this results in the coproduct having the usual degree $-n$ in ungraded notation. In the orientable case it recovers the usual primary coproduct. 

Just like the product, the primary coproduct can be defined with further twisted coefficients. 

\begin{definition} Let $\nu$ be a degree $0$ local system (of rank one $\K$-vector spaces) on $\Lambda$ which is compatible with products. The \emph{primary coproduct with twisted coefficients} is the bilinear map 
$$
\vee_s: H_k(\Lambda;\nu\otimes o) \to \bigoplus_{i+j=k}H_i(\Lambda;\nu\otimes o) \otimes H_j(\Lambda;\nu\otimes o)
$$ 
(with $o=\ev_0^*|M|$ as above and some fixed $s\in[0,1]$) defined as the composition 
\begin{align*}
H_k (\Lambda;\nu\otimes o) 
& \longrightarrow H_k(\nu\cF_s,\dot\nu\cF_s;\nu\otimes o|_{\nu\cF_s}) \\ 
& \stackrel{\simeq}{\longleftarrow} H_k(\cF_s;\nu\otimes o^{\otimes 2}) \\
& \stackrel{c_{s*}}{\longrightarrow} H_k(\Lambda\times\Lambda;p_1^*(\nu\otimes o)\otimes p_2^*(\nu\otimes o)) \\
& \stackrel{AW}{\longrightarrow} \bigoplus_{i+j=k}H_i(\Lambda;\nu\otimes o)\otimes H_j(\Lambda;\nu\otimes o).
\end{align*}
\end{definition}
In the definition we use $c_s^*(p_1^*o\otimes p_2^*o)\simeq o\otimes o$, and the condition $c_s^*(p_1^*\nu\otimes p_2^*\nu)\simeq \nu|_{\cF_s}$ which is part of the condition of being compatible with products for $\nu$. 

The arguments of Goresky-Hingston~\cite[\S8]{Goresky-Hingston} which show that, in the orientable case, there is a secondary coproduct of degree $-n+1$ defined on relative homology $H_*(\Lambda,\Lambda_0;\K)$, apply \emph{verbatim} in the current setup involving local coefficients. As an outcome, we obtain the following. 

\begin{definition-proposition}
For any degree $0$ local system $\nu$ of rank one $\K$-vector spaces on $\Lambda$ which is compatible with products, there is a well-defined \emph{(secondary) loop coproduct with twisted coefficients} (abbreviate $o=\ev_0^*|M|$) 
$$
\vee: H_k(\Lambda,\Lambda_0;\nu\otimes o) \to \bigoplus_{i+j=k+1}H_i(\Lambda,\Lambda_0;\nu\otimes o) \otimes H_j(\Lambda,\Lambda_0;\nu\otimes o).
$$ 
\qed
\end{definition-proposition}

As explained in~\cite{Hingston-Wahl}, in order for this secondary coproduct to be coassociative in the case of a constant local system $\nu$ we need to correct the $ij$-component of the secondary product induced by the previously defined primary product by a sign $\epsilon=(-1)^{(n-1)(j-n)}$ (see~\cite[Definition~1.7]{Hingston-Wahl} and note the shift in grading $H_j(\Lambda,\Lambda_0;\nu\otimes o)=H_{j-n}(\Lambda,\Lambda_0;\nu\otimes \ul{o})$). With this correction the coproduct with constant local system $\nu$ is also graded cocommutative if gradings are shifted so that it has degree $0$. However, it has no counit (this would contradict the infinite dimensionality of the homology of $\Lambda$).

In the case of coefficients twisted by a local system $\nu$ which is nonconstant, coassociativity further requires that the isomorphisms $\Phi_s$ expressing compatibility with products for $\nu$ satisfy the condition from Remark~\ref{rmk:associativity-local-systems}. The coproduct is then also cocommutative. 

One obtains dually a cohomology product~\cite{Goresky-Hingston,
  Hingston-Wahl}. Note that, in contrast to the loop coproduct, the
dual loop cohomology product is defined with arbitrary coefficients
because its definition does not require the K\"unneth isomorphism. 

\begin{definition-proposition}
For any degree $0$ local system $\nu$ of rank $1$ free abelian groups on $\Lambda$ which is compatible with products, there is a well-defined \emph{cohomology product with twisted coefficients} (abbreviate $\mu=\ev_0^*|M|^{-1}$) 
$$
\oast: H^i(\Lambda,\Lambda_0;\nu\otimes \mu) \otimes H^j(\Lambda,\Lambda_0;\nu\otimes \mu) \to H^{i+j-1}(\Lambda,\Lambda_0;\nu\otimes \mu).
$$ 
\qed\end{definition-proposition}

The cohomology product with twisted coefficients is associative. It is also graded commutative when viewing it as a degree $0$ product on $H^{*-1}(\Lambda,\Lambda_0;\nu\otimes\mu)$. 

%%%
\subsection{Reduced- vs. loop homology relative to $\chi\cdot$point} \label{sec:reduced-relpoint}
%%%

We explain in this section the interplay between reduced loop homology and loop homology relative to $\chi\cdot$point in the presence of local coefficients. 

Recall the previous notation $o=\ev_0^*|M|=\mu^{-1}$, and let $\nu$ be a local system compatible with products. The arguments of~\S\ref{ss:A2+loop} carry over \emph{verbatim} to give a description of the loop product and coproduct in Morse homology with local coefficients in $\nu\otimes \mu$, respectively $\nu\otimes o$. For a definition of Morse homology with local coefficients we refer to~\cite[\S11.3]{Abouzaid-cotangent} or~\cite[\S7.2]{LS}. 

The \emph{reduced loop (co)homology groups} $\ol{MH}_*$ and $\ol{MH}^*$ are defined with local coefficients as follows. Recall that $\nu|_M$ is trivial. We consider the map $\eps$ given as the composition
$$
\xymatrix{
H^{*}(\Lambda;\nu\otimes \ul{\mu}) \ar[r]^-\eps \ar[d] & H_*(\Lambda;\nu\otimes \ul{\mu}) \\
H^0(M;\ul{\mu})\ar[r]_{\eps_0}& H_0(M;\ul{\mu}) \ar[u]
}
$$
where the vertical maps are restriction to, respectively inclusion of constant loops, and $\eps_0$ is induced by multiplication with the Euler characteristic. We then define 
$$
\ol{MH}^*(\Lambda;\nu\otimes \ul{\mu})=\ker \eps,\qquad \ol{MH}_*(\Lambda;\nu\otimes \ul{\mu})=\coker \, \eps. 
$$
Thus $\ol{MH}_*(\Lambda;\nu\otimes \mu)=\ol{MH}_{*-n}(\Lambda;\nu\otimes \ul{\mu})$ and $\ol{MH}_*(\Lambda;\nu\otimes o)=\ol{MH}_{*+n}(\Lambda;\nu\otimes \ul{\mu})$. 
Similarly to~\S\ref{ss:A2+loop}, the loop product descends to $\ol{MH}_*(\Lambda;\nu\otimes \ul \mu)$ because $\mathrm{im}\,\eps\subset H_*(\Lambda;\nu\otimes \ul{\mu})$ is an ideal.

The \emph{loop (co)homology groups relative to $\chi\cdot$point} are defined with local coefficients as follows. Recall again that $\nu|_M$ is trivial. Consider the embedding $\chi R q_0 \to MC_*(\Lambda;\nu\otimes \ul o)$ given by the inclusion $\chi R\hookrightarrow R$ and define the Morse chains relative to $\chi\cdot$point as $MC_*(\Lambda,\chi\text{pt};\nu\otimes \ul o)=MC_*(\Lambda;\nu\otimes \ul o)/\chi R q_0$. Similarly consider the projection $\pi:MC^*(\Lambda;\nu\otimes \ul \mu)\to R q_0$ and define the Morse cochains relative to $\chi\cdot$point to be $MC^*(\Lambda,\chi\text{pt};\nu\otimes \ul \mu)=\pi^{-1} \ker (R\stackrel{\cdot \chi}\longrightarrow R)$. The loop (co)homology groups relative to $\chi\cdot$point are 
$$
MH_*(\Lambda,\chi\text{pt};\nu\otimes \ul o) = H_*(MC_*(\Lambda,\chi\text{pt};\nu\otimes \ul o)),
$$
$$ 
MH^*(\Lambda,\chi\text{pt};\nu\otimes \ul \mu) = H^*(MC^*(\Lambda,\chi\text{pt};\nu\otimes \ul \mu)).
$$
The arguments of~\S\ref{ss:A2+loop} carry over \emph{verbatim} in order to show that the loop coproduct extends to $MH_*(\Lambda,\chi\text{pt};\nu\otimes \ul o)$ (for algebraic reasons we need to use field coefficients as in~\S\ref{sec:coproduct-coefficients}). Interpreted dually as a product on cohomology, this is defined with arbitrary coefficients on $MH^*(\Lambda,\chi\text{pt};\nu\otimes \ul \mu)$. 
%With our grading conventions, the degrees are $0$ for the loop product, $+1$ for the loop coproduct, and $-1$ for the cohomology product. 

The comparison between reduced loop homology and loop homology relative to a point goes as follows. Recalling that $\ul\mu=\ul o$, we have a commutative diagram 
$$
\xymatrix{
            & R q_0 \ar[r]^-{\cdot \chi}\ar[d]_-{\cdot \chi}& MC_*(\Lambda;\nu\otimes \ul\mu) \ar@{=}[d] & \\
0 \ar[r]& \chi R q_0 \ar[r]& MC_*(\Lambda;\nu\otimes \ul o) \ar[r]& MC_*(\Lambda,\chi\text{pt};\nu\otimes \ul\mu) \ar[r]& 0
}
$$
which induces 
$$
\xymatrix{
& R \ar[r]^-{\cdot \chi} \ar[d]_-{\cdot \chi} & MH_*(\Lambda;\nu\otimes \ul \mu) \ar@{=}[d] \ar[r]& \ol{MH}_*(\Lambda;\nu\otimes \ul \mu) \ar@{-->}[d] \\
\dots \ar[r]& \chi R \ar[r]& MH_*(\Lambda;\nu\otimes \ul o) \ar[r]& MH_*(\Lambda,\chi\text{pt};\nu\otimes \ul o) \ar[r]& \dots
}
$$
We thus get a canonical map 
\begin{equation}\label{eq:comparison-reduced-relpoint}
\ol{MH}_*(\Lambda;\nu\otimes \ul \mu)\longrightarrow  MH_*(\Lambda,\chi\text{pt};\nu\otimes \ul o)
\end{equation}
which is an isomorphism if and only if the map $\chi R \to MH_0(\Lambda;\nu\otimes \ul o)$ is injective. To study its injectivity we can restrict without loss of generality to the component of contractible loops, in which case the target of this map is $R$ if $\nu\otimes \ul o$ is trivial and $R/2R$ if $\nu\otimes \ul o$ is nontrivial on that component. We thus obtain injectivity of this map, and an isomorphism in~\eqref{eq:comparison-reduced-relpoint}, under any of the following conditions: 

(i) $\nu\otimes \ul o$ is trivial on the component of contractible loops.

(ii) $\chi=0$. 

(iii) $R$ is $2$-torsion.

%%%
\subsection{Isomorphism between symplectic homology and loop homology} \label{sec:iso-sympl-loop}
%%%

We spell out in this section the isomorphism between the symplectic homology of the cotangent bundle and the homology of the free loop space with twisted coefficients. 

For the next definition, recall the local systems 
$$
\sigma = \tau_{w_2},\qquad \tilde o = \ev_0^*\ul{|M|}^{-w}
$$
from~\eqref{eq:sigma=tauw2} and~\eqref{eq:tildeo}, as well as the orientation local systems 
$$
\mu=\ev_0^*|M|^{-1}=o^{-1}.
$$ 

\begin{definition}[Abouzaid~\cite{Abouzaid-cotangent}] \label{defi:local-system-eta} The \emph{fundamental local system for symplectic homology of the cotangent bundle} is the local system on $\Lambda$ given by 
$$
\eta = \sigma \otimes \mu \otimes \tilde o. 
$$
\end{definition}

The fundamental local system $\eta$ is supported in degree $-n$. Our previous discussion shows that the loop product is defined and has degree $0$ on $H_*(\Lambda;\eta)$, and the loop coproduct is defined and has degree $+1$ on $H_*(\Lambda,\Lambda_0;\eta^{-1})$. We can view the loop product as being defined on $H_*(\Lambda;\ul{\eta})$, where it has degree $-n$, and the loop coproduct as being defined (with field coefficients) on $H_*(\Lambda,\Lambda_0;\ul{\eta})$, where it has degree $1-n$. This point of view is useful when considering $\ol{H}_*(\Lambda;\ul{\eta})$, which is a common space of definition (to which the product descends and the coproduct extends). 

As proved in~\cite{AS-corrigendum,Abouzaid-cotangent}, the chain map $\Psi=\Psi^{\text{quadratic}}$ discussed in~\S\ref{sec:Psi} associated to a quadratic Hamiltonian acts as 
$$
\Psi:FC_*(H)\to MC_*(S_L;\ul{\eta})
$$
and induces an isomorphism $SH_*(D^*M)\stackrel\simeq\longrightarrow H_*(\Lambda;\ul{\eta})$. Given any local system $\nu$, the same map acts as $\Psi:FC_*(H;\nu)\to MC_*(S_L;\nu\otimes\ul{\eta})$ and induces an isomorphism $SH_*(D^*M;\nu)\stackrel\simeq\longrightarrow H_*(\Lambda;\nu\otimes \ul{\eta})$. 

Our filtered chain map $\Psi=\Psi^{\text{linear}}$ from~\S\ref{sec:Psi-symp} associated to a linear Hamiltonian is a chain isomorphism $FC_*(H)\stackrel\simeq\longrightarrow MC_*^{\le \mu}(E^{1/2};\ul{\eta})$, with $\mu$ the slope of the Hamiltonian. Given any local system $\nu$, we obtain a filtered chain isomorphism $FC_*(H;\nu)\stackrel\simeq\longrightarrow MC_*^{\le \mu}(E^{1/2};\nu\otimes \ul{\eta})$. 

In case the local system $\nu$ is compatible with products, the arguments of~\cite{AS,AS2,Abouzaid-cotangent} adapt in order to show that the map $\Psi$ intertwines the pair-of-pants product on the symplectic homology side with the homology product on the Morse side. The arguments of Theorem~\ref{thm:cont-loop-iso} adapt in order to show that the map $\Psi$ descends on homology relative to the constant loops, where it intertwines the continuation coproduct with the loop coproduct (with field coefficients). 

\begin{theorem}[{\cite{AS,AS2,AS-corrigendum,Abouzaid-cotangent}},
Theorem~\ref{thm:Psi-symp},
Theorem~\ref{thm:cont-loop-iso}]
Given any local system $\nu$ compatible with products, the filtered chain level map $\Psi$ induces filtered isomorphisms 
$$
\Psi_*:SH_*(D^*M;\nu)\stackrel\simeq\longrightarrow H_*(\Lambda;\nu\otimes \ul{\eta}),
$$
$$
\Psi_*^{>0}:SH_*^{>0}(D^*M;\nu)\stackrel\simeq\longrightarrow H_*(\Lambda,\Lambda_0;\nu\otimes \ul{\eta}).
$$
Moreover: 

-- $\Psi_*$ intertwines the pair-of-pants product with the Chas--Sullivan loop product, 

-- $\Psi_*^{>0}$ intertwines the continuation coproduct with the loop coproduct (with field coefficients).
\qed
\end{theorem}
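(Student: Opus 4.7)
The plan is to view this final theorem as a packaging statement: each of the three isomorphisms and each of the three intertwining properties was already established in the untwisted case in the body of the paper (respectively Theorem~\ref{thm:Psi-symp}, Corollary~\ref{cor:Psi-symp}, Theorem~\ref{thm:cont-loop-iso}, and the cited work of Abbondandolo--Schwarz and Abouzaid), so the proof reduces to verifying that every moduli-space construction in those arguments is compatible with pulling back a local system $\nu$ on $\Lambda$ along the various boundary evaluation maps, and that the orientation bundles of the Fredholm problems produce exactly the twist $\ul{\eta}$ on the Morse side.

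First I would set up the twisted complexes $FC_*(H;\nu)$ and $MC_*(E^{1/2};\nu\otimes\ul{\eta})$. On the Floer side the generators are $1$-periodic orbits $x$ and the coefficient is a rank one free abelian group obtained by pulling back $\nu$ along the inclusion $x\hookrightarrow\Lambda$; on the Morse side the complex is the usual Morse complex of $E$ twisted by the pullback of $\nu\otimes\ul{\eta}$ along the inclusion of a critical point into $\Lambda$. The presence of $\ul{\eta}=\sigma\otimes\mu\otimes\tilde o$ is dictated by Abouzaid's orientation computation (building on Kragh and the Abbondandolo--Schwarz corrigendum): transgression of $w_2$, the pointwise orientation of $M$, and the orientation-reversal shift $\tilde o$ together account for the failure of the Floer determinant line to match the Morse determinant line along the half-cylinders used to define $\Psi$. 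The map $\Psi$ is defined by the same signed count of the half-cylinder moduli spaces $\MM(x;a)$ from \S\ref{sec:Psi-symp}, where each rigid element now carries as coefficient the isomorphism between the pulled-back fibers along the boundary loop, and the filtered isomorphism property is preserved because Proposition~\ref{prop:action-estimate} is a purely geometric length-versus-action estimate that is unaffected by the twist. The descent to $\Psi_*^{>0}$ and $\ol{\Psi}_*$ then proceeds exactly as in Corollary~\ref{cor:Psi-symp}, using that $\nu|_M$ is trivial (a consequence of compatibility with products) so the subcomplex $R\cdot\chi q_0$ and its analogue are well defined.

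Second, for the intertwining of the pair-of-pants product with the loop product, I would follow the argument of Abbondandolo--Schwarz and Abouzaid by replacing their moduli spaces of Floer pairs-of-pants with the same spaces counted with $\nu$-twisted coefficients. Here the key point is that the boundary loop of a pair-of-pants half-tube decomposes as the concatenation $g_s(\gamma_1,\gamma_2)$ of its two output loops, and the compatibility-with-products hypothesis provides exactly the isomorphism $\Phi_s:(p_1^*\nu\otimes p_2^*\nu)|_\cF\stackrel{\simeq}\to g_s^*\nu|_{\cF_s}$ from~\eqref{eq:loc-sys-concat} needed to identify the product on the input side of $\Psi\otimes\Psi$ with the loop product on the output side. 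For the intertwining of the continuation coproduct with the loop coproduct, one follows the proof of Theorem~\ref{thm:cont-loop-iso} in \S\ref{sec:pop-GH-iso}, using the compactified moduli spaces of punctured annuli $\PP(x;a,b)$ and the chain homotopy $\Theta$; the dual compatibility condition~\eqref{eq:loc-sys-cut} controls the coefficient behavior at the cutting map $c_s$, and the obstruction-bundle analysis from \S\ref{sec:constant-annuli} and the Corollary~\ref{cor:constant-mod-zero-annuli} reduction to the Euler characteristic of $M$ go through verbatim since the local system restricts trivially to constant loops. The restriction to field coefficients for the coproduct is forced, as before, by the appearance of the Alexander--Whitney map together with K\"unneth.

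The main obstacle, I expect, will be the bookkeeping of orientations with twisted coefficients: one must check that the signs and the local-system transition functions assemble coherently, in particular verifying that the isomorphisms $\Phi_s$ appearing in the product and coproduct constructions satisfy the associativity condition mentioned in Remark~\ref{rmk:associativity-local-systems}, so that the resulting product is associative and the coproduct is coassociative at the level of reduced homology. All other verifications are local, parallel to those already carried out in the untwisted case, and follow formally once the orientation bundle $\ul{\eta}$ has been correctly identified.
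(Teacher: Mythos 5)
Your proposal is correct and takes essentially the same route as the paper: the theorem is a packaging statement whose proof consists precisely of the observations you make — the maps of \S\ref{sec:Psi}, \S\ref{sec:Psi-symp}, and \S\ref{sec:pop-GH-iso} admit straightforward twisted versions once the untwisted isomorphism with coefficients in $\ul{\eta}$ is imported from~\cite{AS-corrigendum,Abouzaid-cotangent}, the length-vs-action estimate of Proposition~\ref{prop:action-estimate} is insensitive to the twist, triviality of $\nu|_M$ justifies the descent to reduced and positive homology, and compatibility with products supplies the isomorphisms~\eqref{eq:loc-sys-concat}/\eqref{eq:loc-sys-cut} that make the intertwining arguments (for the product, following~\cite{AS,AS2,Abouzaid-cotangent}; for the coproduct, following Theorem~\ref{thm:cont-loop-iso}) go through with coefficients. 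The paper, too, presents this as a summary backed by the preceding discussion in the appendix rather than a fresh argument. One minor caveat: the associativity condition of Remark~\ref{rmk:associativity-local-systems} is needed so that the loop product is associative and the coproduct is coassociative on the Morse side, not for the intertwining statement of this theorem itself, so it is a standing hypothesis on $\nu$ rather than a genuine obstacle to the proof.
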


There are also reduced versions of the map $\Psi$ which intertwine the product, and which also intertwine the coproducts provided both are defined using the same continuation data at the endpoints. The statements for the coproducts can moreover be interpreted as dual statements about products in cohomology.

\bibliographystyle{abbrv}
\bibliography{000_SHpair}

\end{document}